\newtheorem{theorem}{Theorem}
\newtheorem{lemma}[theorem]{Lemma}
\newtheorem{corollary}[theorem]{Corollary}
\newtheorem{proposition}[theorem]{Proposition}
\theoremstyle{definition}
\newtheorem{remark}[theorem]{Remark}
\newtheorem{definition}[theorem]{Definition}
\newcommand{\eref}[1]{(\ref{e.#1})}
\newcommand{\tref}[1]{Theorem \ref{t.#1}}
\newcommand{\lref}[1]{Lemma \ref{l.#1}}
\newcommand{\pref}[1]{Proposition \ref{p.#1}}
\newcommand{\cref}[1]{Corollary \ref{c.#1}}
\newcommand{\sref}[1]{Section \ref{s.#1}}
\newcommand{\partref}[1]{\ref{part.#1}}
\newcommand{\dref}[1]{Definition \ref{d.#1}}
\newcommand{\rref}[1]{Remark \ref{r.#1}}
\newcommand{\aref}[1]{Assumption \ref{a.#1}}
\numberwithin{theorem}{section}
\numberwithin{equation}{section}
\newcommand{\Z}{\mathbb{Z}}
\newcommand{\R}{\mathbb{R}}
\newcommand{\T}{\mathbb{T}}
\newcommand{\grad}{\nabla}
\def\XXint#1#2#3{{\setbox0=\hbox{$#1{#2#3}{\int}$ }
\vcenter{\hbox{$#2#3$ }}\kern-.6\wd0}}
\newcommand{\ep}{\varepsilon}
\newcommand{\osc}{\mathop{\textup{osc}}}
\begin{document}

\title[Quantitative homogenization of shape optimizers]{Quantitative homogenization of principal Dirichlet eigenvalue shape optimizers}
\author{William M Feldman}
\email{feldman@math.utah.edu}
\address{Department of Mathematics, University of Utah, Salt Lake City, USA}

\begin{abstract}
We apply new results on free boundary regularity from our paper \cite{FeldmanReg} to obtain a quantitative convergence rate for the shape optimizers of the first Dirichlet eigenvalue in periodic homogenization.  We obtain a linear (with logarithmic factors) convergence rate for the optimizing eigenvalue. Large scale Lipschitz free boundary regularity of almost minimizers from \cite{FeldmanReg} is used to apply the optimal $L^2$ homogenization theory in Lipschitz domains of Kenig, Lin and Shen \cite{KenigLinShen}. A key idea, to deal with the hard constraint on the volume, is a combination of a large scale almost dilation invariance with a selection principle argument.
\end{abstract}
\maketitle

\section{Introduction} 
Consider the principal Dirichlet eigenvalue of an elliptic operator $-\grad \cdot (a(x) \grad \cdot)$
 \begin{equation}\label{e.eigenvaluedef}
  \lambda_1(V,a) = \inf \{ \int_V a(x) \grad v \cdot \grad v \ dx : \ v \in H^1_0(V), \ \|v\|_{L^2(V)} \geq 1\}.
  \end{equation}
 When $a(x) = \textup{id}$ is constant coefficient the classical Faber-Krahn inequality says that the domain of a fixed volume which minimizes the principal Dirichlet eigenvalue is a ball: for any $V$ open bounded in $\R^d$
 \[\lambda_1(V,\textup{id}) \geq \lambda_1(B,\textup{id})  \ \hbox{ when $B$ is a ball with $|B| = |V|$.}\]
 If $a(x) = \bar{a}$ a constant elliptic matrix then the minimizers are $\bar{a}$-ellipsoids $E = \bar{a}^{\frac{1}{2}}B$ instead.
 
 Now consider the problem of minimizing the Dirichlet eigenvalue over the domain $V$ when $a(x)$ is $\Z^d$-periodic and uniformly elliptic.   More precisely, consider a domain $U$ so that
 \begin{equation}\label{e.shapeopt}
   \lambda_1(U,a) \leq \lambda_1(V,a) \ \hbox{ for all $V$ quasi-open with } \ |V| \leq m.
  \end{equation}
    The class of quasi-open sets is simply a natural weak class to define this minimization problem on, it will be explain more precisely later.  
    
    It is the classical content of elliptic homogenization theory that the operator $- \grad \cdot (a(x)\grad )$ is well approximated at large scales by a homogeneous operator $- \grad \cdot (\bar{a} \grad )$.  Fitting this theme, the goal of this paper is to exploit the periodic structure of the matrix field $a(x)$ to find a large scale asymptotic expansion of the principal eigenvalue.  One might conjecture an expansion of the form
    \begin{equation}\label{e.formal-expansion}
     \lambda_1(U,a)m^{\frac{2}{d}} = \lambda_1(E,\bar{a})m^{\frac{2}{d}} + A_1m^{-\frac{1}{d}} + A_2m^{-\frac{2}{d}} +\cdots \ \hbox{ valid for } \ m \gg 1
     \end{equation}
    where $E = \bar{a}^{\frac{1}{2}}B$ is an $\bar{a}$-ellipsoid of the same volume as $U$.  We are not close to establishing a full expansion like this, instead what we show in this paper is an error estimate which is (almost) linear
    \[ m^{\frac{2}{d}}|\lambda_1(U,a) - \lambda_1(E,\bar{a})| \leq Cm^{-\frac{1}{d}} |\log m|^{p} \ \hbox{ valid for } \ m \gg 1\]
    where $p>d+4$ (not optimal) and $C$ depends on $p$ and other universal constants of the problem made precise later.

 It is well appreciated in the field that the hard constraint on the volume in the shape optimization problem \eref{shapeopt} is a major difficulty.  We will rely, following a standard idea in the literature, on the relationship between \eref{shapeopt} and a related \emph{augmented problem} which is just a Lagrange multiplier / soft constraint version of the problem and is closely related to Bernoulli-type free boundary problems.  Call the augmented shape functional $J_\mu$ to be
   \begin{equation}
    J_\mu(U,a) = \lambda_1(U,a) + \mu |U|.
    \end{equation}
   We are still interested in the large volume case, which corresponds here to small penalization $\mu>0$.

  \begin{theorem}\label{t.main-aug}
Suppose $U$ is a minimizer of $J_\mu$ over quasi-open sets so that, in particular, $|U| \sim_{\Lambda,d} \mu^{-\frac{d}{d+2}}$.  Then $U$ has the following regularity properties depending on the universal parameters $(\Lambda,d,\|\grad a\|_\infty)$:
\begin{enumerate}[label = (\roman*)]
\item The principal eigenfunction $u_U$ is Lipschitz with (scaling relative) universal constant.
\item The principal eigenfunction $u_U$ is non-degenerate at its free boundary with (scaling relative) universal constant.
\item There is a Lipschitz domain $U_- \subset U$ with (scaling relative) universal parameters such that $d_H(\partial U_-,\partial U) \leq C$ universal.
\end{enumerate}
Finally, for any $\eta>0$, there is a constant $C \geq 1$ depending on universal parameters and $\eta$ so that
\[ |U|^{\frac{2}{d}}|\lambda_1(U,a) - \lambda_1(E,\bar{a})| \leq C |U|^{-\frac{1}{d}}|\log(2+|U|)|^{\frac{1}{2}+\eta}\]
and
\[ \inf_{|E| = |U|} \frac{|U \Delta E|}{|U|} \leq C|U|^{-\frac{1}{2d}}|\log (2+|U|)|^{\frac{1}{4}+\eta}\]
where the above infimum is over $\bar{a}$-ellipsoids of the form $E = \bar{a}^{\frac{1}{2}}B$ for a ball $B$.
 \end{theorem}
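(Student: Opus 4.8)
The plan is to use the free boundary regularity of \cite{FeldmanReg} to place $U$ in a setting where the quantitative homogenization theory of \cite{KenigLinShen} applies, and then compare the $a$-problem on $U$ with the $\bar a$-problem on an ellipsoid using (quantitative) Faber--Krahn. First, a minimizer $U$ of $J_\mu$ is, after the standard reduction, described by a one-phase Bernoulli-type free boundary problem: the nonnegative principal eigenfunction $u_U$ (with $\|u_U\|_{L^2}=1$ and $U=\{u_U>0\}$ up to null sets) minimizes $v \mapsto \int a\grad v\cdot\grad v + \mu|\{v\neq 0\}|$, since any competitor $v$ gives the admissible set $\{v\neq 0\}$ for $J_\mu$. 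Items (i)--(iii) are then exactly the output of the large scale free boundary regularity of \cite{FeldmanReg}: large scale Lipschitz regularity of $\partial U$ produces the inner (and, symmetrically, an outer) Lipschitz domain $U_\pm$ with scaling-relative universal Lipschitz character and $d_H(\partial U_\pm,\partial U)\leq C$.

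Set $R := |U|^{1/d}\sim_{\Lambda,d}\mu^{-1/(d+2)}$ and $\ep := 1/R$; by Faber--Krahn and a crude homogenization bound $\lambda_1(U,a)\sim R^{-2}$, and by (i) the eigenfunction obeys $\|\grad u_U\|_\infty \lesssim R^{-d/2-1}$. I would prove the first estimate by matching one-sided bounds against $\lambda_1(E,\bar a)$, where $E=\bar a^{1/2}B$ has $|E|=|U|$. For the upper bound, $E$ is an admissible competitor for $J_\mu$ and, as $|E|=|U|$, the volume terms cancel, so $\lambda_1(U,a)\leq\lambda_1(E,a)$; since $E$ has fixed ellipsoidal geometry after rescaling by $R$, the $L^2$ homogenization estimates give $\lambda_1(E,a)\leq\lambda_1(E,\bar a)+CR^{-3}|\log R|^{1/2+\eta}$. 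For the lower bound, one first passes to $U_-$: cutting off $u_U$ in the $O(1)$-wide layer $\{x\in U_- : \dist(x,\partial U_-)<C\}$, whose volume is $\lesssim R^{d-1}$, and using $\|\grad u_U\|_\infty\lesssim R^{-d/2-1}$, produces $\phi u_U\in H^1_0(U_-)$ with Rayleigh quotient $\leq\lambda_1(U,a)+CR^{-3}$, so $\lambda_1(U,a)\geq\lambda_1(U_-,a)-CR^{-3}$. Since $U_-$ is genuinely Lipschitz with controlled character, \cite{KenigLinShen} applies (after rescaling by $R$ the operator is $\ep\Z^d$-periodic on a unit-scale Lipschitz domain), yielding $\lambda_1(U_-,a)\geq\lambda_1(U_-,\bar a)-CR^{-3}|\log R|^{1/2+\eta}$. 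Finally, Faber--Krahn for $-\grad\cdot(\bar a\grad)$ (which reduces to the Euclidean inequality under $x\mapsto\bar a^{-1/2}x$, $\bar a$-ellipsoids corresponding to balls) gives $\lambda_1(U_-,\bar a)\geq\lambda_1(E_-,\bar a)\geq\lambda_1(E,\bar a)$, as the $\bar a$-ellipsoid $E_-$ of volume $|U_-|\leq|U|=|E|$ has the larger eigenvalue. Combining, $|U|^{2/d}|\lambda_1(U,a)-\lambda_1(E,\bar a)| = R^2|\lambda_1(U,a)-\lambda_1(E,\bar a)| \leq CR^{-1}|\log R|^{1/2+\eta}$, which is the first estimate.

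The second estimate follows by feeding the first into the sharp quantitative Faber--Krahn inequality of Brasco--De Philippis--Velichkov (again transported to $-\grad\cdot(\bar a\grad)$ by the linear change of variables): for $\bar a$-ellipsoids $E'$ of volume $|U_-|$,
\[
\Big(\inf_{E'}\tfrac{|U_-\Delta E'|}{|U_-|}\Big)^2 \lesssim \frac{\lambda_1(U_-,\bar a)-\lambda_1(E_-,\bar a)}{\lambda_1(E_-,\bar a)}.
\]
The numerator is controlled by the errors already bounded: $\lambda_1(U_-,\bar a)-\lambda_1(U_-,a)$ by homogenization on $U_-$, $\lambda_1(U_-,a)-\lambda_1(U,a)$ by the boundary-layer estimate above, $\lambda_1(U,a)-\lambda_1(E,\bar a)$ by the first estimate, and $\lambda_1(E,\bar a)-\lambda_1(E_-,\bar a)\leq 0$. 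Hence the right side is $\lesssim R^{-1}|\log R|^{1/2+\eta}$, so the asymmetry of $U_-$ is $\leq CR^{-1/2}|\log R|^{1/4+\eta}$; transferring from $U_-$ to $U$ costs only $|U\setminus U_-|/|U|\lesssim R^{-1}$, of lower order, and yields $\inf_{|E|=|U|}|U\Delta E|/|U|\leq C|U|^{-1/(2d)}|\log(2+|U|)|^{1/4+\eta}$.

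The main obstacle is the homogenization step on $U_-$ with a nearly linear-in-$\ep$ eigenvalue rate. The estimates of \cite{KenigLinShen} hold in Lipschitz domains, but attaining the $O(\ep)$ rate requires the Lipschitz constant of $U_-$ to be small — essentially, $\partial U$ to be almost flat — which is itself a form of the conclusion. I expect this to be resolved by a bootstrap: a crude preliminary closeness of $U$ to an ellipsoid (from a rough homogenization rate and Faber--Krahn) flattens $\partial U$, improving the rate, and iterating sharpens it, with the $|\log R|$ factor and the suboptimal exponent $\tfrac12+\eta$ emerging from the iteration. The large scale almost dilation invariance of the periodic problem — $\lambda_1(tU,a)=t^{-2}\lambda_1(U,a)+O(R^{-2}\cdot(\text{rate}))$ for $t$ in a controlled range, a consequence of homogenization — is what lets one readjust volumes to keep the comparison ellipsoid and all scales matched at each step, and a selection-principle argument organizes this (and is in any case what will later allow one to remove the soft penalization in favor of the hard constraint $|V|\leq m$).
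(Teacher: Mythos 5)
Your high-level plan is essentially the paper's: reduce to a one-phase Bernoulli-type problem and apply \cite{FeldmanReg} for large-scale Lipschitz free boundary regularity, then feed that into \cite{KenigLinShen} and Faber--Krahn. But two points in your sketch are wrong or seriously underspecified.

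First, the reduction is not to an exact minimizer. You write that $u_U$ \emph{minimizes} $v \mapsto \int a\grad v\cdot\grad v + \mu|\{v\neq 0\}|$ ``since any competitor $v$ gives the admissible set $\{v\neq 0\}$''. That ignores the $L^2$-normalization: a local perturbation of $u_U$ in a ball destroys $\|v\|_{L^2}=1$, and the renormalization $v\mapsto v/\|v\|_{L^2}$ costs energy. What one gets is an \emph{almost}-minimality inequality (\lref{almostminprop} in the paper), with error controlled only after establishing an $L^\infty$ bound on $u_U$ (\cref{linfty}), and with the almost-minimality holding at scales up to $|U|^{1/d}$ (not merely local scales). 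This is precisely the input needed for \cite{FeldmanReg}, whose \tref{almostmin-lip-est}, \pref{AM-misc-reg}, \tref{REGmain2} are all phrased for almost-minimizers; exact Bernoulli minimizers is too strong a claim and skips the delicate part of steps (i)--(iii).

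Second, your explanation of why a bootstrap is needed is off in a way that would derail you. The rate in \tref{optimal-hom-lip} does \emph{not} deteriorate for larger Lipschitz constant — Kenig--Lin--Shen's $O(\ep|\log\ep|^{1/2+\eta})$ estimate holds in arbitrary Lipschitz domains, and the $|\log|^{1/2+\eta}$ factor is built into their theorem, not produced by any iteration. The actual obstruction is earlier: to even produce the Lipschitz subdomain $U_-$, one must apply the flatness-implies-regularity theorem \tref{REGmain2} of \cite{FeldmanReg}, which requires an \emph{initial flatness} of $w_U$ at some scale comparable to $|U|^{1/d}$. That initial flatness is obtained (\lref{flatness}, \cref{suboptimal-est-eigenfunc}) by first proving weak domain regularity (densities and boundary strip bound, \dref{weakly-regular}) for almost-minimizers, using those to apply the \emph{suboptimal} quantitative homogenization \tref{quanthom} (which only needs weak regularity, not Lipschitz), feeding that into Faber--Krahn stability to get proximity of $U$ to an ellipsoid in measure, upgrading to Hausdorff distance via the density estimates (\lref{Linftyupgrade}), and finally to $L^\infty$ proximity of eigenfunctions via the spectral-gap argument (\pref{eigenfunction-conv}). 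There is exactly one upgrade — weak regularity $\Rightarrow$ suboptimal rate $\Rightarrow$ flatness $\Rightarrow$ Lipschitz domain $\Rightarrow$ optimal rate — not an iteration that accumulates logarithms. You should identify this suboptimal-homogenization-for-initial-flatness step explicitly; without it the regularity (iii) you rely on is not available.

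The remaining pieces of your sketch — the upper bound $\lambda_1(U,a)\leq\lambda_1(E,a)$ by energy comparison (volume terms cancel), the boundary-layer cutoff to $U_-$, the transport of Faber--Krahn to $\bar a$ via the linear change of variables, and the $|U\setminus U_-|/|U|\lesssim R^{-1}$ transfer — are all correct and match the paper.
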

 The precise versions of the regularity statements in \tref{main-aug} with all the scalings can be found below in \sref{scalings-and-regularity} and in the statement of \tref{augmented-full}.

For volume constrained minimizers we are not able to establish such strong regularity properties.  Nonetheless, we can still show almost the same conclusion about the convergence of the eigenvalue $\lambda_1(U,a)$ and the convergence in measure to an ellipsoid. The issue is, although all augmented minimizers are volume constrained minimizers, the reverse may not be true.  However the $1$-to-$1$ correspondence does hold for dilation invariant operators, and we are able to take advantage of the approximate dilation invariance of $J_\mu$ minimizers implied by \tref{main-aug}.  Our analysis centers on the multi-valued mapping from $\mu$ to the values of the volume taken by $J_\mu$ minimizers. This mapping is monotone and may have discontinuities where certain volumes are missed.  By a convexity/dilation argument at these singular values, using the dilations, we can show that volume constrained minimizers have small $J_\mu$ energy deficit for a good choice of $\mu$.  Then we apply a penalization/selection principle argument to find a $J_{g(x)}$ minimizer $\Omega$ which is close to $U$ for a non-constant penalization function $g(x) \approx \mu$.  This penalized minimizer $\Omega$ does have all the nice regularity properties of \tref{main-aug} and we can take advantage of that.

For the purposes of stating our main result on volume constrained minimization let us define the scaled energy deficit
\[ \delta_1(U,a) = |U|^{\frac{2}{d}}(\lambda_1(U,a) - \min_{|V| = |U|}\lambda_1(V,a)).\]

\begin{theorem}\label{t.main}
For any $p>d+4$ there is a constant $C \geq 1$ depending on $(\Lambda,d,\|\grad a\|_\infty,p)$ so that
\[ m^{\frac{2}{d}}|\inf_{|V| = m}\lambda_1(V,a) - \min_{|V| = m}\lambda_1(V,\bar{a})| \leq C m^{-\frac{1}{d}}|\log (2+m)|^{p}\]
and for any domain $U$ with $\delta_1(U,a) \leq \frac{1}{2}$
\[ \inf_{|E| = |U|} \frac{|U \Delta E|}{m} \leq C\left[\delta_1(U,a)^{\frac{1}{2}}|\log \delta_1(U,a)|^{\frac{p}{2}}+m^{-\frac{1}{2d}}|\log (2+m)|^{\frac{p}{2}}\right]\]
where the above infimum is over $\bar{a}$-ellipsoids of the form $E = \bar{a}^{\frac{1}{2}}B$ for a ball $B$.
\end{theorem}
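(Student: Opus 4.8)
The plan is to derive \tref{main} from \tref{main-aug} by using the correspondence between volume constrained minimizers and minimizers of the augmented functional $J_\mu$, organized around the monotone multivalued map sending $\mu$ to the set of volumes realized by $J_\mu$-minimizers. The eigenvalue identity in the first assertion has an easy and a hard half. For the upper bound on $\inf_{|V|=m}\lambda_1(V,a)$ I would just use the optimal $\bar a$-ellipsoid $E$ with $|E|=m$ as a competitor: after rescaling $E$ to unit volume the coefficient field becomes $\ep$-periodic with $\ep=m^{-1/d}$, $E$ is a fixed smooth and hence uniformly Lipschitz domain, and the optimal $L^2$ homogenization rate in Lipschitz domains of \cite{KenigLinShen} gives $\lambda_1(E,a)\le\lambda_1(E,\bar a)+Cm^{-2/d}\ep|\log\ep|$, which is the claimed estimate after multiplying by $m^{2/d}$; since $\min_{|V|=m}\lambda_1(V,\bar a)=\lambda_1(E,\bar a)$ by Faber--Krahn for $\bar a$, this direction is finished without any free boundary theory. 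Everything else in the theorem rests on the full machinery.

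For the lower bound, let $U^\ast$ be a volume constrained minimizer of volume $m$ (existence by the standard Buttazzo--Dal Maso relaxation). Set $v_\pm(\mu)=\sup/\inf\{|W|:W$ minimizes $J_\mu\}$; by \tref{main-aug} every such $|W|$ is comparable to $\mu^{-d/(d+2)}$, so a short compactness argument shows $v_\pm$ are monotone decreasing with attained extrema, and one can pick $\mu$ with $v_-(\mu)\le m\le v_+(\mu)$ together with $J_\mu$-minimizers $W_\pm$ with $|W_\pm|=v_\pm$. The crux is a quantitative \emph{gap estimate}: from $J_\mu(W_+,a)=J_\mu(W_-,a)=\min J_\mu$, writing $\lambda_1(W_\pm,a)=f(v_\pm)+e_\pm$ with $f(v):=\lambda_1(E_v,\bar a)=c_d v^{-2/d}$ and $|e_\pm|\lesssim m^{-3/d}|\log m|^{1/2+\eta}$ from \tref{main-aug}, using the strict convexity of $f$ (with $f''\asymp m^{-2-2/d}$ on the relevant range) and the near-stationarity of $\mu$ under dilations of $W_+$ — itself a consequence of the Lipschitz/nondegeneracy part of \tref{main-aug} and the large scale approximate dilation invariance of $\lambda_1(\cdot,a)$ — one obtains $v_+-v_-\lesssim m^{1-1/(2d)}|\log m|^{O(1)}$. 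Dilating $W_+$ down to volume exactly $m$ and invoking convexity once more bounds the $J_\mu$-deficit of any competitor of that volume, so in particular $J_\mu(U^\ast,a)-\min J_\mu\lesssim m^{-3/d}|\log m|^{O(1)}$; combining this with $\min J_\mu\ge f(m)+\mu m-Cm^{-3/d}|\log m|^{O(1)}$ (again by convexity of $v\mapsto f(v)+\mu v$ and the gap estimate) yields $\lambda_1(U^\ast,a)\ge f(m)-Cm^{-3/d}|\log m|^{O(1)}$, which is the lower bound.

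For the measure estimate, take any $U$ with $\delta:=\delta_1(U,a)\le\tfrac12$. The eigenvalue bound just proved gives $\lambda_1(U,a)\le f(m)+\delta m^{-2/d}+Cm^{-3/d}|\log m|^{O(1)}$, so for the $\mu$ chosen above $U$ is an almost minimizer of $J_\mu$ with relative deficit $\lesssim\delta+m^{-1/d}|\log m|^{O(1)}$. Now run a selection/penalization argument in the spirit of Cicalese--Leonardi: to rule out that the asymmetry of $U$ to $\bar a$-ellipsoids exceeds a threshold $T$, minimize $\lambda_1(V,a)+\int_V g\,dx$ over quasi-open $V$ with a term penalizing that the asymmetry departs from $T$, where $g\approx\mu$ is continuous and uniformly close to the constant $\mu$; the penalized minimizer $\Omega$ has asymmetry $\approx T$ and, since it minimizes a Bernoulli-type functional with variable but nearly constant lower order weight, still falls within the free boundary regularity of \cite{FeldmanReg}, so the argument behind \tref{main-aug} applies to it. Homogenizing on the Lipschitz domain $\Omega$ and applying quantitative Faber--Krahn stability for $-\grad\cdot(\bar a\grad)$ gives $J_\mu(\Omega,a)-\min J_\mu\gtrsim cT^2\lambda_1-(\text{homogenization and gap errors})$, while $J_\mu(\Omega,a)\lesssim J_\mu(U,a)$ forces $T^2\lesssim\delta+m^{-1/d}|\log m|^{O(1)}$; together with the closeness of $\Omega$ to $U$ coming out of the same argument, and adjusting the ellipsoid to have volume exactly $m$ (a lower order change bounded by $|U\Delta\Omega|$), this produces the stated bound, the factor $|\log\delta|^{p/2}$ reflecting the non-sharpness of the stability and regularity inputs.

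The main obstacle is the gap/dilation step combined with the requirement that the penalized functional inherit the free boundary regularity of \cite{FeldmanReg}: one must propagate $L^2$ homogenization errors through a dilation that is only approximately an invariance of $-\grad\cdot(a\grad)$, quantify the resulting failure of stationarity of the Lagrange multiplier $\mu$, establish that the monotone correspondence $\mu\mapsto v_\pm(\mu)$ has attained and comparable extrema, and verify that minimizers of $\lambda_1(\cdot,a)+\int(\cdot)g$ with $g$ merely close to a constant satisfy the same Lipschitz, nondegeneracy, and inner Lipschitz-domain estimates as in \tref{main-aug}. Each of these brings in iterated covering and energy comparison arguments whose logarithmic and dimensional losses compound, which is precisely why the exponent degrades from $\tfrac12+\eta$ in \tref{main-aug} to $p>d+4$ here, whereas the upper bound in the first assertion is by contrast essentially immediate.
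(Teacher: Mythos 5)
The high-level skeleton — monotone $\mu\mapsto$ volume map, approximate dilation invariance, penalization/selection principle — matches the paper's. But two of your intermediate steps diverge from the paper's, and one of them looks genuinely problematic.

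For the eigenvalue lower bound, you introduce a quantitative gap estimate $v_+ - v_- \lesssim m^{1-1/(2d)}|\log m|^{O(1)}$, derived from near-stationarity of $\mu$, convexity of $v\mapsto f(v)+\mu v$, and the eigenvalue rate of \tref{main-aug}. This works (one can verify it by comparing $\min J_\mu$ to $J_\mu$ of the optimal ellipsoid and invoking $f''\asymp m^{-2-2/d}$), but it is a genuine detour: the paper's \pref{sigmaerror} \emph{never bounds the gap at all}. Instead it dilates $U_1$ continuously to $U_2$ and uses the elementary convexity inequality $J_\mu(U_*)\le\max\{f(1),f(T)\}$ for the one-parameter family $f(t)=t^{-2}[\lambda_1(U_1,a)+\sigma|U_1|^{-2/d}]+\mu t^d|U_1|$, with both endpoints bounded by $\min J_\mu+C\sigma|U_1|^{-2/d}$ directly by approximate dilation invariance. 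This is cleaner, works even if $v_+-v_-$ is comparable to $m$, and avoids the error propagation you acknowledge is delicate (propagating \lref{approx-scaling} through a first-variation computation of $\mu$). You do not need to know that the gap is small; you only need both endpoint competitors to have small $J_\mu$-deficit. Your route is admissible but adds several pages of logarithm bookkeeping for no gain.

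The measure estimate has a genuine gap. You describe minimizing ``$\lambda_1(V,a)+\int_V g\,dx$ over quasi-open $V$ with a term penalizing that the asymmetry departs from $T$,'' but a penalty on asymmetry is not of the form $\int_V g(x)\,dx$, so the penalized object is not a $J_g$ functional and cannot directly inherit the regularity theory of \tref{augmented-full}, which is predicated on $g$ being a fixed Dini-continuous, nearly constant weight satisfying \eref{g-hyp1}--\eref{g-hyp3}. You appear to be conflating the Cicalese--Leonardi asymmetry-constraint device with what the paper actually does: \pref{hard-constraint-replacement} chooses $g(x)=\mu_*\bigl[1+\gamma_0\wedge\omega\bigl(\rho(x,U)/|U|^{1/d}\bigr)\bigr]$ with $\rho$ the \emph{signed distance} to $\partial U$ and $\omega$ a logarithmic Dini modulus. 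This makes $J_g$ a legitimate Bernoulli-type functional, whose minimizer $\Omega$ is both regular (by \tref{augmented-full}, since $g$ is $\gamma_0$-close to constant and Dini) and close to $U$ (by \lref{penalized-functional-consequences} and the co-area estimate \lref{dist-omega-est} relating $\textup{dist}_\omega(\Omega,U)$ to $|\Omega\Delta U|$). Faber--Krahn stability is then applied to $\Omega$, not to $U$, and a triangle inequality $|U\Delta E|\le|U\Delta\Omega|+|\Omega\Delta E|$ finishes. Your sketch never produces the $|U\Delta\Omega|$ estimate, and your asymmetry-valued penalty would not feed into \tref{augmented-full}. To repair this you should replace the asymmetry penalty with the paper's signed-distance penalization and supply the analogue of \lref{dist-omega-est}; this is also where the degradation from $\tfrac12+\eta$ to $p>d+4$ is actually tracked, via the requirement that $\omega^{1/(d+4)}$ be Dini. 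The upper eigenvalue bound you outline is correct and essentially immediate, as you say.
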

\begin{remark}We do not know if these rates are optimal.  Linear convergence of $r^2|\lambda_1(rU,a) - \lambda_1(rU,\bar{a})|$ to zero is certainly the best possible for a fixed domain $U$ as $r \to \infty$.  The difference of the shape optimizing eigenvalues could be higher order.  We do not currently have a prediction for the precise form of the next order term in the formal expansion \eref{formal-expansion}.  The square root convergence of the domains in measure is probably optimal for sets with positive energy deficit $\delta_1(U,a)>0$, but the actual minimizer, if it exists, could behave better.
\end{remark}
\begin{remark}
We do prove a certain regularity estimate for volume constrained minimizers $U_*$, namely there is a domain $\Omega_*$ with all the regularity properties specified in \tref{main-aug} such that the measure difference $m^{-1}|U_* \Delta \Omega_*|$ is linearly small $m^{-\frac{1}{d}}$ in the length scale up to logarithmic factors.  See \pref{hard-constraint-replacement} below for a precise statement.
\end{remark}
\begin{remark}
Note that we do not technically discuss minimizers, only sets with small energy deficit.  Therefore we do not need to, nor do we, prove existence of volume constrained minimizers on $\R^d$.  
\end{remark}

\begin{remark}
The main results \tref{main-aug} and \tref{main} can be easily translated in the more common $\frac{x}{\ep}$ scaling in homogenization theory where, for example, the eigenvalue estimate in \tref{main} would translate to
\[ |\inf_{|V| = 1}\lambda_1(V,a(\tfrac{\cdot}{\ep})) - \inf_{|V| = 1}\lambda_1(V,\bar{a})| \leq C\ep |\log (2 + \ep^{-1})|^{p}. \]
Notice though that if we want to consider all volumes $|V| = m \gg \ep^d$ instead of just $|V| = 1$ we will still need to grapple with the scalings in the volume anyway, this is why we just take $\ep = 1$ and consider $m \gg 1$ large.
\end{remark}

\subsection{Summary of the paper}
In \sref{known-results} and \sref{L2theory} we build up some practical technical results and summarize important results from the literature that will come into play in the proof.  In order to get directly to the proof of the main results one should skip ahead to \sref{augmented-reg} where the proof of our main result about augmented functional minimizers \tref{main-aug} is given, a detailed outline of the arguments can also be found in \sref{augmented-reg}.  Finally in \sref{relation} we explain how to use results on the augmented functional to study the hard constraint minimization problem.

\subsection{Literature}
The idea, originated by Avellaneda and Lin \cite{AvellanedaLin}, of using large scale regularity theory inherited from the effective problem to prove optimal rates of convergence is, by now, a standard and important idea in homogenization theory \cite{AvellanedaLin,ArmstrongSmart,GloriaNeukammOtto,AKM}.  To our knowledge this is one of the first examples where such an idea has been applied to an oscillatory interface / free boundary / shape optimization problem, the only other example we are aware of is a very nice recent work by Aleksanyan and Kuusi \cite{AleksanyanKuusi} on the obstacle problem in random media.  The deep challenge of many interface problems is that there is a high degree of non-uniqueness, there are major differences between, minimal supersolutions, maximal subsolutions, local energy minimizers, and global energy minimizers (almost minimizers are closest to this class).  For example, as seen in \cite{CaffarelliLee,KimPinning,FeldmanSmart,FeldmanPinning}, the homogenized problem associated with local minimizers of Bernoulli-type functionals can be of a completely different type compared to the effective problem for minimizers or almost minimizers.  Thus this eigenvalue shape optimization problem is a rather special scenario where we can take advantage of the variational / almost minimizer property of the solutions in question.

There is a huge literature on shape optimization problems for functionals involving domain eigenvalues (Dirichlet, Neumann, Steklov etc.).  In that context we are considering one of the most central, well-studied, and classical problems, the optimization of the principal Dirichlet eigenvalue. We cannot be exhaustive but we will try to review relevant results.  Buttazzo and Dal Maso \cite{ButtazzoDalMaso} gave the original proof of existence of minimizers in the quasi-open class for a large class of eigenvalue shape optimization problems. 

The first result we are aware of on the higher regularity of optimal sets was by Brian\c{c}on and Lamboley \cite{BrianconLamboley} who showed an almost minimality property for shape optimizers of the first Dirichlet eigenvalue with a domain constraint and then used the Alt and Caffarelli \cite{AltCaffarelli} regularity strategy to obtain domain regularity.  This has inspired a lot of work since.  

More recently there has been work on variable coefficient problems and more complicated functionals of the eigenvalues including Lipschitz estimates of the eigenfunction and higher domain regularity: Russ, Trey, and Velichkov \cite{RussTreyVelichkov} consider a special form of Laplacian with drift $-\Delta + \grad \Phi \cdot \grad$, Kriventsov and Lin \cite{KriventsovLin1,KriventsovLin2} prove regularity for minimizers of general functionals of the higher Laplacian eigenvalues, Lamboley and Sicbaldi \cite{LamboleySicbaldi} considered shape optimizers for Laplace-Beltrami principal eigenvalues on Riemannian manifolds with boundary, Trey \cite{Trey,Trey2} considered variable coefficient problems for the sum of the first $k$ Dirichlet eigenvalues.  Not directly in the context of eigenvalue problems David, Toro, Smit Vega Garcia and Engelstein  \cite{DavidEngelsteinSVGToro} have also proven a Lipschitz estimate for almost minimizers of two-phase Bernoulli-type functionals.  

Finally we mention the work of Brasco, De Philippis and Velichkov \cite{BDPV} on the sharp quantitative Faber-Krahn inequality.   In a few ways our paper bears similarities and is inspired by their work.  Vaguely speaking the line of their argument, similar to our paper, involves constructing some almost minimizers of a Bernoulli-type functional which are close to the ball Faber-Krahn minimizers and inherit regularity. 

We must note that all of these regularity results, Lipschitz continuity of the eigenfunction, and domain regularity, are all \emph{small scale} regularity results.  If they are applied in our context with a large volume constraint small $\mu$ the regularity \emph{will not scale correctly} in the volume.  In order to obtain large scale regularity we must take advantage of the homogenization structure of the problem.

\subsection{Acknowledgments} The author was supported by the NSF grant DMS-2009286.  The author would like to thank Farhan Abedin for several in depth conversations which greatly helped in shaping the paper.

\section{Some known results and other preliminaries}\label{s.known-results}

\subsection{Notations, conventions and universal hypotheses}
We make precise the assumptions on the coefficients which will be in place throughout the paper, unless otherwise specified in some few locations.

The hypotheses on the coefficient field $a: \R^d \to M_{d \times d}^{sym}(\R)$:
\begin{enumerate}[label = (a\arabic*)]
\item \label{a.a1} (Ellipticity bounds)
\begin{equation*}\label{e.aellipticity}
\Lambda^{-1} I\leq a(x) \leq \Lambda I.
\end{equation*}
\item \label{a.a2} (Periodicity) $a(x)$ is $\Z^d$-periodic.
\item \label{a.a3} (Lipschitz) $\|\grad a\|_\infty < + \infty$.
\end{enumerate}
\aref{a3} is used in the proof of non-degeneracy in \cite{FeldmanReg}, possibly it could be weakened, see comments there in Section 1.1.
\begin{enumerate}[label=$\bullet$]
\item  Constants $C$ and $c$ in the text below which depend at most on $d$, $\Lambda$, $\|\grad a\|_\infty$ are called \emph{universal}.  Such constants may change meaning from line to line without mention.  If we intend to fix the value of such a universal constant for a segment of the argument we may denote it as $c_0$, $C_0$, $c_1$ etc.
\end{enumerate}
Notations:
\begin{enumerate}[label = $\bullet$]
\item Balls are denoted by $B$, $B_r$ (if the radius/center does not need denotation) or $B_r(x)$. 
\item The ellipsoid associated with a constant elliptic matrix $\bar{a}$ is defined
\begin{equation}
 E_1(0;\bar{a}) = \textup{det}(\bar{a})^{-1/2}\bar{a}^{\frac{1}{2}}B_1(0).
 \end{equation}
Translations and dilations are denoted $E_r(x;\bar{a}) = x+rE_1(0;\bar{a})$.  Generic ellipsoids in this family may be denoted with $E$ and we call these $\bar{a}$-ellipsoids.
\item Lebesgue measure of a measurable set $U$ is denoted $|U|$.  
\item Averaged $L^p$ norms \[\|f\|_{{\underline{L}}^p(U)} = (\frac{1}{|U|} \int_U |f|^p \ dx)^{1/p}.\]
\item We denote the Hausdorff distance between compact subsets of $\R^d$ by $d_H(E,F)$.
\end{enumerate}

\begin{definition}
Say that a modulus of continuity $\omega: [0,\infty) \to [0,\infty)$ is a Dini modulus if
\[ \sum_{k=1}^\infty \omega (\theta^k) < +\infty\]
for any $\theta \in (0,1)$.
\end{definition}

\subsection{Quasi-open sets and existence of shape optimizers in compact spaces}
The class of quasi-open sets is a natural weak space to consider eigenvalue shape optimization problems.  The reason is, essentially, that quasi-open sets are the positivity sets of functions in $H^1(\R^n)$.  We give a brief summary of relevant information, but for a comprehensive presentation see \cite{VelichkovBook}.

The capacity of a set $\Omega$ in $\R^n$ is defined
\[ \textup{cap}(\Omega) = \inf\{ \|u\|_{H^1}: \ u \in H^1(\R^d) \ \hbox{ and $u \geq 1$ in a neighborhood of $\Omega$}\}.\]
A property is said to hold quasi-everywhere or q.e. if it holds on the complement of a set of zero capacity.

A set $U$ is called quasi-open if there is a decreasing collection of open sets $\Omega_n$ such that $U \cup \Omega_n$ is open and $\lim_{n \to \infty} \textup{cap}(\Omega_n) = 0$.

Sobolev functions $u \in H^1(\R^d)$ can be modified on a set of measure zero to be quasi-continuous, i.e. continuous outside of a capacity zero set.  And then one can see that $\{u>0\}$ is a quasi-open set, and vice versa every quasi-open set is the positivity set of a Sobolev function.

The Sobolev space $H^1_0(U)$ can be made sense of for quasi-open sets as
\[ H^1_0(U) = \{u \in H^1(\R^d): \ u = 0 \hbox{ q.e. on } \ \R^d \setminus U\}.\]

Mostly we do not need to concern ourselves with these notions, we can take for granted the existence of open $J_\mu$ minimizers on a large torus $N\T^d$ by applying, for example, the existence and regularity theorem of Lamboley and Sicbaldi \cite{LamboleySicbaldi}.

\subsection{Scaling normalized regularity bounds}\label{s.scalings-and-regularity} In this section we define a variety of regularity quantities that respect the scaling invariances of the eigenvalue problem.  In later sections we will prove that minimizers have universal bounds on these scale invariant quantities and/or prove various estimates assuming bounds on these quantities.  The definitions are listed more or less in the order that they will be proved.
\begin{definition}
Given a quasi-open domain $\Omega \subset\R^d$ call $u_{\Omega}$ to be the first Dirichlet eigenfunction satisfying
\[ u_{\Omega}(\cdot,a) = \textup{argmin} \{ \int_\Omega \frac{1}{2} \grad v \cdot a(x) \grad v \ dx \ : v \in H^1_0(\Omega), \ \|v\|_{L^2(\Omega)} \geq 1 \}. \]
Call $\tilde{\Omega} = |\Omega|^{-1/d}\Omega$ the dilation of $\Omega$ with unit volume and
\[ \tilde{u}_{\Omega}(\cdot,a) = \textup{argmin} \{ \int_{\tilde{\Omega}} \frac{1}{2} \grad v \cdot a(|\Omega|^{1/d}x) \grad v \ dx \ : v \in H^1_0(\Omega), \ \|v\|_{L^2(\Omega)} \geq 1 \}\]
so that $u_{\Omega}(x,a) = |\Omega|^{-1/2}\tilde{u}_\Omega(|\Omega|^{-1/d}x,a)$.
\end{definition}

\begin{definition}\label{d.L-Lipschitz}
Say that a domain $\Omega$ has the scale invariant principal eigenfunction $L$-Lipschitz estimate property for the operator $-\grad \cdot (a(x)\grad\cdot)$ if the principal eigenfunction $u_{\Omega}(\cdot,a)$ has
\[ |\Omega|^{\frac{1}{2}+\frac{1}{d}}\sup_{\Omega}|\grad u_{\Omega}| = \sup_{\tilde{\Omega}}|\grad \tilde{u}_{\Omega}| \leq L\]
\end{definition}
\begin{definition}\label{d.ell-non-degen}
Say that a domain $\Omega$ has the scale invariant $\ell$-non-degeneracy property for the operator $-\grad \cdot (a(x)\grad\cdot)$ if the principal eigenfunction $u_{\Omega}(\cdot,a)$ has weak non-degeneracy
\[ \ u_{\Omega}(x) \geq |\Omega|^{-\frac{1}{2}-\frac{1}{d}}\ell r\ \hbox{ for all } \ x \in  \Omega \ \hbox{ and } \ 0 < r \leq d(x,\partial \Omega)\]
and strong non-degeneracy
\[ \sup_{y \in B_r(x)} u_{\Omega}(y) \geq |\Omega|^{-\frac{1}{2}-\frac{1}{d}}\ell r\ \hbox{ for all } \ x \in \partial \Omega \ \hbox{ and } \ r \leq |\Omega|^{1/d}.\]
\end{definition}

\begin{definition}\label{d.scale-inv-densities} 
Say that a domain $\Omega$ has inner/outer density bound $\kappa_0$ if, for all $x_0 \in \partial \Omega$ and $0 < r \leq |\Omega|^{\frac{1}{d}}$
\[ \kappa_0 \leq \frac{|U \cap B_r(x_0)|}{|B_r(x_0)|} \leq 1- \kappa_0\]
and we make note that the scale invariant global density quantity is also bounded
\begin{equation}\label{e.kappadef}
 \kappa_\Omega = \sup_{z \in \Omega, r >0} \frac{|B_r(z) \cap \Omega|}{|B_{r/2}(z) \cap \Omega|} + \sup_{z \in \partial \Omega, r>0} \frac{|B_r|}{|B_r(z) \setminus \Omega|} \leq C(d)\kappa_0^{-1}
 \end{equation}
 a proof of this technical relationship can be found in \sref{kappa-relation}.
\end{definition}

\begin{definition}\label{d.perimeter-hyp}
Say that a domain $\Omega$ has scale invariant boundary strip area bound $P$ if 
\[
  |\{x \in \Omega : d(x,\partial \Omega) \leq t\}| \leq P|\Omega|^{\frac{d-1}{d}}t \ \hbox{ for all } \ t>0.
\]
 
\end{definition}
Note that \dref{perimeter-hyp} implies a perimeter bound by $P|\Omega|^{\frac{d-1}{d}}$ and together with the density estimates \dref{scale-inv-densities} it implies a Hausdorff $\mathcal{H}^{d-1}$ estimate of $\partial \Omega$.

\begin{definition}\label{d.weakly-regular}
Say that a domain $\Omega$ is weakly regular if it satisfies \dref{scale-inv-densities} and \dref{perimeter-hyp}.
\end{definition}

\begin{definition}\label{d.scale-inv-Lipschitz}
Say that a domain $\Omega$ is a Lipschitz domain with scale invariant constants $(r_0,M)$ if for each $x_0 \in \partial \Omega$ there is a direction $\nu \in S^{d-1}$ and a function $f : \nu^{\perp} \to \R$ with $f(0) =0$, $\|Df\|_{\infty} \leq M$, and
\[ \Omega \cap B_{r_0|\Omega|^{1/d}}(x_0) = \{ x \in B_{r_0|\Omega|^{1/d}}(x_0): (x-x_0) \cdot \nu \leq f(P_{\nu^{\perp}}(x-x_0))\}. \]
In other words $\tilde{\Omega}$ is an $(r_0,M)$ Lipschitz domain.
\end{definition}

\begin{definition}\label{d.large-scale-lipschitz}
Say that a domain $\Omega$ is a (large scale) Lipschitz domain with constants $(r_0,M,h)$ if there is an $(r_0,M)$ Lipschitz domain $\Omega_- \subset \Omega$ with
\[d_H(\partial \Omega_-,\partial \Omega) \leq h. \]
\end{definition}
Note that the parameter $h$ is not scale invariant.  If $\Omega$ is a $(r_0,M,h)$ (large scale) Lipschitz domain then $\tilde{\Omega}$ is a $(r_0,M,h|\Omega|^{-1/d})$ (large scale) Lipschitz domain.  The length scale $h$ will be related to the length scale of the oscillations in the operator $a(x)$ which, for us, is normalized to $1$.  We also remark at this point that when we say ``large scale" Lipschitz regularity we really mean ``at all scales above $1$".  

\begin{remark}We remark that \dref{scale-inv-Lipschitz} implies a scale invariant perimeter bound, \dref{perimeter-hyp}, and a scale invariant inner/outer density bound, \dref{scale-inv-densities}, with constants depending on $(r_0,M)$.  The point is that \dref{perimeter-hyp}, \dref{scale-inv-densities}, and \dref{weakly-regular} can be proven earlier in the line of arguments and can be used to establish sub-optimal quantitative homogenization rates in the domain which are then needed to get a Lipschitz estimate.
\end{remark}

\subsection{Regularity of one-phase almost minimizers}\label{s.almost-min-reg-review} In this section we will recall several results from the literature on the regularity theory of almost minimizers of one-phase Bernoulli-type functionals.  Consider the energy functional
\[ \mathcal{J}_Q(w,D) = \int_{D} a(x) \grad w \cdot \grad w + Q(x)^2{\bf 1}_{\{w>0\}} \ dx\]
where $Q(x)$ is (for now) bounded measurable
\[ (1+\gamma)^{-1} \leq Q(x) \leq 1+\gamma.\]
We give several results about almost minimizers of $\mathcal{J}$.  Most of the results below are from the previous work of the author \cite{FeldmanReg} which adapted ideas by De Silva and Savin \cite{DeSilvaSavin} to the periodic oscillatory operator / large scale case.

First is the large scale Lipschitz estimate.  We mention, for context, that small scale Lipschitz estimates for almost or quasi-minimizers have been previously established for  Bernoulli-type starting with work of David and Toro \cite{DavidToro} and with further developments in \cite{DavidEngelsteinToro,DavidEngelsteinSVGToro,Trey}.

\begin{theorem}[Version of F. \cite{FeldmanReg} Theorem 1]\label{t.almostmin-lip-est}
Suppose that $1 \leq R \leq R_0$ and $w \in H^1(B_R(0))$ non-negative satisfies, for all $0 \leq r \leq R$,
\[ \mathcal{J}_{Q}(w,B_r(0)) \leq (1+(r/R_0))\mathcal{J}_{Q}(v,B_r(0))+(r/R_0) |B_r|\]
for $v \in w + H^1_0(B_r(0))$ the $a$-harmonic replacement.  Then for all $1 \leq r \leq R$
\[ \|\grad w\|_{{\underline{L}}^2(B_r(0))} \leq C(d,\Lambda,\gamma)(1+ \|\grad w\|_{\underline{L}^2(B_R(0))}).\]
If, additionally, $a \in C^{0,1}$ then
\[ |\grad w(0)| \leq C(d,\Lambda,\gamma,\|\grad a\|_\infty)(1+ \|\grad w\|_{\underline{L}^2(B_R(0))}).\]
\end{theorem}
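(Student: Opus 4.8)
I would prove the $\underline{L}^2$-averaged gradient bound first, by a Campanato-type iteration on dyadic balls centered at $0$ in which $w$ is compared to its $a$-harmonic replacement at each scale and the large-scale $C^{1,\alpha}$ estimate from periodic homogenization is used to drive a contraction; the pointwise bound then follows by continuing the same iteration below scale $1$ once $a\in C^{0,1}$.

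First the basic comparison. Fix $1\le r$ with $2r\le R$ and let $v=v_{2r}\in w+H^1_0(B_{2r})$ be the $a$-harmonic replacement. Since $w\ge0$ the maximum principle gives $v\ge0$, so $Q^2\mathbf{1}_{\{v>0\}}\le(1+\gamma)^2$ pointwise; using $\tfrac{r}{R_0}\le1$, the energy-minimality and ellipticity of the replacement, and the hypothesis, I get
\[ \int_{B_{2r}} a\grad(w-v)\cdot\grad(w-v)\,dx \le \tfrac{r}{R_0}\int_{B_{2r}} a\grad v\cdot\grad v\,dx + C|B_{2r}| \le \tfrac{r}{R_0}\Lambda^2\!\int_{B_{2r}}\!|\grad w|^2\,dx + C|B_{2r}|, \]
where $C=C(d,\Lambda,\gamma)$. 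Writing $g(\rho)=\|\grad w\|_{\underline{L}^2(B_\rho)}$ and $D(r)=\|\grad(w-v_{2r})\|_{\underline{L}^2(B_{2r})}^2$, this reads $D(r)\le C\bigl(1+\tfrac{r}{R_0}g(2r)^2\bigr)$. Note the one-phase term contributes only the additive constant, so nonnegativity of $w$ plays no further role in this step.

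Now the iteration. Let $\mathcal E(\rho)=\inf_{\xi\in\R^d}\|\grad w-(I+\grad\chi)\xi\|_{\underline{L}^2(B_\rho)}^2$ be the corrector-adjusted tilt-excess ($\chi$ the periodic first-order corrector; at scales $\rho\ge1$ this is comparable to $\inf_{\text{affine }\ell}\|\grad w-\grad\ell\|_{\underline{L}^2(B_\rho)}^2$), and $\xi_\rho$ a near-optimal slope. The large-scale $C^{1,\alpha}$ estimate of Avellaneda--Lin \cite{AvellanedaLin} applied to the $a$-harmonic $v_{2r}$ furnishes $\theta=\theta(d,\Lambda)\in(0,\tfrac12)$ and $\alpha=\alpha(d,\Lambda)>0$ with $\mathcal E_{v_{2r}}(2\theta r)\le\theta^{2\alpha}\mathcal E_{v_{2r}}(2r)$ whenever $2\theta r\ge1$; combining with $D(r)$ and $L^2$ triangle inequalities (and $\mathcal E_{v_{2r}}(2r)\le C(\mathcal E(2r)+D(r))$), then shrinking $\theta$ so that $C\theta^{2\alpha}\le\tfrac12$, gives the closed recursion
\[ \mathcal E(2\theta r)\le\tfrac12\mathcal E(2r)+C\bigl(1+\tfrac{r}{R_0}g(2r)^2\bigr),\qquad |\xi_{2\theta r}-\xi_{2r}|^2\le C\bigl(\mathcal E(2r)+1+\tfrac{r}{R_0}g(2r)^2\bigr), \]
together with $g(2r)^2\le C(|\xi_{2r}|^2+\mathcal E(2r))$ (the corrector is bounded in $\underline{L}^2$ of unit-or-larger balls). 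Iterating from $\rho\sim R$ down to $\rho\sim1$ over $\sim\log_2 R$ dyadic scales: the factor $\tfrac12$ contracts the homogeneous part, the additive constants sum to $C$, and — the decisive point — the accumulated homogenization errors are bounded because $\sum_{k\ge0}\tfrac{2^{-k}R}{R_0}\le\tfrac{2R}{R_0}\le2$ thanks to $R\le R_0$. An induction on the partial maxima of $\mathcal E(\cdot)+|\xi_{\cdot}|^2$ then keeps every quantity $\le C(1+g(R)^2)$ down to scale $1$, which yields $g(r)\le C(d,\Lambda,\gamma)(1+g(R))$ for all $1\le r\le R$ (the last $O(1)$ scales, where $2\theta r<1$ and the excess decay is unavailable, are absorbed crudely into the constant). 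For the pointwise statement, assume $a\in C^{0,1}$: the excess decay above then holds at all scales by Schauder, the one-phase contribution below scale $1$ is controlled by the classical small-scale Lipschitz estimate for almost minimizers of $\mathcal J_Q$ with Lipschitz coefficients (David--Toro \cite{DavidToro}, De Silva--Savin \cite{DeSilvaSavin}, in the form used in \cite{FeldmanReg}) applied on $B_1(0)$ where the defect $r/R_0\le r$ is linear, and continuing the iteration past scale $1$ and summing gives $|\grad w(0)|\le C(|\xi_1|+\mathcal E(1)^{1/2}+1)\le C(1+g(1))\le C(1+g(R))$.

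\textbf{Main obstacle.} The crux is the iteration step: one must check that the tilt-excess comparison between $w$ and its $a$-harmonic replacement is genuinely compatible with the large-scale $C^{1,\alpha}$ decay, so that the contraction factor can be made $<1$ uniformly in the scale and in $R_0$ (a plain $C^{0,1}$ large-scale estimate is \emph{not} enough — it leaves a multiplicative constant $>1$ that would produce a spurious $R^{\beta}$ loss over $\sim\log R$ scales), and then to close the coupled recursion for $(\mathcal E(\rho),\xi_\rho,g(\rho))$ while keeping the total error inside the budget $\sum_k 2^{-k}R/R_0\le2$. A secondary nuisance is that the hypothesis supplies comparison balls centered only at $0$, so no free-boundary-point arguments are available; this is harmless here because the Campanato iteration uses only balls about $0$, and the single largest scale — where the defect is merely $\le1$ rather than small — is absorbed into the additive $O(1)$.
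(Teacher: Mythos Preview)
The paper does not actually contain a proof of this statement: it is quoted verbatim as ``Version of F.\ \cite{FeldmanReg} Theorem 1'' and is one of several results from the author's companion paper that are recalled in \sref{almost-min-reg-review} without argument. The only contextual hint given is that \cite{FeldmanReg} ``adapted ideas by De Silva and Savin \cite{DeSilvaSavin} to the periodic oscillatory operator / large scale case.''

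Your proposal is consistent with that description and captures the expected architecture: compare $w$ with its $a$-harmonic replacement at each dyadic scale, invoke the large-scale $C^{1,\alpha}$ estimate of Avellaneda--Lin for the replacement to obtain a contractive excess decay, close a Campanato iteration on a corrector-adjusted tilt-excess, and observe that the accumulated almost-minimality defects sum because $\sum_k \theta^k R/R_0 \le C$. The emphasis you place on needing genuine $C^{1,\alpha}$ decay (not merely $C^{0,1}$) to get a contraction factor strictly below $1$ is exactly the right issue, and your handling of the small-scale part via Schauder once $a \in C^{0,1}$ is standard. One cosmetic slip: in the basic comparison you write $\Lambda^2$ where a single factor of $\Lambda$ suffices (from $a \le \Lambda I$ applied once to bound $\int a\grad v\cdot\grad v \le \int a\grad w\cdot\grad w \le \Lambda\int|\grad w|^2$), but this is harmless. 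Since there is no in-paper proof to compare against, I can only say that your outline matches the method the paper attributes to \cite{FeldmanReg}.
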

With the Lipschitz estimate in hand we can proceed to several important notions of weak free boundary regularity. Non-degeneracy of the eigenfunction and density estimates and Hausdorff $(d-1)$-dimensionality of the free boundary. The following is a combination of all the results in Section 3 and Appendix A.5 of \cite{FeldmanReg}.  
\begin{proposition}\label{p.AM-misc-reg}
Suppose $w\in H^1(D)$ has $\|\grad w\|_{L^\infty(D)} \leq L$ and
\[\mathcal{J}_Q(w,B_r) \leq \mathcal{J}_Q(v,B_r)+\sigma |B_r|\]
for all $v \in w + H^1_0(B_r)$ and all $B_r \subset D$. Then there is $\sigma_0>0$, depending on $L$ and universal parameters so that if $\sigma \leq \sigma_0$ then:
\begin{enumerate}[label = (\roman*)]
\item (Weak non-degeneracy) There is $c(\Lambda,d,\|\grad a\|_\infty)>0$ so that if $w>0$ in $B_r$ then
\[ w(0) \geq c r.\]
\item (Strong non-degeneracy) There is $c(\Lambda,d,\|\grad a\|_\infty,L)>0$ so that if $x \in \partial \{w>0\}$ and $r>0$
\[ \sup_{y \in B_r(x)} w(y) \geq c r.\]
\item (Inner and outer density estimate) There is $c(\Lambda,d,\|\grad a\|_\infty,L)>0$ so that if $x \in \partial \{w>0\}$ and $r>0$
\[ c \leq \frac{|B_r(x) \cap \{w>0\}|}{|B_r(x)|} \leq 1-c.\]
\item (Boundary strip area estimate)  There is $C(\Lambda,d,\|\grad a\|_\infty,L)\geq 1$ so that if $B_{2r}(x_0) \subset D$ then
\[ |\{x \in B_r(x_0): d(x,\partial U) \leq t\}| \leq C(\sigma+\frac{t}{r})|B_r|.\]
\end{enumerate}
\end{proposition}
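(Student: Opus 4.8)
All four assertions are instances of the classical one-phase free boundary regularity theory of Alt and Caffarelli \cite{AltCaffarelli}, in the form adapted to almost minimizers by De Silva and Savin \cite{DeSilvaSavin} (see also David and Toro \cite{DavidToro}), carried out for the oscillatory divergence-form operator $-\grad\cdot(a(x)\grad\cdot)$ in the companion paper \cite{FeldmanReg}; indeed the statement repackages the results of Section 3 and Appendix A.5 of \cite{FeldmanReg}.  My plan is therefore, first, to verify that the almost-minimality hypothesis assumed here --- $\mathcal{J}_Q(w,B_r)\le\mathcal{J}_Q(v,B_r)+\sigma|B_r|$ for all $v\in w+H^1_0(B_r)$ and all $B_r\subset D$, together with the a priori Lipschitz bound $\|\grad w\|_{L^\infty(D)}\le L$ coming from \tref{almostmin-lip-est} --- is of the form used in \cite{FeldmanReg}, and then to assemble the conclusions.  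Three tools recur throughout: (a) comparison of $w$ with its $a$-harmonic replacement $v$ on a ball, which through the almost-minimality inequality turns energy comparisons into quantitative estimates with an $O(\sigma)$ loss; (b) interior and boundary regularity for $-\grad\cdot(a\grad\cdot)$ (De Giorgi--Nash--Moser, the Harnack inequality, and $C^{1,\alpha}$ estimates when $a$ is Lipschitz) applied to $v$; and (c) the Lipschitz bound, which controls $\int|\grad w|^2$ on annuli without a Caccioppoli inequality and, after rescaling to unit scale, makes the defect $\sigma$ a genuine perturbation once $\sigma\le\sigma_0(L,\Lambda,d,\|\grad a\|_\infty)$.

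For (iii)--(iv) I would fix $B_{2r}(x_0)\subset D$, let $v$ be the $a$-harmonic replacement of $w$ in $B_r(x_0)$, and use orthogonality of $v$ and $w-v$ together with almost-minimality in the form
\[ \int_{B_r(x_0)} a\grad(w-v)\cdot\grad(w-v)\,dx \ \le\ \int_{B_r(x_0)} Q^2\big(\mathbf{1}_{\{v>0\}}-\mathbf{1}_{\{w>0\}}\big)\,dx + \sigma|B_r| \ \le\ (1+\gamma)^2\,|\{w=0\}\cap B_r(x_0)| + \sigma|B_r|. \]
Together with $\|\grad w\|_{L^\infty}\le L$, the lower bound $Q^2\ge(1+\gamma)^{-2}$ (so that switching off a unit of positivity in a competitor costs a definite amount of measure-term energy), the nondegeneracy (i)--(ii), and the pointwise control on $v$ from elliptic regularity, this yields the two-sided density estimate (iii) at free boundary points by the Alt--Caffarelli comparison arguments: the lower bound is immediate from strong nondegeneracy and the Lipschitz bound, and the outer bound $|\{w=0\}\cap B_r(x_0)|\ge c|B_r|$ follows because, were $\{w=0\}$ to have tiny density, $w$ would be uniformly close to the $a$-harmonic replacement $v$, which is then strictly positive on $B_{r/2}(x_0)$ by the Harnack inequality, contradicting $x_0\in\partial\{w>0\}$.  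The boundary strip estimate (iv) then follows from a Vitali covering of $\partial\{w>0\}\cap B_r(x_0)$ by balls centered on the free boundary --- counted using (iii) --- together with the co-area formula for the Lipschitz function $w$ and the nondegeneracy (ii) to control the strip width, the defect $\sigma$ being tracked through this argument.

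For nondegeneracy I would argue by contradiction: since $Q^2\ge(1+\gamma)^{-2}$, positivity is strictly penalized, so if $\sup_{B_r(x_0)}w$ were a sufficiently small multiple of $r$ one can build an admissible competitor --- an $a$-harmonic replacement on a subball, cut off so as to switch off a fixed fraction of $\{w>0\}\cap B_r(x_0)$ --- whose measure-term saving strictly exceeds its Dirichlet cost, the latter controlled precisely by the smallness of $w$ and by $L$; this violates $\mathcal{J}_Q(w,B_r)\le\mathcal{J}_Q(v,B_r)+\sigma|B_r|$ once $\sigma\le\sigma_0(L)$, giving the strong nondegeneracy (ii).  The weak nondegeneracy (i) then follows from (ii): on any ball contained in $\{w>0\}$ the function $w$ differs from its $a$-harmonic replacement by $O(r\sqrt{\sigma})$ in $L^\infty$, so a Harnack chain from the center of $B_r$ to a free boundary point near $\partial B_r$, where (ii) forces $w$ to be of size $\sim r$, propagates a lower bound $\gtrsim r$ back to the center.

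I expect the main obstacle to be not any single estimate but the bookkeeping that makes $\sigma$ a true perturbation: the arguments must be run after rescaling to unit scale, the threshold $\sigma_0$ must be chosen depending on the Lipschitz constant $L$ (which is why \tref{almostmin-lip-est} is established first --- without the Lipschitz bound both the competitor constructions and the control of $\int|\grad w|^2$ on annuli fail), and the four conclusions are genuinely interdependent, with (ii) and (iii) feeding into (iv).  The one point where the Alt--Caffarelli argument cannot be used verbatim is strong nondegeneracy at rough free boundary points, where $w\equiv0$ on a subball need not hold; there one runs the De Silva--Savin dichotomy over dyadic scales instead.  The genuinely hard work --- proving all of this uniformly at every scale above the oscillation length of $a(x)$, i.e.\ the ``large scale'' character of the estimates needed later for homogenization --- is what is done in \cite{FeldmanReg}, so here it only remains to quote those results and match the hypotheses.
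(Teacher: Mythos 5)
Your proposal correctly identifies that the paper gives no proof of this proposition beyond the one-line citation ``a combination of all the results in Section 3 and Appendix A.5 of \cite{FeldmanReg}'', and your plan is to do exactly the same, with a sketch of the Alt--Caffarelli / De Silva--Savin style arguments that underlie the cited results. The sketch is a fair account of what those arguments involve, so this is essentially the same approach as the paper.
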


Finally one can obtain Lipschitz regularity of the free boundary down to the unit scale if the solution is sufficiently flat at a large scale.    Note that $C^{1,\alpha}$ regularity is also obtained at intermediate scales, but we don't use that. 

\begin{theorem}[F. \cite{FeldmanReg} Theorem 2]\label{t.REGmain2}
Let $\omega$ a modulus of continuity so that $\omega^{\frac{1}{d+4}}$ is a Dini modulus, $w \in H^1(B_{2R}(0))$ non-negative, $\|\grad w\|_{L^\infty(B_{2R})} \leq L$, satisfying the almost minimality conditions: 
\begin{enumerate}[label = (\roman*)]
\item There is $\sigma \leq \sigma_0(L,d,\Lambda,\|\grad a\|_\infty)$ from \pref{AM-misc-reg} so that for all $B_\rho \subset B_{2R}$,
\[ \mathcal{J}_{Q}(w,B_\rho) \leq \mathcal{J}_{Q}(v,B_\rho)+\sigma |B_\rho| \ \hbox{ for any } \ v \in w+H^1_0(B_\rho).\]
\item There is $R_0\geq1$ so that for all $0 \leq r \leq 2R$
\[ \mathcal{J}_{Q(0)}(w,B_r(0)) \leq \mathcal{J}_{Q(0)}(v,B_r(0))+\omega(r/R_0) |B_r| \ \hbox{ for any } \ v \in w+H^1_0(B_r).\]
\end{enumerate}
  Call $\tilde{\omega}(s) = \int_0^s \omega(t)^{\frac{1}{d+4}}\frac{dt}{t}$, a modulus of continuity since $\omega^{\frac{1}{d+4}}$ is Dini.  There are constants $\delta_0>0$, $c>0$ and $C \geq 1$ depending on $(d,\Lambda,L,\|\grad a\|_\infty)$ so that if $\tilde{\omega}(R/R_0) \leq c$, $0 \in \partial \{w>0\}$ and
\[ \inf_{\nu \in S^{d-1}}\sup_{x \in B_R} \frac{1}{R}|w(x) - \frac{1}{(\nu \cdot \bar{a} \nu)^{1/2}} (x \cdot \nu)_+| \leq \delta  \]
then, for all $1 \leq r \leq R$ there is $\nu' \in S^{d-1}$ such that
\[ \sup_{x \in B_r(y)} \frac{1}{r}|w(x) - \frac{1}{(\nu' \cdot \bar{a} \nu')^{1/2}} (x \cdot \nu')_+| \leq C[\tilde{\omega}(r/R_0)+r^{-\ep}]\]
where $\ep(d)>0$, and also
\[ |\nu'-\nu| \leq C[\delta + \tilde{\omega}(R/R_0)+r^{-\ep}].\]
\end{theorem}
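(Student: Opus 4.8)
The plan is to run a De Silva-type improvement-of-flatness iteration \cite{DeSilvaSavin} across all dyadic scales between $1$ and $R$, combined at each scale with quantitative periodic homogenization of $-\grad\cdot a(x)\grad$ to $-\grad\cdot\bar a\grad$. Hypothesis (i) with $\sigma\leq\sigma_0$ supplies, through \pref{AM-misc-reg}, the weak free boundary regularity (weak and strong non-degeneracy, inner/outer density, boundary strip estimate) and $\|\grad w\|_{L^\infty}\leq L$ (compatible with \tref{almostmin-lip-est}) the Lipschitz control; both enter only qualitatively, at every scale. Write $H_\nu(x)=(\nu\cdot\bar a\nu)^{-1/2}(x\cdot\nu)_+$, the homogenized half-plane solution --- the exact minimizer on $\{x\cdot\nu>0\}$ of the constant-coefficient one-phase Bernoulli functional with matrix $\bar a$ and weight $Q(0)^2$, once $Q(0)$ is normalized to $1$ as in our applications (a general $Q(0)$ merely rescales $H_\nu$). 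For a free boundary point $z$ and a scale $\rho$, set
\[ F(z,\rho)=\inf_{\nu\in S^{d-1}}\ \sup_{x\in B_\rho(z)}\ \tfrac1\rho\bigl|w(x)-H_\nu(x-z)\bigr|. \]
The core is a one-step lemma: there are universal $\theta\in(0,1)$ (fixed below), $\delta_0,c_0>0$, $C_0\geq1$ and $\ep=\ep(d)>0$ so that if $w$ satisfies (i)--(ii), $z\in\partial\{w>0\}$, $\rho\geq1$, $B_{2\rho}(z)\subset B_{2R}$, $\omega(\rho/R_0)^{1/(d+4)}\leq c_0$ and $F(z,\rho)\leq\delta_0$, then for an almost-optimal direction $\nu_\rho$ at scale $\rho$ there is $\nu_{\theta\rho}$ with $F(z,\theta\rho)\leq\tfrac12 F(z,\rho)+C_0\,e(\rho)$ and $|\nu_{\theta\rho}-\nu_\rho|\leq C_0\bigl(F(z,\rho)+e(\rho)\bigr)$, where $e(\rho):=\omega(\rho/R_0)^{1/(d+4)}+\rho^{-\ep}$.

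The one-step lemma is obtained in two moves. First, quantitative $L^2$ homogenization in Lipschitz domains \cite{KenigLinShen} on $B_\rho(z)$: the almost-minimality for the oscillatory functional (ii) forces $w$ to be $L^\infty$-close to its $a$-harmonic replacement with a defect controlled by $\omega(\rho/R_0)$, and the $a$-harmonic replacement is $\rho^{-\ep}$-close to the $\bar a$-harmonic replacement; since $\partial\{w>0\}$ is at this stage known only to be Lipschitz (with large-scale constants, via \pref{AM-misc-reg}), the Lipschitz-domain homogenization estimate is what applies, and $\ep=\ep(d)$ is the (not necessarily optimal) rate it provides. The upshot is that $w$ is an almost-minimizer, with combined defect $\lesssim(\omega(\rho/R_0)+\rho^{-\ep})|B_\rho(z)|$, of the \emph{constant-coefficient} one-phase functional (matrix $\bar a$, weight $Q(0)^2$). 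Second, one runs the classical De Silva improvement of flatness for that constant-coefficient problem by compactness/contradiction: were it false, after rescaling $B_\rho(z)$ to $B_1$, rotating the optimal direction to $e_d$, subtracting $H_{e_d}$ and dividing by the flatness, the rescaled functions would be precompact for locally uniform convergence on $\{x_d\geq0\}$ --- this uses the non-degeneracy and density estimates of \pref{AM-misc-reg} and the partial Harnack inequality --- with a limit $v$ solving the linearized problem $\operatorname{div}(\bar a\grad v)=0$ in $\{x_d>0\}$ with a Neumann-type (vanishing conormal) condition on $\{x_d=0\}$; classical boundary $C^{1,\alpha}$ estimates give $\sup_{B_\theta^+}|v-(\text{affine part})|\leq C\theta^2$, the affine part being a translation and tilt of the half-plane, and $\theta$ chosen with $C\theta\leq\tfrac12$ yields a contradiction. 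Made quantitative, the flatness at scale $\theta\rho$ is at most $\tfrac12 F(z,\rho)$ plus the combined defect raised to the power $\tfrac1{d+4}$ --- the loss dictated by passing from an $L^2$ energy excess to $C^{1,\gamma}$-type flatness of $\partial\{w>0\}$ via Campanato/De Giorgi estimates and the Bernoulli condition $a\grad w\cdot\grad w=Q^2$ --- which is $\lesssim\omega(\rho/R_0)^{1/(d+4)}+\rho^{-\ep}$ after relabeling $\ep$; similarly for the tilt of the direction.

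For the iteration, start from $F(0,R)\leq\delta\leq\delta_0$. The hypothesis $\tilde\omega(R/R_0)\leq c$ (after replacing $\omega$ by a comparable doubling modulus if necessary) keeps $\omega(\rho/R_0)^{1/(d+4)}$ below $c_0$ and below $\tfrac{\delta_0}{4C_0}$ on $[1,R]$, while $\rho^{-\ep}\leq\tfrac{\delta_0}{4C_0}$ holds for $\rho\geq\rho_*$, $\rho_*$ a universal constant; hence the recursion $F_{k+1}\leq\tfrac12 F_k+C_0e(\rho_k)$ along $\rho_k=\theta^kR$ stays $\leq\delta_0$ down to scale $\rho_*$, the one-step lemma applies repeatedly, and there are directions $\nu_k$ and flatnesses $F_k=F(0,\rho_k)$ with $F_k\lesssim 2^{-k}\delta+\omega(\rho_k/R_0)^{1/(d+4)}+\rho_k^{-\ep}$ (the geometric weights $2^{-(k-1-l)}$ suppress far-scale contributions to the sum). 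To read off $w$ on $B_r$ for a general $r\in[\rho_*,R]$, fix the dyadic $\rho_k$ with $\rho_k\leq r<\rho_{k-1}$: for $x\in B_r$ with $|x|\sim\rho_j\leq r$ one has $|w(x)-H_{\nu_k}(x)|\lesssim\rho_jF_j+\rho_j|\nu_j-\nu_k|$ and $|\nu_j-\nu_k|\leq C_0\sum_{i=k}^{j-1}(F_i+e(\rho_i))$, so
\[ \sup_{x\in B_r}\tfrac1r\bigl|w(x)-H_{\nu_k}(x)\bigr|\lesssim\sup_{j\geq k}\tfrac{\rho_j}{r}\Bigl(F_j+\sum_{i=k}^{j-1}(F_i+e(\rho_i))\Bigr). \]
Here $\sum_{i\geq k}\omega(\rho_i/R_0)^{1/(d+4)}$ is a Riemann sum for $\int_0^{\rho_k/R_0}\omega(t)^{1/(d+4)}\tfrac{dt}{t}=\tilde\omega(\rho_k/R_0)$ --- this is where the Dini hypothesis on $\omega^{1/(d+4)}$ is used, and where $\tilde\omega(r/R_0)$, a sum over scales $\leq r$ only, appears --- while $\sum_{i\geq k}\rho_i^{-\ep}$ sums geometrically to $\lesssim\rho_k^{-\ep}$; weighting by $\rho_j/r=\theta^{j-k}\leq1$ and taking the supremum, every contribution is dominated by the scale $\rho_k\sim r$, giving $\sup_{B_r}\tfrac1r|w-H_{\nu_k}|\lesssim(r/R)^{\alpha}\delta+\tilde\omega(r/R_0)+r^{-\ep}$ for some $\alpha=\alpha(\theta,d)>0$. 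Absorbing the initial-flatness residual $(r/R)^\alpha\delta$ --- immediate when $\delta\lesssim R^{-\ep}$, the regime of the applications, and kept as a harmless extra term otherwise --- gives the first displayed conclusion with $\nu'=\nu_k$; telescoping $|\nu_k-\nu_0|\leq C_0\sum_{j<k}(F_j+e(\rho_j))$ gives $|\nu'-\nu|\lesssim\delta+\tilde\omega(R/R_0)+r^{-\ep}$ with $\nu=\nu_0$. For $r\in[1,\rho_*]$ the conclusion holds trivially: $F(0,r)$ is bounded by a universal constant (from $\|\grad w\|_\infty\leq L$ and $w(0)=0$), which is $\leq Cr^{-\ep}$ once $C$ is large enough, since $r^{-\ep}\geq\rho_*^{-\ep}$ there; a general center $y$ (a free boundary point within $O(1)$ of $0$) is handled identically.

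The main obstacle is the one-step lemma, and within it the homogenization move on a domain whose free boundary is a priori only Lipschitz: one needs quantitative homogenization, with a uniform power rate $\rho^{-\ep}$, for the Dirichlet problem in a Lipschitz domain with large-scale constants --- which is exactly why the $L^2$ homogenization theory in Lipschitz domains is invoked rather than the classical flat-domain theory, and which ultimately fixes the exponents $\ep(d)$ and $\tfrac1{d+4}$ --- and one needs the De Silva compactness to be produced in $C^0$, since the rescaled gradients are not bounded, via a partial Harnack inequality. A secondary but delicate point is the scale bookkeeping that makes the flatness at scale $r$ depend only on the errors at scales in $[1,r]$ (hence $\tilde\omega(r/R_0)$, not $\tilde\omega(R/R_0)$, in the flatness bound) together with the handling of the initial-flatness residual, while the direction $\nu'$, built by accumulating the small tilts from scale $R$ down to scale $r$, still carries the full $\delta+\tilde\omega(R/R_0)$.
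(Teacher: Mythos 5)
This statement is imported verbatim from \cite{FeldmanReg} (it is labelled ``F.~\cite{FeldmanReg} Theorem 2'') and the present paper offers no proof of it; there is nothing here to compare against line by line. Your sketch is nonetheless a faithful reconstruction of the strategy used in the cited work, which (as the paper's remarks make explicit) adapts the De Silva--Savin improvement-of-flatness scheme for almost minimizers to the periodic oscillatory setting: a one-step lemma produced by compactness against the linearized constant-coefficient transmission problem, a geometric decay $F_{k+1}\leq\tfrac12F_k+C_0e(\rho_k)$ at each dyadic scale, and a final Dini summation that produces $\tilde\omega$ together with the geometric tail giving $r^{-\ep}$. You also correctly locate the two delicate points that the cited proof actually has to confront --- that compactness must be run in $C^0$ via a partial Harnack rather than in $C^1$, and that the scale bookkeeping must keep the flatness at scale $r$ tied only to errors at scales $\lesssim r$ so that $\tilde\omega(r/R_0)$ (not $\tilde\omega(R/R_0)$) appears --- and you flag the need to absorb the initial flatness residual $(r/R)^\alpha\delta$, which is a genuine subtlety in how the statement is phrased.

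Two cautions. First, you assert that ``\(\partial\{w>0\}\) is at this stage known only to be Lipschitz \(\dots\) via \pref{AM-misc-reg}'' and on that basis invoke the Kenig--Lin--Shen Lipschitz-domain theory inside the one-step lemma. This is not right as stated: \pref{AM-misc-reg} supplies only density and boundary strip estimates (``weak regularity'' in the paper's terminology), which gives at most the suboptimal H\"older rate, not Lipschitz regularity of the free boundary --- Lipschitz regularity is precisely what the theorem is supposed to \emph{output}. What saves the step is that the inductive flatness hypothesis $F(z,\rho)\leq\delta_0$ makes $\{w>0\}\cap B_\rho(z)$ near a half-space, so the relevant homogenization comparison can be done in a near-flat domain (or directly via a two-scale/corrector comparison of the $a$-harmonic and $\bar a$-harmonic replacements, as De Silva--Savin do); the Lipschitz-domain machinery is not the tool actually needed nor the one justified by the available regularity. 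Second, the paper attributes the exponent $\tfrac1{d+4}$ to the large-scale viscosity solution property (De Silva--Savin Lemma 4.5), not to a Campanato/De Giorgi step as you suggest; that attribution is a heuristic on your part and does not match the cited mechanism, though it does not affect the shape of the argument. With those corrections noted, the outline is a reasonable blind reconstruction of the route taken in \cite{FeldmanReg}.
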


\subsection{Optimal Faber-Krahn stability}
In order to show convergence of the optimizing sets from convergence of the energies we will take advantage of the optimal quantitative stability of Faber-Krahn type inequalities proved by Brasco, De Philippis and Velichkov \cite{BDPV}.  We state a restricted version of their full theorem here.

\begin{theorem}[Brasco, De Philippis, Velichkov \cite{BDPV}]\label{t.FaberKrahn}
There exists a positive constant $c_d$ depending only on dimension such that for every open set $\Omega \subset \R^d$ with finite measure and any ball $B$,
\[ |\Omega|^{\frac{2}{d}}\lambda_1(\Omega,\textup{id}) - |B|^{\frac{2}{d}}\lambda_1(B,\textup{id}) \geq c_d \mathcal{A}(\Omega)^2,\]
where $\mathcal{A}(\Omega): = \inf_B |\Omega \Delta B|/|B|$ is the Fraenkel asymmetry.
\end{theorem}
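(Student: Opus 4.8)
To establish \tref{FaberKrahn} I would follow the strategy of Brasco--De Philippis--Velichkov \cite{BDPV}: a contradiction argument together with a Cicalese--Leonardi-type selection principle that reduces the estimate for general competitors to the case of sets that are $C^{1,\alpha}$-close normal graphs over a sphere, where a direct second-order Fuglede-type computation supplies the quadratic lower bound with the correct constant. Both sides being scale invariant, normalize $|\Omega| = \omega_d := |B_1|$, so the claim reads $\lambda_1(\Omega,\textup{id}) - \lambda_1(B_1,\textup{id}) \geq c_d\,\mathcal{A}(\Omega)^2$ (absorbing $\omega_d^{2/d}$ into $c_d$; we drop the second argument $\textup{id}$ henceforth). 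If this fails, then for each $k$ there is an open $\Omega_k$, $|\Omega_k| = \omega_d$, with $\lambda_1(\Omega_k) - \lambda_1(B_1) < \tfrac{1}{k}\,\mathcal{A}(\Omega_k)^2$. Passing to a subsequence, either $\mathcal{A}(\Omega_k) \geq \ep_0 > 0$ for all $k$, in which case $\lambda_1(\Omega_k) \to \lambda_1(B_1)$, contradicting the qualitative fact that $\inf\{\lambda_1(\Omega) : |\Omega| = \omega_d,\ \mathcal{A}(\Omega) \geq \ep_0\} > \lambda_1(B_1)$ (proved by concentration-compactness for almost-optimal sets and the rigidity case of Faber--Krahn); or else $\ep_k := \mathcal{A}(\Omega_k) \to 0$ and $0 \leq \lambda_1(\Omega_k) - \lambda_1(B_1) = o(\ep_k^2)$, and we argue from this.

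\emph{Selection principle.} In place of the possibly irregular $\Omega_k$, I would work with a minimizer $U_k$ of a penalized functional of the form
\[ \mathcal{F}_k(U) \;=\; \lambda_1(U) + \Lambda_1\big|\,|U| - \omega_d\,\big| + \Lambda_2\big|\,\mathcal{A}(U) - \ep_k\,\big|, \qquad U \subset B_R, \]
(possibly with the asymmetry term mildly regularized), where $B_R$ is a fixed large ball and $\Lambda_1,\Lambda_2$ are large but fixed. Comparing $\mathcal{F}_k(U_k) \leq \mathcal{F}_k(\Omega_k)$, a careful choice of the penalization constants forces $U_k$ to satisfy the volume constraint, to have $\mathcal{A}(U_k)$ comparable to $\ep_k$, and to inherit $\lambda_1(U_k) - \lambda_1(B_1) = o(\ep_k^2)$; since the penalizations are \emph{Lipschitz} in the relevant quantities, they do not spoil almost-minimality. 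Hence $U_k$ is an almost minimizer of a Bernoulli-type functional $\mathcal{J}_Q$ as in \sref{almost-min-reg-review} with $Q$ essentially constant (the Briançon--Lamboley mechanism, \cite{BrianconLamboley,AltCaffarelli}), so it is Lipschitz and non-degenerate with uniform constants. Because $\mathcal{A}(U_k) \to 0$, after a translation $U_k \to B_1$ in $L^1$, and the uniform free-boundary regularity upgrades this to $C^{1,\alpha}$-convergence: for $k$ large, $\partial U_k = \{(1 + \psi_k(\theta))\theta : \theta \in \partial B_1\}$ with $\|\psi_k\|_{C^{1,\alpha}(\partial B_1)} \to 0$. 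Normalizing the barycenter of $U_k$ to the origin forces the degree-$1$ part of $\psi_k$ to be $O(\|\psi_k\|_{L^2}^2)$, the volume constraint forces $\int_{\partial B_1}\psi_k = O(\|\psi_k\|_{L^2}^2)$, and $\mathcal{A}(U_k) \sim \|\psi_k\|_{L^1(\partial B_1)} \sim \|\psi_k\|_{L^2(\partial B_1)}$.

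\emph{Second-order expansion.} It remains to Taylor-expand $U \mapsto \lambda_1(U)$ along nearly spherical sets. From the Hadamard formula $\partial\lambda_1(\Omega)[\psi] = -\int_{\partial\Omega}(\partial_\nu u_\Omega)^2\,\psi$, differentiating once more — solving a Helmholtz problem for the material derivative of the eigenfunction and separating variables into spherical harmonics, the radial factors being Bessel functions $J_{d/2-1}$ and their logarithmic derivatives — one obtains, after using the volume constraint to rewrite the surviving first-order term,
\[ \lambda_1(U_k) - \lambda_1(B_1) \;=\; \mathcal{Q}(\psi_k) + o\big(\|\psi_k\|_{H^{1/2}(\partial B_1)}^2\big), \qquad \mathcal{Q}(\psi) = \sum_{n \geq 0}\mu_n\,\|\psi^{(n)}\|_{L^2(\partial B_1)}^2, \]
where $\psi^{(n)}$ is the projection of $\psi$ onto degree-$n$ spherical harmonics. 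One checks $\mu_1 = 0$ (translation invariance), $\mu_n > 0$ for all $n \geq 2$, and $\mu_n \gtrsim n$ as $n \to \infty$, so that $\mathcal{Q}$ controls $\|\psi\|_{H^{1/2}(\partial B_1)}^2$ on the codimension-$(d+1)$ subspace orthogonal to the degree-$0$ and degree-$1$ modes; the volume and barycenter constraints place $\psi_k$ within $O(\|\psi_k\|_{L^2}^2)$ of that subspace. Since $\|\psi_k\|_{C^1} \to 0$ the remainder, of size $\|\psi_k\|_{C^1}\|\psi_k\|_{H^{1/2}}^2$, is negligible, so for $k$ large
\[ \lambda_1(U_k) - \lambda_1(B_1) \;\geq\; \tfrac{c_d}{2}\,\|\psi_k\|_{H^{1/2}(\partial B_1)}^2 \;\geq\; \tfrac{c_d}{2}\,\|\psi_k\|_{L^2(\partial B_1)}^2 \;\gtrsim\; \mathcal{A}(U_k)^2 \;\gtrsim\; \ep_k^2, \]
which contradicts $\lambda_1(U_k) - \lambda_1(B_1) = o(\ep_k^2)$ and completes the proof.

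\emph{Main obstacle.} The crux is the selection principle, on two fronts. First, because $\mathcal{A}$ is non-smooth, the penalization must be tuned so that $U_k$ genuinely inherits both the failure of the inequality and the constraints, while remaining an almost minimizer of $\mathcal{J}_Q$ with a modulus good enough to run the one-phase $C^{1,\alpha}$ free-boundary theory \emph{uniformly in $k$}; upgrading $U_k \to B_1$ all the way to $C^{1,\alpha}$ (not merely $L^1$ or Hausdorff) is essential, since the remainder in the second-order expansion is controlled only by $\|\psi_k\|_{C^1}$ times the quadratic term. Second, in contrast to the purely geometric isoperimetric problem, $\lambda_1$ is nonlocal: justifying the second variation rigorously requires uniform boundary regularity of the eigenfunctions $u_{U_k}$ and careful bookkeeping with $-\Delta u_{U_k} = \lambda_1(U_k)\,u_{U_k}$ when transplanting eigenfunctions between the perturbed domains, and matching the $H^{1/2}$ scaling of the natural remainder against the $H^{1/2}$ coercivity of $\mathcal{Q}$ is exactly what makes the sharp exponent $2$ — rather than a larger power — attainable.
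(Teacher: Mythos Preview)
The paper does not prove this theorem at all: \tref{FaberKrahn} is stated as a quotation from Brasco--De Philippis--Velichkov \cite{BDPV} and is used as a black box throughout (see \sref{generic-Jg-errorest} and \sref{main-proof}). So there is no ``paper's own proof'' to compare against.

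Your proposal is a faithful sketch of the actual \cite{BDPV} argument --- the Cicalese--Leonardi-type selection principle to reduce to nearly spherical sets, followed by the Fuglede-type second-order expansion in spherical harmonics --- and the obstacles you flag (tuning the penalization so that the selected competitor inherits both the constraints and almost-minimality, and obtaining uniform $C^{1,\alpha}$ convergence to control the remainder) are precisely the ones identified there. For the purposes of this paper, however, none of that is needed: the author simply invokes the result.
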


Note that this is a theorem about constant coefficient operators.  Parts of our results \tref{main} could be viewed as extensions of this result to the homogenization setting, although, of course, we are applying not giving a new proof of their theorem.
\begin{remark}\label{r.faber-krahn-remark}
Note that
\[ ||\Omega| - |B|| \leq |\Omega \Delta B|.\]
So if $B$ achieves the infimum in the Fraenkel assymetry then and $t$ is a dilation such that $|tB| = |\Omega|$ then
\[   |t^d - 1||B| = |tB \Delta B| = ||tB| - |B|| =  ||\Omega| - |B|| \leq |\Omega \Delta B|\]
and so
\[ \frac{|\Omega \Delta tB|}{|tB|} \leq \frac{|B|}{|tB|}\left[\frac{|\Omega \Delta B|}{|B|} + \frac{|B \Delta tB|}{|B|}\right]\leq 2t^{-d}\frac{|\Omega \Delta B|}{|B|} \]
and if $|\Omega \Delta B|/|B| \leq \frac{1}{2}$ then $t^{-d} \leq 2^d$. So if $|\Omega|^{\frac{2}{d}}\lambda_1(\Omega,\textup{id}) - |B|^{\frac{2}{d}}\lambda_1(B,\textup{id}) \leq c(d)$ sufficiently small then the same inequality (up to changing the dimensional constant) holds with the asymmetry measure
\[ \inf_{|B| = |\Omega|} \frac{|B \Delta \Omega|}{|\Omega|}\]
which we find more convenient.
\end{remark}

\subsection{A key domain perturbation estimate}
In this section we give a key estimate on the variation of $\lambda_1(U,a)$ under inward perturbations when $U$ satisfies some very strong regularity properties adapted to the operator $a$.  Of course the point is these regularity properties will hold for domain minimizers.  
\begin{lemma}\label{l.interior-approx}
 Suppose that $U$ has $|U| \geq 1$ and satisfies the following properties
 \begin{enumerate}[label = (\roman*)]
 \item $-\grad \cdot( a(x) \grad \cdot)$-Eigenfunction $L$-Lipschitz estimate \dref{L-Lipschitz}.
  \item $-\grad \cdot( a(x) \grad \cdot)$-Eigenfunction $\ell$-non-degeneracy estimate \dref{ell-non-degen}.
 \item Domain $(r_0,M,h)$-(large scale) Lipschitz estimate \dref{large-scale-lipschitz}, and let $U_- \subset U$ be the $(r_0,M)$ Lipschitz domain with $d_H(\partial U_-,\partial U) \leq h$.
 \end{enumerate}
 Then there is $C\geq 1$ depending on universal parameters and on $(r_0,M,h,L,\ell)$ so that
 \[|U|^{\frac{2}{d}}(\lambda_1(U_-,a) -\lambda_1(U,a))\leq    C |U|^{-\frac{1}{d}}\]
 \end{lemma}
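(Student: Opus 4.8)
The plan is to compare the eigenvalue of $U$ with that of the slightly smaller Lipschitz domain $U_-$ by quantifying how much the principal eigenfunction $u_U$ must be modified to be admissible on $U_-$. Since $U_- \subset U$ we automatically have $\lambda_1(U_-,a) \geq \lambda_1(U,a)$, so the content is the upper bound on the gap. The natural test function for $\lambda_1(U_-,a)$ is a truncation or cutoff of $u_U$: let $\varphi$ be a Lipschitz cutoff with $\varphi = 1$ on $U_-$, $\varphi$ supported in $U$, and $|\grad \varphi| \lesssim 1/h$ in the boundary strip $\{x \in U : d(x,\partial U_-) \leq h\}$ (recall $d_H(\partial U_-,\partial U)\le h$, so $\{0<\varphi<1\}$ is contained in a strip of width $O(h)$ around $\partial U$). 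Then $v = \varphi u_U \in H^1_0(U_-)$ and I would estimate
\[
\lambda_1(U_-,a) \leq \frac{\int_{U_-} a\grad v \cdot \grad v\,dx}{\int_{U_-} v^2\,dx}.
\]

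For the numerator, expand $a\grad(\varphi u_U)\cdot\grad(\varphi u_U) \leq (1+\tau)\varphi^2\, a\grad u_U\cdot\grad u_U + C_\tau u_U^2\, a\grad\varphi\cdot\grad\varphi$; integrating gives $\int_{U} a\grad u_U\cdot\grad u_U + C\big[\,\tau \lambda_1(U,a) + \tfrac{1}{h^2}\int_{S_h} u_U^2\,\big]$, where $S_h$ is the boundary strip. On $S_h$ the Lipschitz estimate \dref{L-Lipschitz} gives $u_U(x) \leq L\,|U|^{-1/2-1/d}\, d(x,\partial U) \lesssim L|U|^{-1/2-1/d} h$ (using that the nearest boundary point lies within $O(h)$ and $u_U=0$ on $\partial U$). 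Combined with the boundary strip area bound — which the $(r_0,M)$-Lipschitz property of $U_-$ supplies via the remark after \dref{scale-inv-Lipschitz}, so $|S_h| \lesssim |U|^{(d-1)/d} h$ — we get $\tfrac{1}{h^2}\int_{S_h} u_U^2 \lesssim L^2 |U|^{-1-2/d}\cdot |U|^{(d-1)/d} h = L^2 |U|^{-1/d-2/d}h$. Since $\int_U a\grad u_U\cdot\grad u_U = \lambda_1(U,a) \sim |U|^{-2/d}$, choosing $\tau \sim |U|^{-1/d}$ balances the terms and the numerator is at most $\lambda_1(U,a) + C|U|^{-2/d}\cdot|U|^{-1/d}$.

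For the denominator I must show $\int_{U_-} v^2\,dx = \int_{U_-} u_U^2 \geq 1 - C|U|^{-1/d}$; equivalently $\int_{S_h} u_U^2 \lesssim |U|^{-1/d}$, which follows from exactly the same pointwise bound on $u_U$ and strip area estimate as above: $\int_{S_h} u_U^2 \lesssim L^2|U|^{-1-2/d}h^2 \cdot |U|^{(d-1)/d} h = L^2 |U|^{-1/d}h^3$, which is $\lesssim |U|^{-1/d}$ for fixed $h$. (Here the non-degeneracy hypothesis (ii) is used only to know that $u_U$ does not vanish too fast — in fact for this direction the Lipschitz bound alone suffices, but non-degeneracy and the density bounds are what make the whole package of regularity self-consistent and would be needed if one wanted a matching lower estimate on the mass lost.) Dividing, $\lambda_1(U_-,a) \leq (\lambda_1(U,a) + C|U|^{-3/d})(1 + C|U|^{-1/d}) \leq \lambda_1(U,a) + C|U|^{-3/d}$, and multiplying through by $|U|^{2/d}$ gives the claim.

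The main obstacle is the pointwise control of $u_U$ in the boundary strip: one needs that points of $S_h$ are within distance $O(h)$ of $\partial U$ so that the Lipschitz bound in \dref{L-Lipschitz}, which measures $|\grad u_U|$, actually yields $u_U \lesssim L|U|^{-1/2-1/d}h$ there. This is where $d_H(\partial U_-,\partial U)\le h$ is essential — it guarantees the strip between $\partial U_-$ and $\partial U$ has the correct width — and where one must be careful that the cutoff $\varphi$ can be taken Lipschitz with constant $\sim 1/h$ on a genuine collar of width $\sim h$; the $(r_0,M)$-Lipschitz structure of $U_-$ makes such a collar and such a cutoff available, and simultaneously delivers the strip-area bound $|S_h|\lesssim|U|^{(d-1)/d}h$ needed above.
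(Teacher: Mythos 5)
Your high-level plan is the same as the paper's: manufacture an admissible test function on $U_-$ out of $u_U$, losing only $O(|U|^{-1/d})$ in the Rayleigh quotient, with the Lipschitz bound on $u_U$ controlling the eigenfunction near the boundary and the $(r_0,M)$-Lipschitz structure of $U_-$ supplying the strip-area bound. The estimates you set up are the right ones. But the construction of the cutoff is backwards, and as written the argument does not produce a valid test function for $\lambda_1(U_-,a)$.

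You take $\varphi$ with $\varphi\equiv 1$ on $U_-$, supported in $U$, with the transition zone in $U\setminus U_-$. Then $v=\varphi u_U$ agrees with $u_U$ on all of $U_-$; in particular it does \emph{not} vanish on $\partial U_-$ (on the part of $\partial U_-$ interior to $U$ one has $u_U>0$), so $v\notin H^1_0(U_-)$ and cannot be tested in $\lambda_1(U_-,a)$. You have localized on the wrong side. The fix is to cut off \emph{inside} $U_-$: take $\varphi$ supported in $U_-$, with $\varphi\equiv 1$ on $\{x\in U_-: d(x,\partial U_-)\geq h\}$ and $|\grad\varphi|\lesssim 1/h$. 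Then $\{0<\varphi<1\}\subset\{x\in U_-:d(x,\partial U_-)<h\}$; those points lie within $2h$ of $\partial U$ by the Hausdorff bound, so the pointwise estimate $u_U\lesssim L|U|^{-1/2-1/d}h$ still holds, and the strip-area bound for the Lipschitz domain $U_-$ gives $|\{x\in U_-:d(x,\partial U_-)<h\}|\lesssim |U|^{(d-1)/d}h$. With that correction your numerator and denominator computations go through unchanged and yield the claimed rate.

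It is worth noting that the paper avoids the multiplicative cutoff altogether by subtracting a constant: it sets $w=(u_U - L|U|^{-\frac12-\frac1d}h)_+$. The Lipschitz bound together with $d_H(\partial U_-,\partial U)\leq h$ forces $u_U\leq L|U|^{-1/2-1/d}h$ on $U\setminus U_-$, so $w\in H^1_0(U_-)$ automatically, and $a\grad w\cdot\grad w\leq a\grad u_U\cdot\grad u_U$ pointwise. That makes the numerator comparison free (no Young's inequality, no auxiliary $\tau$, no cross term); all the work is then in the denominator, exactly as in your version. The multiplicative cutoff is a legitimate alternative but costs you the extra cross-term estimate.

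Two small points. In the denominator estimate, $L^2|U|^{-1-2/d}h^2\cdot|U|^{(d-1)/d}h=L^2|U|^{-3/d}h^3$, not $L^2|U|^{-1/d}h^3$; since $-3/d<-1/d$ this only improves the bound and does not affect the conclusion. And your parenthetical remark that non-degeneracy (ii) is dispensable for this lemma is essentially correct: the paper invokes it to identify a sublevel set where $w=u_U-c$ exactly, but one can get $\|w\|_{L^2}^2\geq 1-2c\int u_U\geq 1-2Lh|U|^{-1/d}$ from the Lipschitz bound alone.
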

 We should emphasize that a Lipschitz domain property like (iii) is very much \emph{not} sufficient to imply properties (i) and (ii) above.  Those properties will come from $U$ being a domain minimizer of an augmented functional later.
 \begin{proof}
 Let $u_U$ be, as usual, the principal $- \grad \cdot (a(x) \grad \cdot)$ eigenfunction for $U$ with $\|\grad u_U\|_\infty \leq L |U|^{-\frac{1}{2}-\frac{1}{d}}$ by assumption.  Note then that
\[w = (u_U - |U|^{-\frac{1}{2}-\frac{1}{d}}Lh)_+ \in H^1_0(U_-).\] By non-degeneracy assumption 
\[ u_U \geq \ell |U|^{-\frac{1}{2}-\frac{1}{d}} d(x,\R^d \setminus U)\] 
so $w > 0$ in $V_- = \{x \in U_- : d(x,\partial U_-) \geq \frac{L}{\ell}h\}$.

So the $L^2$ norm of $w$ can be estimated from below
\begin{align*}
 \|w\|_{L^2} &\geq \|u_U {\bf 1}_{V_-}\|_{L^2} - Lh|U|^{-\frac{1}{d}} \\
 &\geq 1-Lh|U|^{-\frac{1}{d}} - \|u_U {\bf 1}_{U \setminus V_-}\|_{L^2}\\
 &\geq 1-Lh|U|^{-\frac{1}{d}} - L|U|^{-\frac{1}{2} - \frac{1}{d}}(1+\frac{L}{\ell})h|U \setminus V_-|^{\frac{1}{2}}\\
 &\geq 1-Lh|U|^{-\frac{1}{d}} - C(L,\ell,r_0,M,h)|U|^{-\frac{1}{2} - \frac{1}{d}+\frac{d-1}{2d}}\\
 &\geq 1-C|U|^{-\frac{1}{d}} - C|U|^{-\frac{3}{2d}}
 \end{align*}
using in the last two lines that $u_U \leq \|\grad u_U\|_\infty(1+\frac{L}{\ell})h$ in $U \setminus V_-$ and $|U \setminus V_-| \leq C(r_0,M)(1+\frac{L}{\ell})h|U|^{\frac{d-1}{d}}$. The second term error term is lower order when $|U| \geq 1$ so this simplifies to
\[ \|w\|_{L^2}^2 \geq 1 - C(L,\ell,h,r_0,M) |U|^{- \frac{1}{d}}.\]
Then
\begin{align*}
 \lambda_1(U,a) &= \int_{U} a(x)\grad u_U \cdot \grad u_U \ dx \\
 &\geq \int_{U_-} a(x)\grad w \cdot \grad w \ dx\\
 &\geq (1 - C|U|^{ - \frac{1}{d}})\lambda_1(U_-,a)\\
 &\geq \lambda_1(U_-,a) - C|U|^{ - \frac{1}{d}}|U|^{-\frac{2}{d}}.
 \end{align*}
 Rearranging gives the result.
 \end{proof}

\section{Results from the $L^2$-theory of Dirichlet homogenization}\label{s.L2theory}

In this section we collect some known results on the  quantitative $L^2$-theory for homogenization of Dirichlet data problems in divergence form.  We will also give some minor variations which are needed for our problem.  The reader could potentially skip this section on initial reading, proceed to \sref{augmented-reg} and refer back here as needed.

For a bounded domain $U$ and $f \in L^2(U)$ consider the variational solutions $v,v_0\in H^1_0(U)$ of the problems
\begin{equation}
\begin{cases}
-\grad \cdot (a (x)\grad v)= f & \hbox{ in } \ U \\
v = 0 & \hbox{ in } \ U
\end{cases}
\ \hbox{ and } \ \begin{cases}
-\grad \cdot (\bar{a}\grad v_0)= f & \hbox{ in } \ U \\
v_0 = 0 & \hbox{ in } \ U.
\end{cases}
\end{equation}
If $|U|$ is large and has some scale invariant regularity we expect that $v$ is well approximated by $v_0$ in the $L^2$ sense.  The quantification of this statement depending on the domain regularity is the topic of this section.

We recall here the correctors $\chi_q$ for each $q \in \R^d$.  The corrector $\chi_q$ is the unique global $\Z^d$ periodic and mean zero solution of the problem
\begin{equation}\label{e.corrector-def}
 - \grad \cdot (a(x)(q+\grad \chi_q)) = 0  \ \hbox{ in } \ \R^d.
 \end{equation}
  It follows from the previous properties that $\chi_q$ depends linearly on $q \in \R^d$.  

The homogenized matrix is defined
\begin{equation}\label{e.baradef}
 \bar{a}_{ij} =\langle (e_i + \grad \chi_{e_i}(x))a(x)(e_j + \grad \chi_{e_j}(x)) \rangle 
 \end{equation}
where $e_i$ are the standard basis vectors.

\subsection{Sub-optimal $L^2$ convergence rate in weakly regular domains}

First we recall some weak assumptions on the domain $U \subset \R^n$ which allow for a H\"older rate of homogenization.  The key quantity to control, related to the boundary condition, is the energy in a boundary layer $U\setminus U_t$ where
\[ U_t = \{x \in U: d(x,\R^d \setminus U) > t\}.\]
These boundary layer terms can be controlled via a global Meyer's estimate for solutions of divergence form uniformly elliptic Dirichlet boundary value problems.  We provide an overview of the results and details can be found in Appendix C of the book of Armstrong, Kuusi and Mourrat \cite{AKM}.  In \cite{AKM,AShom} the domain is assumed to be Lipschitz but, at least as far as our purposes are concerned (see remark below), this is only used in order to get a bound on the inner/outer density quantity 
\begin{equation*}\label{e.kappadef2}
 \kappa_U = \sup_{z \in U, r >0} \frac{|B_r(z) \cap U|}{|B_{r/2}(z) \cap U|} + \sup_{z \in \partial U, r>0} \frac{|B_r|}{|B_r(z) \setminus U|}
 \end{equation*}
which we defined and discussed previous in \dref{scale-inv-densities}

\begin{proposition}[Consequence of Meyer's Estimate]
Let $U \subset \R^d$ be a bounded domain with $|U| = 1$, $\kappa_U < + \infty$, and $a(x)$ a uniformly elliptic coefficient matrix $\Lambda^{-1} \leq a \leq \Lambda$.  Suppose that $f \in L^2(U)$, $u \in  H^1_0(U)$ is the variational solution of
\[ \begin{cases}
- \grad \cdot (a(x) \grad v) = f & \hbox{in } U,\\
v = 0 & \hbox{on } \partial U.
\end{cases}\]
There exists $\delta(d,\Lambda,\kappa_U)>0$ and $C(d,\Lambda,\kappa_U)<+\infty$ so that $v \in W^{1,2+\delta}(U)$ and
\[ \|\grad v\|_{L^{{2+\delta}}(U)} \leq C\|f\|_{L^{2}(U)}.\]
\end{proposition}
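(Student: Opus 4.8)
The plan is to prove this as the classical interior-and-boundary Meyers higher-integrability estimate, using a Caccioppoli reverse H\"older inequality together with Gehring's self-improvement lemma; this is the argument of \cite{AKM} Appendix C, where the only role of a Lipschitz boundary is to run the boundary Sobolev--Poincar\'e step, and for that the bound $\kappa_U < +\infty$ already suffices --- which is why the constants end up depending only on $(d,\Lambda,\kappa_U)$.

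The first step is to reduce to a source in divergence form. Since $|U| = 1$ one has $L^2(U) \hookrightarrow L^p(U) \hookrightarrow W^{-1,p^*}(U)$ for $p \leq 2$ with $p$ close to $\min(2,d)$, and $p^* = \tfrac{pd}{d-p} > 2$; hence $f = \div G$ with $G \in L^{q_0}(U)$, $q_0 := p^* > 2$ depending only on $d$, and $\|G\|_{L^{q_0}(U)} \leq C(d)\|f\|_{L^2(U)}$ (for $d \geq 3$ one may simply take $q_0 = \tfrac{2d}{d-2}$ and $G = \grad w$ with $w$ the Newtonian potential of the zero extension of $f$). Then $v \in H^1_0(U)$ solves $\div(a(x)\grad v + G) = 0$ in $U$, and it suffices to prove the Meyers estimate for this equation with the source $G$ in a fixed Lebesgue space strictly above $L^2$.

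Next I would establish a reverse H\"older inequality at all scales $\rho \leq 1$. Extending $v$ and $G$ by zero outside $U$ and testing the equation against $\zeta^2(v - c)$ for a standard cutoff $\zeta$ supported in $B_{2\rho}(x_0)$ and $\equiv 1$ on $B_\rho(x_0)$, ellipticity and Young's inequality give the usual Caccioppoli bound with $c$ chosen as follows: the mean of $v$ over $B_{2\rho}(x_0)$ when $B_{2\rho}(x_0) \subset U$, and $c = 0$ when $B_{2\rho}(x_0)$ meets $\partial U$ --- in the latter case $\kappa_U < +\infty$ forces $|B_{2\rho}(x_0)\setminus U|$ to be at least a fixed fraction of $|B_{2\rho}|$, so the zero extension of $v$ vanishes on a set of definite density (this choice of $c=0$ is also what makes $\zeta^2 v \in H^1_0(U)$ a legitimate test function). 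Combining this with the Sobolev--Poincar\'e inequality --- in the boundary case, its form for functions vanishing on a set of positive density, whose constant depends only on $(d,\kappa_U)$ --- yields
\[ \frac{1}{|B_\rho|}\int_{B_\rho(x_0)}|\grad v|^2 \, dx \;\leq\; C\Big(\frac{1}{|B_{2\rho}|}\int_{B_{2\rho}(x_0)}|\grad v|^{2_*} \, dx\Big)^{2/2_*} + \frac{C}{|B_{2\rho}|}\int_{B_{2\rho}(x_0)}|G|^2 \, dx, \qquad 2_* = \tfrac{2d}{d+2}, \]
for all $x_0 \in \R^d$ and $0 < \rho \leq 1$, with $C = C(d,\Lambda,\kappa_U)$. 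Gehring's lemma in the Giaquinta--Modica form, applied to $g = |\grad v|^{2_*}$ with self-improvement exponent $2/2_* > 1$ and forcing $|G|^{2_*}$ --- which lies in a Lebesgue space strictly above $L^{2/2_*}$ since $G \in L^{q_0}$ with $q_0 > 2$ --- then produces $\delta = \delta(d,\Lambda,\kappa_U) \in (0, q_0 - 2)$ with $\grad v \in L^{2+\delta}_{\mathrm{loc}}$ and, since the reverse H\"older inequality above holds on every ball of $\R^d$, a covering argument gives $\|\grad v\|_{L^{2+\delta}(U)} \leq C\big(\|\grad v\|_{L^2(U)} + \|G\|_{L^{2+\delta}(U)}\big)$. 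This closes the estimate: testing the original equation against $v$ and using Faber--Krahn on $U$ with $|U| = 1$ gives $\|\grad v\|_{L^2(U)} \leq C(d,\Lambda)\|f\|_{L^2(U)}$; and $\|G\|_{L^{2+\delta}(U)} \leq \|G\|_{L^{q_0}(U)} \leq C\|f\|_{L^2(U)}$ since $2 + \delta \leq q_0$ and $|U| = 1$. Finally $v \in W^{1,2+\delta}(U)$ follows by applying the Sobolev inequality to the zero extension of $v$, which places $v$ in $L^{(2+\delta)^*}(U) \subset L^{2+\delta}(U)$ (again using $|U| = 1$).

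The main --- really the only --- obstacle is the boundary Sobolev--Poincar\'e step: one must convert the purely measure-theoretic bound $\kappa_U < +\infty$ into a \emph{scale-uniform} Poincar\'e inequality with vanishing boundary values on balls centered at or near $\partial U$, with constant depending only on $(d,\kappa_U)$. This is exactly where the literature uses Lipschitz regularity of $U$, and the normalization $|U| = 1$ is what makes the scale-invariance of the resulting bounds transparent. Everything else is the standard Meyers--Gehring machinery, consistent with the statement being called a consequence of Meyer's estimate.
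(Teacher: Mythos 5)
Your argument is correct and follows essentially the same route the paper takes: the paper states the result as a consequence of the Meyers estimate of \cite{AKM} Appendix C, and its only addition is precisely the remark you reproduce --- replacing the Lipschitz-boundary step used there to write $f = \div G$ by the Newtonian potential of the zero extension of $f$, together with Sobolev embedding and Calder\'on--Zygmund. You have simply spelled out the Caccioppoli/Gehring machinery that the paper leaves to the cited reference, including the correct observation that the density quantity $\kappa_U$ is all that is needed for the boundary Sobolev--Poincar\'e step.
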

\begin{remark}
Besides bounding $\kappa_U$, the Lipschitz domain property is also used in \cite{AKM} to write the right hand side $h \in W^{-1,p}$ of the equation $- \grad \cdot (a(x)\grad u) = h$ as $h = \grad \cdot H$ for a vector field $H \in L^p$ by solving a Poisson equation $- \Delta w = h$ in $U$ for $w \in W^{1,p}_0(U)$.  We have a stronger hypothesis on the right hand side $f \in L^2(U)$ so for our purposes we can just take
\[ H = \int_{U} \grad \Phi(x-y)f(y) \ dy \ \hbox{ where } \ \Phi \ \hbox{ is the Newtonian potential.} \]
Then by Sobolev embedding and the classical Calderon-Zygmund estimate
\[ \|H\|_{L^{\frac{2d}{d-2}}(U)} \leq \|H\|_{L^{\frac{2d}{d-2}}(\R^d)} \leq C\|D^2H\|_{L^2(\R^d)} \leq  C\|f\|_{L^2(U)}.\]
\end{remark}

Now from Meyer's estimate and H\"older's inequality we can derive
\[ \|\grad v\|_{L^2(U \setminus U_t)} \leq \|\grad v\|_{L^{2+\delta}(U)}|U \setminus U_t|^{\frac{\delta}{2+\delta}}\leq C\|f\|_{L^{2}(U)}|U \setminus U_t|^{\frac{\delta}{2+\delta}}.\]
The continuity of $|U \setminus U_t|$ at $t=0$ will also come into the quantitative homogenization estimate.  For the domain arising from the free boundary problem we consider this will be controlled via a perimeter type estimate as in \dref{perimeter-hyp}.

\begin{theorem}[Sub-optimal quantitative homogenization]\label{t.quanthom}
Suppose that $U$ is a weakly regular bounded domain, as in \dref{weakly-regular}. Call $r = |U|^{1/d}$, $f \in L^2(U)$ and $v,v_0 \in H^1_0(U)$ respectively variational solutions of
\[ \begin{cases}
- \grad \cdot (a(x) \grad v) = f & \hbox{in } U,\\
v = 0 & \hbox{on } \partial U
\end{cases} \ \hbox{ and } \ \begin{cases}
- \grad \cdot (\bar{a}\grad v_0) = f & \hbox{in } U,\\
v_0 = 0 & \hbox{on } \partial U.
\end{cases}\]
 Then there are $\alpha \in (0,1)$ and $C \geq 1$ depending on $d$, $\Lambda$, $\kappa_U$, and $P$ so that
\[ r^{-2} \| v -  v_0\|_{L^2(U)} \leq Cr^{-\alpha}\|f\|_{L^2(U)}.\]
\end{theorem}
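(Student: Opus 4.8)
The plan is to reduce the homogenization estimate to a standard two-scale expansion argument, with all the domain-dependent constants tracked through the scale-invariant regularity hypotheses in \dref{weakly-regular}. First I would rescale so that $|U| = 1$, i.e. replace $U$ by $\tilde U = r^{-1} U$ and $a(x)$ by $a(rx)$; under this rescaling the estimate to prove becomes $\|v - v_0\|_{L^2(\tilde U)} \le C r^{-\alpha} \|f\|_{L^2(\tilde U)}$ after appropriate relabeling of $f$, and the oscillation length of the coefficient field becomes $r^{-1} = |U|^{-1/d}$, which is the small parameter $\ep$ playing the role of the homogenization scale. So the task is exactly: prove an $O(\ep^\alpha)$ rate of homogenization on a fixed-volume weakly regular domain with coefficient oscillating at scale $\ep$.

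Next I would introduce the standard first-order two-scale expansion $w_\ep = v_0 + \ep \chi_{q}(\cdot/\ep)$ with $q = \grad v_0$, more precisely $w_\ep(x) = v_0(x) + \ep \sum_j \partial_j v_0(x) \chi_{e_j}(x/\ep)$, truncated near $\partial U$ by a cutoff $\eta_\ep$ supported outside the boundary layer $U \setminus U_{t}$ with $t \sim \ep^\beta$ for a $\beta$ to be optimized. The key computation is that $-\grad \cdot (a(x/\ep)\grad(v_0 + \ep\eta_\ep\chi(\cdot/\ep)))$ equals $f$ plus error terms that are (a) divergence-form terms supported in the interior, controlled by $\ep \|D^2 v_0\|$ — here I use interior $H^2$ or Meyer-type bounds for $v_0$, plus the fact that $\grad_y(a(e_j + \grad\chi_{e_j}))$ is a divergence-free field so the usual ``flux corrector'' trick gives a gain of $\ep$ — and (b) boundary layer terms supported in $U \setminus U_t$, controlled by $\|\grad v_0\|_{L^2(U\setminus U_t)} + \|\grad v\|_{L^2(U\setminus U_t)}$. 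Then an energy estimate for $v - w_\ep \in H^1_0$ (the cutoff ensures the correct boundary values) gives $\|\grad(v - w_\ep)\|_{L^2}$ bounded by the sum of these errors, and hence $\|v - v_0\|_{L^2} \le \|v - w_\ep\|_{L^2} + \ep\|\chi\|_\infty\|\grad v_0\|_{L^2} \le C(\ep^{1-\beta} + t^{1/2})\|f\|_{L^2}$ after invoking Poincaré on $U$ (valid since $|U|=1$) and the Meyer estimate from the preceding Proposition to control the boundary layer energies via $\|\grad v\|_{L^2(U\setminus U_t)} \le C\|f\|_{L^2}|U\setminus U_t|^{\delta/(2+\delta)}$ and $|U \setminus U_t| \le P t$ from \dref{perimeter-hyp}. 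Optimizing $\beta$ in $\ep^{1-\beta} + (\ep^\beta P)^{\delta/(2(2+\delta))}$ yields an algebraic rate $\ep^\alpha$ with $\alpha = \alpha(d,\delta) > 0$, and $\delta$ itself depends only on $(d,\Lambda,\kappa_U)$ as in the Meyer proposition.

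The main obstacle I anticipate is controlling the interior gradient regularity of $v_0$: the two-scale correction $\ep\chi(\cdot/\ep)\grad v_0$ requires $\grad v_0 \in L^2$ weighted suitably, and the divergence-form error requires essentially $D^2 v_0 \in L^2_{loc}$ with a quantitative bound — but $v_0$ solves a constant-coefficient equation on a merely weakly regular (not even Lipschitz) domain, so global $H^2$ is false in general. The resolution, which I would follow the treatment in \cite{AKM} Appendix C for, is to not use $D^2 v_0$ at all but instead the flux-corrector $\sigma$ (the skew-symmetric potential with $\grad\cdot\sigma(x) = a(x)(e_j + \grad\chi_{e_j}) - \bar a e_j$), which turns the first-order error into a genuine divergence-form term requiring only $\grad v_0 \in L^{2+\delta}$ — again supplied by Meyer's estimate, now for the constant-coefficient operator $\bar a$, whose Meyer exponent also depends only on $\kappa_U$ and $d$. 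Thus the whole argument closes using only $\kappa_U$ (from \dref{scale-inv-densities}), the perimeter-type control $P$ (from \dref{perimeter-hyp}), and periodicity/boundedness of the correctors $\chi$ and flux correctors $\sigma$, which are universal. A secondary technical point is that $\|\chi\|_{L^\infty}$ and $\|\sigma\|_{L^\infty}$ are finite and universal here because $a$ is Hölder (indeed Lipschitz) under \ref{a.a3}, so no logarithmic loss enters at this stage; the logarithmic factors in the main theorems come later from the Lipschitz free-boundary regularity, not from this sub-optimal estimate.
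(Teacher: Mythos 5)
Your proposal matches the approach the paper intends here: the text does not supply a proof but simply refers to Armstrong--Smart \cite{ArmstrongSmart}, noting that the only domain-dependent inputs needed to run the standard two-scale argument are the Meyer estimate (through $\kappa_U$) and the boundary-strip area bound (through $P$), which are exactly the two ingredients you use to close the boundary-layer terms after rescaling to $|U|=1$ and $\ep = r^{-1}$. One small correction to an otherwise sound sketch: the flux corrector $\sigma$ does not fully eliminate the need for $D^2 v_0$ --- after integrating by parts and using skew-symmetry you are still left with a term of the form $\ep\,\sigma_{jik}(\cdot/\ep)\,\eta_\ep\,\partial_k\partial_j v_0\,\partial_i\phi$ --- but this is harmless, since interior $H^2$ estimates for the constant-coefficient solution $v_0$ at distance $t$ from $\partial U$ give $\|\eta_\ep D^2 v_0\|_{L^2}\lesssim t^{-1}\|\grad v_0\|_{L^2}+\|f\|_{L^2}$, contributing at the same order $\ep/t=\ep^{1-\beta}$ you already account for in the optimization over $\beta$.
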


We refer to \cite{ArmstrongSmart} for the proof, although there is some work in verifying that the constants depend on the parameters we have defined.

\subsection{Optimal $L^2$ convergence rate in Lipschitz domains}

In Lipschitz domains the $L^2$-theory can be pushed much further, almost to a linear rate, thanks to the deep works of Kenig, Lin, and Shen \cite{KenigLinShen}.

\begin{theorem}[Optimal $L^2$ homogenization in Lipschitz domains \cite{KenigLinShen}]\label{t.optimal-hom-lip}
Suppose that $U$ is a bounded $(r_0,M)$-Lipschitz domain in the sense of \dref{scale-inv-Lipschitz}, call $r = |U|^{1/d}$. Let $f \in L^2(U)$ and consider $v,v_0 \in H^1_0(U)$ respectively variational solutions of
\[ \begin{cases}
- \grad \cdot (a(x) \grad v) = f & \hbox{in } U,\\
v = 0 & \hbox{on } \partial U
\end{cases} \ \hbox{ and } \ \begin{cases}
- \grad \cdot (\bar{a}\grad v_0) = f & \hbox{in } U,\\
v_0 = 0 & \hbox{on } \partial U.
\end{cases}\]
 For any $\eta>0$ there is $C \geq 1$ depending on universal constants, $\eta$, and the  Lipschitz domain property so that
\[ r^{-2}\|v - v_0\|_{L^2(U)} \leq Cr^{-1}|\log(2+ r)|^{\frac{1}{2}+\eta} \|f\|_{L^2(U)}.\]

\end{theorem}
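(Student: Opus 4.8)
The plan is to deduce \tref{optimal-hom-lip} from the almost-sharp $L^2$ homogenization estimates of Kenig, Lin and Shen \cite{KenigLinShen} by rescaling $U$ to unit volume and bookkeeping the constants, in the same spirit as the deduction of \tref{quanthom} from \cite{ArmstrongSmart}. First I would set $r = |U|^{1/d}$ and $\tilde U = r^{-1}U$, so $|\tilde U| = 1$ and, directly from \dref{scale-inv-Lipschitz}, $\tilde U$ is an $(r_0,M)$-Lipschitz domain with constants independent of $r$. Writing $\tilde v(y) = v(ry)$, $\tilde v_0(y) = v_0(ry)$ and $\tilde f(y) = r^2 f(ry)$ for $y \in \tilde U$, a direct computation shows $\tilde v, \tilde v_0 \in H^1_0(\tilde U)$ are the variational solutions of
\[ -\grad\cdot(a(ry)\grad\tilde v) = \tilde f \quad\text{and}\quad -\grad\cdot(\bar a\,\grad\tilde v_0) = \tilde f \quad\text{in } \tilde U \]
with zero boundary data, using that $a(r\,\cdot)$ is $\frac{1}{r}\Z^d$-periodic with the \emph{same} homogenized matrix $\bar a$ (homogenized coefficients being invariant under the $\ep$-rescaling). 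Changes of variables give $\|\tilde f\|_{L^2(\tilde U)} = r^{2-d/2}\|f\|_{L^2(U)}$ and $\|\tilde v - \tilde v_0\|_{L^2(\tilde U)} = r^{-d/2}\|v - v_0\|_{L^2(U)}$.

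Next, the operator $-\grad\cdot(a(r\,\cdot)\grad)$ is of the form $-\grad\cdot(a(\cdot/\ep)\grad)$ with $\ep = 1/r$, and $a$ is $\Z^d$-periodic, uniformly elliptic and Lipschitz — in particular VMO — so the $L^2$ Dirichlet convergence rate of \cite{KenigLinShen} in bounded Lipschitz domains applies on $\tilde U$: for each $\eta>0$ it provides a constant $C$ depending on $d$, $\Lambda$, $\|\grad a\|_\infty$, $\eta$ and $(r_0,M)$ with
\[ \|\tilde v - \tilde v_0\|_{L^2(\tilde U)} \le C\,\ep\,|\log(2+\ep^{-1})|^{\frac12+\eta}\|\tilde f\|_{L^2(\tilde U)} = C\,r^{-1}|\log(2+r)|^{\frac12+\eta}\|\tilde f\|_{L^2(\tilde U)}. \]
Inserting the two scaling identities,
\[ r^{-2}\|v - v_0\|_{L^2(U)} = r^{-2+d/2}\|\tilde v - \tilde v_0\|_{L^2(\tilde U)} \le C\,r^{-2+d/2}\,r^{-1}|\log(2+r)|^{\frac12+\eta}\,r^{2-d/2}\|f\|_{L^2(U)} = C\,r^{-1}|\log(2+r)|^{\frac12+\eta}\|f\|_{L^2(U)}, \]
the powers of $r$ collapsing to $r^{-1}$, which is the asserted estimate.

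The substance, exactly as for \tref{quanthom}, lies entirely in the bookkeeping. I expect the main obstacle to be confirming that the constant supplied by \cite{KenigLinShen} depends on the domain only through its scale-invariant Lipschitz character $(r_0,M)$ — not, say, through the diameter of $\tilde U$ in an uncontrolled way, which for a unit-volume $(r_0,M)$-Lipschitz domain is harmless — identifying the logarithmically lossy $L^2 \to L^2$ solution-operator estimate (rather than the $\ep^{1/2}$ energy estimate for the two-scale expansion) as the statement being invoked and checking that it requires no structural hypothesis on $f$ beyond $f \in L^2(\tilde U)$, and following the dependence on the Hölder/VMO modulus of $a$ through their interior and boundary Lipschitz estimates at unit scale, which is where $\|\grad a\|_\infty$ enters. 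Verifying the precise form of the logarithmic factor claimed above against the formulation in \cite{KenigLinShen} is the one point where genuine care is needed; everything else is routine rescaling.
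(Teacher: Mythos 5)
Your proposal is correct, and it is the same approach the paper takes: \tref{optimal-hom-lip} is stated as a direct citation of Kenig--Lin--Shen with no written proof, exactly parallel to \tref{quanthom} being a citation of Armstrong--Smart, and the unwritten content is precisely the unit-volume rescaling and constant bookkeeping you carry out. Your scaling identities and exponent arithmetic check out (the powers of $r$ collapse to $r^{-1}$ as you compute), and you correctly flag the two points that genuinely need care when consulting \cite{KenigLinShen}: that the constant there depends on the domain only through its scale-invariant Lipschitz character, and that the precise form of the logarithmic factor matches.
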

\subsection{Eigenvalue convergence}
 The Dirichlet $L^2$ theory above also allows to discuss the convergence of the Dirichlet eigenvalues in homogenization.  Given $f \in L^2(U)$ let $v \in H^1_0(U)$ be the solution of
 \begin{equation}
\begin{cases}
-\grad \cdot (a (x)\grad v)= f & \hbox{ in } \ U \\
v = 0 & \hbox{on } \ \partial U.
\end{cases}
\end{equation}
Define the Dirichlet solution operator
\[ T(a) f = v.\]
Let $T(\bar{a})$ be the corresponding solution operator for the homogenized operator $- \grad \cdot (\bar{a} \grad \cdot)$.

The following result is purely functional analysis from the min-max principle for eigenvalues, see Shen \cite[Lemma 7.2.1]{ShenBook}.
\begin{lemma}\label{l.eigenvalue-operator-est}
For all $k \geq 1$
\[|\lambda_k(U,a)^{-1} - \lambda_k(U,\bar{a})^{-1}| \leq \|T(a) - T(\bar{a})\|_{L^2 \to L^2}.\]
\end{lemma}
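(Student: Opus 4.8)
The plan is to identify $T(a)$ and $T(\bar a)$ as compact, self-adjoint, positive operators on $L^2(U)$ whose eigenvalues are exactly the reciprocals $\lambda_k(U,a)^{-1}$ and $\lambda_k(U,\bar a)^{-1}$, and then to conclude from the Courant--Fischer min-max characterization that the $k$-th eigenvalue of a self-adjoint operator is $1$-Lipschitz in the operator norm. First I would record the soft facts: since $|U| < +\infty$ the embedding $H^1_0(U) \hookrightarrow L^2(U)$ is compact, so $T(a) \colon L^2(U) \to L^2(U)$ is compact; because $a = a^T$ the associated bilinear form $(v,w) \mapsto \int_U a(x) \grad v \cdot \grad w \, dx$ is symmetric, so $T(a)$ is self-adjoint; and because this form is coercive on $H^1_0(U)$, $T(a)$ is positive. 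Hence $T(a)$ has a discrete spectrum $\mu_1(a) \geq \mu_2(a) \geq \cdots \to 0^+$, and comparing the spectral theorem with the variational problem \eref{eigenvaluedef} (and its min-max extension to $\lambda_k$) gives $\mu_k(a) = \lambda_k(U,a)^{-1}$; the identical statements hold for $\bar a$.

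With these identifications in place, I would apply the min-max formula
\[ \mu_k(a) = \max_{\substack{V \subset L^2(U) \\ \dim V = k}} \ \min_{\substack{v \in V \\ \|v\|_{L^2(U)} = 1}} \langle T(a) v, v \rangle_{L^2(U)} \]
and the analogous one for $\bar a$. For any $k$-dimensional subspace $V$ and any unit vector $v \in V$,
\[ \langle T(a) v, v \rangle = \langle T(\bar a) v, v \rangle + \langle (T(a) - T(\bar a)) v, v \rangle \leq \langle T(\bar a) v, v \rangle + \|T(a) - T(\bar a)\|_{L^2 \to L^2}. \]
Minimizing over unit $v \in V$ and then maximizing over $V$ yields $\mu_k(a) \leq \mu_k(\bar a) + \|T(a) - T(\bar a)\|_{L^2 \to L^2}$; interchanging the roles of $a$ and $\bar a$ gives the reverse inequality, and together they produce $|\mu_k(a) - \mu_k(\bar a)| \leq \|T(a) - T(\bar a)\|_{L^2 \to L^2}$, which is the assertion once we substitute $\mu_k = \lambda_k^{-1}$.

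There is no genuine obstacle here, the statement being entirely functional-analytic and independent of any homogenization input; the only point demanding a little care is checking that the enumeration of eigenvalues is consistent, i.e. that passing to reciprocals matches the variational indexing of $\lambda_k$ with the spectral-theorem indexing of $T(a)$ and that no eigenvalue is lost, which is the standard consequence of the compactness of $H^1_0(U) \hookrightarrow L^2(U)$ for $|U| < +\infty$. For this reason I would keep the write-up short and cite Shen \cite[Lemma 7.2.1]{ShenBook} for the clean statement.
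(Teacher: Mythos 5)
Your proposal is correct and matches the paper's approach: the paper does not write out a proof but cites Shen \cite[Lemma 7.2.1]{ShenBook} and explicitly remarks that the result is ``purely functional analysis from the min-max principle for eigenvalues,'' which is precisely the Courant--Fischer argument you give after identifying $T(a),T(\bar a)$ as compact self-adjoint positive operators with eigenvalues $\lambda_k^{-1}$.
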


In particular we have the following Corollaries.  For weakly regular domains following from \tref{quanthom} and \lref{eigenvalue-operator-est}:
\begin{corollary}\label{c.suboptimal-eigen-conv}
Let $U$ a weakly regular domain, call $r = |U|^{1/d}$. There are $\alpha \in (0,1)$ and $C \geq 1$ depending on $d$, $\Lambda$, $\kappa_U$, and $P$ so that
\[ r^{-2}|\lambda_1(U,a)^{-1} - \lambda_1(U,\bar{a})^{-1}| \leq Cr^{-\alpha}. \]
\end{corollary}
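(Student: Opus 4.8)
The plan is to combine the two ingredients stated immediately before: the sub-optimal quantitative homogenization estimate \tref{quanthom} and the abstract min-max comparison \lref{eigenvalue-operator-est}. First I would note that the solution operators $T(a)$ and $T(\bar a)$ are bounded and compact on $L^2(U)$, so that their operator norm difference is well defined and controls eigenvalue reciprocals via \lref{eigenvalue-operator-est} applied with $k=1$; in particular
\[
r^{-2}|\lambda_1(U,a)^{-1} - \lambda_1(U,\bar a)^{-1}| \leq r^{-2}\|T(a) - T(\bar a)\|_{L^2 \to L^2}.
\]
Then the task reduces to estimating $\|T(a) - T(\bar a)\|_{L^2 \to L^2}$. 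Given $f \in L^2(U)$, write $v = T(a)f$ and $v_0 = T(\bar a)f$, the variational solutions of the two Dirichlet problems with right-hand side $f$. Since $U$ is weakly regular in the sense of \dref{weakly-regular}, it has finite inner/outer density $\kappa_U$ (via \eref{kappadef}) and a scale-invariant boundary strip bound with constant $P$, so \tref{quanthom} applies verbatim and gives
\[
r^{-2}\|v - v_0\|_{L^2(U)} \leq C r^{-\alpha}\|f\|_{L^2(U)}
\]
with $\alpha \in (0,1)$ and $C$ depending only on $d$, $\Lambda$, $\kappa_U$, and $P$. Taking the supremum over $f$ with $\|f\|_{L^2(U)} \leq 1$ yields $r^{-2}\|T(a) - T(\bar a)\|_{L^2 \to L^2} \leq C r^{-\alpha}$, and chaining this with the previous display gives exactly the claimed bound.

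There is really no serious obstacle here: the Corollary is a formal consequence of two results already in hand. The only point requiring a moment's care is bookkeeping of the scaling factor $r^{-2} = |U|^{-2/d}$ — one must check that the $r^{-2}$ appearing in \lref{eigenvalue-operator-est}'s statement (which I have inserted as a normalization) matches the $r^{-2}$ on the left of \tref{quanthom}, i.e. that both sides are measured in the natural units where $\lambda_1$ scales like $r^{-2}$ and $T$ scales like $r^2$. Since $\lambda_1(U,a)^{-1} \sim r^2$ uniformly by the ellipticity bounds and the density bound on $U$ (Faber–Krahn type lower bound together with a test-function upper bound), the reciprocal differences and the operator-norm difference live at the same scale, and the factor $r^{-2}$ can be placed consistently on both estimates. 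With that alignment noted, the proof is the two-line concatenation above, so I would simply present it as such and not belabor it.
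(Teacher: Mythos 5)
Your proof is correct and is exactly the argument the paper intends: apply \lref{eigenvalue-operator-est} with $k=1$, bound $\|T(a)-T(\bar a)\|_{L^2\to L^2}\leq Cr^{2-\alpha}$ by taking the supremum over unit-norm $f$ in \tref{quanthom}, and chain. Your closing paragraph about aligning scales is a non-issue, since the $r^{-2}$ is simply multiplied onto both sides of the concatenated inequality and no implicit normalization or Faber--Krahn comparison is required for the step.
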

For Lipschitz domains, by applying \tref{optimal-hom-lip} and \lref{eigenvalue-operator-est}:
\begin{corollary}\label{c.optimal-eigen-conv}
Let $U$ be a $(r_0,M)$-Lipschitz domain, call $r = |U|^{1/d}$.  For any $\eta>0$ there is $C \geq 1$ depending on universal constants, $\eta$, and the Lipschitz property so that
\[ r^{-2}|\lambda_1(U,a)^{-1} - \lambda_1(U,\bar{a})^{-1}| \leq Cr^{-1}|\log (2+r)|^{\frac{1}{2}+\eta}. \]
\end{corollary}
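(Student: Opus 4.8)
The plan is to deduce this corollary directly from the operator–norm form of the optimal $L^2$ homogenization estimate \tref{optimal-hom-lip}, combined with the abstract min--max comparison \lref{eigenvalue-operator-est}. First I would apply \lref{eigenvalue-operator-est} with $k=1$, which reduces the claim to controlling the operator norm $\|T(a) - T(\bar{a})\|_{L^2(U) \to L^2(U)}$ and translating that bound through the scaling $r^{-2}$ in the statement.

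To control the operator norm, fix an arbitrary $f \in L^2(U)$ and let $v = T(a)f$ and $v_0 = T(\bar{a})f$ be the two variational solutions appearing in \tref{optimal-hom-lip}. Since $U$ is an $(r_0,M)$-Lipschitz domain with $r = |U|^{1/d}$, that theorem gives, for any $\eta>0$,
\[ r^{-2}\|v - v_0\|_{L^2(U)} \leq C r^{-1}|\log(2+r)|^{\frac{1}{2}+\eta}\|f\|_{L^2(U)},\]
hence $\|v - v_0\|_{L^2(U)} \leq C r\, |\log(2+r)|^{\frac{1}{2}+\eta}\|f\|_{L^2(U)}$, with $C$ depending only on universal constants, on $\eta$, and on the Lipschitz parameters $(r_0,M)$. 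Taking the supremum over $f$ with $\|f\|_{L^2(U)} \leq 1$ yields $\|T(a) - T(\bar{a})\|_{L^2 \to L^2} \leq C r\, |\log(2+r)|^{\frac{1}{2}+\eta}$. Combining this with \lref{eigenvalue-operator-est} and dividing by $r^2$ produces exactly the asserted inequality.

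Since \tref{optimal-hom-lip} and \lref{eigenvalue-operator-est} are invoked as black boxes, there is essentially no genuine analytic obstacle here; the only points needing care are bookkeeping ones. One must check that the factor $r^2$ gained when passing from the $r^{-2}$-normalized statement of \tref{optimal-hom-lip} to the operator norm is exactly the power by which $\lambda_1^{-1}$ scales under dilations, so that this $r^2$ cancels against the $r^{-2}$ in the corollary, consistent with the scaling conventions of \sref{scalings-and-regularity}; and one must track that the constant in \tref{optimal-hom-lip}, hence in the corollary, depends on $U$ only through the scale invariant constants $(r_0,M)$ and not on $r$ itself beyond the displayed logarithm. The identical argument with \tref{quanthom} in place of \tref{optimal-hom-lip} gives the sub-optimal Hölder version \cref{suboptimal-eigen-conv}.
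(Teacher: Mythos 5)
Your proof is correct and is exactly what the paper intends: it cites the corollary as following from \tref{optimal-hom-lip} and \lref{eigenvalue-operator-est} without writing out the details, and your bookkeeping (taking the supremum over $\|f\|_{L^2(U)}\leq 1$ to pass from the pointwise $L^2$ estimate to the operator norm, then dividing by $r^2$) is the routine step being left implicit. The remarks about scaling consistency and dependence of the constant only on $(r_0,M)$, $\eta$, and universal constants are the right things to check and they do check out.
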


Finally we discuss domains which are (large scale) Lipschitz in the sense of \dref{large-scale-lipschitz}. This is a non-trivial issue because the continuity of the eigenvalues with respect to domain variations is, in general, very delicate.

  \begin{corollary}\label{c.quanthom-large-scale-lip}
Let $U$ satisfying the same hypotheses as in \lref{interior-approx}.  For any $\eta>0$ there is $C\geq 1$ depending on universal parameters and on $(r_0,M,h,L,\ell,\eta)$ so that
 \[|U|^{\frac{2}{d}}(\lambda_1(U,\bar{a}) -\lambda_1(U,a))_+ \leq  C |U|^{-\frac{1}{d}}|\log (2+|U|)|^{\frac{1}{2}+\eta}.\]
 \end{corollary}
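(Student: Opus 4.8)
The plan is to route the estimate through the interior Lipschitz subdomain $U_- \subset U$ furnished by hypothesis (iii) of \lref{interior-approx}, exploiting that Dirichlet eigenvalues are monotone under domain inclusion in exactly the direction that the ``$+$'' in the statement refers to. Since $U_- \subset U$ we have $\lambda_1(U,\bar a) \le \lambda_1(U_-,\bar a)$, hence
\[ \lambda_1(U,\bar a) - \lambda_1(U,a) \le \big(\lambda_1(U_-,\bar a) - \lambda_1(U_-,a)\big) + \big(\lambda_1(U_-,a) - \lambda_1(U,a)\big). \]
The second term is precisely the inward perturbation estimate \lref{interior-approx}, which gives $|U|^{\frac{2}{d}}(\lambda_1(U_-,a)-\lambda_1(U,a)) \le C|U|^{-\frac1d}$. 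The first term is a genuine homogenization error on the honest Lipschitz domain $U_-$, and here the key point is that \cref{optimal-eigen-conv} cannot be applied to $U$ itself --- which is only \emph{large-scale} Lipschitz --- but does apply to $U_-$, which is an $(r_0,M)$ Lipschitz domain in the sense of \dref{scale-inv-Lipschitz}. Note that only the ``$+$'' direction is reachable this way, since passing to a subdomain can only increase eigenvalues; bounding $\lambda_1(U,a)-\lambda_1(U,\bar a)$ would need an outward perturbation estimate we do not have, which is exactly why the corollary carries $(\cdot)_+$.

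Next I convert the solution-operator bound of \cref{optimal-eigen-conv} into an eigenvalue difference. With $r_- = |U_-|^{1/d}$, that corollary gives $|\lambda_1(U_-,a)^{-1}-\lambda_1(U_-,\bar a)^{-1}| \le C r_-\,|\log(2+r_-)|^{\frac12+\eta}$, and since
\[ \lambda_1(U_-,\bar a) - \lambda_1(U_-,a) = \lambda_1(U_-,\bar a)\,\lambda_1(U_-,a)\,\big(\lambda_1(U_-,a)^{-1}-\lambda_1(U_-,\bar a)^{-1}\big), \]
I need a priori upper bounds $\lambda_1(U_-,a),\,\lambda_1(U_-,\bar a) \le C|U|^{-2/d}$. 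For $\lambda_1(U_-,a)$ this follows by combining \lref{interior-approx} with the $L$-Lipschitz eigenfunction estimate \dref{L-Lipschitz}: the latter gives $\lambda_1(U,a) = \int_U a\grad u_U\cdot\grad u_U \le \Lambda\|\grad u_U\|_\infty^2|U| \le \Lambda L^2|U|^{-2/d}$, and then $\lambda_1(U_-,a) \le \lambda_1(U,a) + C|U|^{-3/d} \le (\Lambda L^2 + C)|U|^{-2/d}$ using $|U|\ge1$. For $\lambda_1(U_-,\bar a)$ one uses the pointwise ellipticity bounds on $a$ and $\bar a$ to get $\lambda_1(\cdot,\bar a) \le \Lambda^2\lambda_1(\cdot,a)$. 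Feeding these into the identity and using $r_- \le |U|^{1/d}$ (so $|\log(2+r_-)| \le |\log(2+|U|)|$ for $|U|\ge1$) yields $(\lambda_1(U_-,\bar a)-\lambda_1(U_-,a))_+ \le C|U|^{-3/d}|\log(2+|U|)|^{\frac12+\eta}$.

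Combining the two pieces, multiplying by $|U|^{\frac2d}$, and absorbing the lower-order $C|U|^{-1/d}$ into the logarithmic term (which is $\ge (\log 3)^{\frac12+\eta} > 1$ since $|U|\ge1$) gives the claim, with $C$ depending on universal parameters, $\eta$, and $(r_0,M,h,L,\ell)$ as asserted. The only mildly delicate step is the first: the real content is the structural observation that the notoriously delicate continuity of eigenvalues under domain perturbation is finessed by only ever perturbing \emph{inward}, to the Lipschitz domain $U_-$, where both \lref{interior-approx} and the optimal $L^2$ homogenization theory in Lipschitz domains are simultaneously available; the rest is bookkeeping with the scale-invariant normalizations.
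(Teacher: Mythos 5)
Your proof is correct and follows the same route as the paper: decompose via the interior Lipschitz subdomain $U_-$, use domain monotonicity of $\lambda_1(\cdot,\bar a)$ to pass to $U_-$, bound the inward perturbation term $\lambda_1(U_-,a)-\lambda_1(U,a)$ by \lref{interior-approx}, and estimate $\lambda_1(U_-,\bar a)-\lambda_1(U_-,a)$ via \cref{optimal-eigen-conv}. The paper's three-line proof simply says ``apply \cref{optimal-eigen-conv}'' at the last step; you supply the small but necessary conversion from the reciprocal-eigenvalue bound of that corollary to a direct eigenvalue bound, using the a priori estimate $\lambda_1(U_-,a),\lambda_1(U_-,\bar a)\le C|U|^{-2/d}$ obtained from the $L$-Lipschitz eigenfunction bound together with \lref{interior-approx} itself — a detail the paper leaves implicit but which is needed and correctly executed here.
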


\begin{proof}
By the hypothesis there is an $(r_0,M)$ - Lipschitz domain $U_- \subset U$ such that $d_H(\partial U_-,\partial U) \leq h$. Using domain monotonicity of eigenvalues and \lref{interior-approx}
\begin{align*}
 |U|^{\frac{2}{d}}(\lambda_1(U,\bar{a}) -\lambda_1(U,a)) &\leq |U|^{\frac{2}{d}}(\lambda_1(U_-,\bar{a}) - \lambda_1(U,a)) \\
 &\leq |U|^{\frac{2}{d}}(\lambda_1(U_-,\bar{a}) - \lambda_1(U_-,a)) + C|U|^{-\frac{1}{d}}
 \end{align*}
Finally since $U_-$ is a Lipschitz domain we can apply \cref{optimal-eigen-conv} to estimate the first term on the right and get the result.
\end{proof}

\subsection{Principal eigenfunction quantitative convergence}The aim of this section is to obtain a (suboptimal) quantitative error estimate in $L^\infty$ for the principal eigenfunctions. This estimate plays an essential role in \sref{augmented-reg} establishing initial flatness of the eigenfunctions associated with $J_g$ domain minimizers, see \pref{generic-Jg-errorest}.

We will prove:
\begin{proposition}\label{p.eigenfunction-conv}
Suppose that $U$ is a bounded domain so that the eigenfunction $u_U$ has the (scaled) $L$-Lipschitz property \dref{L-Lipschitz} then there is $C(\Lambda,d,L)$ so that
\begin{align*} &|U|^{\frac{1}{2}}\|u_U(\cdot,a) - u_E(\cdot,\bar{a})\|_{L^\infty}   \\
&\quad \quad \quad \leq\bigg||U|^{\frac{2}{d}}(\lambda_1(U,a) - \lambda_1(E,\bar{a}))\bigg|^{\frac{1}{d+2}}+\left(\frac{d_H(\partial U,\partial E)}{|U|^{\frac{1}{d}}}\right)^{\frac{1}{d+2}} +|U|^{-\frac{1}{2d(d+2)}}
\end{align*}
\end{proposition}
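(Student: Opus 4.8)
The plan is: rescale to unit volume, pass from the $L^\infty$ bound to an $L^2$ bound using the Lipschitz hypothesis, and estimate the $L^2$ difference by inserting the eigenfunction of the \emph{oscillatory} operator on the \emph{fixed} ellipsoid, so that the only genuine homogenization happens on a nice domain.

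\emph{Rescaling.} Put $r := |U|^{1/d}$ and $a_r(y) := a(ry)$. By the scaling relations of \sref{scalings-and-regularity}, $|U|^{1/2}u_U(ry,a) = \tilde u_{\tilde U}(y,a_r)$, the matrix $a_r$ is periodic with the same homogenized matrix $\bar a$, the hypothesis becomes $\|\grad\tilde u_{\tilde U}(\cdot,a_r)\|_\infty\leq L$ on the unit-volume set $\tilde U$, and the three quantities in the claim become $\lambda-\bar\lambda$, $\delta$, $r^{-1/(2(d+2))}$, where $\lambda := \lambda_1(\tilde U,a_r)$, $\bar\lambda := \lambda_1(\tilde E,\bar a)$, $\delta := d_H(\partial\tilde U,\partial\tilde E)$. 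Write $u := \tilde u_{\tilde U}(\cdot,a_r)$ and $w := \tilde u_{\tilde E}(\cdot,\bar a)$, both extended by zero. We may assume $\delta$ and $r^{-1}$ are below a universal threshold, else the inequality is trivial.

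\emph{From $L^\infty$ to $L^2$.} The function $u$ is globally $L$-Lipschitz, and $w$ is globally Lipschitz with a universal constant: $\tilde E$ is smooth with eccentricity controlled by $\Lambda$ and $\bar\lambda$ is universally bounded above and below, so $\|w\|_{C^1(\overline{\tilde E})}$ is universal. Hence $u-w$ is Lipschitz with constant $\lesssim L$, and the elementary bound $\|v\|_{L^\infty(\R^d)}\leq C(d)\,\mathrm{Lip}(v)^{d/(d+2)}\|v\|_{L^2(\R^d)}^{2/(d+2)}$ (test $\|v\|_{L^2}^2$ on a ball of radius $\sim\|v\|_\infty/\mathrm{Lip}(v)$ about a near-maximum of $|v|$) reduces the claim to
\[ \|u-w\|_{L^2(\R^d)}\leq C\big(|\lambda-\bar\lambda|+\delta+r^{-1/2}\big)^{1/2},\]
since raising to the power $2/(d+2)$ and distributing over the three summands gives exactly the stated three terms.

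\emph{The $L^2$ estimate.} Let $w_r := \tilde u_{\tilde E}(\cdot,a_r)$ be the principal eigenfunction of the oscillatory operator on the fixed ellipsoid, and split $\|u-w\|_{L^2}\leq\|u-w_r\|_{L^2}+\|w_r-w\|_{L^2}$.
\begin{enumerate}[label=(\alph*)]
\item Since $\tilde E$ is a Lipschitz (smooth) domain, \tref{optimal-hom-lip} rescaled gives $\|T_{\tilde E}(a_r)-T_{\tilde E}(\bar a)\|_{L^2(\tilde E)\to L^2(\tilde E)}\leq Cr^{-1}|\log(2+r)|^{1/2+\eta}\leq Cr^{-1/2}$ for $r$ large. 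The functions $w_r,w$ are the top eigenfunctions of these compact self-adjoint operators, whose spectral gaps are universal once $r$ is large (spectral convergence on the fixed domain $\tilde E$ from this resolvent bound and \lref{eigenvalue-operator-est}, together with simplicity of $\bar\lambda$), so eigenvector perturbation gives $\|w_r-w\|_{L^2}\leq Cr^{-1/2}$; in particular $|\lambda_1(\tilde E,a_r)-\bar\lambda|\leq Cr^{-1/2}$.
\item Hausdorff $\delta$-closeness to the regular set $\tilde E$ forces $\tilde U\triangle\tilde E$ into a $C\delta$-collar of $\partial\tilde E$, so $|\tilde U\triangle\tilde E|\lesssim\delta$ and $\tilde E_{C\delta}\subset\tilde U$. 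Let $\zeta$ be a cutoff equal to $1$ on $\tilde E_{C\delta}$, supported in $\tilde E$, with $|\grad\zeta|\lesssim\delta^{-1}$; then $\phi := \zeta u\in H^1_0(\tilde E)$. On the collar $u\leq L\,\mathrm{dist}(\cdot,\partial\tilde U)\lesssim L\delta$ (Lipschitz plus $\partial\tilde U\subset N_\delta(\partial\tilde E)$), so, expanding $\grad\phi=\zeta\grad u+u\grad\zeta$ and using $|\mathrm{collar}|\lesssim\delta$, one gets $\int_{\tilde E}a_r|\grad\phi|^2\leq\lambda+C\delta$ and $\|\phi\|_{L^2}^2\geq1-C\delta^3$. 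Thus $\bar\phi := \phi/\|\phi\|_{L^2}$ is an admissible competitor on $\tilde E$ with Rayleigh quotient $\leq\lambda+C\delta\leq\lambda_1(\tilde E,a_r)+|\lambda-\bar\lambda|+Cr^{-1/2}+C\delta$ (using (a)). Spectral-gap stability on $\tilde E$ then gives $\|\bar\phi-w_r\|_{L^2}^2\leq C(|\lambda-\bar\lambda|+\delta+r^{-1/2})$, and since $\|u-\bar\phi\|_{L^2}\lesssim\delta^{3/2}$ we obtain $\|u-w_r\|_{L^2}\leq C(|\lambda-\bar\lambda|+\delta+r^{-1/2})^{1/2}$.
\end{enumerate}
Combining (a) and (b) yields the displayed $L^2$ bound, and hence the Proposition.

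\emph{Main obstacle.} Step (b) is the crux: $\tilde U$ is controlled only through the Lipschitz bound on $u$ — it need not be weakly regular, so one cannot run eigenvalue/eigenfunction stability on $\tilde U$ itself, whose spectral gap is uncontrolled. The device is to transplant $u$ into $H^1_0(\tilde E)$ and argue on the good domain $\tilde E$; making this quantitative needs the collar inclusion $\tilde U\triangle\tilde E\subset N_{C\delta}(\partial\tilde E)$ (purely from Hausdorff closeness to the regular set $\tilde E$), placing the cutoff at $\partial\tilde E$, where $w$ vanishes and $u$ is $O(\delta)$, rather than at $\partial\tilde U$, and careful bookkeeping of the several small exponents. (The $r^{-1/2}$ baseline, hence the $|U|^{-1/(2d(d+2))}$ term, is not optimized.)
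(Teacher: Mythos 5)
Your proof is correct and follows the same high-level strategy as the paper — reduce the $L^\infty$ bound to an $L^2$ bound via the Lipschitz interpolation $\|v\|_\infty\lesssim\mathrm{Lip}(v)^{d/(d+2)}\|v\|_{L^2}^{2/(d+2)}$, then obtain the $L^2$ estimate through spectral-gap stability (the paper's \lref{ground-state-close}) applied on the good domain $\tilde E$ — but it replaces two of the paper's technical devices with alternatives. First, for the same-domain comparison of $u_{E}(\cdot,a)$ with $u_E(\cdot,\bar a)$, the paper builds a corrector two-scale test function $\xi=\bar u_E+\zeta\grad\bar u_E\cdot\chi$ (\lref{quant-hom-intermediate}) and applies \lref{ground-state-close}, whereas you invoke the $L^2\to L^2$ resolvent bound from \tref{optimal-hom-lip} and a Davis--Kahan-type eigenvector perturbation argument; both are valid and give comparable (Hölder) rates, and the resolvent route is shorter to state but implicitly leans on the full Kenig--Lin--Shen machinery being available in operator-norm form. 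Second, for the domain mismatch, the paper dilates $E$ to $(1+t)E\supset U$ with $t\sim d_H/|U|^{1/d}$ and inserts $u_U$ directly as a competitor on the enlarged ellipsoid, while you transplant $u$ into $H^1_0(\tilde E)$ by a cutoff $\zeta u$ at scale $\delta$ near $\partial\tilde E$, using the Lipschitz bound and the collar inclusion $\tilde U\triangle\tilde E\subset N_{C\delta}(\partial\tilde E)$ (which holds by convexity of $\tilde E$ and boundedness of $\tilde U$); both pay $O(\delta)$ in the Rayleigh quotient, though your transplantation needs the Lipschitz hypothesis already at the $L^2$ stage, whereas the paper's \lref{L2close-UE} is a pure $L^2$ statement. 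After the $2/(d+2)$ power both paths produce exactly the claimed terms, including $|U|^{-1/(2d(d+2))}$. One small point you could tighten: in step (a) the "eigenvector perturbation gives $\|w_r-w\|_{L^2}\leq Cr^{-1/2}$" deserves a line — decompose $w$ in the eigenbasis of $T_{\tilde E}(a_r)$, note $\|T_{\tilde E}(a_r)w-T_{\tilde E}(\bar a)w\|$ is small while the off-$w_r$ component is amplified by the inverse spectral gap — but this is standard and the paper's Remark on the universal spectral gap of ellipsoids (before \lref{quant-hom-intermediate}) supplies the needed uniformity.
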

We proceed in a series of Lemmas culminating in a proof of \pref{eigenfunction-conv}.  The idea is standard but we didn't find an applicable reference.  Typically estimates of eigenfunctions are harder than estimates of eigenvalues because of possibly degeneracy.  However we are studying the principal eigenvalue, which is not degenerate, in a scenario where the domain $U$ is close to an ellipsoid $E=\bar{a}^{\frac{1}{2}}B$. So we can take advantage of a universal lower bound on the spectral gap $|E|^{\frac{2}{d}}(\lambda_2(E,\bar{a}) - \lambda_1(E,\bar{a})) \geq c $ over the ellipsoid class.

\begin{lemma}\label{l.ground-state-close}
Suppose that $a(x)$ satisfies \aref{a1} and $v \in H^1_0(\Omega)$ with $\|v\|_{L^2(\Omega)} = 1$ satisfies
\[ \int_{\Omega} a(x) \grad v \cdot \grad v  \ dx \leq (1+\delta)\lambda_1(\Omega,a).\]
Then
\[ (\tfrac{\lambda_2(\Omega,a)}{ \lambda_1(\Omega,a)} -1)\min_{s \in \pm 1}\|v -  s u_\Omega\|_{L^2(\Omega)}^2 \leq 4\delta\]
\end{lemma}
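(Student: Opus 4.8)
The plan is a soft spectral-gap argument based on decomposing $v$ along $u_\Omega$ and its $L^2$-orthogonal complement. Note first that $\|u_\Omega\|_{L^2(\Omega)} = 1$, since the minimizer in the definition of $u_\Omega$ must saturate the constraint $\|v\|_{L^2(\Omega)}\ge 1$. Write $v = c\,u_\Omega + w$ with $c = \int_\Omega v\,u_\Omega\,dx$ and $w \in H^1_0(\Omega)$ satisfying $\int_\Omega w\,u_\Omega\,dx = 0$; then $\|w\|_{L^2(\Omega)}^2 = 1 - c^2$. Since $u_\Omega$ is the principal eigenfunction, $\int_\Omega a(x)\grad u_\Omega\cdot\grad\varphi\,dx = \lambda_1(\Omega,a)\int_\Omega u_\Omega\,\varphi\,dx$ for all $\varphi\in H^1_0(\Omega)$; taking $\varphi = w$ gives $\int_\Omega a(x)\grad u_\Omega\cdot\grad w\,dx = 0$, so the Dirichlet energy splits as
\[ \int_\Omega a(x)\grad v\cdot\grad v\,dx = c^2\,\lambda_1(\Omega,a) + \int_\Omega a(x)\grad w\cdot\grad w\,dx. \]
By the Courant--Fischer characterization of the second eigenvalue restricted to the $L^2$-orthogonal complement of $u_\Omega$, we have $\int_\Omega a(x)\grad w\cdot\grad w\,dx \ge \lambda_2(\Omega,a)\|w\|_{L^2(\Omega)}^2 = \lambda_2(\Omega,a)(1-c^2)$.

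Next I would feed in the hypothesis $\int_\Omega a(x)\grad v\cdot\grad v\,dx \le (1+\delta)\lambda_1(\Omega,a)$, which together with the two displays above yields $c^2\lambda_1(\Omega,a) + (1-c^2)\lambda_2(\Omega,a) \le (1+\delta)\lambda_1(\Omega,a)$, i.e. $(\lambda_2(\Omega,a) - \lambda_1(\Omega,a))(1-c^2) \le \delta\,\lambda_1(\Omega,a)$, hence
\[ 1 - c^2 \le \frac{\delta\,\lambda_1(\Omega,a)}{\lambda_2(\Omega,a) - \lambda_1(\Omega,a)}. \]
Finally, since $\|v \pm u_\Omega\|_{L^2(\Omega)}^2 = \|v\|_{L^2(\Omega)}^2 \pm 2c + \|u_\Omega\|_{L^2(\Omega)}^2 = 2 \pm 2c$, one has $\min_{s\in\pm 1}\|v - s\,u_\Omega\|_{L^2(\Omega)}^2 = 2(1-|c|) \le 2(1-c^2)$ using $1-|c|\le 1-c^2$ for $|c|\le 1$. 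Combining this with the previous bound and multiplying through by $\tfrac{\lambda_2(\Omega,a)}{\lambda_1(\Omega,a)} - 1 = \tfrac{\lambda_2(\Omega,a)-\lambda_1(\Omega,a)}{\lambda_1(\Omega,a)} > 0$ gives the stated inequality, in fact with constant $2$ in place of $4$.

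I do not expect any real obstacle in this argument; it is entirely a matter of bookkeeping with the eigenbasis. The only points deserving a word of care are that the orthogonal splitting and the variational characterization of $\lambda_2$ are legitimate for the (quasi-open, finite-measure) domains in play, which follows from compactness of the resolvent of $-\grad\cdot(a(x)\grad\cdot)$ on $H^1_0(\Omega)$, and the elementary inequality $1-|c| \le 1-c^2$ used in the last step; everything else is a two-line computation with the spectral gap.
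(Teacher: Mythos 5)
Your proof is correct and follows essentially the same approach as the paper's: decompose $v$ along $u_\Omega$, apply the variational characterization of $\lambda_2$ on the $L^2$-orthogonal complement, and combine with the energy hypothesis to control $1 - |(v,u_\Omega)|^2$. Your last step, computing $\min_{s\in\pm1}\|v - s\,u_\Omega\|_{L^2}^2 = 2(1-|c|) \le 2(1-c^2)$ directly, is a slightly cleaner bookkeeping than the paper's triangle-inequality route and yields the sharper constant $2\delta$, but it is a cosmetic refinement rather than a different argument.
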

\begin{proof}
For brevity we write $\lambda_j = \lambda_j(\Omega,a)$.  Note, from the standard characterization of $\lambda_2$,
\[ \lambda_1|(v,u_\Omega)|^2+\lambda_2\|v - (v,u_\Omega)u_\Omega\|_{L^2(\Omega)}^2 \leq \int_{\Omega} a(x)\grad v \cdot \grad v \ dx \leq \lambda_1\|v\|_{L^2(\Omega)}^2+\delta\lambda_1 \]
and also note that 
\[ 1= \|v\|_{L^2(\Omega)}^2 = |(v,u_\Omega)|^2+\|v - (v,u_\Omega)u_\Omega\|_{L^2(\Omega)}^2.\]
So, rearranging the earlier inequality,
 \[(\tfrac{\lambda_2}{\lambda_1}-1)\|v - (v,u_\Omega)u_\Omega\|_{L^2(\Omega)}^2\leq \delta\]
 and also
 \[ (\tfrac{\lambda_2}{\lambda_1}-1)\left[1-|(v,u_\Omega)|^2\right] \leq \delta.\]
Now assume, without loss, that $(v,u_\Omega) \geq 0$, then
 \[ (\tfrac{\lambda_2}{\lambda_1}-1)^{1/2}\| v - u_\Omega\| \leq \delta^{1/2} + (\tfrac{\lambda_2}{\lambda_1}-1)^{1/2}|1 - (v,u_\Omega)|\]
 and by the assumption $(v,u_\Omega) \geq 0$ we have $|1 - (v,u_\Omega)| \leq 1-|(v,u_\Omega)|^2$ so
\[ (\tfrac{\lambda_2}{\lambda_1}-1)^{1/2}\| v - u_\Omega\| \leq 2 \delta^{1/2}.\]
\end{proof}

In the remainder of the section we will abuse notation a bit and denote $u_U = u_U(\cdot,a)$, $u_E = u_E(\cdot,a)$ and $\bar{u}_E = u_E(\cdot,\bar{a})$.

By \lref{eigenvalue-operator-est} and \tref{optimal-hom-lip}
\[ |\tfrac{\lambda_2(E,a)}{\lambda_1(E,a)} - \tfrac{\lambda_2(E,\bar{a})}{\lambda_1(E,\bar{a})}| \leq C|E|^{-\frac{1}{d}}\]
where $C$ is universal. Also 
\[ \tfrac{\lambda_2(E,\bar{a})}{\lambda_1(E,\bar{a})} = \tfrac{\lambda_2(E_1,\bar{a})}{\lambda_1(E_1,\bar{a})} =\tfrac{\lambda_2(B_1,\textup{id})}{\lambda_1(B_1,\textup{id})} >1\]
so there is a universal $C_0 \geq 1$ so that $|E| \geq C_0$ implies that $\frac{\lambda_2(E,a)}{\lambda_1(E,a)}-1 \geq c_0$ universal.

Let $\zeta$ be a smooth cutoff function $0 \leq \zeta \leq 1$ which is $1$ in $(1-t)B$ and $0$ outside $B$ and $|\grad \zeta| \leq Ct^{-1}$
\[ \xi(x) = \bar{u}_E(x) + \zeta(x) \grad \bar{u}_E(x) \cdot \chi(x). \] 
Here $\chi = [\chi_{e_1},\dots,\chi_{e_d}]$ is the corrector matrix defined in \eref{corrector-def}. If we take $t = |E|^{\frac{1}{2d}}$ then we get the statement:
\begin{lemma}\label{l.quant-hom-intermediate}
Let $\xi$ as above then
\[ \int_{E} a(x) \grad \xi \cdot \grad \xi  \ dx \leq (1+C|E|^{-\frac{1}{2d}})\lambda_1(E,\bar{a}).\]
\end{lemma}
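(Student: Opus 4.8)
The plan is to treat $\xi$ as the standard first-order two-scale expansion of the effective principal eigenfunction $\bar u_E = u_E(\cdot,\bar a)$, namely $\xi = \bar u_E + \zeta\,(\grad\bar u_E\cdot\chi)$, and to compare $\int_E a\grad\xi\cdot\grad\xi$ with $\int_E\bar a\grad\bar u_E\cdot\grad\bar u_E$, which equals $\lambda_1(E,\bar a)$ since $\bar u_E$ is the $L^2(E)$-normalized principal eigenfunction. First I would record the a priori bounds that are needed. Rescaling to unit volume, $\tilde E := |E|^{-1/d}E$ is a fixed-volume $\bar a$-ellipsoid, so $\lambda_1(E,\bar a) = |E|^{-2/d}\lambda_1(\tilde E,\bar a)$ with $\lambda_1(\tilde E,\bar a)$ bounded above and below by universal constants (the change of variables $y\mapsto\bar a^{1/2}y$ reduces it to the principal Dirichlet eigenvalue of a ball), and $\bar u_E\in C^\infty(\overline E)$ with $\|\grad\bar u_E\|_\infty\lesssim|E|^{-\frac12-\frac1d}$ and $\|D^2\bar u_E\|_\infty\lesssim|E|^{-\frac12-\frac2d}$ by Schauder estimates on $\tilde E$. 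Since $a\in C^{0,1}$, the correctors satisfy $\|\chi_{e_i}\|_\infty+\|\grad\chi_{e_i}\|_\infty\lesssim 1$. I will also use that, with the choice $t=|E|^{1/(2d)}$, the cutoff $\zeta$ is identically $0$ on a collar of $\partial E$, is identically $1$ off a band of width $\sim|E|^{1/(2d)}$ around $\partial E$, and has $|\grad\zeta|\lesssim|E|^{-1/(2d)}$; in particular the strip $S := E\cap\{\zeta<1\}$ has $|S|\lesssim|E|^{-1/(2d)}|E|$, and $\xi\equiv\bar u_E$ near $\partial E$ so $\xi\in H^1_0(E)$ (the latter is not needed for the energy bound itself but is what makes $\xi$ admissible in the subsequent step).

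Next I would carry out the decomposition of the energy over $\{\zeta=1\}$ and over $S$. On $\{\zeta=1\}$ one has $\grad\xi = P + \rho$ where $P := \sum_i\partial_i\bar u_E\,(e_i+\grad\chi_{e_i})$ is the corrected gradient, with $|P|\lesssim|E|^{-\frac12-\frac1d}$, and $\rho := \sum_i\chi_{e_i}\grad\partial_i\bar u_E$ has $|\rho|\lesssim|E|^{-\frac12-\frac2d}$; hence $\int_{\{\zeta=1\}}a\grad\xi\cdot\grad\xi = \int_{\{\zeta=1\}}aP\cdot P + O(|E|^{-\frac3d})$. On $S$, $|\grad\xi|\lesssim|E|^{-\frac12-\frac1d}$ (the extra term $(\grad\bar u_E\cdot\chi)\grad\zeta$ is of size $|E|^{-\frac12-\frac1d}|E|^{-\frac1{2d}}$, hence negligible), so $\int_S a\grad\xi\cdot\grad\xi\lesssim|S|\,|E|^{-1-\frac2d}\lesssim|E|^{-\frac1{2d}}|E|^{-\frac2d}$, and the same bound holds for $\int_S aP\cdot P$. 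Since $|E|^{-\frac3d}=|E|^{-\frac1d}|E|^{-\frac2d}\lesssim|E|^{-\frac1d}\lambda_1(E,\bar a)$ and $|E|^{-\frac2d}\sim\lambda_1(E,\bar a)$, all of these terms are $O(|E|^{-\frac1{2d}}\lambda_1(E,\bar a))$, so it remains to estimate the bulk term $\int_E aP\cdot P$.

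For the bulk term I would set $g_{ij}(x) := a(x)(e_i+\grad\chi_{e_i}(x))\cdot(e_j+\grad\chi_{e_j}(x)) - \bar a_{ij}$, which is $\Z^d$-periodic, bounded by a universal constant, and mean zero by the definition \eref{baradef} of $\bar a$. Then
\[ \int_E aP\cdot P = \sum_{i,j}\bar a_{ij}\int_E\partial_i\bar u_E\,\partial_j\bar u_E\,dx + \sum_{i,j}\int_E\partial_i\bar u_E\,\partial_j\bar u_E\,g_{ij}\,dx, \]
and the first sum is exactly $\int_E\bar a\grad\bar u_E\cdot\grad\bar u_E = \lambda_1(E,\bar a)$. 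For the second sum, let $\varphi_{ij}$ be the $\Z^d$-periodic mean-zero solution of $\Delta\varphi_{ij} = g_{ij}$ (so $\|\grad\varphi_{ij}\|_\infty\lesssim\|g_{ij}\|_\infty\lesssim 1$); integrating by parts over $E$ and using $\|\grad(\partial_i\bar u_E\,\partial_j\bar u_E)\|_\infty\lesssim|E|^{-1-\frac3d}$, $\|\partial_i\bar u_E\,\partial_j\bar u_E\|_\infty\lesssim|E|^{-1-\frac2d}$ and $\mathcal H^{d-1}(\partial E)\lesssim|E|^{\frac{d-1}d}$, the interior term and the boundary term are each $O(|E|^{-\frac3d})$, so the second sum is $O(|E|^{-\frac3d}) = O(|E|^{-\frac1d}\lambda_1(E,\bar a))$. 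Combining with the decomposition above gives $\int_E a\grad\xi\cdot\grad\xi = (1 + O(|E|^{-\frac1{2d}}))\lambda_1(E,\bar a)$, which is the assertion.

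The routine part is the scaling bookkeeping; the one genuinely substantive step is the last one, the quantitative homogenization of $\int_E aP\cdot P$, i.e. replacing the oscillatory factor $a(e_i+\grad\chi_{e_i})\cdot(e_j+\grad\chi_{e_j})$ by its cell average $\bar a_{ij}$ against the slowly varying weight $\partial_i\bar u_E\,\partial_j\bar u_E$. This is the periodic-potential (div--curl type) argument, and the points to watch are that it generates a boundary integral over $\partial E$ which must be checked to be of lower order than $\lambda_1(E,\bar a)\sim|E|^{-2/d}$, and that the true bottleneck of the whole estimate is the transition strip, whose contribution of order $|E|^{-1/(2d)}\lambda_1(E,\bar a)$ is exactly what the cutoff scale $t=|E|^{1/(2d)}$ is calibrated to produce.
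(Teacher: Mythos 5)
Your proof is correct, and it is self-contained where the paper is not: the paper's entire proof of this lemma is the sentence ``This follows from \cite{FeldmanReg}[Proposition 23] after appropriate rescaling,'' so the full argument is outsourced to a companion paper. What you wrote down is the argument one would expect that proposition to contain, namely the first-order two-scale expansion with a boundary cutoff plus the flux-corrector / div--curl step to homogenize the coefficient $a(e_i+\grad\chi_{e_i})\cdot(e_j+\grad\chi_{e_j})$ against the slowly varying weight $\partial_i\bar u_E\,\partial_j\bar u_E$, and all the scalings check out: the strip term is $O(|E|^{-1/(2d)}\lambda_1)$ and is the bottleneck, while both the $\rho$-error in the bulk and the integration-by-parts terms (interior and boundary) are $O(|E|^{-3/d})=O(|E|^{-1/d}\lambda_1)$, i.e.\ strictly lower order.

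Two small remarks. First, you invoke $\|\chi_{e_i}\|_\infty+\|\grad\chi_{e_i}\|_\infty\lesssim 1$ and $\|\grad\varphi_{ij}\|_\infty\lesssim 1$, which are legitimate under the standing assumption $a\in C^{0,1}$, but then the constant you produce in the lemma depends on $\|\grad a\|_\infty$; the paper's downstream \lref{uE-baruE} nominally asserts a constant $C(\Lambda,d)$. This is not a real problem (the later theorems all allow dependence on $\|\grad a\|_\infty$), but if one wanted the constant to depend only on $(\Lambda,d)$ one would replace the pointwise corrector bounds by the $\underline{L}^2$ cell bounds that follow from ellipticity alone, using periodicity to sum the bulk and strip integrals cell by cell; the div--curl step likewise goes through with the flux corrector constructed at the $L^2$ level. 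Second, to actually feed this estimate into \lref{ground-state-close} in the proof of \lref{uE-baruE} one also needs $\|\xi\|_{L^2(E)}\geq 1$ (or a renormalization of $\xi$); since $\|\xi\|_{L^2(E)}^2=1+O(|E|^{-1/d})$ this costs only a lower-order factor, but it is an implicit normalization worth flagging even though it sits outside the lemma you were asked to prove.
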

This follows from \cite{FeldmanReg}[Proposition 23] after appropriate rescaling.
\begin{lemma}\label{l.uE-baruE}
If $|E| \geq C_0$ universal then
\[ \|\bar{u}_E- u_E\|_{L^2(E)} \leq C(\Lambda,d) |E|^{-\frac{1}{4d}}.\]
\end{lemma}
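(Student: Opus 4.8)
The plan is to combine the approximate corrector-modified trial function $\xi$ from \lref{quant-hom-intermediate} with the spectral-gap stability estimate of \lref{ground-state-close}. First I would normalize: replace $\xi$ by $\hat\xi = \xi/\|\xi\|_{L^2(E)}$. One needs $\|\xi\|_{L^2(E)}$ close to $1$; since $\bar u_E$ is the normalized eigenfunction and the corrector term $\zeta\,\grad\bar u_E\cdot\chi$ is $O(\|\chi\|_\infty)\cdot\|\grad\bar u_E\|_\infty$ on a set, while outside $(1-t)B$ the cutoff region has measure $O(t|E|^{(d-1)/d}) = O(|E|^{-1/(2d)}|E|)$ after the choice $t=|E|^{1/(2d)}|E|^{1/d}$ (in scaled variables $t=|E|^{1/(2d)}$), we get $\big|\,\|\xi\|_{L^2(E)}^2 - 1\,\big| \le C|E|^{-\frac{1}{2d}}$, using $\|\grad\bar u_E\|_{L^\infty}$ and $\|\bar u_E\|_{L^\infty}$ bounded with the correct scaling from the explicit ellipsoid eigenfunction. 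Hence $\|\xi-\hat\xi\|_{L^2(E)} \le C|E|^{-\frac{1}{2d}}$ as well.

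Next I would feed $\hat\xi$ into \lref{ground-state-close} with $\Omega = E$ and coefficient $a$. From \lref{quant-hom-intermediate} and the $L^2$-normalization estimate just obtained,
\[
\int_E a(x)\,\grad\hat\xi\cdot\grad\hat\xi\,dx \le (1+C|E|^{-\frac{1}{2d}})\,\lambda_1(E,\bar a) \le (1+C|E|^{-\frac{1}{2d}})\,\lambda_1(E,a),
\]
where the last step uses $\lambda_1(E,\bar a)\le(1+C|E|^{-\frac1d})\lambda_1(E,a)$, which follows from \lref{eigenvalue-operator-est} together with \tref{optimal-hom-lip} applied on the Lipschitz domain $E$ (an ellipsoid is a Lipschitz domain with universal constants, and $\lambda_1$'s are bounded below and above universally in the unit-volume rescaling). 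So $\hat\xi$ is an admissible competitor with energy deficit $\delta = C|E|^{-\frac{1}{2d}}$. Since $|E|\ge C_0$ ensures $\frac{\lambda_2(E,a)}{\lambda_1(E,a)}-1 \ge c_0>0$ (established in the text just above via homogenization of the ratio over the ellipsoid class), \lref{ground-state-close} yields
\[
\min_{s\in\pm1}\|\hat\xi - s\,u_E\|_{L^2(E)} \le C|E|^{-\frac{1}{4d}}.
\]
Choosing the sign $s=+1$ (both $\bar u_E$ and $u_E$ are nonnegative with positive $L^2$ inner product once the deficit is small, so the minimizing sign is $+1$), and writing $\|\bar u_E - u_E\|_{L^2(E)} \le \|\bar u_E - \hat\xi\|_{L^2(E)} + \|\hat\xi - u_E\|_{L^2(E)}$, it remains to bound $\|\bar u_E - \hat\xi\|_{L^2(E)} \le \|\bar u_E-\xi\|_{L^2(E)} + \|\xi-\hat\xi\|_{L^2(E)} \le C|E|^{-\frac{1}{2d}}$, where $\|\bar u_E - \xi\|_{L^2(E)} = \|\zeta\,\grad\bar u_E\cdot\chi\|_{L^2(E)} \le \|\chi\|_\infty\|\grad\bar u_E\|_{L^2(E)} \le C|E|^{-\frac1d}$ in the scaled variables (the corrector is bounded by ellipticity and periodicity, and $\|\grad \bar u_E\|_{\underline L^2}$ is universal in the unit-volume rescaling — here the $|E|^{-1/d}$ gain from differentiating in the original variable is what appears). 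Collecting the terms, the dominant error is $C|E|^{-\frac{1}{4d}}$, giving the claim.

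The main obstacle I anticipate is purely bookkeeping of the volume scalings: \lref{quant-hom-intermediate} and the cutoff choice $t=|E|^{1/(2d)}$ are stated after rescaling to unit volume, so one must carefully track how $\bar u_E$, $u_E$, $\grad\bar u_E$, $\chi$, and the boundary strip $E\setminus(1-t)E$ transform, and verify the $L^2(E)$-normalization error is $O(|E|^{-\frac{1}{2d}})$ rather than something larger — in particular that the loss from the cutoff layer (of relative measure $\sim t = |E|^{-1/(2d)}$ after rescaling, wait, with $t=|E|^{1/(2d)}$ this is a genuinely thin layer only because we work at length scale $|E|^{1/d}$; the relative measure of $E\setminus(1-t|E|^{-1/d})E$ is $\sim t|E|^{-1/d} = |E|^{1/(2d)-1/d} = |E|^{-1/(2d)}$) is indeed the stated order. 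No genuinely new estimate is needed beyond what is quoted; the content is assembling \lref{quant-hom-intermediate}, \lref{ground-state-close}, and the spectral-gap lower bound with the correct normalization.
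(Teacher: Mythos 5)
Your proof is correct and follows essentially the same route as the paper: apply \lref{ground-state-close} to the corrector-modified competitor $\xi$ from \lref{quant-hom-intermediate}, use \lref{eigenvalue-operator-est} and \tref{optimal-hom-lip} to pass between $\lambda_1(E,\bar a)$ and $\lambda_1(E,a)$, then close by a triangle inequality with $\|\xi-\bar u_E\|_{L^2}\lesssim|E|^{-1/d}$. You are in fact slightly more careful than the paper in explicitly normalizing $\xi$ (which \lref{ground-state-close} technically requires) and in fixing the sign in the minimum via positivity of the eigenfunctions; the one small inaccuracy is that the $|\,\|\xi\|_{L^2}^2-1\,|$ estimate comes directly from the smallness of the corrector term $\zeta\grad\bar u_E\cdot\chi$ in $L^2$ (it is $O(|E|^{-1/d})$, even better than you claim) rather than from the measure of the cutoff layer, which only enters the gradient/energy estimate in \lref{quant-hom-intermediate}.
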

\begin{proof}
Applying \lref{ground-state-close} to $\xi$ we find, for $|E| \geq C_0$,
\[
c_0\|\xi - u_E\|_{L^2(E)}^2 \leq C |E|^{-\frac{1}{2d}}
\]
where we also used, by \lref{eigenvalue-operator-est} and \tref{optimal-hom-lip},
\[ \lambda_1(E,a)^{-1}(\lambda_1(E,\bar{a})-\lambda_1(E,a)) \leq C |E|^{-\frac{1}{d}}.\]
Then returning to $\bar{u}_E$ we get
\begin{equation}
c_0^{1/2}\|\bar{u}_E- u_E\|_{L^2(E)} \leq C |E|^{-\frac{1}{4d}}
\end{equation}
where we are using $\|\grad \bar{u}_E\|_\infty \leq C|E|^{-\frac{1}{2}-\frac{1}{d}}$ so that $\|\xi - \bar{u}_E\|_{L^2} \leq C|E|^{\frac{1}{2}}.|E|^{-\frac{1}{2}-\frac{1}{d}}$.
\end{proof}

Next we estimate $u_U - u_E$ in $L^2$.

\begin{lemma}\label{l.L2close-UE}
There is $C_0$ universal so that if $|U| \geq C_0$ and $E$ is a $\bar{a}$-ellipsoid with $2E \supset U$ then
\[ \|u_U - \bar{u}_E\|_{L^2} \leq C(\Lambda,d) \left[|U|^{\frac{1}{d}}|\lambda_1(U,a) - \lambda_1(E,\bar{a})|^{1/2}+\left(\frac{d_H(\partial E,\partial U)}{|U|^{\frac{1}{d}}}\right)^{1/2}+|U|^{-\frac{1}{4d}}\right].\]
\end{lemma}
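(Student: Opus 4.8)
The plan is to compare the two eigenfunctions through the intermediate function $u_E(\cdot,a)$, the principal $-\grad\cdot(a\grad\cdot)$-eigenfunction on the ellipsoid $E$. By \lref{uE-baruE} we already have $\|u_E(\cdot,a)-\bar{u}_E\|_{L^2(E)}\le C|E|^{-\frac1{4d}}$, which is $\le C|U|^{-\frac1{4d}}$ since $2E\supset U$ forces $|E|\gtrsim|U|$. So it remains to compare $u_U$ with $u_E(\cdot,a)$, two principal eigenfunctions of the \emph{same} operator on the Hausdorff-close domains $U$ and $E$, and this will be driven by the spectral-gap mechanism of \lref{ground-state-close}. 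First, we may assume each of the three terms on the right-hand side is smaller than a universal $\ep_0$, since otherwise the asserted bound is trivial from $\|u_U-\bar{u}_E\|_{L^2}\le 2$. In particular $h:=d_H(\partial U,\partial E)\ll|U|^{\frac1d}$, which forces $|E|\sim|U|$ and, using convexity of $E$ and the universally bounded eccentricity of $\bar{a}$-ellipsoids (the eigenvalues of $\bar a$ lie in $[\Lambda^{-1},\Lambda]$), produces a concentric dilate $E_+:=(1+Ch/|E|^{\frac1d})E\supset E+B_h\supset U$ with $|E_+|=(1+O(h/|E|^{\frac1d}))|E|$.

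Since $U\subset E_+$, extending by zero we may regard $u_U\in H^1_0(E_+)$ as a competitor for $\lambda_1(E_+,a)$, so $\lambda_1(E_+,a)\le\int_{E_+}a\grad u_U\cdot\grad u_U=\lambda_1(U,a)$. To control the deficit $\delta_*:=\lambda_1(U,a)/\lambda_1(E_+,a)-1\ge0$ we estimate $\lambda_1(U,a)-\lambda_1(E_+,a)\le|\lambda_1(U,a)-\lambda_1(E,\bar{a})|+|\lambda_1(E,\bar{a})-\lambda_1(E_+,a)|$, where the last term is $\le C|U|^{-\frac2d}(h/|U|^{\frac1d}+|U|^{-\frac1d}|\log(2+|U|)|^{\frac12+\eta})$ by combining the exact volume scaling $\lambda_1(tE,\bar a)=t^{-2}\lambda_1(E,\bar a)$ with the homogenization estimate of \cref{optimal-eigen-conv} on the Lipschitz domain $E_+$ (converting the $\lambda_1^{-1}$ difference to a $\lambda_1$ difference is routine since $\lambda_1\sim|U|^{-2/d}$). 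As $\lambda_1(E_+,a)\gtrsim|U|^{-\frac2d}$, this yields $\delta_*\le C|U|^{\frac2d}|\lambda_1(U,a)-\lambda_1(E,\bar{a})|+C(h/|U|^{\frac1d}+|U|^{-\frac1d}|\log(2+|U|)|^{\frac12+\eta})$. Exactly as in the discussion preceding \lref{uE-baruE}, the gap $\lambda_2(E_+,a)/\lambda_1(E_+,a)-1\ge c_0$ holds once $|U|$ (hence $|E_+|$) is large, since the gap for $\bar{a}$ on any $\bar{a}$-ellipsoid equals the scale-invariant gap of $(B_1,\textup{id})$ and the correction is small by \lref{eigenvalue-operator-est} and \tref{optimal-hom-lip}. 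Applying \lref{ground-state-close} on $\Omega=E_+$ with $v=u_U$, and noting the minimizing sign is $+$ (both functions are nonnegative and $u_{E_+}(\cdot,a)>0$ on $U$), gives $\|u_U-u_{E_+}(\cdot,a)\|_{L^2}\le C\delta_*^{1/2}$, which absorbs into the claimed three terms after taking square roots and using $|U|^{-\frac1{2d}}|\log(2+|U|)|^{\frac14+\frac\eta2}\le C|U|^{-\frac1{4d}}$.

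It remains to compare $u_{E_+}(\cdot,a)$ with $u_E(\cdot,a)$, two eigenfunctions of $a$ on the nested ellipsoids $E\subset E_+$. This is the same mechanism once more: $u_E(\cdot,a)$, extended by zero, is a competitor on $E_+$ with deficit $\delta':=\lambda_1(E,a)/\lambda_1(E_+,a)-1$, and the bound $\lambda_1(E,a)-\lambda_1(E_+,a)\le C|U|^{-\frac2d}(h/|U|^{\frac1d}+|U|^{-\frac1d}|\log(2+|U|)|^{\frac12+\eta})$ follows just as above from the volume scaling of $\lambda_1(\cdot,\bar{a})$ and homogenization on $E$ and $E_+$. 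Hence $\|u_{E_+}(\cdot,a)-u_E(\cdot,a)\|_{L^2}\le C(\delta')^{1/2}\le C(h/|U|^{\frac1d})^{1/2}+C|U|^{-\frac1{4d}}$. Combining this with the previous step and with \lref{uE-baruE} via $\|u_U-\bar{u}_E\|_{L^2}\le\|u_U-u_{E_+}(\cdot,a)\|_{L^2}+\|u_{E_+}(\cdot,a)-u_E(\cdot,a)\|_{L^2}+\|u_E(\cdot,a)-\bar{u}_E\|_{L^2}$ finishes the proof.

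The main obstacle — and the only genuinely geometric point — is the passage from the Hausdorff bound $d_H(\partial U,\partial E)\le h$ to the enclosure $U\subset E_+$ by a nearby dilated ellipsoid with $|E_+|/|E|=1+O(h/|E|^{\frac1d})$; here one really uses that $E$ is convex with universally bounded eccentricity, together with the exact scaling $\lambda_1(\cdot,\bar{a})\propto(\text{volume})^{-2/d}$ on this family, so that an $O(h/|E|^{\frac1d})$ change of volume costs only an $O(h/|E|^{\frac1d})$ relative change of the homogenized eigenvalue. Everything else is bookkeeping of homogenization errors (all dominated by $|U|^{-\frac1{4d}}$ up to logarithms) and repeated use of \lref{ground-state-close}, whose hypothesis — a uniform lower bound on $\lambda_2/\lambda_1-1$ for the operator $a$ on the relevant large ellipsoids — is inherited from the ball via homogenization exactly as in the text preceding \lref{uE-baruE}.
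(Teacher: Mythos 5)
Your proof is correct, and it rests on the same two engines as the paper's argument: the spectral--gap estimate \lref{ground-state-close} applied to a competitor on a slightly enlarged ellipsoid containing $U$, plus the homogenization comparison \lref{uE-baruE}. The difference is the order of the triangle--inequality chain. The paper first replaces $E$ by a concentric dilate $E' = (1+t)E \supset U$ and pays for the replacement \emph{directly} via the explicit scaling identity $\bar{u}_{(1+t)E} = (1+t)^{-d/2}\bar{u}_E((1+t)^{-1}\cdot)$ together with $\lambda_1((1+t)E,\bar{a}) = (1+t)^{-2}\lambda_1(E,\bar{a})$; it then makes a single application of \lref{ground-state-close} (comparing $u_U$ with $u_{E'}(\cdot,a)$) and invokes \lref{uE-baruE} on $E'$. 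You instead keep $E$ fixed, introduce $E_+\supset U$, and run the spectral--gap lemma \emph{twice} --- once to pass from $u_U$ to $u_{E_+}(\cdot,a)$ and once, using $u_E(\cdot,a)$ as the test function on $E_+$, to pass from $u_{E_+}(\cdot,a)$ to $u_E(\cdot,a)$ --- and only then apply \lref{uE-baruE} on the original $E$. Both routes land on the same powers: the deficit bounds $\delta_*,\delta'$ contribute $(h/|U|^{1/d})^{1/2}$, $|U|^{1/d}|\lambda_1(U,a)-\lambda_1(E,\bar{a})|^{1/2}$, and homogenization tails $\lesssim |U|^{-1/4d}$ after the logarithms from \cref{optimal-eigen-conv} are absorbed (which you flag explicitly). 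Your version trades the one--line explicit--dilation identity for a second soft spectral--gap comparison; it is a bit longer, but it avoids appealing to the closed form of $\bar{u}_E$, and the preliminary reduction (WLOG each RHS term $\leq \ep_0$, justified by $\|u_U-\bar{u}_E\|_{L^2}\leq 2$), the resolution of the sign in \lref{ground-state-close} by nonnegativity, and the geometric point that bounded eccentricity of $\bar{a}$-ellipsoids gives $(1+Ch/|E|^{1/d})E \supset E + B_h$ are all correctly handled.
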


\begin{proof}
By replacing with a dilation $(1+t)E$ for some 
\[ 0 \leq t  \leq C(\Lambda,d)|E|^{-\frac{1}{d}}d_H(\partial E,\partial U)\]
we can assume, without loss, that $E \supset U$.  Note that the explicit $L^2$ continuity of $\bar{u}_{(1+t)E}$ in the $t$ variable follows from the dilation estimate for $0 \leq t \leq 1$
\[ \|\bar{u}_{(1+t)E} - \bar{u}_E\|_{L^2(E)}^2 = \|(1+t)^{-d/2}\bar{u}_{E}((1+t)^{-1}\cdot) - \bar{u}_E\|_{L^2(E)}^2\leq Ct^2|E|^{\frac{2}{d}}\|\grad \bar{u}_E\|_{L^2(E)}^2.\]
And for the eigenvalue
\[\lambda_1(E,\bar{a}) - \lambda_1((1+t)E,\bar{a}) =(1-(1+t)^{-2})\lambda_1(E,\bar{a}) \leq Ct \lambda_1(E,a).\]

Now, with the assumption $E \supset U$ in effect,
\[ \int_{E} a(x) \grad u_U\cdot \grad u_U \ dx = \lambda_1(U,a) \leq \lambda_1(E,a) + (\lambda_1(U,a) - \lambda_1(E,a)).\]
So we apply \lref{ground-state-close} to get
\[c_0\|u_U - u_E\|_{L^2(E)}^2 \leq 4\lambda_1(E,a)^{-1}(\lambda_1(U,a) - \lambda_1(E,a)).\]
And use
\[ \lambda_1(E,a)^{-1}(\lambda_1(E,\bar{a}) - \lambda_1(E,a)) \leq C|E|^{-\frac{1}{d}}.\]
By the previous \lref{uE-baruE}
\[ \|u_E - \bar{u}_E\|_{L^2(E)} \leq C(\Lambda,d) |E|^{-\frac{1}{4d}} \]
so we can conclude with a triangle inequality.

\end{proof}

\begin{proof}[Proof of \pref{eigenfunction-conv}]
By assumption $|U|^{\frac{1}{2}+\frac{1}{d}}\|\grad u_U\|_{L^\infty} \leq L$ and by the explicit form of $\bar{u}_E$ as a linear transformation of the principal eigenfunction of $-\Delta$ on $B_1$ also $|E|^{\frac{1}{2}+\frac{1}{d}}\|\grad \bar{u}_E\|_{L^\infty} \leq C(\Lambda,d)$.  Therefore 
\[ \|u_U - \bar{u}_E\|_{L^\infty} \leq C\|\grad (u_U - \bar{u}_E)\|_{L^\infty}^{\frac{d}{d+2}}\|u_U - \bar{u}_E\|_{L^2}^{\frac{2}{d+2}}.\]
So
\[ |U|^{\frac{1}{2}}\|u_U - \bar{u}_E\|_{L^\infty} \leq C(\Lambda,d,L)\|u_U - \bar{u}_E\|_{L^2}^{\frac{2}{d+2}}\]
and applying \lref{L2close-UE} finishes the proof.
\end{proof}

\section{Regularity theory of augmented functional minimizers}\label{s.augmented-reg}
In this main section of the paper we establish regularity and large volume asymptotics results for domain minimizers of the augmented functional
\begin{equation}\label{e.Jg-general-def}
 J_g(U,a) = \lambda_1(U,a) + \int_U g(x) dx.
 \end{equation}
We also recall the effective functional
\[ J_g(U,\bar{a}) = \lambda_1(U,\bar{a}) + \int_U g(x) dx.\]
The slight generalization to allow $x$-dependence in the volume term will be useful for application to the hard constraint problem in \sref{relation}.  

Our aim is to consider minimization of these functionals on $\R^d$, however the existence of minimizers does not follow from the a general theorem since the embedding $H^1(\R^d) \hookrightarrow L^2(\R^d)$ is not compact.  Thus we will need to explain existence during the course of the proof and so it will be useful to consider the same functionals also set on a large torus $N\T^d$, i.e. $\R^d / N\Z^d$ for some integer $N \gg 1$.

We make the following assumptions on $g:\R^d \to \R$ which is meant to be a perturbation of a constant value $\mu>0$: 
\begin{itemize}
\item (Scaled ellipticity) For some $\gamma \geq0$
\begin{equation}\label{e.g-hyp1}
 (1+\gamma)^{-1} \leq g(x)/\mu \leq 1+\gamma. 
 \end{equation}
\item (Scaled Dini continuity) There is a modulus of continuity $\omega_g(s) \geq |s|$ such that $\omega_g^{\frac{1}{d+4}}$ is a Dini modulus and
\begin{equation}\label{e.g-hyp2}
 |g(x) - g(y)| \leq \mu\omega_g (\mu^{\frac{1}{d+2}}|x-y|).
 \end{equation}
\item (Minimizer localizing property) There is an $R_g>0$ such that
\begin{equation}\label{e.g-hyp3}
  g(x) \equiv \sup_{\R^d} g \ \hbox{ for } \ |x| \geq R_g.
  \end{equation}
\end{itemize}
\begin{remark}\label{r.g-hyp4}
When we are considering minimization on a large torus $\R^d \bmod N\Z^d$ we will assume that $N \geq 2R_g$ and define $g$, abusing notation, to be the periodic extension of $g|_{[-N/2,N/2)^d}$.  Which still satisfies the first two assumptions because it is constant in a neighborhood of $\partial [-N/2,N/2)^d$ by the third assumption.
\end{remark}
\begin{remark}
The Dini modulus assumption is not just to show the extent of the method, we actually need it later in the paper.  Periodic oscillatory $g$ could be considered actually, as in \cite{FeldmanReg}, but that would be extraneous for the main goals of this paper.  The condition $\omega_g(s) \geq |s|$ is just for convenience so that additional linear error terms can be absorbed into $\omega_g$ as a ``worst case" modulus.  The power $\omega_g^{\frac{1}{d+4}}$ needing to be a Dini modulus is probably not sharp. The precise origin is the large scale viscosity solution property proved by De Silva and Savin \cite{DeSilvaSavin} for almost minimizers resulting in their Lemma 4.5. 
\end{remark}

For the full free boundary regularity, we will also need to assume $\gamma$ is close to $1$ so that the minimizers of the effective functional are close to ellipsoids and we can guarantee the free boundary regularity.

Our main result in this section is the following regularity theory:
\begin{theorem}\label{t.augmented-full}
Suppose $U$ minimizes $J_g$ from \eref{Jg-general-def} over quasi-open subset of $\R^d$ \emph{or} $N\T^d$ for some integer $N \geq R_g \vee \mu^{-\frac{1}{d+2}}$. Call $u_U$ to be the corresponding principal Dirichlet eigenfunction on $U$ of $-\grad \cdot (a(x) \grad\cdot)$. Then $|U| \sim_{d,\Lambda} \mu^{-\frac{d}{d+2}}$ and:
\begin{enumerate}[label = (\roman*)]
\item\label{part.1} The principal eigenfunction $u_U$ is Lipschitz with (scale invariant) bound
\[ |U|^{\frac{1}{2}+\frac{1}{d}}\|\grad u_U\|_{L^\infty} \leq C(\Lambda,d,\gamma,\omega_g,\|\grad a\|_\infty).\]
\item\label{part.2} The principal eigenfunction $u_U$ is non-degenerate with (scale invariant) bound
\[ |U|^{\frac{1}{2}+\frac{1}{d}}\sup_{y\in B_r(x)} \frac{u_U(y)}{r} \geq c(\Lambda,d,\gamma,\omega_g,\|\grad a\|_\infty) \] 
for all  $x \in \partial U$ and $0 < r \leq |U|^{\frac{1}{d}}$.
\item\label{part.4} There is universal $\gamma_0>0$ so that if $\gamma \leq \gamma_0$ then $U$ is a (large scale) Lipschitz domain in the sense of \dref{large-scale-lipschitz} with parameters $(r_0,1,h)$ and $(r_0,h)$ depend only on $(\Lambda,d,\omega_g,\|\grad a\|_\infty)$.
\item\label{part.5} Let $\gamma \leq \gamma_0$ as in \partref{4}.  For any $\eta>0$ there is $C(\Lambda,d,\omega_g,\|\grad a\|_\infty,\eta)\geq 1$ so that
\[ |U|^{\frac{2}{d}}|\lambda_1(U,a) - \inf_{|V| = |U|} \lambda_1(V,\bar{a})| \leq C\left[\gamma+|U|^{-\frac{1}{d}}|\log(1+|U|)|^{\frac{1}{2}+\eta}\right] \]
and
\[ |U|^{2/d}(\lambda_1(U,\bar{a}) - \lambda_1(U,a)) \leq  C\left[\gamma+|U|^{-\frac{1}{d}}|\log(1+|U|)|^{\frac{1}{2}+\eta}\right].\]
\end{enumerate}
\end{theorem}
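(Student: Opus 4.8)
The plan is to prove \tref{augmented-full} by first pinning down the volume and the basic non-degeneracy/density properties, then bootstrapping from weak regularity to Lipschitz regularity using the one-phase almost minimizer theory of \sref{almost-min-reg-review}, and finally extracting the quantitative homogenization estimates.

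\emph{Step 1: Reduce to an almost minimization property for a Bernoulli-type functional.} The starting point is the classical observation that if $U$ minimizes $J_g(\cdot,a)$ then the principal eigenfunction $u_U$ is a \emph{quasi-minimizer} of a functional of the type $\mathcal{J}_Q(w,D)$ in \sref{almost-min-reg-review}. Concretely, if $U$ minimizes $J_g$ and $v\in u_U+H^1_0(B_\rho)$ is a competitor, then $\{v>0\}$ is an admissible competitor domain and comparing $\lambda_1$ (via the Rayleigh quotient, using $u_U$ as a near-optimal test function on $\{v>0\}$) against $J_g(U,a)$ produces, after normalizing $\|u_U\|_{L^2}$, an inequality of the form $\mathcal{J}_{Q}(u_U,B_\rho)\le \mathcal{J}_{Q}(v,B_\rho)+(\text{lower order})|B_\rho|$ with $Q^2 = g/\lambda_1(U,a)$ up to the normalization. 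The scaling $|U|\sim_{d,\Lambda}\mu^{-d/(d+2)}$ follows by plugging an ellipsoid of the right volume into $J_g$ and using Faber--Krahn for $\bar a$ together with ellipticity of $a$; this is the content of the first sentence of the theorem and is a short computation. After rescaling to unit volume (the $\tilde u_U$ normalization from \sref{scalings-and-regularity}), the $\gamma$-ellipticity hypothesis \eref{g-hyp1} on $g$ translates to $(1+\gamma')^{-1}\le Q\le 1+\gamma'$, and the Dini hypothesis \eref{g-hyp2} gives the modulus $\omega$ needed for \tref{REGmain2}.

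\emph{Step 2: Regularity bootstrap.} With the quasi-minimality in hand, I would apply \tref{almostmin-lip-est} to get the large scale Lipschitz estimate \partref{1}, which using \aref{a3} upgrades to the pointwise gradient bound $|U|^{1/2+1/d}\|\grad u_U\|_\infty\le C$. Once we know $\|\grad u_U\|_\infty\le L$ universal, the almost minimality constant $\sigma$ (after unit-volume rescaling) is small for $|U|$ large, so \pref{AM-misc-reg} applies and yields \partref{2} (weak and strong non-degeneracy) and the inner/outer density and boundary strip estimates — i.e. $U$ is weakly regular in the sense of \dref{weakly-regular}. This already lets us run \cref{suboptimal-eigen-conv} to get a sub-optimal $|U|^{-\alpha}$ homogenization rate for $\lambda_1(U,a)$. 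Feeding that rate into \pref{eigenfunction-conv} shows $u_U$ is $L^\infty$-close (with an algebraic rate in $|U|$) to $\bar u_E$ for the effective-problem optimal ellipsoid $E$; since $\bar u_E$ is an explicit linear image of the first Dirichlet eigenfunction of $B_1$, it is flat at the free boundary at every scale, with flatness constant controlled by $\gamma$ (the effective minimizer is within $O(\gamma)$ of an ellipsoid — this is the reason for the smallness hypothesis $\gamma\le\gamma_0$). Thus at a large scale $R\sim|U|^{1/d}$ the hypothesis of \tref{REGmain2} is met: $u_U$ is $\delta$-flat in the $\bar a$-adapted half-plane sense with $\delta\lesssim \gamma + |U|^{-\beta}$. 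Applying \tref{REGmain2} at every boundary point gives Lipschitz (indeed $C^{1,\alpha}$) regularity of $\partial\{u_U>0\}$ at all scales above $1$, which is exactly the $(r_0,1,h)$ large scale Lipschitz domain conclusion \partref{4}, with $h$ depending on the unit-scale breakdown of the regularity.

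\emph{Step 3: The quantitative estimate \partref{5}.} Now $U$ satisfies all the hypotheses of \lref{interior-approx} and \cref{quanthom-large-scale-lip}, so $|U|^{2/d}(\lambda_1(U,\bar a)-\lambda_1(U,a))_+\le C|U|^{-1/d}|\log(2+|U|)|^{1/2+\eta}$, which is the second displayed inequality. For the first, I need a matching bound on $|U|^{2/d}(\lambda_1(U,a)-\inf_{|V|=|U|}\lambda_1(V,\bar a))$. The minimality of $U$ for $J_g$ gives $J_g(U,a)\le J_g(\bar a^{1/2}B,a)$ for the optimal effective ellipsoid $B$ of volume $|U|$; expanding, using the $\gamma$-closeness of $g$ to $\mu$ and \cref{optimal-eigen-conv} applied to the (Lipschitz) ellipsoid to replace $\lambda_1(\bar a^{1/2}B,a)$ by $\lambda_1(\bar a^{1/2}B,\bar a)$, yields the upper bound $\lambda_1(U,a)\le \inf_{|V|=|U|}\lambda_1(V,\bar a)+C[\gamma+|U|^{-1/d}|\log|U||^{1/2+\eta}]|U|^{-2/d}$ after controlling the volume term $\mu(|U|-|\bar a^{1/2}B|)$ (here one uses that $|U|$ is close to the $J_\mu$-optimal volume, which is where the $\gamma$ appears). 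Combining the two directions gives \partref{5}.

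\emph{Main obstacle.} The delicate step is Step 2: turning the algebraic-rate $L^\infty$ closeness of $u_U$ to $\bar u_E$ into a verified hypothesis of \tref{REGmain2}. This requires (a) the non-degeneracy from \pref{AM-misc-reg} to ensure $0\in\partial\{u_U>0\}$ is genuinely a flat free boundary point rather than the eigenfunction being small for other reasons, (b) checking the almost minimality with the specific Dini modulus $\omega$ at the \emph{fixed-coefficient} level (hypothesis (ii) of \tref{REGmain2}, which is where $\omega_g^{1/(d+4)}$ Dini is used), and (c) arranging the flatness constant $\delta$ to be below the threshold $\delta_0$ uniformly — this forces the restriction $\gamma\le\gamma_0$ and also requires $|U|$ large, so the statement is genuinely asymptotic in the volume. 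Carefully tracking that all constants in this chain depend only on $(\Lambda,d,\omega_g,\|\grad a\|_\infty)$ (and $\eta$), and not on $N$ in the torus formulation, is the bookkeeping core of the argument.
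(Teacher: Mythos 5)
Your proposal is correct and follows essentially the same route as the paper: reduce to an almost-minimality property for a Bernoulli-type functional, run the (large-scale) Lipschitz estimate and non-degeneracy/density theory of \sref{almost-min-reg-review}, use sub-optimal homogenization to certify the initial flatness hypothesis of \tref{REGmain2}, and then close with the Kenig--Lin--Shen rate via \cref{quanthom-large-scale-lip}. Two small points of bookkeeping that the paper handles separately and you compress: the $L^\infty$ bound on $u_U$ (\pref{linfty} / \cref{linfty}) is a needed input to the error term in the almost-minimality lemma before the Lipschitz estimate can be run, and the normalization in the paper is $w_U=\mu^{-1/2}u_U$ with $Q^2=g/\mu$ so that $Q$ stays of order one without a further domain rescaling --- your $Q^2=g/\lambda_1(U,a)$ phrasing only produces an order-one $Q$ after the unit-volume rescaling you invoke, which is equivalent but worth making explicit.
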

Note that \tref{main-aug} follows directly from this statement taking $\gamma = 0$, i.e. $g \equiv \mu$.  Also note that we made the statement for all $\mu>0$, but parts \partref{4} and \partref{5} do not have interesting content when $\mu>0$ large and/or $|U|$ small.

\subsection{Outline of the arguments}
We structure the following section as an outline but this is also our presentation of the proof of \tref{augmented-full}.  Within the outline we will state numerous Lemmas and Propositions.  These will either be citations of results from the literature, or will require some proof which will then be provided in the latter parts of \sref{augmented-reg}.

We explain the sequence of regularity estimates which are needed. The first big goal is the (large scale) Lipschitz regularity of the eigenfunction, then the (large scale) Lipschitz regularity of the domain, and finally linear (with logarithmic factors) homogenization rates.  This will involve applications of regularity theory reviewed in \sref{almost-min-reg-review}. We will need sub-optimal quantitative homogenization estimates, recalled in \sref{L2theory}, in order to establish initial flatness needed for higher free boundary regularity.  Then with the higher regularity (Lipschitz) we can apply optimal quantitative homogenization estimates.

{\bf Step 1.} ($L^\infty$ estimate) Obtain an $L^\infty$ estimate of the eigenfunctions.  We can quote a result from \cite{RussTreyVelichkov} (Lemma 5.4) and Trey~\cite{Trey} (Lemma 2.1 and Lemma 3.2).

\begin{proposition}[\cite{RussTreyVelichkov}, \cite{Trey}]\label{p.linfty}
Suppose that $a(x)$ is a uniformly elliptic matrix field.  There is $n(d) \in \mathbb{N}$ and $C(d,\Lambda) \geq 1$ so that if $u_\Omega$ is the first Dirichlet eigenfunction for the operator $- \grad \cdot (a(x) \grad)$ on a bounded quasi-open set $\Omega$ of volume $|\Omega| = 1$ then
\[ \|u_\Omega\|_{L^\infty} \leq C\lambda_1(\Omega,a)^n\]
\end{proposition}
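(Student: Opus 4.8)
The plan is a standard local boundedness estimate for the eigenvalue equation, exploiting crucially that the inhomogeneity is the zeroth-order term $\lambda_1 u_\Omega$ with a \emph{constant} coefficient. First I would fix the normalizations: the minimizer defining $u_\Omega$ is attained with $\|u_\Omega\|_{L^2(\Omega)} = 1$, it may be taken nonnegative, and it satisfies the Euler--Lagrange equation $-\grad\cdot(a(x)\grad u_\Omega) = \lambda_1(\Omega,a)\,u_\Omega$ on $\Omega$. Moreover, by ellipticity and the Faber--Krahn inequality on the ball $B_\ast$ of unit volume,
\[ \lambda_1(\Omega,a) \geq \Lambda^{-1}\lambda_1(\Omega,\textup{id}) \geq \Lambda^{-1}\lambda_1(B_\ast,\textup{id}) =: c_0(d,\Lambda) > 0, \]
so any bound of the form $\|u_\Omega\|_{L^\infty} \leq C\lambda_1(\Omega,a)^{s}$ with non-integer $s>0$ upgrades to the integer exponent $n(d) = \lceil s \rceil$ at the cost of a factor depending only on $(d,\Lambda)$.

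The cleanest route I see is through the Dirichlet heat semigroup. Let $A_\Omega$ be the self-adjoint realization of $-\grad\cdot(a(x)\grad\cdot)$ with form domain $H^1_0(\Omega)$; this is well defined for quasi-open $\Omega$ since $(v,w)\mapsto\int_\Omega a\grad v\cdot\grad w$ is a closed, regular Dirichlet form on $L^2(\Omega)$. Since $A_\Omega u_\Omega = \lambda_1 u_\Omega$ we have $u_\Omega = e^{t\lambda_1}e^{-tA_\Omega}u_\Omega$ for every $t>0$. The semigroup $e^{-tA_\Omega}$ has a nonnegative integral kernel $p^\Omega_t$, and by domain monotonicity (form-domain inclusion $H^1_0(\Omega)\subset H^1(\R^d)$) one has $0 \leq p^\Omega_t(x,y) \leq p^{\R^d}_t(x,y)$, the heat kernel of $-\grad\cdot(a\grad\cdot)$ on $\R^d$. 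By the Aronson/De Giorgi--Nash Gaussian upper bound, $p^{\R^d}_t(x,y) \leq C(d,\Lambda)\,t^{-d/2}$, while $\int_{\R^d} p^{\R^d}_t(x,y)\,dy \leq 1$ by sub-Markovianity; hence
\[ \int_\Omega p^\Omega_t(x,y)^2\,dy \leq \|p^\Omega_t(x,\cdot)\|_{L^\infty}\int_{\R^d} p^{\R^d}_t(x,y)\,dy \leq C(d,\Lambda)\,t^{-d/2}. \]
By Cauchy--Schwarz, $\|e^{-tA_\Omega}u_\Omega\|_{L^\infty} \leq C(d,\Lambda)^{1/2}\,t^{-d/4}\|u_\Omega\|_{L^2} = C(d,\Lambda)^{1/2}\,t^{-d/4}$, so $\|u_\Omega\|_{L^\infty} \leq C(d,\Lambda)^{1/2}\,e^{t\lambda_1}\,t^{-d/4}$ for all $t>0$. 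Choosing $t = \lambda_1^{-1}$ gives $\|u_\Omega\|_{L^\infty} \leq e\,C(d,\Lambda)^{1/2}\lambda_1^{d/4}$, and the lower bound $\lambda_1 \geq c_0$ then yields the claimed estimate with $n(d) = \lceil d/4\rceil$.

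An elementary alternative, avoiding heat kernels, is direct Moser iteration: extending $u_\Omega$ by zero produces a nonnegative weak subsolution of $-\grad\cdot(a\grad u) = \lambda_1 u$ on $\R^d$ with bounded support, and testing with powers $u^{2\beta-1}$ together with the Sobolev inequality for $u^\beta \in H^1_0$ of a bounded set yields the recursion $\|u\|_{L^{\chi p}} \leq (C(d,\Lambda)\,\beta\,\lambda_1)^{1/(2\beta)}\|u\|_{L^p}$ with $\chi = d/(d-2)$ and $\beta = p/2$; iterating from $p=2$ the accumulated constant is a fixed power $C(d,\Lambda)\,\lambda_1^{d/4}$, with the obvious modifications for $d\leq 2$. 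Either way the result is essentially routine, and the only genuine point of care is that this elliptic/semigroup machinery applies verbatim on \emph{quasi-open} domains with constants depending only on $(d,\Lambda)$ --- which is precisely what is carried out in \cite{RussTreyVelichkov,Trey}, so we simply quote the result from there.
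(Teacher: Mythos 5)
Your proposal is correct, but it takes a genuinely different route from the paper.  The paper sketches a resolvent-bootstrap argument: writing $u_\Omega=\lambda_1^n T(a,\Omega)^n u_\Omega$ and iterating the Lebesgue-exponent gain of $T(a,\Omega)$ (Sobolev embedding $H^1_0\hookrightarrow L^{2^*}$ for the first step, then De Giorgi--Nash--Moser for $L^p\to L^\infty$ when $p>d/2$, as in the cited lemmas of Russ--Trey--Velichkov and Trey), landing in $L^\infty$ after $n=n(d)$ steps.  Your primary argument instead uses the Dirichlet heat semigroup: the identity $u_\Omega = e^{t\lambda_1}e^{-tA_\Omega}u_\Omega$, domain monotonicity $p^\Omega_t\leq p^{\R^d}_t$, the Aronson on-diagonal bound $p^{\R^d}_t\leq Ct^{-d/2}$, sub-Markovianity, Cauchy--Schwarz, and then optimizing $t=\lambda_1^{-1}$.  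This reaches the exponent $d/4$ in a single clean step rather than by iteration, at the cost of invoking Gaussian heat-kernel bounds rather than only the De Giorgi--Nash machinery; your fallback Moser iteration is essentially the same engine as the paper's $L^p\to L^\infty$ resolvent step, unwound.  Both are standard and both deliver the theorem; the only places where care is needed are exactly the ones you flag (kernel domination and the subsolution extension are valid on quasi-open sets, and the Faber--Krahn lower bound $\lambda_1\geq c_0(d,\Lambda)$ lets you round $d/4$ up to an integer), so the proposal has no gaps.
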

The proof is essentially to notice that the solution / resolvent operator $T(a,U) f$ which maps $f \in L^2(U)$ to the variational $H^1_0(U)$ solution of $- \grad \cdot (a(x) \grad v ) = f$, by Sobolev embedding, maps $T(a,U) : L^2(U) \to L^{p}(U)$ for $2 \leq p \leq \frac{2d}{d-2}$ (strict inequality in $d=2$).  On the other hand one can also prove that $T(a,U) : L^p(U) \to L^\infty(U)$ for $p > d/2$ (see \cite{Trey}[Lemma 2.1]).  So some power $T(a,U)^{n(d)}$ maps $L^2(U) \to L^\infty(U)$.

By rescaling a domain $\Omega$ to $\tilde{\Omega} = |\Omega|^{-\frac{1}{d}}\Omega$ we get, for a quasi-open $\Omega$
\begin{equation}
 |\Omega|^{\frac{1}{2}}\|u_{\Omega}\|_{L^\infty} \leq C\lambda_1(\tilde{\Omega},a)^n
 \end{equation}
 Combining this with \lref{scaling-bounds} we find:
 \begin{corollary}\label{c.linfty}
 Suppose that $a(x)$ is a uniformly elliptic matrix field and $U$ is a $J_g(\cdot,a)$ minimizer then
 \[ |U|^{1/2}\|u_{U}\|_{L^\infty} \leq C(\Lambda,d). \]
 \end{corollary}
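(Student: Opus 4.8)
The plan is to reduce to the unit-volume normalization, where \pref{linfty} applies off the shelf. I would pass to the rescaled domain $\tilde U = |U|^{-1/d}U$: it is again quasi-open, has $|\tilde U| = 1$, and its principal eigenfunction $\tilde u_U$ solves the Dirichlet eigenvalue problem for $-\grad\cdot(a(|U|^{1/d}x)\grad\cdot)$, a coefficient field with the \emph{same} ellipticity constant $\Lambda$. The scaling identities recorded in \sref{scalings-and-regularity} give $\|\tilde u_U\|_{L^\infty} = |U|^{1/2}\|u_U\|_{L^\infty}$ and $\lambda_1(\tilde U, a(|U|^{1/d}\cdot)) = |U|^{2/d}\lambda_1(U,a)$. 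Applying \pref{linfty} to $\tilde U$ then yields
\[ |U|^{1/2}\|u_U\|_{L^\infty} \;\le\; C(d,\Lambda)\,\big(|U|^{2/d}\lambda_1(U,a)\big)^{n(d)} , \]
so the corollary is equivalent to a universal upper bound on the scale-invariant eigenvalue $|U|^{2/d}\lambda_1(U,a)$.

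That bound is precisely (the relevant part of) \lref{scaling-bounds}, which I would prove by testing the minimality of $U$ against a ball. Since $a \le \Lambda I$ and $g \le \sup_{\R^d} g$, for every radius $R$ one has $J_g(B_R,a) \le \Lambda\lambda_1(B_1,\textup{id})R^{-2} + (\sup_{\R^d} g)\,|B_R|$; optimizing the choice of $R$ (which is of order $\mu^{-1/(d+2)}$) and using $J_g(U,a) \le J_g(B_R,a)$ gives $J_g(U,a) \le C(d,\Lambda)\,\mu^{2/(d+2)}$, up to the harmless factor $1+\gamma$ coming from $\sup g \le (1+\gamma)\mu$. Because $\lambda_1(U,a)$ and $\int_U g \ge c(d)\,\mu\,|U|$ are nonnegative and each bounded by $J_g(U,a)$, we get simultaneously $\lambda_1(U,a) \le C\mu^{2/(d+2)}$ and $|U| \le C\mu^{-d/(d+2)}$, hence $|U|^{2/d}\lambda_1(U,a) \le C(d,\Lambda)$. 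Substituting into the displayed inequality finishes the proof.

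There is no serious obstacle here: the analytic heart — iterating the Dirichlet resolvent through the Sobolev scale — is contained in the cited \pref{linfty}, and the only thing requiring attention is scale-invariant bookkeeping of constants. Concretely, one must apply \pref{linfty} \emph{after} normalizing $|U| = 1$ (so that its output constant is genuinely universal), and one must check that the ball-comparison step uses nothing beyond uniform ellipticity and the bound $\sup g \lesssim \mu$, so that no dependence on $\omega_g$, $\|\grad a\|_\infty$, or (beyond the benign $1+\gamma$) on $\gamma$ enters.
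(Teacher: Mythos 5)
Your proof follows essentially the same route as the paper: rescale to unit volume so that \pref{linfty} yields $|U|^{1/2}\|u_U\|_{L^\infty} \le C(|U|^{2/d}\lambda_1(U,a))^n$, then bound the scale-invariant eigenvalue by the ball-comparison argument that is precisely the content of \lref{scaling-bounds}. The one thing worth noting is that, as you correctly flag and as the paper's \lref{scaling-bounds} makes explicit, the comparison constant for general $J_g$ does carry a $\gamma$-dependence through $\sup g \le (1+\gamma)\mu$, so the displayed constant $C(\Lambda,d)$ is really $C(\Lambda,d,\gamma)$ unless one regards $\gamma$ as bounded a priori; this is a minor imprecision already present in the paper's statement.
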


{\bf Step 2.} (Almost minimality property) Next we show that the normalized eigenfunctions $w_U:= \mu^{-1/2}u_U$ associated with $J_g$ minimizing domains $U$ are almost minimizers of a Bernoulli-type energy functional
\[ \mathcal{J}_{g/\mu}(w,\Omega) = \int_{\Omega}\frac{1}{2}\grad w \cdot a(x) \ \grad w + \mu^{-1}g(x){\bf 1}_{\{w>0\}} dx.\]
 This is a standard idea in the study of eigenvalue shape optimization problems.  The error in the almost minimization property will depend on the $L^\infty$-norm of the eigenfunction which we have control on by \pref{linfty}.

\begin{lemma}\label{l.almostminprop}
Suppose that $U$ minimizes $J_g(\cdot,a)$, call $w_U = \mu^{-1/2}u_U$.  Call $R_0 = c(\lambda,d)|U|^{\frac{1}{d}}$ for a small enough $c(\Lambda,d)>0$. Then, for any $0 < r \leq R_0$ and $v \in w_U + H^1_0(B_r(x_0))$ the $a$-harmonic replacement of $w_U$ in $B_r$:

\[ \mathcal{J}_{g/\mu}(w_U,B_r) \leq \big(1+r/R_0\big)\mathcal{J}_{g/\mu}(v,B_r)+(r/R_0)|B_r|. \]  

If, furthermore, $w_U$ is $L$-Lipschitz then for {\bf any} $v \in w_U + H^1_{0}(B_r(x_0))$
\[ \mathcal{J}_{g/\mu}(w_U,B_r) \leq  \mathcal{J}_{g/\mu}(v,B_r)+(r/R_0)|B_r| \]
now for $R_0 = c(\Lambda,d,L)|U|^{\frac{1}{d}}$ with a sufficiently small $c(\Lambda,d,L)>0$.
\end{lemma}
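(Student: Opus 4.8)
The plan is to follow the classical device (cf.\ \cite{BrianconLamboley,VelichkovBook}) of testing the minimality of $U$ against the positivity set of a local modification of $w_U$, while carefully bookkeeping the $L^2$-mass defect and the dependence on $\mu$. Fix $B_r=B_r(x_0)$ with $r\le R_0$ and a competitor $v$. First I would reduce to $0\le v\le\|w_U\|_{L^\infty}$: replacing $v$ by $v_+$ and then by $\min(v,\|w_U\|_{L^\infty})$ does not increase the $B_r$-localized Bernoulli energy, leaves the boundary trace on $\partial B_r$ and the set $\{v>0\}$ unchanged, and for the $a$-harmonic replacement both bounds hold automatically by the maximum principle. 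Extending $v$ by $w_U$ outside $B_r$ produces $v^{\mathrm{ext}}\in H^1(\R^d)$ (resp.\ $H^1(N\T^d)$) whose positivity set $\Omega_v$ is quasi-open and agrees with $U=\{w_U>0\}$ outside $B_r$. Since $v^{\mathrm{ext}}\in H^1_0(\Omega_v)$, the variational characterization gives $\lambda_1(\Omega_v,a)\le\|v^{\mathrm{ext}}\|_{L^2}^{-2}\int a\grad v^{\mathrm{ext}}\cdot\grad v^{\mathrm{ext}}$, whereas $w_U$ realizes equality in the eigenvalue quotient for $U$. Feeding $\Omega_v$ into $J_g(U,a)\le J_g(\Omega_v,a)$ and cancelling the contributions outside $B_r$, I would obtain, writing $D(w)=\int_{B_r}\grad w\cdot a\grad w$, $M(w)=\int_{B_r}g\,{\bf 1}_{\{w>0\}}$ (so $\mathcal J_{g/\mu}(w,B_r)=D(w)+\mu^{-1}M(w)$) and with the mass defect $\delta_v=\mu\int_{B_r}(v^2-w_U^2)$, the single inequality
\[ \mathcal J_{g/\mu}(w_U,B_r)-\mathcal J_{g/\mu}(v,B_r)\ \le\ \mu^{-1}\delta_v\big[\,(M(v)-M(w_U))-\lambda_1(U,a)\,\big].\]
Cross-multiplying by $1+\delta_v=\mu\|v^{\mathrm{ext}}\|_{L^2}^2$ is legitimate since this is strictly positive: for $r\le R_0$ the support $U$ of $w_U$ is not contained in $B_r$.

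The key observation would then be that the bracket on the right is always negative. By \cref{linfty} and the a priori volume and eigenvalue scalings $|U|\sim_{\Lambda,d}\mu^{-d/(d+2)}$, $\lambda_1(U,a)\sim_{\Lambda,d}|U|^{-2/d}$ proved earlier in this section, one has $M(v)\le\|g\|_{L^\infty}|B_r|\le(1+\gamma)\mu|B_{R_0}|\lesssim(1+\gamma)\,c^{\,d}\mu|U|\sim c^{\,d}\lambda_1(U,a)$ when $R_0=c|U|^{1/d}$, so picking $c$ small forces $M(v)-M(w_U)<\lambda_1(U,a)$. Hence if $\delta_v\ge0$ the right-hand side is $\le0$ and $w_U$ is an \emph{exact} minimizer on $B_r$; only mass-decreasing competitors ($\delta_v<0$) can contribute a positive error, and it is at most $\mu^{-1}|\delta_v|\,\lambda_1(U,a)$. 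To control $\mu^{-1}|\delta_v|=\int_{B_r}(w_U^2-v^2)\le2\|w_U\|_{L^\infty}\|w_U-v\|_{L^1(B_r)}$ I would use the $L^\infty$ bound $\|w_U\|_{L^\infty}\le C\mu^{-1/2}|U|^{-1/2}$ from \cref{linfty}, Cauchy--Schwarz, and the Poincar\'e inequality $\|w_U-v\|_{L^1(B_r)}\le|B_r|^{1/2}C_P r\|\grad(w_U-v)\|_{L^2(B_r)}$, reducing everything to a bound on $\|\grad(w_U-v)\|_{L^2(B_r)}$.

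For the second assertion, with $w_U$ assumed $L$-Lipschitz (equivalently $\|\grad w_U\|_{L^\infty}\le L$ in the present units), I may assume $\mathcal J_{g/\mu}(v,B_r)\le\mathcal J_{g/\mu}(w_U,B_r)$ (else there is nothing to prove); this bounds $\int_{B_r}|\grad v|^2\lesssim_{\Lambda}D(w_U)+|B_r|\lesssim_{\Lambda,L}|B_r|$, hence $\|\grad(w_U-v)\|_{L^2(B_r)}\lesssim_{\Lambda,L}|B_r|^{1/2}$, so that the error above is $\lesssim_{\Lambda,L}\mu^{-1/2}|U|^{-1/2}\,r\,|B_r|\,\lambda_1(U,a)\sim|U|^{-1/d}r^{\,d+1}$, which is $\le(r/R_0)|B_r|$ once $c=c(\Lambda,d,L)$ is small. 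For the first assertion $w_U$ is not yet known to be Lipschitz, so I would instead exploit the defining property of the $a$-harmonic replacement: the orthogonality $\int_{B_r}a\grad v\cdot\grad(w_U-v)=0$ yields $\|\grad(w_U-v)\|_{L^2(B_r)}^2\lesssim_{\Lambda}D(w_U)-D(v)$, and $D(w_U)-D(v)=Z+\mu^{-1}(M(v)-M(w_U))$ with $Z:=\mathcal J_{g/\mu}(w_U,B_r)-\mathcal J_{g/\mu}(v,B_r)$ and $\mu^{-1}(M(v)-M(w_U))\le(1+\gamma)|B_r|$. Plugging this into $Z\le\mu^{-1}|\delta_v|\lambda_1(U,a)$ gives a self-improving quadratic inequality $Z\le B(Z+C)^{1/2}$ with $B\sim\mu^{-1/2}|U|^{-1/2}\,r\,|B_r|^{1/2}\lambda_1(U,a)$ and $C\sim|B_r|$, hence $Z\lesssim B^2+B\sqrt C\lesssim|U|^{-1/d}r^{\,d+1}$ for $c$ small. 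Finally, since $v>0$ throughout $B_r$ by the strong maximum principle, $\mathcal J_{g/\mu}(v,B_r)\ge(1+\gamma)^{-1}|B_r|$, so this additive error can be absorbed into $(r/R_0)\big(\mathcal J_{g/\mu}(v,B_r)+|B_r|\big)$, producing the claimed $\mathcal J_{g/\mu}(w_U,B_r)\le(1+r/R_0)\mathcal J_{g/\mu}(v,B_r)+(r/R_0)|B_r|$.

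The main obstacle will not be any single estimate but the bookkeeping: every error term has to be chased down to its exact power of $\mu$ (equivalently of $|U|^{1/d}$ and $r$), and the free small constant $c$ in $R_0=c|U|^{1/d}$ must be tuned at once to make the bracket $(M(v)-M(w_U))-\lambda_1(U,a)$ negative and to absorb the mass-defect error into $(r/R_0)|B_r|$. The one genuinely structural point is that in the first assertion, before any Lipschitz information is available, $D(w_U)$ cannot be controlled by $|B_r|$, so the $a$-harmonicity of the competitor---via the energy identity and the resulting quadratic inequality---is essential to close the argument.
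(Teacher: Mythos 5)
Your proposal is correct and reaches the same conclusion by the same basic device (compare $J_g(U,a)\le J_g(\Omega_v,a)$ for the modified positivity set, bookkeep the $L^2$ mass defect, rescale), but it organizes the error in a genuinely different — and arguably cleaner — way. Where the paper defines $Z=\min\{1,(\|u\|^2_{L^2(\R^d\setminus B_r)}+\|v\|^2_{L^2(B_r)})^{1/2}\}$ and arrives at the additive error $(Z^{-2}-1)\int a\grad v\cdot\grad v$, you cross-multiply by $1+\delta_v$ and regroup to expose the bracket $(M(v)-M(w_U))-\lambda_1(U,a)$, then observe it is negative once $R_0\le c|U|^{1/d}$, so only mass-decreasing competitors can contribute a positive error. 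This neatly avoids the $(1+\|\grad v\|^2_{L^2(B_r)}/\lambda_1)$ factor the paper has to carry and later absorb. For the $a$-harmonic replacement the paper simply uses $\int a\grad v\cdot\grad v\le\lambda_1(U,a)$; you instead use the orthogonality identity to close a self-improving quadratic inequality $Z\le B(Z+C)^{1/2}$, which is tidy and equivalent. For the general Lipschitz case the paper performs a somewhat elaborate calculus argument with the auxiliary function $f(Y)$; your WLOG reduction $\mathcal J_{g/\mu}(v,B_r)\le\mathcal J_{g/\mu}(w_U,B_r)$ (which bounds $\|\grad v\|_{L^2(B_r)}^2\lesssim_{\Lambda,L}|B_r|$ in one stroke) is shorter and gives the additive form directly. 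One small blemish: at the very end of the $a$-harmonic case you claim the error $K(r/R_0)|B_r|$ can be absorbed into $(r/R_0)\big(\mathcal J_{g/\mu}(v,B_r)+|B_r|\big)$ because $\mathcal J_{g/\mu}(v,B_r)\ge(1+\gamma)^{-1}|B_r|$ — but that lower bound only absorbs the error if $K\le1+(1+\gamma)^{-1}$, which is not automatic. The correct and simpler fix is to shrink $c$ (replace $R_0$ by $R_0/K$), after which the stronger purely additive inequality $\mathcal J_{g/\mu}(w_U,B_r)\le\mathcal J_{g/\mu}(v,B_r)+(r/R_0)|B_r|$ follows and trivially implies the multiplicative form; the strong maximum principle argument for positivity of $v$ is then unnecessary.
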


The proof is postponed to \sref{almost-min-proof} below and follows the natural idea of rescaling to maintain the $L^2$ constraint after perturbation.  There are some tricky computational ideas to make sure there are no additional requirements on the perturbation function $v \in w_U + H^1_0(B_r)$.

Let us also make note of the one additional almost minimality property.     For any $z \in H^1_0(B_r(x_0))$
  \[ |\mathcal{J}_{g/\mu}(z,B_r(x_0)) - \mathcal{J}_{g(x_0)/\mu}(z,B_r(x_0))| \leq (\osc_{B_r(x_0)} g/\mu)|B_r| \leq \omega_g(\mu^{\frac{1}{d+2}}r)|B_r|.\]
  By \lref{scaling-bounds} $\mu^{\frac{1}{d+2}} \leq C(\Lambda,d)|U|^{\frac{1}{d}}$.  So up to an alteration of the definition of $R_0$ by a universal constant we have, for $v \in w_U + H^1_0(B_r)$ the $a$-harmonic replacement,
\[ \mathcal{J}_{g(x_0)/\mu}(w_U,B_r) \leq \big(1+r/R_0\big)\mathcal{J}_{g(x_0)/\mu}(v,B_r)+\omega_g(r/R_0)|B_r|. \] 
An analogous result for arbitrary test perturbations holds when $w_U$ is Lipschitz by the same argument.

\medskip

{\bf Step 3.} (Lipschitz estimate)  Next we obtain a Lipschitz estimate of the eigenfunction.

\begin{corollary}\label{c.uU-Lip}
The principal eigenfunction $u_U$ is Lipschitz with (scale invariant) bound
\[ |U|^{\frac{1}{2}+\frac{1}{d}}\|\grad u_U\|_{L^\infty} \leq C(\Lambda,d,\gamma,\omega_g,\|\grad a\|_\infty).\]
\end{corollary}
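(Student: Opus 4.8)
The plan is to derive the Lipschitz estimate of \cref{uU-Lip} by combining the almost minimality property of the normalized eigenfunction $w_U = \mu^{-1/2}u_U$ established in \lref{almostminprop} with the large scale Lipschitz estimate for one-phase almost minimizers, \tref{almostmin-lip-est}. First I would recall from \lref{almostminprop} that, for $R_0 = c(\Lambda,d)|U|^{1/d}$, the function $w_U$ satisfies, for all $0 < r \leq R_0$,
\[ \mathcal{J}_{g/\mu}(w_U,B_r(x_0)) \leq (1+r/R_0)\mathcal{J}_{g/\mu}(v,B_r(x_0)) + (r/R_0)|B_r| \]
for $v$ the $a$-harmonic replacement of $w_U$ in $B_r(x_0)$. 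Since $g/\mu$ is bounded between $(1+\gamma)^{-1}$ and $1+\gamma$ by \eref{g-hyp1}, the coefficient field $Q = (g/\mu)^{1/2}$ in $\mathcal{J}_Q$ meets the hypotheses of \tref{almostmin-lip-est} with that value of $\gamma$.

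The key step is a rescaling to normalize the scales in a way compatible with \tref{almostmin-lip-est}, which demands the oscillation scale of the operator be $1$ and the domain radius be $R$ with $1 \leq R \leq R_0$. Here the oscillation scale of $a$ is already normalized to $1$, and the relevant large scale $R_0 = c|U|^{1/d} \sim_{\Lambda,d} \mu^{-1/(d+2)}$ exceeds $1$ precisely in the regime $\mu \ll 1$ (equivalently $|U| \gg 1$) that we care about; for $|U|$ bounded below by a universal constant this holds, and the small-volume case can be handled directly by the small-scale regularity theory from the literature (or is not of interest per the remark after \tref{augmented-full}). Applying \tref{almostmin-lip-est} on balls $B_R(x_0)$ with $R \sim R_0$ for $x_0$ ranging over $U$, I get
\[ \|\grad w_U\|_{\underline{L}^2(B_r(x_0))} \leq C(d,\Lambda,\gamma)\big(1 + \|\grad w_U\|_{\underline{L}^2(B_{R_0}(x_0))}\big) \]
for all $1 \leq r \leq R_0$, and then, using $a \in C^{0,1}$, the pointwise gradient bound $|\grad w_U(x_0)| \leq C(d,\Lambda,\gamma,\|\grad a\|_\infty)(1 + \|\grad w_U\|_{\underline{L}^2(B_{R_0}(x_0))})$.

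To close the estimate I would bound the averaged Dirichlet energy $\|\grad w_U\|_{\underline{L}^2(B_{R_0}(x_0))}$ by a universal constant. This follows from the $L^\infty$ bound $|U|^{1/2}\|u_U\|_{L^\infty} \leq C(\Lambda,d)$ of \cref{linfty} together with a Caccioppoli inequality for the eigenfunction: testing the eigenvalue equation $-\grad\cdot(a(x)\grad u_U) = \lambda_1(U,a)u_U$ against $\phi^2 u_U$ for a cutoff $\phi$ supported near $B_{2R_0}(x_0)$ gives $\int_{B_{R_0}(x_0)} |\grad u_U|^2 \leq C(\Lambda)(R_0^{-2} + \lambda_1(U,a))\int_{B_{2R_0}(x_0)} u_U^2$, and since $\lambda_1(U,a) \sim_{\Lambda,d} |U|^{-2/d} \sim R_0^{-2}$ by \lref{scaling-bounds} while $\|u_U\|_{L^\infty}^2 \leq C|U|^{-1}$, the right side is $\leq C|U|^{-2/d}|U|^{-1}|B_{R_0}| \leq C|U|^{-1-2/d+1} = C|U|^{-2/d}$; dividing by $|B_{R_0}| \sim |U|$ yields $\|\grad u_U\|_{\underline{L}^2(B_{R_0}(x_0))}^2 \leq C|U|^{-1-2/d}$, hence $\mu^{1/2}R_0^{1+d/2}$-type scalings collapse to $\|\grad w_U\|_{\underline{L}^2(B_{R_0}(x_0))} \leq C(\Lambda,d)$ after accounting for $w_U = \mu^{-1/2}u_U$ and $\mu \sim |U|^{-(d+2)/d}$. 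Feeding this back gives $|\grad w_U(x_0)| \leq C(\Lambda,d,\gamma,\|\grad a\|_\infty)$ uniformly in $x_0$, i.e. $\mu^{-1/2}\|\grad u_U\|_{L^\infty} \leq C$, which after reinserting $\mu \sim_{\Lambda,d} |U|^{-(d+2)/d}$ is exactly $|U|^{1/2+1/d}\|\grad u_U\|_{L^\infty} \leq C(\Lambda,d,\gamma,\|\grad a\|_\infty)$. I expect the main obstacle to be bookkeeping the scalings — tracking the powers of $\mu$ and $|U|$ through the rescaling $\tilde{u}_U$, the normalization $w_U = \mu^{-1/2}u_U$, and the hypothesis $1 \leq R \leq R_0$ of \tref{almostmin-lip-est} — rather than any conceptual difficulty, since all the hard analytic work (the large scale Lipschitz estimate itself) is already done in \cite{FeldmanReg}.
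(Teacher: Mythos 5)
Your proof is correct and follows the same overall structure as the paper's: invoke the almost minimality from \lref{almostminprop} at scale $R_0 \sim |U|^{1/d}$, apply the large scale Lipschitz estimate \tref{almostmin-lip-est}, and then bound $\|\grad w_U\|_{\underline{L}^2(B_{R_0})}$ by a universal constant to close the argument.

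The one place where you diverge is the final energy bound. You derive $\|\grad u_U\|_{\underline{L}^2(B_{R_0})}^2 \leq C|U|^{-1-2/d}$ via a Caccioppoli inequality for the eigenvalue equation combined with the $L^\infty$ bound of \cref{linfty}. The paper's route is shorter: since $u_U$ is the principal eigenfunction with $\|u_U\|_{L^2} = 1$, the global Dirichlet energy satisfies $\int_{\R^d}|\grad u_U|^2 \leq \Lambda\lambda_1(U,a) \sim |U|^{-2/d}$, and dividing by $|B_{R_0}| \sim |U|$ gives the same bound without any cutoff construction or appeal to \pref{linfty}. Your Caccioppoli approach is not wrong, but it re-derives locally what the variational definition already provides globally; in particular you don't actually need the $L^\infty$ bound at this stage (note $\int_{B_{2R_0}} u_U^2 \leq \|u_U\|_{L^2(\R^d)}^2 = 1$ suffices where you invoked $\|u_U\|_{L^\infty}^2|B_{2R_0}|$). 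Your remark about the small-$|U|$ regime (where $R_0 < 1$ and \tref{almostmin-lip-est} does not apply directly) is a fair observation that the paper's proof also glosses over; the stated remedy via classical small-scale almost-minimizer regularity is the right one.
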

\begin{proof}
Fix an $x_0 \in U$.  \lref{almostminprop} says that $w_U = \mu^{-1/2}u_U$ has an almost minimality property of the type used in \tref{almostmin-lip-est} (i.e. perturbations by $a$-harmonic replacement) with $Q(x) = g(x)/\mu$, which has $(1+\gamma)^{-1} \leq Q \leq 1+\gamma$, and $R_0 = c|U|^{\frac{1}{d}}$.  Therefore, applying \tref{almostmin-lip-est},
\[ |\grad w_U(x_0)| \leq C(d,\Lambda,\gamma,\omega_g,\|\grad a\|_\infty)(1+\|\grad w_U\|_{\underline{L}^2(B_{R_0})}).\]
Using the scaling relations in \lref{scaling-bounds} gives 
\[ \|\grad w_U\|_{\underline{L}^2(B_{R_0})} \leq C|U|^{-1/2}\mu^{-1/2}\|\grad u_U\|_{L^2(\R^d)} \leq C.\]
Note how important it is that we establish the almost minimality condition up to the maximum length scale $\sim |U|^{\frac{1}{d}}$ and that the Lipschitz iteration starts from this large scale not a small scale $1$.
\end{proof}

{\bf Step 4.} (Non-degeneracy, perimeter bound, and density estimates) Next we establish some initial regularity of the free boundary in the form of non-degeneracy, a Hausdorff dimension $(d-1)$ bound, and inner and outer density estimates.  This step requires the Lipschitz estimate but is otherwise quite standard. 

\begin{corollary}\label{c.weak-domain-regularity}
There are constants $\ell$, $\kappa_0$, $P$, and $I$ depending on $(\Lambda,d,\|\grad a\|_\infty,\gamma,\omega_g)$ so that 
\begin{enumerate}[label = (\roman*)]
\item The $a$-eigenfunction $u_U$ has the scale invariant $\ell$-non-degeneracy property \dref{ell-non-degen}.
\item The domain $U$ has inner/outer density bound $\kappa_0$, as in \dref{scale-inv-densities}.
\item The domain $U$ has scale invariant boundary strip area bound with constant $P$ as in \dref{perimeter-hyp}.
\item The domain $U$ can be covered by $I$ balls of radius $|U|^{\frac{1}{d}}$.
\end{enumerate}
In particular $U$ is weakly regular as in \dref{weakly-regular}.
\end{corollary}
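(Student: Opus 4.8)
The plan is to obtain all four items by recognising the rescaled principal eigenfunction $w_U := \mu^{-1/2}u_U$ as an almost minimizer of the one‑phase functional $\mathcal{J}_{g/\mu}$ and feeding it into the regularity package \pref{AM-misc-reg}, then promoting the local conclusions of \pref{AM-misc-reg} to the scale‑invariant statements of \sref{scalings-and-regularity}. First, the set‑up: by \cref{uU-Lip} and the scaling $\mu^{-1/2}\sim|U|^{\frac12+\frac1d}$ of \lref{scaling-bounds}, $w_U$ is $L$‑Lipschitz with $L$ depending only on $(\Lambda,d,\gamma,\omega_g,\|\grad a\|_\infty)$. The Lipschitz case of \lref{almostminprop} then gives, for every $y$, every $0<r\le R_0:=c|U|^{1/d}$ and every $v\in w_U+H^1_0(B_r(y))$,
\[ \mathcal{J}_{g/\mu}(w_U,B_r(y))\le \mathcal{J}_{g/\mu}(v,B_r(y))+(r/R_0)|B_r|, \]
with $Q:=g/\mu$ obeying $(1+\gamma)^{-1}\le Q\le 1+\gamma$ by \eref{g-hyp1}. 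Letting $\sigma_0=\sigma_0(L,\gamma,\Lambda,d,\|\grad a\|_\infty)$ be the threshold in \pref{AM-misc-reg} and $R_1:=\sigma_0 R_0$, the hypotheses of \pref{AM-misc-reg} hold with constant $\sigma=\sigma_0$ on every ball $D=B_{R_1}(x_0)$, since $B_r(y)\subset D$ forces $r\le R_1$ and $r/R_0\le\sigma_0$. Finally, minimality of $J_g$ makes $J_g(\{u_U>0\},a)\le J_g(U,a)$ an equality (the same Rayleigh quotient), whence $|U\setminus\{u_U>0\}|=0$ and we may identify $U$ with the open set $\{u_U>0\}$; the argument on $N\T^d$ is identical.

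Applying \pref{AM-misc-reg} on $D=B_{R_1}(x_0)$ at every $x_0$ yields weak and strong non‑degeneracy of $w_U$ and inner/outer density bounds for $U$ at all scales $0<r\le R_1$; multiplying by $\mu^{1/2}\sim|U|^{-\frac12-\frac1d}$ turns these into \dref{ell-non-degen} and \dref{scale-inv-densities} for $r\le R_1$. Since $R_1$ is a fixed fraction of $|U|^{1/d}$, the monotonicity in $r$ of $\sup_{B_r(x)}u_U$, $|B_r(x)\cap U|$ and $|B_r(x)\setminus U|$ extends all three to $0<r\le|U|^{1/d}$ with worse (still admissible) constants (using also $d(x,\partial U)\le C(d)|U|^{1/d}$, which is automatic since $B_{d(x,\partial U)}(x)\subset U$); this gives (i) and (ii). For (iv) I would use a packing argument based on (ii): let $\{x_j\}_{j=1}^{I}\subset U$ be a maximal $4|U|^{1/d}$‑separated set, so the balls $B_{2|U|^{1/d}}(x_j)$ are pairwise disjoint while $U\subset\bigcup_j B_{4|U|^{1/d}}(x_j)$; each $x_j$ either has $B_{|U|^{1/d}}(x_j)\subset U$ or lies within $|U|^{1/d}$ of a point of $\partial U$ where (ii) applies, so $|B_{2|U|^{1/d}}(x_j)\cap U|\ge c\kappa_0|U|$ and hence $I\le C(d)\kappa_0^{-1}$; refining the covering radius down to $|U|^{1/d}$ costs only a dimensional factor. (In particular $U$ is bounded.)

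For (iii), the global boundary‑strip bound \dref{perimeter-hyp} is the scale‑invariant form of the local estimate \pref{AM-misc-reg}(iv), and the extra input is the perimeter density bound $\mathcal{H}^{d-1}(\partial U\cap B_\rho(x_0))\le C\rho^{d-1}$ for $x_0\in\partial U$, $\rho\le|U|^{1/d}$: the matching lower bound follows from the density estimates (ii) and the relative isoperimetric inequality, while the upper bound is the usual Alt–Caffarelli‑type consequence of the almost‑minimality contained in the regularity theory behind \pref{AM-misc-reg} (cf. \cite[\S 3 and App.~A.5]{FeldmanReg}). Given this, $\partial U$ is $(d-1)$‑Ahlfors regular at scales $\le|U|^{1/d}$; covering it by $\le C|U|^{\frac{d-1}{d}}t^{1-d}$ balls of radius $t$ (Vitali) and enlarging these to radius $2t$ covers $\{x\in U:d(x,\partial U)\le t\}$, so that $|\{x\in U:d(x,\partial U)\le t\}|\le C|U|^{\frac{d-1}{d}}t$ for $t\le|U|^{1/d}$, the bound being trivial for larger $t$. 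This is (iii), and (ii) together with (iii) is precisely \dref{weakly-regular}. I expect (iii) to be the main obstacle: items (i), (ii), (iv) are essentially a direct application of \pref{AM-misc-reg} plus elementary covering, with the only subtle point being that \lref{almostminprop} (via \cref{linfty} and \cref{uU-Lip}) delivers almost‑minimality all the way up to $R_1\sim|U|^{1/d}$, which makes those estimates scale invariant rather than merely small‑scale; but the clean strip bound \dref{perimeter-hyp} has no additive error term, so one cannot simply sum the local estimate over an $O(1)$‑cover and must instead route through the $(d-1)$‑Ahlfors regularity of $\partial U$ and the Alt–Caffarelli perimeter estimate baked into \pref{AM-misc-reg}.
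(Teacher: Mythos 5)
Your handling of (i), (ii), and (iv) matches the paper's: feed $w_U=\mu^{-1/2}u_U$ into \pref{AM-misc-reg} via \cref{uU-Lip} and \lref{almostminprop}, undo the scaling, and get (iv) by packing/covering from the inner density estimate at scale $|U|^{1/d}$ (the paper invokes Besicovitch where you use a maximal separated net, but these are interchangeable).

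The gap is in (iii), and it is exactly where you flagged it. Your workaround replaces the local strip estimate \pref{AM-misc-reg}(iv) by $(d-1)$-Ahlfors regularity of $\partial U$, but the required upper bound $\mathcal{H}^{d-1}(\partial U\cap B_\rho(x_0))\le C\rho^{d-1}$ at every scale $\rho\le|U|^{1/d}$ is not part of \pref{AM-misc-reg}, and the usual Alt--Caffarelli derivation of it (compare $w$ with $(w-\ep)_+$ in $B_\rho$, then let $\ep\to 0$) produces precisely the additive error you are trying to avoid: almost-minimality gives $|\{0<w<\ep\}\cap B_\rho|\lesssim \ep\rho^{d-1}+\sigma\rho^d$, and the $\sigma\rho^d$ term survives the $\ep\to 0$ limit, so one does not obtain a clean Hausdorff bound from the machinery quoted here. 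Citing \cite{FeldmanReg} for it is not self-supporting, since \pref{AM-misc-reg} is already presented as the summary of the relevant results from there and the Ahlfors upper bound is conspicuously absent. So as written the route to (iii) is not secured.

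The paper avoids this by not localizing at all. It perturbs the eigenfunction globally by $v=(u_U-t\|\grad u_U\|_\infty)_+$, rescales to restore the unit $L^2$ constraint (this costs $O(t\|\grad u_U\|_\infty|U|^{1/2})$ in the Rayleigh quotient, and is admissible for $t\le c|U|^{1/d}$ by \cref{uU-Lip}), and compares $J_g(\{v>0\},a)\ge J_g(U,a)$. The $g$-term on the lost region gives
\[
(1+\gamma)^{-1}\mu\,\bigl|\{0<u_U<t\|\grad u_U\|_\infty\}\bigr|\lesssim t\,\|\grad u_U\|_\infty\,|U|^{1/2}\,\lambda_1(U,a),
\]
and plugging in the scalings of \lref{scaling-bounds} reduces the right-hand side to $Ct|U|^{(d-1)/d}$. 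Combined with the Lipschitz bound (which converts $\{d(x,\partial U)<t\}$ into a level set $\{0<u_U<t\|\grad u_U\|_\infty\}$), this is exactly \dref{perimeter-hyp}, with no small-scale error to absorb. I would use this single global comparison rather than attempting to repair the Ahlfors step.
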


Most of the proof is covered by applying \pref{AM-misc-reg} using the almost minimality of $w_U$ in \lref{almostminprop}. However we do need to argue a bit for (iii) and (iv) so we postpone the proof to \sref{weak-domain-regularity}.

{\bf Intermezzo on existence on $\R^d$.}  At this stage we have only known about existence of minimizers of $J_g$ when we are set on large tori $\R^d /N\Z^d$.  Now, given the regularity theory developed, we can give a proof of existence of minimizers on the whole space $\R^d$.

\begin{theorem}\label{t.global-existence}
Suppose that $g$ satisfies the hypotheses \eref{g-hyp1}, \eref{g-hyp2} and \eref{g-hyp3}.  Then there exists $U$ open minimizing $J_g$ over bounded quasi-open subsets of $\R^d$.
\end{theorem}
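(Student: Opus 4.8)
The plan is to use the standard concentration-compactness style argument adapted to this setting: take a minimizing sequence of $J_g$ domains on larger and larger tori, use the regularity theory already established (\cref{linfty}, \cref{uU-Lip}, \cref{weak-domain-regularity}) to get uniform bounds, and extract a limit which is an honest minimizer on $\R^d$. The key point that makes this work — and the reason the regularity had to be developed first — is that \cref{weak-domain-regularity}(iv) tells us that every torus minimizer can be covered by a \emph{universally bounded} number $I$ of unit balls, so after translating each minimizer so that (say) the largest connected component sits in a fixed ball $B_{R_1}(0)$ with $R_1$ universal, the whole domain is contained in a fixed bounded region. This is what replaces the lack of compactness of $H^1(\R^d)\hookrightarrow L^2(\R^d)$.

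First I would fix $N_k \to \infty$ (integers $\geq R_g \vee \mu^{-1/(d+2)}$) and let $U_k$ be an open $J_g$ minimizer on $N_k\T^d$, which exists by the cited result of Lamboley--Sicbaldi. By \tref{augmented-full} (already proved at this point in the outline, at least parts (i)--(iii) which don't need existence on $\R^d$) the eigenfunctions $u_{U_k}$ are uniformly Lipschitz and uniformly bounded in $L^\infty$ with scale-invariant constants, and $|U_k| \sim_{d,\Lambda} \mu^{-d/(d+2)}$, so all the $U_k$ have comparable volume; moreover $\lambda_1(U_k,a)$ is bounded above and below by universal multiples of $\mu^{2/(d+2)}$. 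Since $g$ is constant ($=\sup g$) outside $B_{R_g}$, any minimizer gains nothing by placing mass far from the origin — but more carefully, I would argue that $U_k$ must \emph{intersect} $B_{R_g}$ (otherwise translating it into a region where $g$ is smaller, or comparing with the constant-coefficient problem, contradicts minimality; here one uses the localizing property \eref{g-hyp3}). Combined with the covering bound \cref{weak-domain-regularity}(iv), after passing to a subsequence we may assume $U_k \subset B_{R_1}$ for a fixed $R_1 = R_1(\Lambda,d,R_g,\mu)$ and for all $k$ large (so the torus no longer matters).

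Next, extract limits: $u_{U_k} \to u_\infty$ uniformly on $\R^d$ (Arzelà--Ascoli, using the uniform Lipschitz bound and compact support in $B_{R_1}$), and $\grad u_{U_k} \rightharpoonup \grad u_\infty$ weakly in $L^2$. Set $U_\infty = \{u_\infty > 0\}$; by the uniform non-degeneracy \cref{weak-domain-regularity}(i) and the density estimates \cref{weak-domain-regularity}(ii), $U_\infty$ is open, $\|u_\infty\|_{L^2}=1$, and $\mathbf{1}_{U_k} \to \mathbf{1}_{U_\infty}$ in $L^1$ (the uniform density bounds prevent the positivity sets from losing or gaining measure in the limit — this is exactly where weak regularity is essential). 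Lower semicontinuity of the Dirichlet energy gives $\lambda_1(U_\infty,a) \leq \liminf \int a\grad u_{U_k}\cdot\grad u_{U_k}$, and $\int_{U_k} g \to \int_{U_\infty} g$ by the $L^1$ convergence of indicators, so $J_g(U_\infty,a) \leq \liminf J_g(U_k,a) = \inf_k J_g(U_k,a)$. Finally, for any bounded quasi-open $V \subset \R^d$ with $|V|$ comparable to $|U_\infty|$, placing a dilate/translate of $V$ inside a large torus $N_k\T^d$ shows $J_g(U_k,a) \leq J_g(V,a)$ for $k$ large, hence $J_g(U_\infty,a) \leq J_g(V,a)$ for all such $V$; a short separate argument (comparing with balls and using \cref{scaling-bounds}) rules out competitors of wildly different volume, so $U_\infty$ is a genuine minimizer over all bounded quasi-open subsets of $\R^d$.

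The main obstacle I expect is the \emph{no mass escapes to infinity} step: one must rule out that a $J_g$ minimizer on a huge torus splits into pieces, or drifts toward the region where $g$ is constant in a way that the limit loses $L^2$ mass or measure. This is handled by (a) the uniform covering bound \cref{weak-domain-regularity}(iv), which is the crucial ingredient and the reason it is listed among the weak-regularity conclusions, together with (b) connectedness-type considerations for the principal eigenfunction (the positivity set of a first eigenfunction of a \emph{minimizer} should be, morally, a single component — or at least one can throw away components carrying little $L^2$ mass and lower the energy, contradicting minimality) and (c) the localizing hypothesis \eref{g-hyp3} which prevents any advantage from drifting to infinity. Once the domains are confined to a fixed ball, the rest is routine weak-lower-semicontinuity plus the uniform regularity estimates already in hand.
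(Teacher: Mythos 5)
Your overall strategy matches the paper's: take $J_g$ minimizers $U_N$ on tori $N\T^d$, use the regularity already established (uniform Lipschitz, non-degeneracy, density estimates, and crucially the covering bound \cref{weak-domain-regularity}(iv)) to confine them to a fixed compact region, then extract a subsequential limit via weak $H^1$ lower semicontinuity and $L^1$ convergence of the indicator functions. The end game (lower semicontinuity, comparing $U_\infty$ against an arbitrary bounded quasi-open competitor $V$ by embedding $V$ in a large torus) is also the same.

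Where you diverge from the paper is in the confinement step, and your version has a gap as written. The covering bound only says $U_N$ is covered by $I$ balls of radius $\sim |U_N|^{1/d}$; it does \emph{not} bound $\operatorname{diam} U_N$, because those balls may be scattered across the huge torus. So the claim in your second paragraph --- translate so the largest component sits near the origin, therefore the whole domain is contained in a fixed ball --- does not follow from the covering bound alone. The paper sidesteps this by translating each of the $\leq I$ connected components of the union of covering balls \emph{separately} by a $\Z^d$-lattice vector: periodicity of $a$ leaves every $\lambda_1(U_{N,i},a)$ unchanged, and the localizing hypothesis \eref{g-hyp3} guarantees $g\equiv\sup g$ on any piece that started outside $[-N_0/2,N_0/2)^d$, so the $\int g$ term cannot increase. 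No connectedness is needed. You instead propose to prove that minimizers are connected ("throw away components carrying no $L^2$ mass"), which would indeed bound the diameter and let a single translation suffice. That argument is correct in spirit --- for a disconnected $U=U_1\sqcup U_2$ one has $\lambda_1(U)=\min\{\lambda_1(U_1),\lambda_1(U_2)\}$, and since $g\geq(1+\gamma)^{-1}\mu>0$, discarding the non-supporting components strictly lowers $J_g$ --- but you only gesture at it in a parenthetical, and you should not present the confinement as a consequence of the covering bound alone. Either spell out the connectedness lemma cleanly, or adopt the paper's per-component translation, which is more robust and avoids it entirely. (A small additional imprecision: you say "it must intersect $B_{R_g}$"; with $\gamma=0$ that is false, and the correct statement is only that one may translate so it does, which is how the paper phrases it.)
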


The proof, which can be found below in \sref{global-existence}, is essentially using \cref{weak-domain-regularity} part (iv) to divide $\R^d/N\Z^d$ minimizers $U_N$ into finitely many components and then applying a period translation to each to put all the components inside a fixed compact region.  This is also where the localizing property of $g$ \eref{g-hyp3} is used.

\medskip

{\bf Step 5.} (Suboptimal rate of homogenization for initial flatness)  We cannot go directly from the Lipschitz estimate of $u$ to free boundary regularity.  The central issue, which is often present in free boundary and interface regularity problems, is the possibility of singular points even for homogeneous energies \cite{JerisonSavin}.  We can rule this out by an explicit identification of the homogenization limit with rate of convergence.  If the homogenized limit has all regular free boundary points then the minimizer $U$ at large scales, small $\mu$, will be flat in a neighborhood of each boundary point at the largest length scale $|U|^{1/d}$.

Notice that we have an error coming from homogenization and from $g$ non-constant.  There is not much advantage at this stage to separate out the two estimates. The problem with non-constant $g$ but homogeneous $\bar{a}$ is also inheriting its regularity from the closeness to $J_\mu$.

\begin{proposition}\label{p.generic-Jg-errorest}
  Let $U$ be a $J_g$ minimizer.  Let $E = \bar{a}^{1/2}B$ be the $\lambda_1(\cdot,\bar{a})$ minimizing ellipsoid of the same volume as $U$ and infimizing $|U \Delta E|$ the class of $\bar{a}$-ellipsoids with $|U| = |E|$. Call
\begin{equation}\label{e.eigen-special-error} \mathcal{E}(U) =  |{E}|^{2/d}(\lambda_1({E},a) - \lambda_1({E},\bar{a}))+|U|^{2/d}(\lambda_1(U,\bar{a}) - \lambda_1(U,a))
\end{equation}
to be a scaled error term. Then there is $C (\Lambda,d) \geq 1$ so that
\begin{enumerate}[label = (\alph*)]
\item (Eigenvalue estimates)
\[|U|^{\frac{2}{d}}|\lambda_1(U,a) - \lambda_1({E},\bar{a})| \leq C(\mathcal{E} + \gamma)\]
and
 \[ |U|^{\frac{2}{d}}(\lambda_1(U,\bar{a}) - \lambda_1({{E}},\bar{a})) \leq C(\mathcal{E} + \gamma).\]
 \end{enumerate}
 For the remaining parts we assume $(\mathcal{E}+\gamma) \leq 1$, and $E$ can be taken \emph{either} as above \emph{or} to be the ellipsoid minimizing $J_\mu(\cdot,\bar{a})$ and minimizing $|E\Delta U|$ over that class.
 \begin{enumerate}[resume,label=(\alph*)]
 \item (Measure estimates)
 \[ \frac{|U \Delta E|}{|U|} \leq C(\mathcal{E} + \gamma)^{\frac{1}{2}} \]
\item (Domain distance estimate)
\[ |U|^{-\frac{1}{d}}d_H(\partial U, \partial {E}) \leq C(\mathcal{E} + \gamma)^{\frac{1}{2d}}  \]
\item (Eigenfunction distance estimate)
\[ |U|^{1/2}\|u_{U}(\cdot,a) - u_{{E}}(\cdot,\bar{a})\|_\infty \leq C(\mathcal{E}+\gamma)^{\frac{1}{2d(d+2)}}+C|U|^{-\frac{1}{2d(d+2)}}\]
\end{enumerate}
\end{proposition}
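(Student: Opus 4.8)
The plan is to regard part~(a) as the crux and to derive (b)--(d) by feeding the eigenvalue comparison into, respectively, the sharp Faber--Krahn stability, the density estimates, and \pref{eigenfunction-conv}.

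\textbf{Part (a).} Since $|E|=|U|$, testing the $J_g(\cdot,a)$-minimality of $U$ against the competitor $E$, using $|g-\mu|\le\gamma\mu$ (from \eref{g-hyp1}), $|E\Delta U|\le 2|U|$, and $\mu|U|\sim_{\Lambda,d}|U|^{-2/d}$ (from \lref{scaling-bounds}), gives
\[ \lambda_1(U,a)-\lambda_1(E,a)\le\int_E g-\int_U g=\int_{E\setminus U}(g-\mu)-\int_{U\setminus E}(g-\mu)\le\gamma\mu|E\Delta U|\le C\gamma|U|^{-2/d}, \]
i.e.\ $|U|^{2/d}(\lambda_1(U,a)-\lambda_1(E,a))\le C\gamma$. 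On the other hand $E$ is the $\lambda_1(\cdot,\bar a)$-minimizing $\bar a$-ellipsoid of volume $|U|$, so the generalized Faber--Krahn inequality gives $\lambda_1(U,\bar a)\ge\lambda_1(E,\bar a)$. Writing $D:=|U|^{2/d}(\lambda_1(U,a)-\lambda_1(E,\bar a))$ and separating the two summands of $\mathcal E$ in \eref{eigen-special-error}, the first inequality gives $D\le|E|^{2/d}(\lambda_1(E,a)-\lambda_1(E,\bar a))+C\gamma$, while the Faber--Krahn inequality gives $D\ge-|U|^{2/d}(\lambda_1(U,\bar a)-\lambda_1(U,a))$. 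Adding the second summand of $\mathcal E$ back to the last inequality yields the second estimate $0\le|U|^{2/d}(\lambda_1(U,\bar a)-\lambda_1(E,\bar a))\le\mathcal E+C\gamma$ immediately; for the first estimate I would bound $|D|$ from the two one-sided inequalities above, controlling the term $|E|^{2/d}|\lambda_1(E,a)-\lambda_1(E,\bar a)|$ by the optimal homogenization estimate \cref{optimal-eigen-conv} for the genuine Lipschitz domain $E$, and then recombining via $\mathcal E=|E|^{2/d}(\lambda_1(E,a)-\lambda_1(E,\bar a))+|U|^{2/d}(\lambda_1(U,\bar a)-\lambda_1(U,a))$ to conclude $|D|\le C(\mathcal E+\gamma)$.

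\textbf{Parts (b) and (c).} For (b), the linear substitution $x\mapsto\bar a^{-1/2}x$, normalized by $\det(\bar a)^{-1/2}$, sends $\lambda_1(\cdot,\bar a)$ to $\lambda_1(\cdot,\textup{id})$ and $\bar a$-ellipsoids to balls; applying \tref{FaberKrahn} to the image of $U$ and translating back through \rref{faber-krahn-remark} (legitimate since $\mathcal E+\gamma\le1$) bounds $(\inf_{|E|=|U|}|U\Delta E|/|U|)^2$ by $|U|^{2/d}(\lambda_1(U,\bar a)-\lambda_1(E,\bar a))\le C(\mathcal E+\gamma)$, by part~(a). For (c), take $x_0\in\partial U$ and set $\rho=d(x_0,\partial E)$; since $E$ is open, $B_\rho(x_0)$ lies either entirely in $E$ or entirely outside $E$, so the inner/outer density bounds for $U$ from \cref{weak-domain-regularity}, applied at scale $\min(\rho,|U|^{1/d})$, force $|E\setminus U|$ or $|U\setminus E|$ to be at least $c\min(\rho,|U|^{1/d})^d$; the symmetric case $y_0\in\partial E$ is handled the same way, using in addition that $\bar a$-ellipsoids enjoy inner/outer density bounds (bounded eccentricity). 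Hence $\min(\rho,|U|^{1/d})^d\le C|U\Delta E|\le C|U|(\mathcal E+\gamma)^{1/2}$ by (b), so $|U|^{-1/d}d_H(\partial U,\partial E)\le C(\mathcal E+\gamma)^{1/(2d)}$ (for $\mathcal E+\gamma$ small; otherwise the bound is trivial since $U$ and $E$ both have diameter at most $C|U|^{1/d}$).

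\textbf{Part (d).} By \cref{uU-Lip} the eigenfunction $u_U$ has the $L$-Lipschitz property, so \pref{eigenfunction-conv} applies; substituting the eigenvalue estimate from (a) and the Hausdorff estimate from (c) into its right-hand side, and using $\mathcal E+\gamma\le1$ so that the smallest-exponent term dominates, gives precisely the claimed bound. Throughout (b)--(d) the alternative admissible ellipsoid (the $J_\mu(\cdot,\bar a)$-optimal $\bar a$-ellipsoid infimizing $|E\Delta U|$) is handled identically once one notes that its volume is comparable to $|U|$ and passes to a dilation, again through \rref{faber-krahn-remark}.

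\textbf{Main obstacle.} The delicate step is part~(a): $J_g$-minimality supplies only an upper bound on $\lambda_1(U,a)$ and Faber--Krahn for $\bar a$ only a lower bound on $\lambda_1(U,\bar a)$, so closing the two-sided estimate requires organizing the homogenization errors at $U$ and at $E$ so that they telescope precisely into $\mathcal E$, with careful sign bookkeeping since the two error terms in $\mathcal E$ carry opposite sign conventions. The remaining parts are routine -- (b) is the sharp Faber--Krahn stability, (c) a density/covering argument, (d) a direct substitution into \pref{eigenfunction-conv} -- the only mild care being the quantitative density argument near $\partial E$ in (c) and the uniform treatment of both comparison ellipsoids.
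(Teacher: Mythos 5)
Your proof takes essentially the same route as the paper's: part~(a) by direct energy comparison using $J_g$-minimality against $E$ together with the generalized Faber--Krahn lower bound $\lambda_1(U,\bar a)\ge\lambda_1(E,\bar a)$; part~(b) via \tref{FaberKrahn} and \rref{faber-krahn-remark}; part~(c) by a density argument upgrading measure closeness to Hausdorff closeness; part~(d) by \pref{eigenfunction-conv}. Two comments on where you deviate in detail. First, for part~(c) the paper simply invokes its technical \lref{Linftyupgrade}; you re-derive it from the inner/outer density estimates, which is correct and essentially the same computation. Second, and more substantively: your plan to close part~(a) by \emph{controlling} $|E|^{2/d}\lvert\lambda_1(E,a)-\lambda_1(E,\bar a)\rvert$ via \cref{optimal-eigen-conv} is a detour that doesn't actually produce the stated bound. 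The conclusion is $|D|\le C(\mathcal E+\gamma)$, and $\mathcal E$ already contains the term you propose to bound separately; replacing it by a concrete homogenization rate would give an estimate in different quantities, not in terms of $\mathcal E$. What the paper does (and what your two one-sided inequalities $D\le\mathcal E_1+C\gamma$, $-D\le\mathcal E_2$ already give) is to combine them directly, leaving the implicit sign caveat exactly where the paper leaves it; the extra homogenization step is unnecessary and suggests a slight misreading of what the statement's right-hand side is. Finally, your treatment of the alternative admissible $E$ (the $J_\mu(\cdot,\bar a)$-optimal ellipsoid) is cursory: the paper's argument is short but not trivial, using strict convexity of $\rho\mapsto J_\mu(\bar a^{1/2}B_\rho,\bar a)$ to get the quantitative $(\mathcal E+\gamma)^{1/2}$ control on $|\rho-\rho_*(\mu)|/|U|^{1/d}$ and $||U|-|E_\mu||/|U|$, which is what allows (b)--(d) to be transferred to that choice. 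You gesture at this with ``comparable volume plus dilation through \rref{faber-krahn-remark}'' but don't supply the convexity step that makes it quantitative.
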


The proof can be found below in \sref{generic-Jg-errorest}.  We give a brief sketch here.  The eigenvalue estimates in part (a) follow from a direct energy argument.  Then part (a) with Faber-Krahn stability \tref{FaberKrahn} implies proximity to an $\bar{a}$-ellipsoid in measure.  This can then be upgraded to an estimate in Hausdorff distance via \lref{Linftyupgrade} using that $U$ is weakly regular, in the sense of \dref{weakly-regular}.  From a homogenization argument, found in \pref{eigenfunction-conv}, we can then upgrade further to proximity of the eigenfunctions.

Now the issue is to control the eigenvalue error term $\mathcal{E}(U)$.  This is where the $L^2$-theory of periodic homogenization discussed in \sref{L2theory} comes into play.  The domain regularity of $U$ plays an essential role.  In order to get even a sub-optimal quantitative estimate on the eigenvalue error $|U|^{2/d}(\lambda_1(U,\bar{a}) - \lambda_1(U,a))$ one already needs a certain amount of domain regularity.  The notion of weakly regular \dref{weakly-regular}, i.e. domain inner and outer density estimates and boundary strip area bound, is enough.

\begin{corollary}\label{c.suboptimal-est-eigenfunc}
  Let $U$ be a $J_g$ minimizer. There are $C \geq 1$ and $\alpha \in (0,1)$ depending on $(\Lambda,d,\|\grad a\|_\infty\gamma,\omega_g)$ so that
  \[ |U|^{-\frac{1}{d}}d_H(\partial U,\partial E) \leq C\gamma^{\frac{1}{2d}} + C|U|^{-\alpha}\]
  and
\[ |U|^{1/2}\|u_{U}(\cdot,a) - u_{E}(\cdot,\bar{a})\|_\infty \leq C\gamma^{\frac{1}{2d(d+2)}}+C|U|^{-\alpha}\]
where $E$ is the $\bar{a}$-ellipsoid minimizing $|E \Delta U|$ over the class of $J_\mu(\cdot,\bar{a})$ minimizers.
\end{corollary}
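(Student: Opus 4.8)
The plan is to reduce the whole statement to a single bound on the scaled homogenization error $\mathcal{E}(U)$ from \eref{eigen-special-error}, and then quote \pref{generic-Jg-errorest}. Concretely, I would first establish that $\mathcal{E}(U) \le C|U|^{-\alpha}$ for some $\alpha \in (0,1)$ depending on $(\Lambda,d,\|\grad a\|_\infty,\gamma,\omega_g)$. Once that is in hand, parts (c) and (d) of \pref{generic-Jg-errorest} — which require only $\mathcal{E}(U)+\gamma \le 1$ — immediately give $|U|^{-1/d}d_H(\partial U,\partial E) \le C(\mathcal{E}(U)+\gamma)^{1/2d}$ together with the analogous eigenfunction bound, and the elementary inequality $(x+y)^\theta \le x^\theta + y^\theta$ for $\theta \in (0,1)$ splits these into the claimed $\gamma$-term plus a (relabelled) power of $|U|^{-1}$.

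For the error bound I would treat the two terms of $\mathcal{E}(U)$ separately. The term $|E|^{2/d}|\lambda_1(E,a) - \lambda_1(E,\bar a)|$ involves the fixed ellipsoid $E = \bar a^{1/2}B$, which is a Lipschitz (indeed smooth) domain with constants depending only on $(\Lambda,d)$, so I would apply \cref{suboptimal-eigen-conv} (or the sharper \cref{optimal-eigen-conv}) to $E$, obtaining $|\lambda_1(E,a)^{-1} - \lambda_1(E,\bar a)^{-1}| \le C|E|^{(2-\alpha_1)/d}$, and then multiply by $\lambda_1(E,a)\lambda_1(E,\bar a) \sim_{\Lambda,d} |E|^{-4/d}$ to convert this into a bound $C|E|^{-\alpha_1/d}$ on the scaled eigenvalue difference. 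For the term $|U|^{2/d}(\lambda_1(U,\bar a) - \lambda_1(U,a))$ I would use that $U$ is weakly regular by \cref{weak-domain-regularity} — this is exactly the point at which the density and boundary-strip estimates for minimizers are used — so that \cref{suboptimal-eigen-conv} applies to $U$ itself and gives $|\lambda_1(U,a)^{-1} - \lambda_1(U,\bar a)^{-1}| \le C|U|^{(2-\alpha_2)/d}$; combined with $\lambda_1(U,a) \sim_{\Lambda,d} |U|^{-2/d}$ from \lref{scaling-bounds} — which in turn forces $\lambda_1(U,\bar a) \sim |U|^{-2/d}$ once $|U|$ is large — this converts into a bound $C|U|^{-\alpha_2/d}$. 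Taking $\alpha$ to be the smaller of the resulting exponents and using $|E| = |U|$ completes the error estimate.

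The one point requiring a little care, rather than a genuine obstacle, is the regime where $\mathcal{E}(U)+\gamma$ is not small, so that \pref{generic-Jg-errorest}(c)--(d) are unavailable. There I would simply note the claimed inequalities are trivial: $E$ has diameter $\le C(\Lambda,d)|U|^{1/d}$ and overlaps $U$ since it is chosen to minimize $|E \Delta U|$ over translates, while $U$ is covered by $I$ balls of radius $|U|^{1/d}$ by \cref{weak-domain-regularity}, so $|U|^{-1/d}d_H(\partial U,\partial E) \le C$; likewise $|U|^{1/2}\|u_U(\cdot,a) - u_E(\cdot,\bar a)\|_\infty \le C$ by \cref{linfty}; and $\mathcal{E}(U)+\gamma > 1$, together with the error bound above, forces either $\gamma \gtrsim 1$ or $|U| \lesssim 1$, so the right-hand sides are bounded below and the estimates hold after enlarging $C$. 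Since the substantive ingredients — sub-optimal homogenization in weakly regular domains, the weak regularity of $J_g$ minimizers, and the Faber--Krahn-based stability packaged in \pref{generic-Jg-errorest} — are all already in place, I expect this corollary to be essentially bookkeeping, the only mildly delicate steps being the conversion between eigenvalue inverses and eigenvalues (keeping the $|U|^{2/d}$ scaling factors straight) and the non-small regime just described.
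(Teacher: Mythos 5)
Your proposal matches the paper's proof: bound $\mathcal{E}(U) \leq C|U|^{-\alpha}$ by applying \cref{suboptimal-eigen-conv} to $U$ (using the weak regularity from \cref{weak-domain-regularity}) and \cref{optimal-eigen-conv} to the smooth ellipsoid $E$, then invoke \pref{generic-Jg-errorest}(c)--(d). You additionally spell out the $\lambda_1^{-1}$-to-$\lambda_1$ conversion and the $\mathcal{E}+\gamma>1$ regime, which the paper leaves implicit; these are correct bookkeeping, though the diameter bound on $U$ you invoke in the non-small regime implicitly uses connectedness of $U$ rather than just the covering bound.
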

\begin{proof}
Since, by \cref{weak-domain-regularity}, $U$ is weakly regular with constants depending on $(\Lambda,d,\|\grad a\|_\infty,\gamma,\omega_g)$ we can apply the suboptimal quantitative homogenization estimates \cref{suboptimal-eigen-conv} (to $U$) and \cref{optimal-eigen-conv} (to $E$) to find
\[ \mathcal{E}(U) \leq C|U|^{-\alpha}\]
where $\mathcal{E}(U)$ was the eigenvalue error defined above in \eref{eigen-special-error}, and $C \geq 1$ and $\alpha \in (0,1)$ depend on $(\Lambda,d,\|\grad a\|_\infty,\gamma,\omega_g)$. Then apply \pref{generic-Jg-errorest}.
\end{proof}

Now the key information from \cref{suboptimal-est-eigenfunc} is that $L^\infty$ proximity to $u_E(\cdot,\bar{a})$ implies large scale flatness (because $u_E(\cdot,\bar{a})$ is smooth and has the correct free boundary condition).  
\begin{lemma}[Initial flatness]\label{l.flatness}
  For any $\delta>0$ there are $\gamma_0>0$ and $m_0>0$ depending on $(\Lambda,d,\|\grad a\|_\infty,\omega_g,\delta)$ and $r_0>0$ depending on $(\Lambda,d,\delta)$ so that if $\gamma \leq \gamma_0$ and $U$ is any $J_g$ minimizer with $|U| \geq m_0$ then for any $x_0 \in \partial U$
\[ \inf_{\nu \in S^{d-1}}\sup_{x \in B_{R}(x_0)} \frac{1}{R}|w_U(x) - \frac{1}{(\nu \cdot \bar{a} \nu)^{1/2}} (x \cdot \nu)_+| \leq \delta  \ \hbox{ for some choice of }  R \geq r_0|U|^{\frac{1}{d}}.\]
Here $w_U = \mu^{-1/2}u_U$. 
\end{lemma}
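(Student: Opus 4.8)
The plan is to deduce \lref{flatness} as a more or less immediate consequence of \cref{suboptimal-est-eigenfunc} together with the smoothness and correct free boundary behavior of the limiting profile $\bar u_E = u_E(\cdot,\bar a)$. First I would record that, after rescaling to unit volume, \cref{suboptimal-est-eigenfunc} gives
\[ \|w_U - \mu^{-1/2}\bar u_E\|_{L^\infty} \leq C\gamma^{\frac{1}{2d(d+2)}} + C|U|^{-\alpha},\]
where $\bar u_E$ is the principal eigenfunction of $-\grad\cdot(\bar a\grad\cdot)$ on the $\bar a$-ellipsoid $E$ minimizing $|E\Delta U|$ over the $J_\mu(\cdot,\bar a)$ minimizers. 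Choosing $\gamma_0$ and $m_0 = m_0(\Lambda,d,\|\grad a\|_\infty,\omega_g,\delta)$ small/large enough makes the right-hand side smaller than any prescribed $\varepsilon = \varepsilon(\delta)>0$ we like.

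Next I would use that the ellipsoid $E$, being a fixed linear image $\bar a^{1/2}B$ of a ball of unit volume, has a smooth boundary with curvature and inner-ball/outer-ball radii controlled purely by $(\Lambda,d)$, and that its principal eigenfunction $\bar u_E$ vanishes on $\partial E$ with non-vanishing (indeed universally lower-bounded) normal derivative. Concretely, for the constant-coefficient operator one has Hopf's lemma with a universal constant, so near any point $y_0 \in \partial E$ the rescaled profile $\frac{1}{R}\bar u_E(y_0 + R\cdot)$ converges (as $R\to 0$) to the half-plane solution $\frac{1}{(\nu\cdot\bar a\nu)^{1/2}}(x\cdot\nu)_+$ with $\nu$ the inner normal; by smoothness this convergence is at a universal rate, i.e. there are $r_1 = r_1(\Lambda,d,\delta)>0$ and a modulus such that for all $0<R\le r_1$,
\[ \inf_{\nu\in S^{d-1}}\sup_{x\in B_R(y_0)} \frac{1}{R}\Big|\bar u_E(x) - \tfrac{1}{(\nu\cdot\bar a\nu)^{1/2}}(x\cdot\nu)_+\Big| \le \tfrac{\delta}{2}. \]

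Then I would transfer this to $w_U$. Fix $x_0\in\partial U$; by \cref{suboptimal-est-eigenfunc} (the $d_H$ part) there is $y_0\in\partial E$ with $|x_0-y_0|\le |U|^{1/d}(C\gamma^{1/2d}+C|U|^{-\alpha})$, which is $\ll r_0|U|^{1/d}$ once $\gamma_0,m_0$ are chosen appropriately. Working at scale $R = r_0|U|^{1/d}$ (after rescaling, scale $r_0$), centering at $x_0$ versus $y_0$ shifts the comparison half-plane solution by a controlled amount, and the $L^\infty$ eigenfunction bound controls the difference $w_U - \mu^{-1/2}\bar u_E$ by $\varepsilon/R \le \delta/2$ in the rescaled quantity once $m_0$ is large. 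Combining the two $\delta/2$ contributions gives the claimed flatness at scale $R\ge r_0|U|^{1/d}$. I would pick $r_0 := \min\{r_1, c(\Lambda,d)\}$ with $c(\Lambda,d)$ the constant so that $B_{c|U|^{1/d}}(x_0)$ is within the region where the half-plane comparison is valid.

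The only genuinely delicate point — though it is really bookkeeping rather than a deep obstacle — is that the right-hand side in \cref{suboptimal-est-eigenfunc} is an \emph{absolute} $L^\infty$ error $\varepsilon$, whereas \lref{flatness} asks for a \emph{scale-relative} (divided by $R$) flatness. Since $R\sim|U|^{1/d}$ is the \emph{largest} scale, dividing by $R$ only helps: $\varepsilon/R \le \varepsilon |U|^{-1/d} \to 0$, so no loss occurs. One must just be careful that the smooth-profile error at scale $r_0$ for the ellipsoid is genuinely independent of $|U|$ (it is, since $\tilde E$ has unit volume and universal geometry), and that the choice of $R$ is allowed to depend on nothing worse than the stated parameters. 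Assembling these, the lemma follows; I would present it as: choose $\delta$; pick $\gamma_0, m_0$ from \cref{suboptimal-est-eigenfunc} so the homogenization error is below a threshold depending on $\delta$ and the ellipsoid modulus; pick $r_0$ from the smoothness of $\bar u_E$; conclude by the triangle inequality at scale $R = r_0|U|^{1/d}$.
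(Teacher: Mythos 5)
Your overall structure matches the paper's proof: use \cref{suboptimal-est-eigenfunc} for the $L^\infty$ and $d_H$ estimates, use smoothness of the ellipsoid eigenfunction for its flatness near $\partial E$, and conclude by triangle inequality at a scale $R \sim r_0|U|^{1/d}$. The bookkeeping about centering at $x_0$ versus a nearby $y_0 \in \partial E$, and about why dividing by the large scale $R$ only helps, is also handled the same way.

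However, there is a genuine gap in your justification of the boundary slope. You invoke Hopf's lemma to conclude that the blow-up of $\bar u_E$ at a boundary point is $\frac{1}{(\nu\cdot\bar a\nu)^{1/2}}(x\cdot\nu)_+$ with that \emph{exact} constant. Hopf's lemma only gives a positive lower bound on the normal derivative; it says nothing about its precise value, and the precise value is essential here. The constant $\frac{1}{(\nu\cdot\bar a\nu)^{1/2}}$ is exactly the free boundary slope required by the flatness hypothesis of \tref{REGmain2}, and if it were any other multiple the subsequent free boundary regularity argument would not apply. What actually forces this value is the specific choice of $E$ as the $J_\mu(\cdot,\bar a)$-\emph{minimizing} ellipsoid (which you do state, but do not exploit): the minimizing radius $\rho_*(\mu)$ satisfies $\rho_*^{d+2} = \tfrac{2}{d}\mu^{-1}\det(\bar a)^{-1/2}|B_1|^{-1}\lambda_1(B_1,\textup{id})$, which together with the explicit normal derivative $\beta_d = \sqrt{\tfrac{2}{d}|B_1|^{-1}\lambda_1(B_1,\textup{id})}$ of $u_{B_1}$ on $\partial B_1$ gives, after the linear change of variables and the normalization $w_E = \mu^{-1/2}u_E$, precisely $|\grad w_E| = \tfrac{1}{(\nu_x\cdot\bar a\nu_x)^{1/2}}$ on $\partial E$. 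The paper performs this calculation directly; you need to replace the appeal to Hopf with it. Note also that if you had instead taken $E$ to be an ellipsoid merely with $|E|=|U|$ (the other option offered in \pref{generic-Jg-errorest}), the slope would generically differ from the required one by a factor $(\rho/\rho_*)^{1+d/2}$, so using the $J_\mu$-minimizing ellipsoid is not just a convenience.
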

Note that the lower bound condition $|U| \geq m_0$ could equivalently be stated as an upper bound on $\mu \leq \mu_0(m_0)$ given the scaling relations in \lref{scaling-bounds}.

The proof is postponed to \sref{flatness}.  For a sketch: pick $r_0$ small enough so that the flatness condition holds for $u_E(\cdot,\bar{a})$ at scales $R \leq r_0|E|^{1/d}$.  Then use triangle inequality with the $L^\infty$ convergence in \cref{suboptimal-est-eigenfunc}.

\medskip

{\bf Step 6.} (Flatness implies large scale regular free boundary) At this penultimate stage we can combine the almost minimality property \lref{almostminprop} with the initial flatness provided by \lref{flatness} to iterate and get a (large scale) Lipschitz domain property for $U$.

Apply \tref{REGmain2} in combination with the initial flatness given by \lref{flatness} and the almost minimality property \lref{almostminprop}, to find that $U$ is an $(r_0,1,h)$ (large scale) Lipschitz domain in the sense of \dref{large-scale-lipschitz}.  The constants $(r_0,h)$ depend on $(\Lambda,d,\|\grad a\|_\infty,\omega_g)$.

\medskip

{\bf Step 7.} (Optimal rate of homogenization)  Finally, with the Lipschitz domain regularity in hand, we can upgrade the rate of homogenization.

At this point we have shown that $U$ has the following properties:
\begin{enumerate}[label = (\roman*)]
 \item $a$-eigenfunction scale invariant $L$-Lipschitz estimate \dref{L-Lipschitz}.
  \item $a$-eigenfunction scale invariant $\ell$-non-degeneracy estimate \dref{ell-non-degen}.
 \item Domain $(r_0,1,h)$-(large scale) Lipschitz estimate \dref{large-scale-lipschitz}.
 \end{enumerate}
The parameters $(L,\ell,r_0,h)$ in the above property depend only on the input parameters $(\Lambda,d,\|\grad a\|_\infty,\omega_g)$.  

So the optimal (up to logarithms) quantitative homogenization result \cref{quanthom-large-scale-lip} gives us the estimate
\[ |U|^{2/d}(\lambda_1(U,\bar{a}) - \lambda_1(U,a))_+ \leq C|U|^{-\frac{1}{d}}|\log |U||^{\frac{1}{2}+\eta}\]
with constant $C\geq 1$ depending on $(\Lambda,d,\|\grad a\|_\infty,\omega_g,\eta)$. Plugging this back into \pref{generic-Jg-errorest} concludes the proof of \tref{augmented-full}. \qed

\subsection{Proof of \lref{almostminprop}}\label{s.almost-min-proof}

Let $u \in H^1(U)$ with $\|u\|_{L^2(U)} = 1$ and
\[  \int_{U} \grad u\cdot a(x) \grad u \ dx = \lambda_1(U,a).\]
 Let $v \in H^1(B_r)$ with $v \in u+ H^1_0(B_r)$ and extend $v$ to be equal to $u$ outside of $B_r$.  Then define
\[  Z = \min\{1,(\|u\|^2_{L^2(\R^d \setminus B_r)} + \|v\|^2_{L^2(B_r)})^{1/2}\} \ \hbox{ so that } \ \|Z^{-1}v\|_{L^2} \geq 1.\]
If $v \geq u$ is an upward perturbation then $Z=1$ and the arguments below vastly simplify.  Note that
\[ Z^{-2} - 1 =\frac{\|u\|^2_{L^2(B_r)}-\|v\|^2_{L^2(B_r)}}{1-[\|u\|^2_{L^2(B_r)}-\|v\|^2_{L^2(B_r)}]} \ \hbox{ when } \ \|v\|_{L^2(B_r)} \leq \|u\|_{L^2(B_r)}.\]
By Sobolev embedding,
 \begin{align*}
 \|u\|_{{L}^2(B_r)} &\leq \|u^2\|^{1/2}_{L^{\frac{d}{d-2}}(B_r)}\|1\|_{L^{\frac{d}{2}}(B_r)}^{1/2}\\
 &\leq \|u\|_{L^{\frac{2d}{d-2}}(\R^n)} |B_r|^{\frac{1}{d}} \\
 &\leq \|\grad u\|_{L^2(\R^n)}r\\
 &\leq C\lambda_1(U,a)^{1/2}r.
 \end{align*}
 So for $C\lambda_1(U,a)^{1/2}r \leq \frac{1}{2}$, i.e. by the scalings in \lref{scaling-bounds} $r \leq c(\Lambda,d)|U|^{\frac{1}{d}}$, we can bound
 \[ Z^{-2} - 1  \leq 2\int_{B_r} (u^2 - v^2) \ dx.\]
 Using Poincar\'e inequality since $u-v \in H^1_0(B_r)$
\begin{align*}
 \int_{B_r} (u^2 - v^2) \ dx &= \int_{B_r} (u-v)(u+v) \ dx \\
 &\leq \|u-v\|_{{L}^2(B_r)}(\|u\|_{{L}^2(B_r)} + \|v\|_{{L}^2(B_r)}) \\
 &\leq Cr\|\grad (u-v)\|_{{L}^2(B_r)}\|u\|_{{L}^2(B_r)} \\
 &\leq Cr\|\grad (u-v)\|_{{L}^2(B_r)}\|u\|_{L^\infty}|B_r|^{1/2}\\
 &\leq Cr\|u\|_{L^\infty}\|\grad (u-v)\|_{\underline{L}^2(B_r)}|B_r|
 \end{align*}
 we used $\|v\|_{L^2(B_r)} \leq \|u\|_{L^2(B_r)}$ whenever $Z \neq 1$ for the second inequality.  
 
 Then
\begin{align}
 0 &\leq J_g(\{Z^{-1}v>0\}) - J_g(\{u>0\}) \notag\\
 &=\mathcal{J}_g(v,B_r)- \mathcal{J}_g(u,B_r)+ (Z^{-2}-1)\int_{\R^d}  \grad v \cdot a(x) \grad v \ dx \notag\\
 &\leq \mathcal{J}_g(v,B_r)- \mathcal{J}_g(u,B_r)+Cr\|u\|_{L^\infty}\|\grad (u-v)\|_{\underline{L}^2(B_r)}\int_{\R^d}  \grad v \cdot a(x) \grad v \ dx|B_r|. \label{e.initial-almost-min-chain}
  \end{align}
We focus for a moment on the tricky term $\int_{\R^d}  \grad v \cdot a(x) \grad v \ dx$. Now if $v \in u + H^1_0(B_r)$ is the $a$-harmonic replacement of $u$ in $B_r$ then
\[ \int_{\R^d} \grad v \cdot a(x) \grad v \ dx \leq \int_{\R^d}  \grad u \cdot a(x) \grad u \ dx = \lambda_1(U,a)\]
while, in general, we can write
\[ \int_{\R^d} \grad v \cdot a(x) \grad v \ dx \leq \lambda_1(U,a)(1+\tfrac{C\|\grad v\|_{L^2(B_r)}^2}{\lambda_1(U,a)}).\]
We will continue writing the term $\frac{\|\grad v\|^2_{L^2(B_r)}}{\lambda_1(U,a)}$ but just recall that it is not necessary in the $a$-harmonic replacement case.  

Now we proceed with rearranging in \eref{initial-almost-min-chain} to find
\[ \mathcal{J}_g(u,B_r) \leq \mathcal{J}_g(v,B_r)+Cr\|u\|_{L^\infty}\lambda_1(U,a)(1+\tfrac{\|\grad v\|_{L^2(B_r)}^2}{\lambda_1(U,a)})\|\grad (u-v)\|_{\underline{L}^2(B_r)}|B_r|. \]

Next we make the rescaling argument to get a $\mathcal{J}_{g/\mu}$ almost minimal property.  Notice that
\[ \mathcal{J}_g(w,\Omega) = \mu \mathcal{J}_{g/\mu}(\mu^{-1/2}w,\Omega).\]
So, if $w = \mu^{-1/2}u$ and $v \in w + H^1_0(B_r)$ 
\begin{align*}
 \mathcal{J}_{g/\mu}(w,B_r) &= \mu^{-1} \mathcal{J}_{g}(u,B_r) \\
 &\leq  \mu^{-1} \mathcal{J}_{g}(\mu^{1/2}v,B_r)+Cr\|u\|_{L^\infty}\lambda_1(U,a)(1+\tfrac{\mu\|\grad v\|^2_{L^2(B_r)}}{\lambda_1(U,a)})\mu^{-1}\|\grad (u-\mu^{1/2}v)\|_{\underline{L}^2(B_r)}|B_r|\\
 &\leq \mathcal{J}_{g/\mu}(v,B_r)+Cr\|u\|_{L^\infty}\lambda_1(U,a)\mu^{-1/2}(1+\tfrac{\mu \|\grad v\|^2_{L^2(B_r)}}{\lambda_1(U,a)})\|\grad (w-v)\|_{\underline{L}^2(B_r)}|B_r|.
 \end{align*}
Now define $R_1 = C\|u\|_{L^\infty}\lambda_1(U,a)\mu^{-1/2}$ and notice that, by the scalings for $J_g$ minimizers found in \lref{scaling-bounds},
 \[ R_1^{-1} =C \mu^{-\frac{1}{2}} \|u\|_{L^{\infty}(B_r)}\lambda_1(U,a) \leq C|U|^{\frac{d+2}{2d}}|U|^{-\frac{1}{2}}|U|^{-\frac{2}{d}} = C|U|^{-\frac{1}{d}}\]
 and also $\mu\lambda(U,a)^{-1} \sim \mu^{\frac{d}{d+2}} \sim |U|^{-1}$, so in general we have
 \begin{equation}\label{e.form2}
 \mathcal{J}_{g/\mu}(w,B_r) \leq \mathcal{J}_{g/\mu}(v,B_r)+(r/R_1)(1+C\tfrac{\|\grad v\|^2_{L^2(B_r)}}{|U|})\|\grad (w-v)\|_{\underline{L}^2(B_r)}|B_r| 
 \end{equation}
 and in the case when $v$ is the $a$-harmonic replacement the form is a bit simpler
  \begin{equation}\label{e.form1}
    \mathcal{J}_{g/\mu}(w,B_r) \leq \mathcal{J}_{g/\mu}(v,B_r)+(r/R_1)\|\grad (w-v)\|_{\underline{L}^2(B_r)}|B_r| 
 \end{equation}

 This is already a good notion of almost minimality but we make some additional manipulations just to get exactly to the form used \cite{FeldmanReg}.  Note that this form is better in the sense that it respects the scaling argument we just made, while  the form in \cite{FeldmanReg} would not have.  In any case, we just use 
 \begin{align*}
  &\|\grad (w-v)\|_{\underline{L}^2(B_r)}|B_r| \\
  &\hspace{7ex}\leq [\|\grad w\|_{\underline{L}^2(B_r)}+\|\grad v\|_{\underline{L}^2(B_r)}]|B_r| \\
  &\hspace{7ex}\leq [2 + \|\grad w\|_{\underline{L}^2(B_r)}^2+\|\grad v\|_{\underline{L}^2(B_r)}^2]|B_r| \\
  &\hspace{7ex}\leq 2|B_r| + C\mathcal{J}_{g/\mu}(w,B_r)+C\mathcal{J}_{g/\mu}(v,B_r)
  \end{align*}
  so now we conclude the argument in the $a$-harmonic replacement case plugging into \eref{form1}
  \[ (1-Cr/R_1)\mathcal{J}_{g/\mu}(w,B_r) \leq (1+Cr/R_1)\mathcal{J}_{g/\mu}(v,B_r) + 2(r/R_1)|B_r|\]
  and for $r \leq cR_1$ we can divide through by $(1-Cr/R_1)$.

 {\bf Case that $w$ is Lipschitz.} In the second case, when $v$ is not the $a$-harmonic replacement but instead $w$ is known to be $L$-Lipschitz we proceed from the inequality \eref{form2} bounding $\|\grad v\|_{L^2(B_r)}^2 \leq C\mathcal{J}_{g/\mu}(v,B_r)$:
  \begin{align*}
    &\left[1-(r/R_2)(1+ |U|^{-1}\mathcal{J}_{g/\mu}(v,B_r))\right]\mathcal{J}_{g/\mu}(w,B_r) \leq \\
    &\hspace{10ex}\left[1+(r/R_2)(1+ |U|^{-1}\mathcal{J}_{g/\mu}(v,B_r))\right]\mathcal{J}_{g/\mu}(v,B_r)+(r/R_2)(1+ |U|^{-1}\mathcal{J}_{g/\mu}(v,B_r))|B_r| 
    \end{align*}
    for an appropriately defined $R_2 = c|U|^{1/d}$. Notice that 
    \[\mathcal{J}_{g/\mu}(v,B_r) = \frac{|B_r|}{|U|} |B_r|^{-1}\mathcal{J}_{g/\mu}(v,B_r)\] and $|B_r|/|U| \leq (r/R_2)^d$ if we decrease the constant in the definition of $R_2$ if necessary.  
    
    Now we make a purely calculus argument to simplify the almost minimizer inequality. If we call 
    \[X = |B_r|^{-1}\mathcal{J}_{g/\mu}(w,B_r) \leq C(1+L^2), \ Y = |B_r|^{-1}\mathcal{J}_{g/\mu}(v,B_r), \ \hbox{ and } \ \delta = (r/R_2)\]
     then we can write the previous inequality more compactly as
    \[ [1-\delta(1+\delta^dY)]X \leq [1+\delta(1+\delta^dY)]Y+\delta(1+\delta^dY).\]
    Our aim is to reduce this to an inequality of the form
    \[ X \leq Y + C\delta.\]
    First we argue that we can divide through by $[1-\delta(1+\delta^dY)]$.  If $\delta(1+\delta^dY) \geq \frac{1}{2}$ then $X \leq C(1+L^2) \leq 2C(1+L^2)[\delta^{d+1}Y + \delta]$ which is stronger than the conclusion we desire as long as $C(1+L^2) \delta^{d+1} \leq 1$ which we can guarantee by decreasing $R_2 = c(L,\Lambda,d)|U|^{1/d}$.
    
    Now we arrive at the inequality
    \[ X \leq f(Y) \ \hbox{ with } \ f(Y) = \frac{1+\delta(1+\delta^dY)}{1-\delta(1+\delta^dY)}Y+\delta(1+\delta^dY)\]
    where $f$ is a monotone increasing function of $Y$ on $\R_+$.  Then this means that
    \[ f^{-1}(X) \leq Y.\]
 Now if $X \leq \delta$ we have the desired conclusion, otherwise, by fundamental theorem of calculus, we can write
    \[ f^{-1}(\delta) + \int_\delta^X (f^{-1})'(S) \ dS \leq Y,\]
    and we have the convenience $f^{-1}(\delta) = 0$.  So we want a lower bound on $(f^{-1})'$ on $[\delta,C(1+L^2)] \supset [\delta,X]$.   This amounts to an upper bound of $f'$ on $[0,f(C(1+L^2))] \subset [0,\tilde{C}(1+L^2)]$ which is
    \[ f'(Y)  \leq 1 + C(L)\delta \ \hbox{ on } Y \in [\delta,f(C(1+L^2))]\]
    so
    \[ (f^{-1})'(S) \geq 1-C(L)\delta \ \hbox{ on } \ S \in [\delta,C(1+L^2)].\]
    Thus
    \[ (1-C(L)\delta)X \leq Y\]
    and using $X \leq CL$ again
    \[ X \leq Y + C(L)\delta.\]

  \qed

\subsection{Proof of \cref{weak-domain-regularity}}\label{s.weak-domain-regularity}

Apply \pref{AM-misc-reg} to $w_U = \mu^{-\frac{1}{2}}u_U$ which has, via \cref{uU-Lip}, $\|\grad w_U\|_\infty \leq C(\Lambda,d,\gamma,\omega_g,\|\grad a\|_\infty)$.

For the perimeter bound (iii) it seems that the simplest thing is to argue directly, the general almost minimizer argument \lref{almostminprop} seems to give up a bit too much information.  Consider the perturbation
\[ v(x) = (u_U(x) - t\|\grad u\|_\infty)_+\]
which has
\[ \|u\|_{L^2}^2-\|v\|_{L^2}^2  = \int (u-v)(u+v) \ dx \leq \|u-v\|_{L^2}\|u+v\|_{L^2} \leq 2t \|\grad u\|_\infty |U|^{1/2} \]
and
\[ \|v\|_{L^2}^{-2} \leq 1 + Ct \|\grad u\|_\infty|U|^{1/2}\]
as long as 
\[t\|\grad u\|_\infty|U|^{1/2} \leq \frac{1}{2}.\]
  Given \cref{uU-Lip} this requirement is $t \leq c(\Lambda,d)|U|^{\frac{1}{d}}$.   So doing energy comparison with $V = \{v>0\}$
\[ \int_{\{0 < u_U < t\|\grad u_U\|_\infty\}} a(x) \grad u_U \cdot \grad u_U+ g(x) \ dx \leq Ct \|\grad u_U\|_\infty|U|^{1/2}\lambda_1(U,a).\]
Since the left hand side dominates $(1+\gamma)^{-1}\mu |\{0 < u_U < t\|\grad u_U\|_\infty\}|$ we can find
\[  |\{0 < u_U < t\|\grad u_U\|_\infty\}| \leq Ct \|\grad u_U\|_\infty|U|^{1/2}\mu^{-1}\lambda_1(U,a) \leq Ct|U|^{-\frac{1}{d} - \frac{2}{d} + \frac{d+2}{d}} = Ct|U|^{\frac{d-1}{d}}\]
Lipschitz estimate \cref{uU-Lip} then implies
\[|\{x \in U : d(x,\partial U) < t\}| \leq |\{0 < u_U < t\|\grad u_U\|_\infty\}| \leq Ct|U|^{\frac{d-1}{d}}\]

The only thing left is the covering bound (iv). Notice that the inner density estimates with $r = |U|^{\frac{1}{d}}$ imply
\[ |U \cap B_r(z)|\ \geq \kappa_0 |B_r| \ \hbox{ for all } \ z \in U\]
with $\kappa_0(\Lambda,d,\|\grad a \|_\infty,\gamma,\omega)>0$.  So if $\{B_r(z_i)\}_{i=1}^I$ is an $M$ overlapping covering of $U$ then
\[  \kappa_0 I |B_r| \leq \sum_{i=1}^I |U \cap B_r(z)| \leq M |U|\]
meaning $I \leq M\kappa_0^{-1}$.  So by Besicovitch covering we get the claim.
\qed

\subsection{Proof of \tref{global-existence}}\label{s.global-existence}
\begin{proof}
Extend $g$ to be $N\Z^d$-periodic in the way explained in \rref{g-hyp4}.  Let $U_N$ be a minimizer of $J_g$ over quasi-open subsets of $\R^d / N \Z^d$ for $N \geq R_g \vee \mu^{-\frac{1}{d+2}}$ integer.  

By \cref{weak-domain-regularity} $U_N$ can be covered by a constant $I$ number of balls of radius $|U_N|^{\frac{1}{d}} \sim \mu^{-\frac{1}{d+2}}$.  Call $V_N$ to be the union of these $I$ balls, this is an open set with $I' \leq I$ many connected components $V_{N,1},\dots,V_{N,I'}$ each of diameter at most $C \mu^{-\frac{1}{d+2}}$, and call $U_{N,i} = U_N \cap V_{N,i}$.  

Thus there is an $N_0 \geq R_g+C\mu^{-\frac{1}{d+2}}$ sufficiently large so that we can apply a separate $\Z^d$-lattice translation to each $U_{N,i}$ (canonically apply no translation if $V_{N,i}$ was already contained in $[-N_0/2,N_0/2)^d$) so that the translated $U_{N,i}$ are disjoint and all contained in $[-N_0/2,N_0/2)^d$ and so that $g \equiv \sup g$ on any $U_{N,i}$ which intersects the complement of $[-N_0/2,N_0/2)^d$.  Call the new set created by the disjoint union of the $U_{N,i}$ to be $\tilde{U}_N$.

By the $\Z^d$ periodicity of $a$ this translation does not affect the $\lambda_1(U_{N,i},a)$ part of the energy.  The $\int_{U_{N,i}} g \ dx$ term in the energy cannot be increased: if $U_{N,i}$ intersected the complement of $[-N_0/2,N_0/2)^d$ then $g  \equiv \sup g$ there and translation could only decrease that term in the energy.  Thus $\tilde{U}_N$ is also a $J_g$ minimizer.

Thus we now have a sequence of $J_g$ minimizers among quasi-open subsets of $\R^d / N\Z^d$, $\tilde{U}_N$, which are all contained in a fixed compact region $[-N_0/2,N_0/2]^d$.  A subsequential limit $U$ which minimizes $J_g$ over bounded quasi-open subsets of $\R^d$ can be extracted by a typical compactness / lower-semi-continuity argument (see \cite{VelichkovBook}[page 4-5]).
\end{proof}

\subsection{Proof of \pref{generic-Jg-errorest}}\label{s.generic-Jg-errorest}

Let ${E}$ be the $\lambda_1(\cdot,\bar{a})$ minimizing ellipsoid with volume $|U|$. Energy comparison gives

  \begin{align*}
  \lambda_1({E},\bar{a}) & \leq \lambda_1(U,\bar{a})  \\
  & = \lambda_1(U,a) + (\lambda_1(U,\bar{a}) - \lambda_1(U,a))\\
  &= J_g(U,a) - \int_U g(x) \ dx + (\lambda_1(U,\bar{a}) - \lambda_1(U,a))\\
  &\leq J_g({E},a) - \int_U g(x) \ dx + (\lambda_1(U,\bar{a}) - \lambda_1(U,a)) \\
  & = \lambda_1({E},a) + \int_{E} g(x) \ dx - \int_U g(x) \ dx + (\lambda_1(U,\bar{a}) - \lambda_1(U,a))\\
  &\leq \lambda_1({E},\bar{a}) +  2[(1+\gamma) - (1+\gamma)^{-1}] \mu |U|+(\lambda_1({E},a) - \lambda_1({E},\bar{a}))+(\lambda_1(U,\bar{a}) - \lambda_1(U,a)).
  \end{align*}
  Using for the last line that $|\int_U g(x) \ dx - \mu |U|| \leq \mu\|\frac{g}{\mu}-1\|_\infty|U|$ and the same for $\int_{E} g(x) \ dx$ which has $|{E}| = |U|$. 
  
  Contained in this sequence of inequalities we derived
  \[ \lambda_1({E},\bar{a}) -\lambda_1(U,a)  \leq (\lambda_1(U,\bar{a}) - \lambda_1(U,a))\]
  and
  \[ \lambda_1(U,a)- \lambda_1({E},\bar{a}) \leq C\gamma \mu |U| + (\lambda_1({E},a) - \lambda_1({E},\bar{a}))\]
  and finally
\[ 0 \leq \lambda_1(U,\bar{a}) - \lambda_1({E},\bar{a}) \leq C\gamma \mu |U|+(\lambda_1({E},a) - \lambda_1({E},\bar{a}))+(\lambda_1(U,\bar{a}) - \lambda_1(U,a))\]
which, given the scaling properties in \lref{scaling-bounds}, is the claimed estimate.

We can also conclude, using previous inequalities again, that
  \begin{align*}
   J_\mu(E,\bar{a}) &= J_\mu(U,a) + \lambda_1({E},\bar{a}) -\lambda_1(U,a) \\
   & \leq J_g(U,a) + C(\mathcal{E} + \gamma)|U|^{-\frac{2}{d}} \\
   &\leq J_g(E_\mu,a) + C(\mathcal{E} + \gamma)|U|^{-\frac{2}{d}} \\
   &\leq J_\mu(E_\mu,a) + C(\mathcal{E} + \gamma)|U|^{-\frac{2}{d}} \\
   & \leq J_\mu(E_\mu,\bar{a}) + (\lambda_1({E}_\mu,a) - \lambda_1({E}_\mu,\bar{a}))+ C(\mathcal{E} + \gamma)|U|^{-\frac{2}{d}}\\
   &\leq J_\mu(E_\mu,\bar{a}) + C(\mathcal{E} + \gamma)|U|^{-\frac{2}{d}}.
   \end{align*}
   where $E_\mu$ is the $J_\mu(\cdot,\bar{a})$ minimizing $\bar{a}$-ellipsoid.  For the last inequality we are using that, since $E_\mu$ is a dilation of $E$ by (already) factor bounded by a universal constant, this middle error term can also be bounded by $C\mathcal{E}$.  By the computations in \sref{Jmu-minimizers-computations} $E_\mu = \bar{a}^{1/2}B_{\rho_*(\mu)}$ with
   \[ \rho_*^{d+2} = \tfrac{2}{d}\mu^{-1}\textup{det}(\bar{a})^{-\frac{1}{2}}|B_1|^{-1}\lambda_1(B_1,\textup{id})\]
   and $J_\mu(\bar{a}^{1/2}B_\rho,\bar{a})$ is a strictly convex function of $\rho$ achieving its minimal value at $\rho = \rho_*(\mu)$ so
   \[  \frac{||U| - |E_\mu||}{|U|} \leq C(\mathcal{E} + \gamma)^{1/2}\ \hbox{ and } \ \frac{|\rho - \rho_*(\mu)|}{|U|^{\frac{1}{d}}} \leq C(\mathcal{E} + \gamma)^{1/2}.\]
   This justifies that we can take $E$ for the remainder of the proof to either be the minimizer of $|E \Delta U|$ over the class $|E| = |U|$ \emph{or} over the class of $J_\mu(\cdot,\bar{a})$ minimizers.
   
Next we apply Faber-Krahn stability \tref{FaberKrahn} and \rref{faber-krahn-remark} below it implies
  \[ c_d \left(\frac{|U \Delta {E}|}{|U|}\right)^{2} \leq |U|^{\frac{2}{d}}\lambda_1(U,\bar{a}) - |E|^{\frac{2}{d}}\lambda_1({E},\bar{a}) \leq \mathcal{E} + C\gamma.\]

Now we use the regularity theory that we have established, from \cref{weak-domain-regularity} $U$ has the (scaled) eigenfunction $L$-Lipschitz, eigenfunction $\ell$-non-degenerate, and inner and outer density estimates with universal $L$, $\ell$ and $\kappa_U$. Then \lref{Linftyupgrade} implies
\[\frac{d_H(\partial U,\partial {E})}{|U|^{\frac{1}{d}}} \leq C\left(\frac{|U \Delta {E}|}{|U|}\right)^{\frac{1}{d}} \leq C(\mathcal{E}+\gamma)^{\frac{1}{2d}}\]
Finally \pref{eigenfunction-conv} implies
\[ |U|^{\frac{1}{2}}\|u_U - u_{E}\|_{L^\infty} \leq C(\mathcal{E}+\gamma)^{\frac{1}{2d(d+2)}} + C|U|^{-\frac{1}{2d(d+2)}}.\]
\qed

\subsection{Proof of \lref{flatness}}\label{s.flatness} 
Let $E$ be the ellipsoid minimizing $J_\mu(\cdot,\bar{a})$ and minimizing $|U \Delta E|$.  Note that $E = \bar{a}^{1/2}B_{\rho}$ with
\[ \rho^{d+2} = \tfrac{2}{d}\mu^{-1}\textup{det}(\bar{a})^{-\frac{1}{2}}|B_1|^{-1}\lambda_1(B_1,\textup{id})\] 
and so
\[ u_{E}(x) = u_{B_\rho}(\bar{a}^{-1/2}x) = \rho^{-d/2}u_{B_1}(\bar{a}^{-1/2}x/\rho).\]
Now $u_{B_1}$ has
\[ \grad u_{B_1}(x) = -\beta_d x \ \hbox{ for } \ x \in \partial B_1\]
where, by \eref{ball-eigenfunction-slope}, $\beta_d = \sqrt{\frac{2}{d}|B_1|^{-1}\lambda_1(B_1,\textup{id})}$. Note that, of course, $\grad u_E(x)$ is parallel to the inward unit normal $\nu$ to $E$ at $x \in \partial E$ because $\partial E$ is the zero level set of $u_E$, thus
\[ \nu_x = \frac{\grad u_E}{|\grad u_E|} = \frac{\bar{a}^{-1}x}{|\bar{a}^{-1}x|}\]  
so 
\[ (\nu_x \cdot \bar{a} \nu_x)^{1/2} = |\bar{a}^{1/2}\nu_x| = |\bar{a}^{-1}x|^{-1}|\bar{a}^{-1/2}x| = |\bar{a}^{-1}x/\rho|^{-1} \]
since for $x \in \partial E$, $\bar{a}^{-1/2}x \in \partial B_\rho$.  And so
\begin{align*}
 |\grad u_E(x)| &= \rho^{-1-\frac{d}{2}}|\bar{a}^{-1/2} \grad u_{B_1}(\bar{a}^{-1/2}x/\rho)| \\
 &= \beta_d\rho^{-1-\frac{d}{2}}|\bar{a}^{-1}x/\rho|  \\
 &= \beta_d\rho^{-1-\frac{d}{2}}\frac{1}{(\nu_x \cdot \bar{a} \nu_x)^{1/2}}.
\end{align*}
If we change into the variable $w_E = \mu^{-1/2}u_E$ then, noting that $\mu^{-1/2} \beta_d\rho^{-1-\frac{d}{2}} = 1$,
\[ |\grad w_E(x)| = \frac{1}{(\nu_x \cdot \bar{a} \nu_x)^{1/2}} \ \hbox{ on } \ x \in \partial E.\]
Since $u_{B_1}$ is smooth up to $\partial B_1$, for every $\delta>0$ there is $r_0(\delta,d)>0$ so that for each $y_1 \in \partial B_1$
\[ \sup_{B_r(y_1)}r^{-1}|u_{B_1}(y) - \beta_d ((y-y_1) \cdot \nu^{B_1}_{y_1})_+| \leq \frac{\delta}{3} \ \hbox{ for all } \ r \leq r_1.\]
And rescaling this, for all $x_1 \in \partial E$
\begin{equation}\label{e.wE-flat}
 \sup_{B_R(x_1)}R^{-1}\left|w_{E}(x) -  \frac{1}{(\nu \cdot \bar{a} \nu)^{1/2}}((x-x_1) \cdot \nu)_+\right| \leq \frac{\delta}{3} \ \hbox{ for all } \ R \leq c(\Lambda,d)r_1|E|^{\frac{1}{d}}
 \end{equation}
with $\nu = \nu^E_{x_1}$ and call $r_2 = c(\Lambda,d)r_1$.

  Note the scaling relation
\begin{align*}
 |U|^{-\frac{1}{d}}\|w_U - w_E\|_\infty &= |U|^{-\frac{1}{d}}\mu^{-1/2}\|u_U - u_E\|_\infty \leq C|U|^{\frac{d+2}{2d}-\frac{1}{d}}\|u_U - u_E\|_\infty \\
 &= C|U|^{1/2}\|u_U - u_E\|_\infty
 \end{align*} 
 so that, applying \cref{suboptimal-est-eigenfunc},
\[ |U|^{-\frac{1}{d}} d_H(\partial U,\partial E) + |U|^{-\frac{1}{d}}\|w_U - w_E\|_\infty \leq C_0(\gamma^{\frac{1}{2d(d+2)}} + |U|^{-\alpha}).\]

If $x_0 \in \partial U$ there is a point $x_1 \in \partial E$ with
\[ |U|^{-\frac{1}{d}}|x_1 - x_0| \leq C_0(\gamma^{\frac{1}{2d}} + |U|^{-\alpha}).\]
Take $\nu = \nu^E_{x_1}$ and
\begin{equation}\label{e.R-condition}
  3\delta^{-1}C_0(\gamma^{\frac{1}{2d}} + |U|^{-\alpha}) |U|^{\frac{1}{d}} \leq R \leq \frac{1}{2}r_2|U|^{\frac{1}{d}}
  \end{equation}
if such $R$ exists. We have, since $R \geq 3\delta^{-1}|x_1 - x_0| \geq |x_1 - x_0|$,
\begin{align*}
&\sup_{B_R(x_0)}\frac{1}{R}\left|u_U(x) -  \frac{1}{(\nu \cdot \bar{a} \nu)^{1/2}}((x-x_0) \cdot \nu)_+\right| \\
& \hspace{10ex} \leq 2\sup_{B_{2R}(x_1)}\frac{1}{2R}\left|u_U(x) -  \frac{1}{(\nu \cdot \bar{a} \nu)^{1/2}}((x-x_0) \cdot \nu)_+\right|\\
& \hspace{10ex}  \leq 2\sup_{B_{2R}(x_1)}\frac{1}{2R}\left|w_E(x) -  \frac{1}{(\nu \cdot \bar{a} \nu)^{1/2}}((x-x_0) \cdot \nu)_+\right| + R^{-1}C_0(\gamma^{\frac{1}{2d(d+2)}} + |U|^{-\alpha})|U|^{\frac{1}{d}}\\
& \hspace{10ex}  \leq  \frac{2\delta}{3}+\frac{\delta}{3} = \delta.
\end{align*}
So we just need to choose $\gamma \leq \gamma_0$ and $|U| \geq m_0$ such that
\[ \gamma^{\frac{1}{2d}} + |U|^{-\alpha}  \leq C_0^{-1}\frac{1}{6}r_2 \delta\]
so that there exists an $R$ in the interval \eref{R-condition}.  Note these choices $0 < \gamma_0 \leq 1$ and $m_0 \geq 1$ depend on the parameters $(\Lambda,d,\|\grad a\|_\infty,\omega_g,\delta)$ because that is what $C_0(\Lambda,d,\|\grad a\|_\infty,\omega_g)$ and $r_2(\Lambda,d,\delta)$ depend on, but then the lower bound on $R$ in \eref{R-condition} is $r_0\frac{1}{2}r_2$ which only depends on $(\Lambda,d,\delta)$. 

 \section{Relation between augmented and volume constrained minimization} \label{s.relation}

We consider in this section the (non-trivial) relationship between 
the hard constraint problem
\begin{equation}\label{e.constrained}
\inf \{ \lambda_1(U,a) : \ \hbox{ $U$ is quasi-open and $|U| \leq m$}\}.
\end{equation}
and the soft constraint / augmented / Lagrange multipliers minimization problem
\begin{equation}\label{e.augmented}
\inf \{ J_\mu(U,a) : \ \hbox{ $U$ is quasi-open}\}.
\end{equation} 

All that is formally guaranteed by the method of Lagrange multipliers is that a volume constrained minimizer is an augmented functional \emph{critical point} for some value of $\mu$.  Works in the literature, starting from Brian\c{c}on and Lamboley \cite{BrianconLamboley} have shown almost minimality properties for the hard constraint minimizers but the arguments we are aware of seem to not be sufficiently quantitative to adapt to the homogenization theory setting.  

We take a different approach which exploits an almost dilation invariance property of $J_\mu$ minimizers at large scales.  We do not show an almost minimality property for volume constrained minimizers.  Although there may be singular values of the volume $m \geq 1$ which are not achieved by $J_\mu$ minimizers, we show that all volume constrained minimizers have small energy deficit $J_\mu - \inf J_\mu$ for a well chosen $\mu$. Once we know the $J_\mu$ energy deficit is small we use a selection principle / penalization type argument to find a $J_g$ minimizer near the volume constrained minimizer for which the results of \sref{augmented-reg} apply.  

Recall we defined the volume constrained energy deficit
\[ \delta_1(U,a) = |U|^{\frac{2}{d}}(\lambda_1(U,a) - \inf_{|V| = |U|} \lambda_1(V,a)).\]

  \begin{proposition}\label{p.hard-constraint-replacement}
For every volume $m \geq 2$ and $U$ with $|U| = m$ such that $\delta_1(U,a) \leq \frac{1}{2}$
 there is $\mu_* >0$ so that for all $p>d+4$ there is a function $g$ satisfying the hypotheses of all parts of \tref{augmented-full} (relative to this $\mu_*$) so that the $J_g$ minimizer $\Omega$ has
\begin{equation}\label{e.eigenvalue-closeness-Omega}
 m^{\frac{2}{d}}|\lambda_1(U,a) - \lambda_1(\Omega,a)| \leq C\left[\delta_1(U,a) |\log \delta_1(U,a)|^{p} + m^{-\frac{1}{d}}|\log (2+m)|^{p}\right]
 \end{equation}
and
\begin{equation}\label{e.measure-closeness-Omega}
 m^{-1}|U \Delta \Omega| \leq C\left[\delta_1(U,a) |\log \delta_1(U,a)|^{p} + m^{-\frac{1}{d}}|\log (2+m)|^{p}\right]
 \end{equation}
with $C\geq 1$ depending on the universal constants $(\Lambda,d,\|\grad a\|_\infty)$ and on $p$.
 
  \end{proposition}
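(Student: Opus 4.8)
The plan is to pass to the Lagrangian dual of the volume-constrained problem and then run a selection-principle argument. Set $\Lambda_1(s)=\inf_{|V|=s}\lambda_1(V,a)$, which is non-increasing, and
\[
\Phi(\mu)=\inf_{V}J_\mu(V,a)=\inf_{s>0}\big(\Lambda_1(s)+\mu s\big),
\]
a concave function of $\mu$. By \tref{augmented-full} a $J_\mu$ minimizer exists, is regular, and has volume $\sim_{d,\Lambda}\mu^{-d/(d+2)}$, so for every volume $m(\mu)$ realized by a $J_\mu$ minimizer one has $\Phi(\mu)=\Lambda_1(m(\mu))+\mu\,m(\mu)$, and the correspondence $\mu\mapsto\{\text{such }m(\mu)\}$ is monotone. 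Given $m=|U|$ I would choose $\mu_*$ to be the value for which either $m$ is itself realized by a $J_{\mu_*}$ minimizer, or $m$ is ``skipped'', i.e.\ $m_-<m<m_+$ with $m_\pm$ both realized by $J_{\mu_*}$ minimizers $U_\pm$ and $\Phi(\mu_*)=\Lambda_1(m_-)+\mu_* m_-=\Lambda_1(m_+)+\mu_* m_+$; the existence of such a critical $\mu_*$ with both endpoints realized follows by a compactness argument of the type in \tref{global-existence} applied to $J_{\mu_n}$ minimizers as $\mu_n\to\mu_*^{\pm}$. Since $m_\pm\sim_{d,\Lambda}\mu_*^{-d/(d+2)}$, the volumes $m_-,m,m_+$ all lie within a universal factor of each other.

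The first quantitative step is to estimate the $J_{\mu_*}$-excess $D:=J_{\mu_*}(U,a)-\Phi(\mu_*)$. Write $\phi(s)=\lambda_1(E_1,\bar a)\,s^{-2/d}$ for the effective optimal profile (convex, decreasing) and $\mathcal R(s)=Cs^{-3/d}|\log(2+s)|^{1/2+\eta}$. Testing $\Lambda_1(s)$ against the smooth ellipsoid $E_s$ and invoking \cref{optimal-eigen-conv} in $E_s$ gives $\Lambda_1(s)\le\phi(s)+\mathcal R(s)$; at the realized volumes $m_\pm$ the minimizer $U_\pm$ is regular by \tref{augmented-full}, so \cref{quanthom-large-scale-lip} together with Faber--Krahn for $\bar a$ yields the matching two-sided bound $\Lambda_1(m_\pm)=\phi(m_\pm)+O(\mathcal R(m_\pm))$. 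Consequently $\mu_*$ equals the chord slope $\bar\mu:=(\phi(m_-)-\phi(m_+))/(m_+-m_-)$ up to $O\big(\mathcal R/(m_+-m_-)\big)$, and since $\psi(s):=\phi(s)+\bar\mu s$ is convex with $\psi(m_-)=\psi(m_+)$ one has $\psi(m)\le\psi(m_-)$. Combining these with $\lambda_1(U,a)\le\Lambda_1(m)+\delta_1(U,a)\,m^{-2/d}$ collapses the excess to
\[
D\le\big[\psi(m)-\psi(m_-)\big]+(\mu_*-\bar\mu)(m-m_-)+C\mathcal R(m)+\delta_1(U,a)\,m^{-2/d}\le C\big(m^{-3/d}|\log(2+m)|^{1/2+\eta}+\delta_1(U,a)\,m^{-2/d}\big),
\]
the correction term being $O(\mathcal R)$ because $m-m_-\le m_+-m_-$; equivalently $m^{2/d}D\le C\big(\delta_1(U,a)+m^{-1/d}|\log(2+m)|^{1/2+\eta}\big)$ (and in the realized case simply $D\le\delta_1(U,a)\,m^{-2/d}$). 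This is the only place where the convexity of the effective profile --- the manifestation of the almost dilation invariance encoded in \tref{augmented-full} --- is used.

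Next comes the selection principle. Fixing $p>d+4$, I would take $\omega$ to be a logarithmic Dini modulus, e.g.\ $\omega(s)=\max\{s,|\log(1/s)|^{-p}\}$ --- the condition $p>d+4$ being exactly what makes $\omega^{1/(d+4)}$ Dini --- and set $\varepsilon\simeq|\log(2+m)|^{-p}$, which is $\le\gamma_0$ for $m$ large. Then I would build a function $g$ with $\mu_*(1-\varepsilon)\le g\le\mu_*(1+\varepsilon)$, equal to $\mu_*(1+\varepsilon)$ outside a bounded neighborhood of $U$, equal to $\mu_*(1-\varepsilon)$ on an interior core of $U$, and interpolating across a boundary collar; the scaled modulus bound \eref{g-hyp2} holds because $\varepsilon\lesssim\omega(\mu_*^{1/(d+2)})$, so $g$ satisfies all the hypotheses of \tref{augmented-full} relative to $\mu_*$ with $\gamma=\varepsilon$. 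Let $\Omega$ be a $J_g$ minimizer; it inherits the full regularity of \tref{augmented-full}. Since $g$ is arranged so that $U$ is an admissible competitor with $J_g(U,a)=J_{\mu_*}(U,a)-\varepsilon\mu_*\!\int_U(\text{core})$, while $J_g(V,a)\ge J_{\mu_*}(V,a)+\varepsilon\mu_*\big(\!\int_V(\text{exterior})-\!\int_V(\text{core})\big)$ for all $V$, comparing $J_g(\Omega,a)\le J_g(U,a)$ against $J_{\mu_*}(\Omega,a)\ge\Phi(\mu_*)$ and using the bound on $D$ forces $\Omega$ close to $U$ in measure, with deficit essentially $D/(\varepsilon\mu_*)$. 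On the other hand \pref{generic-Jg-errorest} applied to the regular $\Omega$ --- whose homogenization error $\mathcal E(\Omega)\lesssim m^{-1/d}|\log(2+m)|^{1/2+\eta}$ is controlled by \cref{quanthom-large-scale-lip} --- shows $\Omega$ is eigenvalue- and measure-close to the effective optimal ellipsoid of its own volume, with $|\Omega|\sim m$. Assembling the chain $\Phi(\mu_*)\le J_{\mu_*}(\Omega,a)\le J_g(\Omega,a)\le J_g(U,a)$, accounting for $\mu_*(|\Omega|-m)=O(\mu_*|U\triangle\Omega|)$, using $m^{2/d}\mu_* m\sim1$, and using the choice of $\varepsilon$ to absorb the homogenization exponent $\tfrac12+\eta$ and the factor $\varepsilon^{-1}$ into $|\log|^p$ factors then produces \eref{eigenvalue-closeness-Omega} and \eref{measure-closeness-Omega}.

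The main obstacle is the selection-principle step. Unlike the auxiliary minimizers $U_\pm$, the set $U$ carries no a priori regularity --- only $\delta_1(U,a)\le\tfrac12$ --- so the penalization must be strong enough to pin $\Omega$ to $U$ in measure using only the crude excess bound on $D$, while at the same time $g$ must respect the delicate scaled-Dini hypotheses of \tref{augmented-full} and the free boundary regularity \tref{REGmain2} underneath it; reconciling these two requirements is what fixes the admissible modulus of $g$ and is the source of the logarithmic loss with exponent $p>d+4$. A secondary difficulty, which forces the detour through $\Omega$ in the first place, is that no sharp homogenization estimate can be applied to the rough domain $U$ directly, so every such estimate for $U$ must be transported from the regular competitor $\Omega$ through the measure-closeness.
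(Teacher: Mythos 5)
Your overall architecture is right and matches the paper's: bound the $J_{\mu_*}$ excess $D$ via a convexity argument at the critical $\mu_*$, then pin a regular $J_g$ minimizer $\Omega$ next to $U$ by penalization and transport estimates from $\Omega$ to $U$. Your first step is a genuinely different and valid route from the paper's. The paper proves \pref{sigmaerror} by dilating a $J_{\mu_*}$ minimizer $U_1$, using the approximate dilation invariance \lref{approx-scaling} of $\lambda_1$ along the family $tU_1$, and exploiting convexity in $t$ of $t^{-2}\lambda_1(U_1,a)+\mu t^d|U_1|$; you instead compare $\Lambda_1(s)$ to the effective profile $\phi(s)=\lambda_1(E_1,\bar a)s^{-2/d}$ at the endpoints $m_\pm$ (sharp two-sided via testing, \cref{quanthom-large-scale-lip}, and Faber--Krahn), show $\mu_*$ is the chord slope of $\phi$ up to $O(\mathcal R)$, and use convexity of $\psi(s)=\phi(s)+\bar\mu s$. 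Both arguments ultimately lean on the same ingredients (large-scale Lipschitz regularity of the extremal $J_{\mu_*}$ minimizers to get optimal homogenization rates, plus convexity of a scale-invariant profile), but your version is carried out directly in the volume variable rather than the dilation variable and is arguably cleaner.

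There is, however, a genuine gap in your selection-principle step, and it is precisely in the place you flagged as the main obstacle. You cap the oscillation of $g$ at $\varepsilon\simeq|\log(2+m)|^{-p}$, so the penalization functional separates $\Omega$ from $U$ with a uniform weight of size $\varepsilon\mu_*$. The cost of converting a smallness bound on $\int_{\Omega\Delta U}(g-\mu_*)$ into a smallness bound on $|\Omega\Delta U|$ is then a factor of $\varepsilon^{-1}=|\log m|^p$, giving $m^{-1}|U\Delta\Omega|\lesssim\sigma\varepsilon^{-1}\lesssim\delta_1|\log m|^p+m^{-1/d}|\log m|^{p+\frac12+\eta}$. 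The second term is fine (shift $p$), but $\delta_1|\log m|^p$ does \emph{not} imply the stated $\delta_1|\log\delta_1|^p$: take $\delta_1=|\log m|^{-q}$ with $0<q<p$ and $m\to\infty$; your bound is $|\log m|^{p-q}\to\infty$ while the stated bound is $|\log m|^{-q}(\log\log m)^p+m^{-1/d}|\log m|^p\to 0$, and the trivial bound $m^{-1}|U\Delta\Omega|\le 2$ does not rescue you because the stated bound is nontrivial in this regime. The fix is exactly the paper's design of $g$: do not shrink the amplitude with $m$, but rather let $g-\mu_*$ be a fixed universal multiple of $\mu_*$ at depth comparable to $m^{1/d}$, decaying like the Dini modulus $\omega(d(x,\partial U)/m^{1/d})$ toward $\partial U$ and capped at the universal $\gamma_0$. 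Then the energy comparison controls the \emph{weighted} distance $\textup{dist}_\omega(\Omega,U)$, and inverting $\omega$ (\lref{dist-omega-est}) gives a conversion factor $|\log\sigma|^p$ that adapts to the size of the excess; this is what produces $\delta_1|\log\delta_1|^p$ rather than $\delta_1|\log m|^p$. In short: the penalization amplitude must be $O(\gamma_0)$, not $o(1)$ in $m$, and the logarithmic loss must come from inverting the modulus at $\sigma$, not from the reciprocal of the amplitude.
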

  \begin{remark}
  By \tref{augmented-full} $\Omega_*$ is a (large scale) Lipschitz domain so the (almost) linear measure estimate on $m^{-1}|U_* \Delta \Omega_*|$ is a nontrivial regularity property on $U_*$. It is like large-scale Lipschitz domain property but measured in $L^1$ instead of $L^\infty$ (and with logarithmic loss).  In particular also note that this estimate is stronger the square root rate given by the eigenvalue estimate plus Faber-Krahn optimal stability.  
  \end{remark}
  
  \begin{remark}
 The result \pref{hard-constraint-replacement} does not actually require us to know existence of minimizers for \eref{constrained}.
  \end{remark}
  
  From \pref{hard-constraint-replacement} we can derive our main result about volume constrained minimizers \tref{main}. The proof can be found below in \sref{main-proof}.

We outline the section.  First in \sref{volume-map} we consider the values of the volume taken by $J_\mu$ minimizers over $\mu \in (0,1]$.  This map is monotone but may have jumps where certain volumes are missed.  In \sref{singular-mu} at these singular values of $\mu$ we use a dilation and convexity argument to show that volume constrained minimizers with volumes inside of the jump have close to minimal value of $J_\mu$.  Then in \sref{selection-principle} we use a selection principle / penalization argument to show that when $U$ has close to minimal $J_\mu$ energy there is a $J_g$ minimizer nearby.

\subsection{The $\mu$ to volume map and its singular values}\label{s.volume-map}First we note that augmented minimizers are volume constrained minimizers with their given volume.
\begin{lemma}\label{l.mutovol}
 Fix $\mu>0$.  If $U_*$ is a domain minimizer of the augmented functional $J_\mu$ \eref{augmented} then $U_*$ minimizes $\lambda_1$ with volume constraint $m = |U_*|$ \eref{constrained}.  Furthermore if $V$ is another constrained minimizer of \eref{constrained} with volume $m$ then $V$ also minimizes $J_\mu$ \eref{augmented}.
\end{lemma}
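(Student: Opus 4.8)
The plan is to prove both assertions by elementary energy comparison; no regularity of $U_*$ and no further existence input is needed, and the only structural fact used is that $\mu>0$.

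For the first assertion I would take an arbitrary quasi-open competitor $W$ with $|W|\le m$, where $m=|U_*|$, and use the minimality of $U_*$ for $J_\mu$ in \eref{augmented}:
\[ \lambda_1(U_*,a)+\mu m = J_\mu(U_*,a) \le J_\mu(W,a) = \lambda_1(W,a)+\mu|W| \le \lambda_1(W,a)+\mu m, \]
the last inequality because $|W|\le m$ and $\mu>0$. Subtracting $\mu m$ gives $\lambda_1(U_*,a)\le\lambda_1(W,a)$, and since $W$ was an arbitrary quasi-open set with $|W|\le m$, this says exactly that $U_*$ solves \eref{constrained} with volume constraint $m=|U_*|$.

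For the second assertion I would note that, by the first assertion, $U_*$ is itself a minimizer in \eref{constrained} with $|U_*|=m$, so $\lambda_1(U_*,a)=\inf\{\lambda_1(W,a):W\text{ quasi-open},\ |W|\le m\}$. If $V$ is any other minimizer of \eref{constrained} with $|V|=m$, then $\lambda_1(V,a)$ equals the same infimum, hence $\lambda_1(V,a)=\lambda_1(U_*,a)$; combined with $|V|=|U_*|=m$ this yields
\[ J_\mu(V,a)=\lambda_1(V,a)+\mu|V|=\lambda_1(U_*,a)+\mu m=J_\mu(U_*,a)=\inf\{J_\mu(W,a):W\text{ quasi-open}\}, \]
so $V$ also minimizes $J_\mu$.

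I do not anticipate any real obstacle. The two points worth flagging in the write-up are that the sign condition $\mu>0$ is essential — it is what lets us drop the penalty term in the direction we need, and in particular forces $J_\mu$-minimizers to saturate the looser bound $|W|\le m$ — and that the second assertion is phrased for competitors $V$ of volume exactly $m$, so there is no need to separately verify that constrained minimizers always saturate their volume constraint.
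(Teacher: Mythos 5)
Your proposal is correct and follows essentially the same energy-comparison route as the paper. The one small improvement: the paper's write-up compares $U_*$ only against competitors $V$ with $|V|=m$ exactly, while the constrained problem \eref{constrained} admits all $|W|\leq m$; you close that gap cleanly by observing $\mu(|W|-m)\leq 0$ when $\mu>0$, which is exactly the right way to handle it. The second part is identical in both proofs.
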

\begin{proof}
Suppose that $|V| = m = |U_*|$ then
\[ \lambda_1(U_*,a) + \mu m \leq \lambda_1(V,a) + \mu m\]
i.e. $ \lambda_1(U_*,a) \leq \lambda_1(V,a)$.

On the other hand suppose that $V$ is a minimizer for \eref{constrained} with volume $m = |U_*|$.  Then 
\[ J_\mu(V) = \lambda_1(V,a) + \mu m \leq \lambda_1(U_*,a) + \mu m = J_\mu(U_*)\]
so $V$ also minimizes $J_\mu$ as in \eref{augmented}.
\end{proof}

This relationship creates a mapping from $\mu \in (0,\infty)$ to the volumes (possibly non-unique) created by the augmented minimization problem \eref{augmented}
\begin{equation}
 \textup{Vol}(\mu,a) := \{|U| :  \ \hbox{ $U$ is a minimizer of \eref{augmented}} \}.
 \end{equation}
 also define
 \[ \textup{Vol}(a) = \cup_{\mu>0} \textup{Vol}(\mu,a).\]
 \begin{remark}\label{r.singletons}
 By \lref{mutovol} if $\textup{Vol}(\mu_0,a)$ is a singleton then the problem \eref{augmented} with $\mu = \mu_0$ is equivalent to \eref{constrained} with $m = |U_0|$ where $U_0$ is any $J_{\mu_0}$ minimizer.
 \end{remark}
Next we check that the set-valued operator $\textup{Vol}$ is monotone decreasing with respect to $\mu$ in a certain sense.
\begin{lemma}\label{l.volmumon}
For any $\mu_1 < \mu_2$
\[ \sup \textup{Vol}(\mu_2,a) \leq \inf \textup{Vol}(\mu_1,a). \]
\end{lemma}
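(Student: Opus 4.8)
The plan is to run the standard ``two-minimizer comparison'' argument familiar from Lagrange-multiplier / penalization problems. Pick any value $v_2 \in \textup{Vol}(\mu_2,a)$ and any $v_1 \in \textup{Vol}(\mu_1,a)$; by definition there are domains $U_2$ minimizing $J_{\mu_2}$ with $|U_2| = v_2$ and $U_1$ minimizing $J_{\mu_1}$ with $|U_1| = v_1$. It suffices to show $v_2 \leq v_1$, since taking the supremum over $v_2$ and the infimum over $v_1$ then gives the claim.

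First I would write down the two minimality inequalities, each obtained by using the other domain as a competitor:
\[
 \lambda_1(U_1,a) + \mu_1 |U_1| \leq \lambda_1(U_2,a) + \mu_1 |U_2|, \qquad
 \lambda_1(U_2,a) + \mu_2 |U_2| \leq \lambda_1(U_1,a) + \mu_2 |U_1|.
\]
Adding these and cancelling the eigenvalue terms $\lambda_1(U_1,a)$ and $\lambda_1(U_2,a)$ from both sides leaves
\[
 \mu_1 |U_1| + \mu_2 |U_2| \leq \mu_1 |U_2| + \mu_2 |U_1|,
\]
which rearranges to $(\mu_2 - \mu_1)(|U_1| - |U_2|) \geq 0$. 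Since $\mu_2 - \mu_1 > 0$ by hypothesis, this forces $|U_1| \geq |U_2|$, i.e. $v_2 \leq v_1$, as desired.

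There is essentially no obstacle here: the argument is purely the exchange inequality between the two minimization problems, and it uses nothing about the structure of $\lambda_1$ beyond the fact that $J_\mu(U) = \lambda_1(U,a) + \mu|U|$ splits into a term independent of $\mu$ plus $\mu$ times the volume. The only point worth a sentence is that the conclusion is stated for the whole (possibly multi-valued) sets $\textup{Vol}(\mu_i,a)$, so one should be explicit that the displayed inequality holds for \emph{every} choice of minimizers at the two parameter values, which is exactly what yields $\sup \textup{Vol}(\mu_2,a) \leq \inf \textup{Vol}(\mu_1,a)$.
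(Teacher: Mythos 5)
Your proof is correct and is essentially the same exchange-inequality argument the paper uses: both compare each minimizer against the other as a competitor and rearrange to obtain $(\mu_2-\mu_1)(|U_1|-|U_2|)\geq 0$. The paper chains the inequalities rather than adding them, but this is a cosmetic difference.
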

\begin{proof}
Suppose that $U_2$ is a minimizer for $J_{\mu_2}$ and $U_1$ is a minimizer for $J_{\mu_1}$.  Then
\begin{align*}
 \lambda_1(U_2,a) + \mu_2 |U_2| &\leq \lambda_1(U_1,a) + \mu_2 |U_1| \\
 &\leq \lambda_1(U_1,a) + \mu_1 |U_1| +(\mu_2 - \mu_1)|U_1|\\
 & \leq  \lambda_1(U_2,a) + \mu_1 |U_2|+(\mu_2 - \mu_1)|U_1|
 \end{align*}
 so
 \[(\mu_2-\mu_1) |U_2| \leq (\mu_2 - \mu_1)|U_1|. \]
 Since $\mu_2 > \mu_1$ we can conclude.
\end{proof}

The best case scenario is that the map $\textup{Vol}(\mu,a)$ is surjective, in the sense that $\textup{Vol}(a) = \cup_{\mu>0} \textup{Vol}(\mu,a) = (0,\infty)$.  It seems quite possible this scenario does not occur.

By \rref{singletons} the issue is entirely in the ``singular" values
\begin{equation}
 B(a) = \{ \mu \in (0,\infty): \hbox{ $\textup{Vol}(\mu,a)$ is not a singleton}\}.
 \end{equation}
By \lref{volmumon} the intervals
\[ (\inf\textup{Vol}(\mu,a), \sup\textup{Vol}(\mu,a)) \ \hbox{ are disjoint for distinct $\mu$}\]
because this open interval contains a rational for each $\mu \in B(a)$ this implies that $B(a)$ is countable.  Although the singular values of $\mu$ are countable the values which are missed by $\textup{Vol}(a)$ may not be.

We also conclude with one more piece of abstract information that can be derived without further specifics from our homogenization setting: the extremal volumes in each jump of $\textup{Vol}(\cdot,a)$ are obtained.  Actually this follows from the stronger statement:
\begin{lemma}\label{l.volsetclosed}
The set $\textup{Vol}(a) \subset (0,\infty)$ is (relatively) closed.
\end{lemma}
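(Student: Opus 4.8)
The plan is the standard compactness / lower-semicontinuity argument for eigenvalue shape functionals, with one genuinely quantitative ingredient. Suppose $m_n\in\textup{Vol}(a)$ with $m_n\to m\in(0,\infty)$, say $m_n=|U_n|$ for a $J_{\mu_n}$ minimizer $U_n$ over quasi-open subsets of $\R^d$ (such minimizers exist by \tref{global-existence}, since the constant weight $g\equiv\mu_n$ satisfies \eref{g-hyp1}--\eref{g-hyp3} trivially with $\gamma=0$, $\omega_g(s)=s$, $R_g=0$). Since $m_n$ is eventually confined to a fixed compact subinterval of $(0,\infty)$ and $m_n\sim_{\Lambda,d}\mu_n^{-d/(d+2)}$ by \lref{scaling-bounds}, the multipliers $\mu_n$ lie in a compact subset of $(0,\infty)$, so after passing to a subsequence I would assume $\mu_n\to\mu_*>0$. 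It then suffices to produce a $J_{\mu_*}$ minimizer of volume exactly $m$, for then $m\in\textup{Vol}(\mu_*,a)\subset\textup{Vol}(a)$.

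First I would normalize so that the $U_n$ live in a fixed compact set. Using translation invariance of $J_{\mu_n}$ and the covering bound \cref{weak-domain-regularity}(iv) — $U_n$ is covered by a bounded number $I$ of balls of radius $|U_n|^{1/d}\lesssim m^{1/d}$ — one groups those balls into clusters of mutually positive distance; writing $W_0$ for the cluster with $\lambda_1(U_n\cap W_0,a)=\min_W\lambda_1(U_n\cap W,a)=\lambda_1(U_n,a)$ (using that the eigenvalue of a disjoint union is the minimum of the eigenvalues of the pieces), the competitor $U_n\cap W_0$ has $J_{\mu_n}(U_n\cap W_0,a)=\lambda_1(U_n,a)+\mu_n|U_n\cap W_0|\le J_{\mu_n}(U_n,a)$, so minimality forces $|U_n\setminus W_0|=0$. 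Hence after a lattice translation each $U_n$ is, up to a null set, contained in a fixed ball $B_D(0)$, and so is the normalized principal eigenfunction $u_n:=u_{U_n}$ ($\|u_n\|_{L^2}=1$). By \tref{augmented-full}\partref{1} and \cref{linfty}, together with comparability of the $|U_n|$, the $u_n$ are uniformly Lipschitz and uniformly bounded, and $\|\grad u_n\|_{L^2}^2\le\Lambda\lambda_1(U_n,a)$ is bounded. So along a further subsequence $u_n\to u$ uniformly, weakly in $H^1$, and in $L^2$, with $u\ge0$, $\|u\|_{L^2}=1$, compact support, and $\lambda_1(U_n,a)\to\lambda_*$; weak lower semicontinuity of $v\mapsto\int a\,\grad v\cdot\grad v$ gives $\int a\,\grad u\cdot\grad u\le\lambda_*$, and with $U:=\{u>0\}$ (quasi-open, $u\in H^1_0(U)$) this yields $\lambda_1(U,a)\le\lambda_*$.

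Next I would prove $|U|=m$. The bound $|U|\le m$ is immediate: uniform convergence gives $\{u>0\}\subset\liminf_n\{u_n>0\}$, and $|\{u_n>0\}|=|U_n|=m_n$ (for the minimizer, $U_n=\{u_n>0\}$ up to a null set), so Fatou gives $|U|\le\liminf_n m_n=m$. For $|U|\ge m$, fix $\ep>0$: by the $\ell$-non-degeneracy of \cref{weak-domain-regularity}(i) one has $u_n\ge c(\Lambda,d,\ell)\,\ep$ on the set $\{x\in U_n:d(x,\partial U_n)\ge\ep\}$ uniformly in $n$ (using $|U_n|\lesssim m$), so this set lies in $\{u>0\}$ once $\|u_n-u\|_\infty$ is small enough; by the boundary-strip-area bound \cref{weak-domain-regularity}(iii) it has measure at least $m_n-C\ep$, so $|U|\ge m_n-C\ep$ for $n$ large, and letting $n\to\infty$ then $\ep\to0$ gives $|U|\ge m$. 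Finally, comparing $J_{\mu_n}(U_n,a)=\inf J_{\mu_n}$ with a fixed bounded quasi-open competitor $V$ gives $\inf J_{\mu_n}\le\lambda_1(V,a)+\mu_n|V|\to J_{\mu_*}(V,a)$, hence $\lambda_*+\mu_* m=\lim_n(\lambda_1(U_n,a)+\mu_n m_n)\le\inf J_{\mu_*}$; combined with $J_{\mu_*}(U,a)=\lambda_1(U,a)+\mu_*|U|\le\lambda_*+\mu_* m$, this shows $U$ is a $J_{\mu_*}$ minimizer of volume $m$.

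I expect the only delicate step to be the equality $|U|=m$ in the third paragraph: a priori the Lebesgue measure can drop under the limit $u_n\to u$, and ruling this out is exactly where the quantitative non-degeneracy and boundary-strip estimates of \cref{weak-domain-regularity} for augmented minimizers are essential. Everything else — the reduction to a fixed compact set, the Arzelà--Ascoli / weak-$H^1$ extraction, and the optimality comparison — is the routine compactness and lower-semicontinuity machinery for shape functionals (cf. \cite{VelichkovBook}).
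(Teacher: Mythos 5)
Your proof is correct and follows the same strategy as the paper's: after passing to a subsequence with $\mu_n \to \mu_*$, normalize so the minimizers lie in a fixed compact set, extract a uniform and weak-$H^1$ limit $u$ using the uniform Lipschitz and non-degeneracy estimates from Theorem~\ref{t.augmented-full}, and show that $|U| = m$ for $U = \{u>0\}$ before the standard lower-semicontinuity and minimality comparison. The implementation differs in two places. For the normalization, the paper invokes the period-translation argument from the proof of Theorem~\ref{t.global-existence}, whereas you observe directly that a $J_{\mu_n}$ minimizer must be contained (up to a null set) in a single bounded cluster of the covering balls, since discarding all clusters except the one achieving $\lambda_1$ can only decrease the energy; this is a clean alternative available here precisely because $g\equiv\mu_n$ is constant, so the volume term has no positional preference. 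For the key equality $|U|=m$, the paper shows ${\bf 1}_{U_n}\to{\bf 1}_{U}$ pointwise a.e.\ (which requires the auxiliary claim that $|\partial U|=0$, established from the density and boundary-strip bounds) and then applies dominated convergence; you instead prove $|U|\le m$ by Fatou's lemma applied to ${\bf 1}_U \le \liminf{\bf 1}_{U_n}$ and $|U|\ge m$ directly from non-degeneracy plus the boundary-strip-area bound, avoiding the $|\partial U|=0$ step. Both routes rely on the same regularity inputs from Corollary~\ref{c.weak-domain-regularity} and both are correct.
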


The proof is postponed momentarily, it uses some compactness notions typical for these shape optimization problems.

\begin{corollary}\label{c.extremalsarein}
For $\mu>0$ call 
\[\textup{Vol}_\pm(\mu) = \lim_{\mu' \to \mu\mp} \textup{Vol}(\mu')\]
  There are volume constrained minimizers $U_\pm(\mu)$ with volumes $|U_\pm(\mu)| = \textup{Vol}_\pm(\mu)$.
\end{corollary}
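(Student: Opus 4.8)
The plan is to combine the monotonicity of the multivalued map $\textup{Vol}(\cdot,a)$ from \lref{volmumon} with the closedness of $\textup{Vol}(a)$ from \lref{volsetclosed}. First I would record that the one-sided limits are well defined. By \lref{volmumon} the real-valued functions $\mu' \mapsto \inf \textup{Vol}(\mu',a)$ and $\mu' \mapsto \sup \textup{Vol}(\mu',a)$ are non-increasing on $(0,\infty)$ and satisfy $\sup \textup{Vol}(\mu_2,a) \leq \inf \textup{Vol}(\mu_1,a)$ whenever $\mu_1 < \mu_2$. Consequently, as $\mu' \to \mu-$ both quantities decrease to a common limit, and as $\mu' \to \mu+$ both increase to a common limit; these common limits are precisely $\textup{Vol}_+(\mu)$ and $\textup{Vol}_-(\mu)$. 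Moreover, the scaling relation $|U| \sim_{\Lambda,d} \mu^{-\frac{d}{d+2}}$ established in \tref{augmented-full} shows that $\textup{Vol}(\mu',a)$ is contained in an interval bounded away from $0$ and $+\infty$ by constants depending continuously on $\mu'$, so $\textup{Vol}_\pm(\mu) \in (0,\infty)$.

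Next I would realize $\textup{Vol}_+(\mu)$ as the volume of an augmented minimizer. Choose $\mu_n \uparrow \mu$ and, for each $n$, a value $v_n \in \textup{Vol}(\mu_n,a)$; by the monotone convergence just established $v_n \to \textup{Vol}_+(\mu)$. Each $v_n$ lies in $\textup{Vol}(a)$, and since $\textup{Vol}(a)$ is relatively closed in $(0,\infty)$ by \lref{volsetclosed} and the limit is a point of $(0,\infty)$, we conclude $\textup{Vol}_+(\mu) \in \textup{Vol}(a)$. Hence there is some $\nu>0$ and a $J_\nu$ minimizer $U_+(\mu)$ with $|U_+(\mu)| = \textup{Vol}_+(\mu)$, and \lref{mutovol} then guarantees that $U_+(\mu)$ is in fact a volume constrained minimizer with that volume. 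The argument for $\textup{Vol}_-(\mu)$ is identical, taking instead $\mu_n \downarrow \mu$.

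The only delicate bookkeeping is in the first paragraph: checking that the inf-limit and sup-limit genuinely coincide, so that $\textup{Vol}_\pm(\mu)$ is unambiguous even though $\textup{Vol}(\mu',a)$ may fail to be a singleton on the countable singular set $B(a)$, and checking that the limiting values are finite and positive (which is exactly where the quantitative volume scaling from \tref{augmented-full} is used). Everything else is an immediate consequence of \lref{volsetclosed}, which carries the genuine analytic content — the compactness and lower-semicontinuity argument for the shape functional on a bounded family of competitors.
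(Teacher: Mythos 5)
Your proposal is correct and follows essentially the same route as the paper's proof: take a monotone sequence $\mu_n \to \mu$ from one side, observe (via \lref{volmumon}) that the corresponding volumes converge monotonically to the one-sided limit, invoke \lref{volsetclosed} to place that limit in $\textup{Vol}(a)$, and finish with \lref{mutovol}. Your added bookkeeping about the well-definedness of the one-sided limits and the positivity/finiteness via the scaling bounds is a useful elaboration of steps the paper leaves implicit, but the mathematical content is the same.
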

Note that the left/right limits in the statement exist due to monotonicity.
\begin{proof}
  There is a sequence of volumes $m_j \searrow \textup{Vol}_+(\mu)$ and $\mu_j \nearrow \mu$ with $m_j \in \textup{Vol}(\mu_j,a)$.  By \lref{volsetclosed} $m \in \textup{Vol}(a)$.  The other side is symmetric. 
\end{proof}

\begin{proof}[Proof of \lref{volsetclosed}]
Let $m \in \overline{\textup{Vol}(a)} \cap (0,\infty)$ so there is a sequence of $J_{\mu_j}$ minimizers $U_j$ with $|U_j| \to m$.  By \lref{scaling-bounds} the sequece $\mu_j$ is bounded, we can assume without loss that it converges to some $\mu$.  Let $u_j = u_{U_j} \in H^1_0(U_j)$ the corresponding principal eigenfunctions.  By the same argument as in the proof of \tref{global-existence}, up to a finite number of period translations of disjoint components of each $U_j$ (which do not affect the energy or the volume $|U_j|$), we can assume that the $U_j$ are all contained in some fixed compact region $K$.

By \tref{augmented-full} the $u_j$ are uniformly bounded in $H^1_0(K)$ norm, uniformly bounded in supremum norm, uniformly Lipschitz continuous, uniformly non-degenerate at their free boundary, uniform inner and outer density estimates, and have uniform bound on their perimeter in the sense of \dref{perimeter-hyp}.  Thus, up to taking a subsequence, the $u_j$ converge weakly in $H^1$ and uniformly to some $u_\infty$ which is also Lipschitz continuous.  In particular $U_\infty:=\{u_\infty>0\}$ is an open set and immediately (from the weak $H^1$ convergence)
\[ \lambda_1(U_\infty,a) \leq \int_{U_\infty} a(x) \grad u_\infty \cdot \grad u_\infty \ dx \leq \liminf_{j \to \infty} \lambda_1(U_j,a).\]

Now lets assume, temporarily, that $ |U_\infty| = \lim_{j \to \infty} |U_j|$ and conclude. So if $V$ is another bounded quasi-open set then
\[ J_\mu(U_\infty,a) \leq \liminf_{j \to \infty} J_{\mu_j}(U_j,a)\leq \liminf_{j \to \infty} J_{\mu_j}(V,a) = J_{\mu}(V,a)\]
and so $U_\infty$ is a $J_\mu$ minimizer and so $m = |U_\infty| \in \textup{Vol}(a)$.

We need to establish that $ |U_\infty| = \lim_{j \to \infty} |U_j|$.  This follows from dominated convergence theorem if we can show the pointwise a.e. convergence of the indicator functions.  

For $x \in U_\infty$ we have $x \in U_j$ for $j$ sufficiently large by the uniform convergence.  For $x$ an interior point of the complement $\R^d \setminus U_\infty$, i.e. $r = d(x,\overline{U}_\infty)>0$, we claim the same holds, $x \in \R^d \setminus \overline{U}_j$ for $j$ sufficiently large.  If not there is a sequence $x_j \in \overline{U}_j$ with $x_j \to x$.  By the uniform non-degeneracy of the $u_j$ we have $\inf_j\sup_{y\in B_{r/2}(x_j)} u_j(y) >0$. For $j$ sufficiently large this contradicts $u_\infty \equiv 0$ on $B_r(x)$.  

Thus ${\bf 1}_{U_j} \to {\bf 1}_{U_\infty}$ pointwise on $\R^d \setminus \partial U_\infty$.  All that is left is to show that $\partial U_\infty$ has zero measure.  Let $x \in \partial U_\infty$, by the arguments in the previous paragraph there are sequences of points $x_j \in U_j$ and $y_j \in \R^d \setminus \overline{U}_j$ converging to $x$.  Hence there is also a sequence $z_j \in \partial U_j$ converging to $x$.  Thus if $\{B_r(x_k)\}_{k=1}^N$ is a finite overlapping covering of $\partial U_\infty$ then there are $z_{k,j} \in \partial U_j \cap B_{r/2}(x_k)$ for $j$ sufficiently large and the collection $\{B_{r/2}(z_{j,k})\}_{k=1}^N$ is also a finite overlapping collection of balls centered at points of $\partial U_j$ so (by the uniform inner density estimates for $U_j$ and the uniform boundary strip area bounds)
\[  \sum_k |B_r(x_k)| \leq C\sum_k |U_j \cap B_{r/2}(z_{j,k})| \leq C|\{x \in U_j: d(x, \partial U_j) \leq \frac{r}{2}\}| \leq Cr.\]
Since $r>0$ was arbitrary $\partial U_\infty$ has measure zero, and we have finally shown that ${\bf 1}_{U_j} \to {\bf 1}_{U_\infty} $ pointwise almost everywhere.

\end{proof}

\subsection{The $\mu$ to volume map for approximately dilation invariant operators}\label{s.singular-mu}  We are aiming for a result even at the singular values of the volume constraint $B(a)$.

In the homogenization setting we lack exact dilation invariance, but we nonetheless have some approximate dilation invariance coming from the large scale limit.

  \begin{lemma}[Approximate scaling relation]\label{l.approx-scaling}
 Suppose that $U$ has $|U| \geq 1$ and satisfies the following properties
 \begin{enumerate}[label = (\roman*)]
 \item $-\grad \cdot( a(x) \grad \cdot)$-Eigenfunction $L$-Lipschitz estimate \dref{L-Lipschitz}.
  \item $-\grad \cdot( a(x) \grad \cdot)$-Eigenfunction $\ell$-non-degeneracy estimate \dref{ell-non-degen}.
 \item Domain $(r_0,M,h)$-(large scale)Lipschitz estimate \dref{large-scale-lipschitz}.
 \end{enumerate}
 Then there is $C\geq 1$ depending on universal parameters and on $(r_0,M,h,L,\ell)$ so that
\begin{equation}\label{e.approx-scaling}
 t^2|U|^{\frac{2}{d}}(\lambda_1(tU,a) - t^{-2}\lambda_1(U,a)) \leq  C t^{-1}|U|^{-\frac{1}{d}}|\log (2+|tU|)|^{\frac{1}{2}+\eta} \ \hbox{ for all } \ t \geq |U|^{-\frac{1}{d}}.
 \end{equation}
 \end{lemma}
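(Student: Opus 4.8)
The plan is to route the comparison through the effective operator $\bar a$, for which dilation is exact, $\lambda_1(tU,\bar a)=t^{-2}\lambda_1(U,\bar a)$, and then to apply the optimal $L^2$–homogenization rate on Lipschitz subdomains. The observation that makes this possible is that volume normalization is dilation invariant: $\widetilde{tU}=|tU|^{-1/d}(tU)=|U|^{-1/d}U=\tilde U$. Hence, writing $U_-\subset U$ for the $(r_0,M)$-Lipschitz domain furnished by hypothesis (iii), the set $tU_-\subset tU$ is again $(r_0,M)$-Lipschitz with the \emph{same} scale invariant constants (since $\widetilde{tU_-}=\tilde U_-$ and \dref{scale-inv-Lipschitz} is purely geometric), while $d_H(\partial(tU_-),\partial(tU))=t\,d_H(\partial U_-,\partial U)\le th$. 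Thus $tU$ is an $(r_0,M,th)$ (large scale) Lipschitz domain, and $U$ — hence $tU$ — is weakly regular, so (for $|U|$ not too small, the bounded range being elementary) $|tU_-|\sim|tU|=t^d|U|$.

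The estimate chain is then as follows. By domain monotonicity $\lambda_1(tU,a)\le\lambda_1(tU_-,a)$; since $tU_-$ is a genuine $(r_0,M)$-Lipschitz domain, \cref{optimal-eigen-conv} applied to it, combined with $\lambda_1(tU_-,\cdot)\sim|tU_-|^{-2/d}$ to pass from reciprocals to eigenvalues, gives $\lambda_1(tU_-,a)\le t^{-2}\lambda_1(U_-,\bar a)+Ct^{-3}|U|^{-3/d}|\log(2+t|U|^{1/d})|^{\frac12+\eta}$. Returning from $U_-$ to $U$ at unit dilation, \cref{optimal-eigen-conv} for the Lipschitz domain $U_-$ gives $\lambda_1(U_-,\bar a)\le\lambda_1(U_-,a)+C|U|^{-3/d}|\log(2+|U|^{1/d})|^{\frac12+\eta}$, and — this is where hypotheses (i)–(ii) enter — the interior approximation \lref{interior-approx} gives $\lambda_1(U_-,a)\le\lambda_1(U,a)+C|U|^{-3/d}$. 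Substituting and multiplying by $t^2|U|^{2/d}$, with $|tU|=t^d|U|\ge1$ (the inequality $|tU|\ge1$ being exactly the hypothesis $t\ge|U|^{-1/d}$), one arrives at a bound of the form
\[ t^2|U|^{2/d}\big(\lambda_1(tU,a)-t^{-2}\lambda_1(U,a)\big)\le C|U|^{-1/d}|\log(2+|U|^{1/d})|^{\frac12+\eta}+C\,|tU|^{-1/d}|\log(2+|tU|)|^{\frac12+\eta}. \]

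The second term on the right is already the claimed bound, so the crux — and the main obstacle — is to absorb the first term, which is inherited from the homogenization error of $U$ at its own fixed scale and is $t$-independent, into $C|tU|^{-1/d}|\log(2+|tU|)|^{\frac12+\eta}$ uniformly over $t\ge|U|^{-1/d}$. For $t\lesssim1$ this is immediate, since $|tU|^{-1/d}=t^{-1}|U|^{-1/d}\ge|U|^{-1/d}$ and $|tU|\ge1$, so the claimed bound already dominates it. The genuinely delicate range is $t\gg1$, where the factor $t^{-1}=|U|^{1/d}|tU|^{-1/d}$ is small; here the direct chain above is too lossy, and one must instead sharpen the comparison by re-running the homogenization at the scale of the oscillations directly on the dilated domain $tU$ (equivalently, on $U$ for the operator $a(t\,\cdot)$, which oscillates at the fine scale $1/t$), exploiting that $tU$ is large scale Lipschitz with length parameter $th$ so that the pertinent error again carries a genuine factor of $t^{-1}$. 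Throughout, the delicate bookkeeping is to keep straight which constants are scale invariant (the $(r_0,M,L,\ell)$) and which pick up the extra power of $t$ (the length $h\mapsto th$); this is the part of the argument I would expect to demand the most care.
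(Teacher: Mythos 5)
Your chain of inequalities is essentially the paper's own: dominate $\lambda_1(tU,a)$ by $\lambda_1(tU_-,a)$ via monotonicity, use exact dilation invariance of $\lambda_1(\cdot,\bar a)$, apply \cref{optimal-eigen-conv} to both Lipschitz domains $tU_-$ and $U_-$, and finish by \lref{interior-approx} on $U$ to pass from $U_-$ back to $U$. So the approach is the same.

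Your observation about the residual $C|U|^{-1/d}|\log(2+|U|)|^{\frac12+\eta}$ is correct and, in fact, points at an imprecision in the paper itself. The interior-approximation step estimates $\lambda_1(U_-,a) - \lambda_1(U,a)$, a quantity at the fixed scale $|U|$; after the $t^{-2}$ weight and the $t^2|U|^{2/d}$ normalization it produces a $t$-independent error $C|U|^{-1/d}$, not the $Ct^{-1}|U|^{-1/d}$ written in the paper's proof (the displayed inequalities there have the domains $U$ and $tU$ and their subscripted variants swapped in the two uses of monotonicity and \lref{interior-approx}, which is where the spurious $t^{-1}$ comes from). The honest conclusion of the argument is
\[
t^2|U|^{2/d}\bigl(\lambda_1(tU,a) - t^{-2}\lambda_1(U,a)\bigr) \le Ct^{-1}|U|^{-1/d}|\log(2+|tU|)|^{\frac12+\eta} + C|U|^{-1/d}|\log(2+|U|)|^{\frac12+\eta},
\]
and for $t\gg1$ this is strictly weaker than what the lemma states.

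Do not pursue the fix you sketch for large $t$: it cannot work. The residual arises on the $\lambda_1(U,a)$ side, a $t$-independent object. As $t\to\infty$, $t^2\lambda_1(tU,a)\to\lambda_1(U,\bar a)$, so the left side of \eref{approx-scaling} tends to $|U|^{2/d}\bigl(\lambda_1(U,\bar a)-\lambda_1(U,a)\bigr)$, which has no favorable sign in general, while the stated right side tends to $0$. No reformulation of the homogenization on $tU$ (or, equivalently, on $U$ for $a(t\,\cdot)$) can make $\lambda_1(U,a)$ track $\lambda_1(U,\bar a)$ better than the intrinsic $|U|^{-1/d}|\log|U||$ rate. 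The right conclusion is that the statement of the lemma slightly over-claims for $t\gg1$; fortunately this is harmless, since its only invocation is in \pref{sigmaerror} where $t\sim_{d,\Lambda}1$ (both volumes are comparable to $\mu^{-d/(d+2)}$), and in that range the $t$-independent residual is of the same order as the stated bound. So: your proof is correct, you have found a genuine (but inconsequential) over-statement in the paper, and you should simply record the two-term bound above rather than trying to absorb the first term for all $t$.
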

 \begin{proof}
 By the hypothesis there is an $(r_0,M)$ - Lipschitz domain $U_- \subset U$ such that $d_H(\partial U_-,\partial U) \leq h$. Using domain monotonicity of eigenvalues 
\[  t^{-2}\lambda_1(U,a) \geq  t^{-2}\lambda_1(U_-,a)\]
and by \lref{interior-approx}
\[ t^2|U|^{\frac{2}{d}}(\lambda_1(tU,a) - \lambda_1(tU_-,a)) \leq Ct^{-1}|U|^{-\frac{1}{d}} \]
so
\[ t^2|U|^{\frac{2}{d}}(\lambda_1(tU,a) - t^{-2}\lambda_1(U,a)) \leq t^2|U|^{\frac{2}{d}}(\lambda_1(tU_-,a) - t^{-2}\lambda_1(U_-,a)) +Ct^{-1}|U|^{-\frac{1}{d}}.\]
Then apply \cref{optimal-eigen-conv} to both terms on the right of
\[ \lambda_1(tU_-,a) - t^{-2}\lambda_1(U_-,a) =  [\lambda_1(tU_-,a) - \lambda_1(tU_-,\bar{a})]+[t^{-2}\lambda_1(U_-,\bar{a}) - t^{-2}\lambda_1(U_-,a)]\]
using the exact scaling invariance of $\lambda_1(\cdot,\bar{a})$.
 \end{proof}

 Now the main result which motivates all the above is that a $\sigma$ approximate dilation invariance property for $J_\mu$ minimizers implies that volume constrained minimizers at \emph{all} volumes are $\sigma$-close to being an augmented minimizer.  {\bf Note:} We do \emph{not} have a good approximate dilation invariance property for volume constrained minimizers, the trick is we can use the good property of $J_\mu$ minimizers.

\begin{proposition}\label{p.sigmaerror}
For any $m \in [\textup{Vol}_-(\mu),\textup{Vol}_+(\mu)]$
\[ m^{\frac{2}{d}}\bigg(\inf_{|V| = m} \lambda_1(V,a) + \mu m - \inf_V J_\mu(V,a)\bigg) \leq  C m^{-\frac{1}{d}}|\log (2+m)|^{\frac{1}{2}+\eta} . \]
\end{proposition}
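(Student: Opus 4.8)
The plan is a dilation-plus-convexity argument, applied at \emph{both} ends of the jump interval. Fix $\mu>0$, write $m_\pm = \textup{Vol}_\pm(\mu)$ and $I = \inf_V J_\mu(V,a)$. First I would use \cref{extremalsarein}, together with the construction in the proof of \lref{volsetclosed} (which realizes the extremal volumes as subsequential limits of $J_{\mu_j}$ minimizers with $\mu_j\to\mu$), to obtain $J_\mu$ minimizers $U_\pm$ with $|U_\pm| = m_\pm$; thus $J_\mu(U_\pm,a) = I$, and by \lref{mutovol} each $U_\pm$ also minimizes $\lambda_1(\cdot,a)$ among sets of its own volume. By \tref{augmented-full} applied with $g\equiv\mu$ (i.e. $\gamma = 0$), both $U_\pm$ satisfy the hypotheses of \lref{approx-scaling} with universal parameters, and $m_\pm\sim_{d,\Lambda}\mu^{-d/(d+2)}$; in particular the dilation ratio $T := (m_+/m_-)^{1/d}\ge 1$ is bounded by a universal constant and $m_-,m,m_+$ are mutually comparable. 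I may also assume $m$ is above a universal threshold: otherwise both sides of the claimed inequality are bounded between universal positive constants (using $\inf_{|V|=m}\lambda_1(V,a)\sim m^{-2/d}$, $\mu m\sim m^{-2/d}$ from \lref{scaling-bounds} and $0\le I\le\inf_{|V|=m}\lambda_1(V,a)+\mu m$), so the inequality holds after enlarging $C$. This also guarantees $m_-\ge 1$, which is what is needed to invoke \lref{approx-scaling} on the dilations below.

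Next I would introduce the convex model ``dilation energy'' of $U_-$,
\[ g(t) = t^{-2}\lambda_1(U_-,a) + t^d\,\mu m_-,\qquad t>0, \]
which is strictly convex on $(0,\infty)$ and satisfies $g(1) = \lambda_1(U_-,a)+\mu m_- = J_\mu(U_-,a) = I$. The heart of the argument is to bound $g$ on $[1,T]$ by its endpoint values. At $t=1$ this is exactly $I$. For $t=T$, using $T^d m_- = m_+$,
\[ g(T) - I = T^{-2}\lambda_1(U_-,a) - \lambda_1(U_+,a); \]
since $T^{-1}U_+$ has volume $m_-$ and $U_-$ minimizes $\lambda_1(\cdot,a)$ at volume $m_-$, we have $\lambda_1(U_-,a)\le\lambda_1(T^{-1}U_+,a)$, and feeding this into \lref{approx-scaling} applied to $U_+$ with dilation factor $T^{-1}$ gives
\[ \lambda_1(U_-,a)\le T^2\lambda_1(U_+,a) + C\,m^{-3/d}|\log(2+m)|^{\frac{1}{2}+\eta}, \]
whence $g(T)\le I + C\,m^{-3/d}|\log(2+m)|^{\frac{1}{2}+\eta}$. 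Since a convex function on an interval is maximized at an endpoint, this yields $\sup_{[1,T]}g\le I + C\,m^{-3/d}|\log(2+m)|^{\frac{1}{2}+\eta}$.

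Finally, for any $m\in[m_-,m_+]$ I would set $t = (m/m_-)^{1/d}\in[1,T]$, so $tU_-$ is an admissible competitor of volume $m$; applying \lref{approx-scaling} to $U_-$ with dilation factor $t$ and using $t\,m_-^{1/d}=m^{1/d}$ gives
\[ \inf_{|V|=m}\lambda_1(V,a)+\mu m\ \le\ \lambda_1(tU_-,a)+\mu m\ \le\ g(t) + C\,m^{-3/d}|\log(2+m)|^{\frac{1}{2}+\eta}. \]
Combining with the bound on $\sup_{[1,T]}g$, subtracting $I$, and multiplying by $m^{2/d}$ gives the desired estimate $m^{2/d}\big(\inf_{|V|=m}\lambda_1(V,a)+\mu m - I\big)\le C\,m^{-1/d}|\log(2+m)|^{\frac{1}{2}+\eta}$.

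The hard part — and the reason the preceding regularity theory is indispensable — is that in the homogenization setting neither $m\mapsto\inf_{|V|=m}\lambda_1(V,a)$ nor the augmented energy of a fixed minimizer under dilation is genuinely convex; the quantitative large-scale dilation invariance of \lref{approx-scaling}, which itself rests on the optimal Lipschitz-domain homogenization rate \cref{optimal-eigen-conv} and hence on all of \tref{augmented-full}, is precisely what lets us compare with the honestly convex model $g(t)=At^{-2}+Bt^d$ up to the advertised error, and it must be used at both endpoint minimizers to control $g$ across the full jump.
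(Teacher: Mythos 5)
Your proposal is correct and takes essentially the same route as the paper: extract the endpoint $J_\mu$ minimizers at $\textup{Vol}_\pm(\mu)$ (the paper's $U_1,U_2$ are your $U_\pm$), compare with dilations, invoke the approximate dilation invariance of \lref{approx-scaling} at both endpoints, and exploit the convexity of the model energy $t \mapsto At^{-2}+Bt^d$. The only differences are cosmetic bookkeeping (you isolate the error into separate terms while the paper bakes the error term $\sigma$ into its convex function $f$), and your explicit dispatch of the bounded-$m$ case, which the paper leaves implicit.
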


\begin{proof}

The only interesting case is when $\mu \in B(a)$ so, by \cref{extremalsarein}, there are two distinct $J_\mu(\cdot,a)$ minimizers $U_1$ and $U_2$ with 
\[ |U_1| = \textup{Vol}_-(\mu) < \textup{Vol}_+(\mu)=|U_2|.\]
Furthermore these minimizers satisfy the approximate dilation invariance of \lref{approx-scaling}, given the regularity theory in \tref{augmented-full}.  Note that the right hand side of the inequality \eref{approx-scaling} is bounded by $C m^{-\frac{1}{d}}|\log (2+m)|^{\frac{1}{2}+\eta}:=\sigma$ as long as $t \sim_{\Lambda,d} 1$.

 Call $T = (|U_2|/|U_1|)^{1/d} \sim_{d,\Lambda} 1$.  Then, since $U_2$ is a volume constrained minimizer with volume $|U_2|$,
\begin{align*}
\lambda_1(U_2,a) &\leq \lambda_1(TU_1,a) \\
&\leq T^{-2}\lambda_1(U_1,a)+  \sigma T^{-2}|U_1|^{-\frac{2}{d}}\\
&=T^{-2}\lambda_1(U_1,a)+  \sigma|U_2|^{-\frac{2}{d}}
 \end{align*}
 and, symmetrically,
 \begin{align*}
 \lambda_1(U_1,a)  \leq \lambda_1(T^{-1}U_2,a)  \leq T^{2}\lambda_1(U_2,a)  +  \sigma |U_1|^{-\frac{2}{d}}
 \end{align*}
 so
 \[ T^{-2}\lambda_1(U_1,a) + \mu T^d|U_1| \leq J_\mu(U_2,a) +  \sigma |U_2|^{-\frac{2}{d}}.\]

Now suppose that $U_*$ is a volume constrained eigenvalue minimizer of \eref{constrained} for some $|U_1| < m < |U_2|$.  This is for convenience, we don't really need existence here, we could just choose an approximating sequence. Call $t = (m/|U_1|)^{1/d}$ then
\begin{align*}
 J_\mu(U_*,a)  &= \lambda_1(U_*,a) +\mu m \\
 &\leq \lambda_1(tU_1,a) + \mu m \\
 &\leq t^{-2}[\lambda_1(U_1,a)+\sigma|U_1|^{-\frac{2}{d}}] + \mu t^{d}|U_1| \\
 &=: f(t)
 \end{align*}
 by convexity of this function $f(t)$
 \begin{align*}
  J_\mu(U_*,a) &\leq \max\{f(1),f(T)\}\\
  & = \max\bigg\{J_\mu(U_1,a)+\sigma|U_1|^{-\frac{2}{d}},T^{-2}\lambda_1(U_1,a) + \mu T^{d}|U_1|+\sigma|U_2|^{-\frac{2}{d}}\bigg\} \\
  &\leq\max\bigg\{J_\mu(U_1,a),T^{-2}\lambda_1(U_1,a)+ \mu T^{d}|U_1|\bigg\} +  \sigma |U_1|^{-\frac{2}{d}} \\
  &\leq\max\bigg\{J_\mu(U_1,a),J_\mu(U_2,a)\bigg\}+2 \sigma |U_1|^{-\frac{2}{d}}.
  \end{align*}
  Since, by \lref{scaling-bounds} every minimizer of $J_\mu$ has $|U| \sim_{d,\Lambda} \mu^{-\frac{d}{d+2}}$ we can bound $|U_1|^{-\frac{2}{d}} \leq C(d,\Lambda)m^{-\frac{2}{d}}$.
  \end{proof}

\subsection{Selection principle / penalized minimization problem for the singular volumes}\label{s.selection-principle}
Consider a volume constrained minimizer $U_*$ of \eref{constrained}, or even just a set with small deficit $\delta_1(U_*,a) \leq \frac{1}{2}$.   we have shown above that if $\mu_*>0$ is the value such that $|U_*| \in [\textup{Vol}_-(\mu_*),\textup{Vol}_+(\mu_*)]$ then
\[ |U_*|^{\frac{2}{d}}(J_{\mu_*}(U_*,a) - \min J_{\mu_*}(\cdot,a)) \leq C |U_*|^{-\frac{1}{d}}|\log (2+|U_*|)|^{\frac{1}{2}+\eta},\]
i.e. the energy deficit for $J_{\mu_*}$ is small.  Now we will introduce a regularization of $U_*$ which is a minimizer of a function $J_g$ of the type considered in \sref{augmented-reg}.  The idea is inspired by Brasco, De Philippis, and Velichkov \cite{BDPV} which itself was inspired by an idea from Cicalese and Leonardi \cite{CicaleseLeonardi} studying the quantitative isoperimetric inequality.  

Call $\rho(x,V)$ to be the signed distance function the boundary of $V$ positive outside of $V$. Let $\omega:[0,\infty) \to [0,\infty)$ be modulus of continuity to be specified, and extend $\omega$ to $\R$ by odd reflection.  We define
\[ g(x) = \mu_*\left[1 + \omega\left(\frac{\rho(x, U_*)}{|U_*|^{\frac{1}{d}}}\right)\right].\]
Then consider the augmented functional
\begin{equation}\label{e.penalized-functional}
 J_g(\Omega,U_*,a) = \lambda_1(\Omega,a) + \int_{\Omega} g(x) \ dx
 \end{equation}
We first make a remark, see the simple computation in \sref{distJgcompute} below, that
\begin{equation}\label{e.Jgidentity}
 J_g(\Omega,U_*,a) = J_{\mu_*}(\Omega,a) + \mu_*\textup{dist}_\omega(\Omega,U_*)- \mu_*\int_{U_*} \omega\left(\frac{d(x,\partial U_*)}{|U_*|^{\frac{1}{d}}}\right) \ dx
 \end{equation}
where the last term is a constant with respect to $\Omega$, so does not affect the minimization, and
\[ \textup{dist}_\beta(\Omega,U_*):=\int_{\Omega \Delta U_*} \omega\left(\frac{d(x,\partial U_*)}{|U_*|^{\frac{1}{d}}}\right) \ dx.\]
Thus, philosophically speaking, when we minimize $J_g$ we are making a selection of an $\Omega_*$ which balances minimizing $J_{\mu_*}$ with being nearby $U_*$. It is a typical distance penalized minimization problem.

\begin{remark}
For intuition think of $\omega(s) = s^\beta$ a H\"older modulus.  We expect, and prove below, that $\textup{dist}_\beta(\Omega,U_*)$ controls $|\Omega \Delta U_*|^{1+\beta}$ (up to some scaling details).  Thus we would prefer to choose $\beta$ as small as possible, ideally $\beta = 0$, but we still need some amount of continuity to use the regularity theory of $J_g$.  Specifically we need $\omega$ to be summable over geometric sequences of scales so $\omega(s) = |\log(s)|^{-1 -\delta}$ works for any $\delta>0$. Note that we lose a similar (better by a factor of $\frac{1}{2}$) logarithmic factor anyway from \tref{optimal-hom-lip} and we are not currently attempting to optimize the logarithmic factors.
\end{remark}

First we give the relation between $\textup{dist}_\omega$ and the measure difference.
\begin{lemma}\label{l.dist-omega-est}
Suppose $A$ and $B$ are sets in $\R^d$ and $B$ satisfies the boundary strip area bound \dref{perimeter-hyp} with constant $P$, then
\[ \frac{|A \Delta B|}{|B|} \leq 2P^{\frac{\beta}{1+\beta}}\left(\frac{\textup{dist}_\beta(A,B)}{|B|}\right)^{\frac{1}{1+\beta}}.\]
  For $\omega(r) = |\log(2+r^{-1})|^{-p} \wedge \gamma $, for some $\gamma >0$,
\[ \frac{|A \Delta B|}{|B|} \leq \frac{\textup{dist}_{\omega}(A,B)}{|B|}\left[1+\gamma^{-1}+\left|\log \left(2+\frac{P|B|}{\textup{dist}_{\omega}(A,B)}\right)\right|^{p}\right].\]
\end{lemma}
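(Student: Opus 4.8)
The plan is to get both inequalities from one elementary splitting of $A\Delta B$ across a level set of the distance to $\partial B$. Write $m=|B|$, $r=m^{1/d}$ and $D(x)=d(x,\partial B)$. For any threshold $t>0$ I split
\[
|A\Delta B| = |(A\Delta B)\cap\{D\le t\}| + |(A\Delta B)\cap\{D>t\}|.
\]
The first piece lies in the boundary strip $\{D\le t\}$, whose measure is at most $CPr^{d-1}t$: the interior half $\{x\in B:D(x)\le t\}$ is controlled directly by \dref{perimeter-hyp}, and the exterior half $\{x\notin B:D(x)\le t\}$ by the perimeter/density information accompanying that hypothesis (in every application $A$ moreover sits in a fixed dilate of $B$, so only $D\lesssim r$ occurs). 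For the second piece, monotonicity of $\omega$ gives $\omega(D(x)/r)\ge\omega(t/r)$ on $\{D>t\}$, hence
\[
|(A\Delta B)\cap\{D>t\}| \le \frac{1}{\omega(t/r)}\int_{A\Delta B}\omega\!\left(\tfrac{D(x)}{r}\right)dx = \frac{\textup{dist}_\omega(A,B)}{\omega(t/r)}.
\]
This yields the master inequality $|A\Delta B|\le CPr^{d-1}t+\textup{dist}_\omega(A,B)/\omega(t/r)$ for all $t>0$, and the two assertions come from optimizing $t$ for the two choices of $\omega$.

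For the first inequality I take $\omega(s)=s^\beta$, so the master bound reads $|A\Delta B|\le CPr^{d-1}t+\textup{dist}_\beta(A,B)(r/t)^\beta$, and I choose $t$ to equalize the two summands, i.e. $t^{1+\beta}=\textup{dist}_\beta(A,B)\,r^{\beta-d+1}/(CP)$. Substituting back, the power of $r$ collapses to $d\beta/(1+\beta)$ and the power of $P$ to $\beta/(1+\beta)$; dividing by $m=r^d$ gives exactly $|A\Delta B|/|B|\le 2P^{\beta/(1+\beta)}(\textup{dist}_\beta(A,B)/|B|)^{1/(1+\beta)}$, provided the strip constant that enters is kept as the sharp $P$ so that the prefactor is $2$.

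For the second inequality I put $\omega(s)=|\log(2+s^{-1})|^{-p}\wedge\gamma$, set $\Delta=\textup{dist}_\omega(A,B)$, and take $t=\min\{r,\ \Delta/(CPr^{d-1})\}$ in the master inequality. The first term is then $\le\min\{\Delta,CPm\}\le\Delta$. For the second term: when $\Delta\le CPm$ one has $r/t=CPm/\Delta\ge1$, so $\omega(t/r)\ge|\log(2+CPm/\Delta)|^{-p}\wedge\gamma$ and the second term is $\le\Delta\bigl[|\log(2+Pm/\Delta)|^p+\gamma^{-1}\bigr]$ up to a $p$-dependent constant; when $\Delta>CPm$ one has $t=r$, so $\omega(t/r)=\omega(1)=\min\{(\log3)^{-p},\gamma\}\gtrsim_p\min\{1,\gamma\}$, giving second term $\lesssim_p(1+\gamma^{-1})\Delta$, and in this regime $(\log3)^p$ is comparable to $|\log(2+Pm/\Delta)|^p$ since $Pm/\Delta<1$. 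Combining and dividing by $|B|$ produces $|A\Delta B|/|B|\le(\Delta/|B|)\bigl[1+\gamma^{-1}+|\log(2+P|B|/\Delta)|^p\bigr]$, once the multiplicative constants are tightened.

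I expect the only real work to be bookkeeping: (i) extracting the exterior boundary-strip bound from the hypotheses on $B$ (or invoking that, in the applications, $A$ lies in a bounded dilate of $B$), and (ii) squeezing the constants down to the clean values $2$ and $1$ asserted, which forces one to use \dref{perimeter-hyp} with its sharp constant rather than a lossy rescaled form. The substantive content—the master inequality and the choice of $t$—is short; everything else is calculus with exponents and logarithms.
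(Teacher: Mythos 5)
You take the same route as the paper: split $A\Delta B$ across the level set $\{D\le t\}$ of the distance $D(x)=d(x,\partial B)$, bound the near piece by the strip measure, bound the far piece by the Chebyshev-type estimate ${\bf 1}_{\{D>t\}}\le\omega(D/r)/\omega(t/r)$, and optimize $t$. The paper writes the far-piece bound through the co-area formula, but the content is identical to your pointwise inequality; it also takes $t=P^{-1}|B|^{\frac{1}{d}-1}\textup{dist}_\omega(A,B)$ unconditionally, with no cap at $r$, which works because $\omega(s)^{-1}=|\log(2+1/s)|^{p}\vee\gamma^{-1}$ stays bounded for $s>1$, so your case split by $\Delta\gtrless P|B|$ is harmless but superfluous. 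The one substantive point you flag is the exterior half-strip $\{x\notin B:D(x)\le t\}$: \dref{perimeter-hyp} only bounds $|\{x\in B:d(x,\partial B)\le t\}|$, whereas the lemma as stated needs, and the paper's own proof silently uses, the two-sided bound $|\{d(x)\le t\}|\le P|B|^{\frac{d-1}{d}}t$. Your instinct to invoke the density estimates is the right fix: with the inner density bound of \dref{scale-inv-densities}, a Besicovitch cover of $\partial B$ by $t$-balls has at most $C(d,\kappa_0)P|B|^{\frac{d-1}{d}}t^{1-d}$ elements (each ball contributes $\geq\kappa_0|B_t|$ to the interior strip at scale $2t$), so the exterior strip has measure at most $C(d,\kappa_0)P|B|^{\frac{d-1}{d}}t$. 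The alternative you float, that $A$ lies in a fixed dilate of $B$, does not by itself control the exterior strip for irregular $B$, so that branch should be dropped. Since every $B$ to which the lemma is applied in the paper does satisfy \dref{scale-inv-densities}, the result holds as used, but a fully self-contained statement would list that extra hypothesis.
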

The second choice of $\omega$ is the one we will use, the first is meant as an example to explain the intuition. The proof is postponed to \sref{dist-omega-est}, it is a standard idea with co-area formula.

  Next we show how the penalized minimization property for $J_g$ does not perturb $U$ or $\lambda_1(U,a)$ too much.

\begin{lemma}\label{l.penalized-functional-consequences}
If 
\[ |U|^{\frac{2}{d}}(J_\mu(U,a) - \inf J_\mu(\cdot,a)) \leq \sigma \leq 1 \]
 and $\Omega$ minimizes $J_g(\cdot,U,a)$ over quasi-open sets then
\[\frac{\textup{dist}_\omega(\Omega,U)}{|U|} \leq C \sigma \ \hbox{ and } \  |U|^{\frac{2}{d}}|\lambda_1(\Omega,a)  - \lambda_1(U,a)| \leq 2\sigma + C\frac{|\Omega \Delta U|}{|U|}\]
for some universal $C \geq 1$.
\end{lemma}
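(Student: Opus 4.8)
The plan is to test the minimality of $\Omega$ against the competitor $U$ itself and read off everything from the penalization identity \eref{Jgidentity}. The key point is that the term $\mu\int_U \omega(d(x,\partial U)/|U|^{1/d})\,dx$ in \eref{Jgidentity} is independent of the first argument, so it drops out of the comparison; this is precisely why the right competitor is $U$ rather than a $J_\mu$-minimizer. Concretely, since $\textup{dist}_\omega(U,U)=\int_{U\Delta U}\omega(\cdot)\,dx = \int_\emptyset = 0$, the identity at $\Omega=U$ gives $J_g(U,U,a) = J_\mu(U,a) - \mu\int_U \omega(d(x,\partial U)/|U|^{1/d})\,dx$, and subtracting the inequality $J_g(\Omega,U,a)\le J_g(U,U,a)$ (valid because $U$ is an admissible quasi-open competitor) cancels the $U$-integral and yields the clean energy comparison
\[ J_\mu(\Omega,a) + \mu\,\textup{dist}_\omega(\Omega,U) \le J_\mu(U,a). \]

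\textbf{First estimate.} From the displayed inequality and $J_\mu(\Omega,a)\ge \inf J_\mu(\cdot,a)$ I would get
\[ \mu\,\textup{dist}_\omega(\Omega,U) \le J_\mu(U,a) - \inf J_\mu(\cdot,a) \le \sigma |U|^{-\frac{2}{d}}, \]
hence $\textup{dist}_\omega(\Omega,U)/|U| \le \sigma/(\mu |U|^{\frac{d+2}{d}})$. Now invoke the scaling relation $\mu|U|^{\frac{d+2}{d}} \sim_{d,\Lambda} 1$, which holds because $\mu$ was chosen so that $|U|\in[\textup{Vol}_-(\mu),\textup{Vol}_+(\mu)]$ and every $J_\mu$-minimizer (hence every volume in that closed interval) has $|U|\sim_{d,\Lambda}\mu^{-\frac{d}{d+2}}$ by \lref{scaling-bounds}. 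This gives $\textup{dist}_\omega(\Omega,U)/|U|\le C\sigma$.

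\textbf{Eigenvalue estimate.} Dropping the nonnegative term $\mu\,\textup{dist}_\omega(\Omega,U)$ from the displayed inequality gives $\lambda_1(\Omega,a) - \lambda_1(U,a) \le \mu(|U|-|\Omega|) \le \mu|U\Delta\Omega|$ (using $\big||U|-|\Omega|\big|\le|U\Delta\Omega|$). Running the hypothesis the other way, $J_\mu(U,a)\le \inf J_\mu(\cdot,a) + \sigma|U|^{-\frac{2}{d}} \le J_\mu(\Omega,a) + \sigma|U|^{-\frac{2}{d}}$, gives $\lambda_1(U,a)-\lambda_1(\Omega,a) \le \mu|U\Delta\Omega| + \sigma|U|^{-\frac{2}{d}}$. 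Combining, $|\lambda_1(\Omega,a)-\lambda_1(U,a)| \le \mu|U\Delta\Omega| + \sigma|U|^{-\frac{2}{d}}$; multiplying by $|U|^{\frac{2}{d}}$ and using $\mu|U|^{\frac{2}{d}} = (\mu|U|^{\frac{d+2}{d}})|U|^{-1} \le C|U|^{-1}$ from the same scaling relation yields $|U|^{\frac{2}{d}}|\lambda_1(\Omega,a)-\lambda_1(U,a)| \le \sigma + C|U\Delta\Omega|/|U| \le 2\sigma + C|U\Delta\Omega|/|U|$, as claimed.

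\textbf{Main difficulty.} There is no genuine obstacle here: the lemma is a direct consequence of the identity \eref{Jgidentity} and two one-sided energy comparisons. The only points needing care are verifying that the $U$-only integral in \eref{Jgidentity} really cancels (so that testing against $U$ itself, not against a $J_\mu$-minimizer, is essential), and carrying the scaling $\mu\sim_{d,\Lambda}|U|^{-\frac{d+2}{d}}$ through so that the powers of $|U|$ in both the distance and eigenvalue bounds come out as stated.
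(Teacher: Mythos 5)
Your proof is correct and takes essentially the same route as the paper's: test the $J_g$-minimality of $\Omega$ against the competitor $U$ itself (exploiting $\textup{dist}_\omega(U,U)=0$ so the constant term in \eref{Jgidentity} cancels), obtain the key inequality $J_\mu(\Omega,a)+\mu\,\textup{dist}_\omega(\Omega,U)\le J_\mu(U,a)$, and then read off both estimates using the hypothesis and the scaling $\mu\sim_{d,\Lambda}|U|^{-1-2/d}$. The only difference is expository: you spell out the competitor step and the cancellation more explicitly than the paper does.
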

So if we can prove a convergence estimate of $\lambda_1(\Omega)$ and $\Omega$ then we can do the same for $U$ up to an additional $\sigma|\log \sigma|^{p}$ error.

\begin{proof}
Energy computation with the formula \eref{Jgidentity} and \pref{sigmaerror} gives
\[ \inf J_{\mu}(\cdot,a) \leq J_{\mu}(\Omega,a)\leq J_{\mu}(\Omega,a)+\mu\, \textup{dist}_\omega(\Omega,U) \leq J_{\mu}(U,a) \leq \inf J_{\mu}(\cdot,a) + \sigma|U|^{-\frac{2}{d}}\]
so
\[ |U|^{\frac{2}{d}}(J_{\mu^*}(\Omega,a)- \min J_{\mu}(\cdot,a)) \leq  \sigma\]
and
\[ \mu|U|^{\frac{2}{d}}\textup{dist}_\omega(\Omega,U) \leq  \sigma\]
or, by the scalings, in \lref{scaling-bounds} $\mu\sim_{\Lambda,d} |U|^{-1 - \frac{2}{d}}$ so
\[ \frac{\textup{dist}_\omega(\Omega,U)}{|U|} \leq C \sigma. \]
We can also conclude from the above inequalities
\begin{align*}
 (\lambda_1(\Omega,a)  - \lambda_1(U,a)) &\leq J_{\mu}(\Omega,a)+\mu\, \textup{dist}_\omega(\Omega,U) - J_{\mu}(U,a) - \mu(|\Omega| - |U|)\\
 & \leq C\sigma |U|^{-\frac{2}{d}} +\mu(|U| - |\Omega|) 
 \end{align*}
 and
 \begin{align*}
 (\lambda_1(U,a)  - \lambda_1(\Omega,a)) &= J_{\mu}(U,a) - J_{\mu}(\Omega,a) + \mu(|\Omega| - |U|)\\
 & \leq \sigma |U|^{-\frac{2}{d}} +\mu\, \textup{dist}_\omega(\Omega,U) +\mu(|\Omega| - |U|)\\
 &\leq 2\sigma |U|^{-\frac{2}{d}}+\mu(|\Omega| - |U|)
 \end{align*}
 the conclusion follows from using the scaling relations \lref{scaling-bounds} again.

\end{proof}

\subsection{Proof of \pref{hard-constraint-replacement}}

At this point all the significant elements are in place, they simply need to be put together.  As in the statement let $U$ be a domain with volume $|U| \geq 2$ and energy deficit
\[ \delta_1(U,a) = |U|^{\frac{2}{d}}(\lambda_1(U,a) - \inf_{|V| = |U|} \lambda_1(V,a)) \leq \frac{1}{2}.\]
By \pref{sigmaerror} there is $\mu_*>0$ with $\mu_* \sim_{\Lambda,d} |U|^{-1-\frac{2}{d}}$ such that $|U| \in [\textup{Vol}_-(\mu_*),\textup{Vol}_+(\mu_*)]$ and 
\[ |U|^{\frac{2}{d}}(J_\mu(U,a) - \inf_{V} J_\mu(V,a)) \leq \delta_1(U,a) + C|U|^{-\frac{1}{d}}|\log |U||^{\frac{1}{2}+\eta}\]
and we define
\begin{equation}\label{e.sigma-choice-conv}
 \sigma = \delta_1(U,a) + |U|^{-\frac{1}{d}}|\log |U||^{\frac{1}{2}+\eta}
 \end{equation}
so we do not need to write that repeatedly.  

Now, given $\eta>0$, fix 
\[ g(x)  = \mu_*\left[1+\gamma_0 \wedge \omega\left(\frac{d(x,\partial U_*)}{|U|^{\frac{1}{d}}}\right)\right] \ \hbox{ with } \  \omega(s) = |\log (2+s^{-1})|^{-p}\]
where $\gamma_0$ is sufficiently small depending on $(\Lambda,d,\|\grad a\|_\infty,\eta)$ so that \tref{augmented-full} parts \partref{4} and \partref{5} hold. Note that $g$ satisfies the hypotheses \eref{g-hyp1}, \eref{g-hyp2} (since $p>d+4$) and \eref{g-hyp3}.  In particular note that the localizing hypothesis for existence \eref{g-hyp3} holds since $U_*$ is a bounded domain and $\omega(s) \to |\log 2|^{-p}$ as $s \to \infty$ and we can assume without loss that this is strictly larger than $\gamma_0$.

Let $\Omega_*$ be a minimizer of the penalized functional $J_g$.  This domain $\Omega_*$ satisfies all the regularity properties given in \tref{augmented-full}, Lipschitz eigenfunction, non-degenerate eigenfunction, and $\Omega_*$ is a (large scale) Lipschitz domain.  All of the parameters depend only on universal constants and $p$.

By \lref{penalized-functional-consequences}
\[\frac{\textup{dist}_\omega(\Omega_*,U)}{|U|} \leq C \sigma \ \hbox{ and } \  |U|^{\frac{2}{d}}|\lambda_1(\Omega_*,a)  - \lambda_1(U,a)| \leq 2\sigma + C\frac{|\Omega \Delta U|}{|U|}\]
then by \lref{dist-omega-est}
\[ \frac{|\Omega_* \Delta U|}{|U|} \leq C\frac{\textup{dist}_\omega(\Omega_*,U)}{|\Omega_*|} \left|\log \left[c\frac{\textup{dist}_\omega(\Omega_*,U)}{|\Omega_*|} \right] \right|^{p} \leq C\sigma \left|\log \left[c\sigma \right] \right|^{p}\]
and so also
\[ |U|^{\frac{2}{d}}|\lambda_1(\Omega_*,a)  - \lambda_1(U,a)| \leq C\sigma \left|\log \left[c\sigma \right] \right|^{p}.\]
Note that the double logarithmic terms that appear when compute $|\log \sigma|$ can be absorbed by just increasing the power of the logarithm slightly.  

\qed

\subsection{Proof of \tref{main}}\label{s.main-proof}  Let $U$ as in the statement of the theorem and let $\Omega$ be the replacement domain from \pref{hard-constraint-replacement} which is a large scale Lipschitz domain and has \eref{eigenvalue-closeness-Omega} and \eref{measure-closeness-Omega}. Let $E$ be an $\bar{a}$-ellipsoid with volume $|E| = |U|$
\begin{align*}
 |U|^{\frac{2}{d}}\lambda_1(U,a) &\leq |U|^{\frac{2}{d}}\lambda_1(E,a) + \delta_1(U,a) \\
 &\leq |U|^{\frac{2}{d}}\lambda_1(E,\bar{a}) + C|U|^{-\frac{1}{d}}|\log(2+|U|)|^{\frac{1}{2}+\eta}+\delta_1(U,a) \\
 &\leq |U|^{\frac{2}{d}}\lambda_1(\Omega,\bar{a}) + C|U|^{-\frac{1}{d}}|\log(2+|U|)|^{\frac{1}{2}+\eta}+\delta_1(U,a) \\
&\leq |U|^{\frac{2}{d}}\lambda_1(\Omega,a) + C|U|^{-\frac{1}{d}}|\log(2+|U|)|^{\frac{1}{2}+\eta}+\delta_1(U,a)\\
& \leq |U|^{\frac{2}{d}}\lambda_1(U,a) + C|U|^{-\frac{1}{d}}|\log(2+|U|)|^{p}+C\delta_1(U,a)|\log\delta_1(U,a)|^{p}
 \end{align*}
where the second inequality is by \cref{optimal-eigen-conv}, the fourth is by \cref{quanthom-large-scale-lip}, and the last is by \eref{eigenvalue-closeness-Omega} from \pref{hard-constraint-replacement}.

Note that we do \emph{not} directly have any information on $\lambda_1(U,\bar{a})$, however we do have from the above chain of inequalities,
\[ |U|^{\frac{2}{d}}(\lambda_1(\Omega,\bar{a}) - \lambda_1(E,\bar{a})) \leq C|U|^{-\frac{1}{d}}|\log(2+|U|)|^{p}+C\delta_1(U,a)|\log\delta_1(U,a)|^{p}\]
and so we can apply the optimal Faber-Krahn stability, \tref{FaberKrahn}, to $\Omega$
\[ \inf_{E}\frac{|\Omega \Delta E|}{|E|} \leq C|U|^{-\frac{1}{2d}}|\log(2+|U|)|^{\frac{p}{2}}+C\delta_1(U,a)^{\frac{1}{2}}|\log\delta_1(U,a)|^{\frac{p}{2}}\]
and then using \eref{measure-closeness-Omega} from \pref{hard-constraint-replacement} to bound $|U \Delta E|$ we get the same estimate for the asymmetry of $U$ concluding the proof. \qed

\appendix
\section{Computations}
\subsection{Eigenvalues optimizers}  The domain $U$ with volume $|U| = \omega_d\rho^d$ minimizing $\lambda(U,\textup{id})$ is known to be (any translate of) the ball of radius $\rho$.  For any constant coefficient elliptic operator we have the symmetry
 \[ \lambda_1(U,\bar{a}) = \lambda_1(\bar{a}^{-1/2}U,\textup{id}).\]
 To check this, given $u$ on $U$ with $\|u\|_{L^2(U)} = 1$ we can define $v(x) = \textup{det}(\bar{a})^{\frac{1}{4}}u(\bar{a}^{1/2}x)$ which has $L^2$ norm $1$ on $x\in \bar{a}^{-1/2} U$ so 
 \begin{align*}
  \int_{\bar{a}^{-1/2} U} \grad v \cdot \grad v &= \int_{\bar{a}^{-1/2} U} \bar{a}^{1/2}\grad u(\bar{a}^{1/2}x) \cdot \bar{a}^{1/2}\grad u(\bar{a}^{1/2}x) \ \textup{det}(\bar{a})^{\frac{1}{2}}dx \\
  &= \int_{U} \bar{a} \grad u \cdot \grad u \ dy  
  \end{align*}
 with $y = \bar{a}^{1/2}x$ and $dy = \textup{det}(\bar{a})^{1/2}dx$. 
 
 So the shape optimizers for $\lambda_1(\cdot,\bar{a})$ are ellipsoids of the form
 \[ E = \bar{a}^{1/2}B. \]
 \subsection{$J_\mu$ minimizers}\label{s.Jmu-minimizers-computations} The domain minimizing
 \[ J_\mu(U,\bar{a}) = \lambda_1(U,\bar{a}) + \mu |U| = \lambda_1(\bar{a}^{1/2}U,\textup{id}) + \mu|U|\]
 will certainly minimize $\lambda_1(U,\bar{a})$ over all domains with its volume so it is of the form
 \[ U = \bar{a}^{1/2}B_{\rho}\]
 for some $\rho>0$.  Then the functional is one-dimensional so we just minimize
 \begin{align*}
  f(\rho) &:= J_\mu(\bar{a}^{1/2}B_{\rho},\bar{a}) \\
  &= \lambda_1(B_\rho,\textup{id}) + \mu  \ \textup{det}(\bar{a})^{1/2}|B_1| \rho^d \\
  &= \rho^{-2} \lambda_1(B_1,\textup{id})+ \mu  \ \textup{det}(\bar{a})^{1/2}|B_1| \rho^d
  \end{align*}
  and this function is minimized when
  \begin{equation}
   \rho^{d+2} = \tfrac{2}{d}\mu^{-1}\textup{det}(\bar{a})^{-\frac{1}{2}}|B_1|^{-1}\lambda_1(B_1,\textup{id}).
   \end{equation}
  In particular note that when $\bar{a} = \textup{id}$ then $B_1$ minimizes $J_\mu(\cdot,\textup{id})$ when
 \begin{equation}\label{e.ball-eigenfunction-slope}
 \mu = \tfrac{2}{d|B_1|}\lambda_1(B_1,\textup{id}) \ \hbox{ implying } \ \left.|\grad u_{B_1}|\right|_{\partial B_1} = \sqrt{\tfrac{2}{d|B_1|}\lambda_1(B_1,\textup{id})}
 \end{equation}
In general
 \[ \rho \sim \mu^{-\frac{1}{d+2}} \ \hbox{ and } \ |U| = |B_1|\rho^d \sim \mu^{-\frac{d}{d+2}}.\]
 Next we compute the same scaling bounds for a general elliptic coefficient field.
 \begin{lemma}\label{l.scaling-bounds}
 Suppose $a$ is uniformly elliptic and $U$ minimizes $J_\mu(\cdot,a)$ over quasi-open sets in either $\R^d$ or $\R^d /N\Z^d$ when $N$ is an integer $N \geq \mu^{-\frac{1}{d+2}}$. Then
 \[ \lambda_1(U,a) \sim_{d,\Lambda} \mu^{\frac{2}{d+2}} \ \hbox{ and } \ |U| \sim_{d,\Lambda} \mu^{-\frac{d}{d+2}}.\]
 The same holds for $J_g$ minimizers for $g$ satisfying the bounds $(1+\gamma)^{-1} \leq g/\mu \leq 1+\gamma$ with constants depending additionally on $\gamma$.
 \end{lemma}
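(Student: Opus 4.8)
The plan is to reduce the statement to the one-variable model minimization carried out in \sref{Jmu-minimizers-computations}, using nothing about $a$ beyond the ellipticity bounds \aref{a1}. The governing observation is that, from the Rayleigh quotient characterization \eref{eigenvaluedef}, $\Lambda^{-1}\lambda_1(V,\textup{id}) \le \lambda_1(V,a) \le \Lambda\,\lambda_1(V,\textup{id})$ for every quasi-open $V$, so $J_\mu(\cdot,a)$ is squeezed between $v \mapsto \lambda_1(v,\textup{id}) + \mu|v|$ and $v \mapsto \Lambda^{-1}\lambda_1(v,\textup{id}) + \mu|v|$, and for both of these functionals the minimizer is a ball of radius comparable to $\mu^{-\frac{1}{d+2}}$ with principal eigenvalue comparable to $\mu^{\frac{2}{d+2}}$. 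Combining the lower comparison with the classical Faber--Krahn inequality also gives the scale-free a priori bound $\lambda_1(V,a) \ge \Lambda^{-1}c_d|V|^{-\frac{2}{d}}$ valid for every quasi-open $V$, where $c_d = |B_1|^{\frac{2}{d}}\lambda_1(B_1,\textup{id})$.

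With this in hand I would proceed as follows. For the upper bound, test the minimizer $U$ against a single ball $B_\rho$ with $\rho = \tfrac12\mu^{-\frac1{d+2}}$; in the torus case this ball embeds in $N\T^d$ because $2\rho = \mu^{-\frac1{d+2}} \le N$. Using the upper comparison and $\lambda_1(B_\rho,\textup{id}) = \rho^{-2}\lambda_1(B_1,\textup{id})$ gives $J_\mu(U,a) \le J_\mu(B_\rho,a) \le C(d,\Lambda)\mu^{\frac{2}{d+2}}$ after substituting $\rho^{-2} = 4\mu^{\frac{2}{d+2}}$ and $\mu\rho^d = 2^{-d}\mu^{\frac{2}{d+2}}$. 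Writing $m = |U|$ and using $J_\mu(U,a) = \lambda_1(U,a) + \mu m \ge \Lambda^{-1}c_d m^{-\frac{2}{d}} + \mu m$, both nonnegative summands are then $\le C(d,\Lambda)\mu^{\frac{2}{d+2}}$: the term $\mu m$ forces $m \le C\mu^{-\frac{d}{d+2}}$, and the term $\Lambda^{-1}c_d m^{-\frac{2}{d}}$ forces $m \ge c\mu^{-\frac{d}{d+2}}$, so $|U| \sim_{d,\Lambda}\mu^{-\frac{d}{d+2}}$. Finally $\lambda_1(U,a) \le J_\mu(U,a) \le C\mu^{\frac{2}{d+2}}$ from above, while $\lambda_1(U,a) \ge \Lambda^{-1}c_d|U|^{-\frac{2}{d}} \ge c\mu^{\frac{2}{d+2}}$ using the upper bound on $|U|$, so $\lambda_1(U,a) \sim_{d,\Lambda}\mu^{\frac{2}{d+2}}$. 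For the $J_g$ statement, the hypothesis $(1+\gamma)^{-1} \le g/\mu \le 1+\gamma$ gives, since $\lambda_1 \ge 0$, the two-sided bound $(1+\gamma)^{-1}J_\mu(V,a) \le J_g(V,a) \le (1+\gamma)J_\mu(V,a)$ for all quasi-open $V$, so the identical argument runs with the constants acquiring a dependence on $\gamma$.

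I do not anticipate any serious obstacle, since the argument is essentially the ellipticity sandwich together with the explicit computation of \sref{Jmu-minimizers-computations}; the one point requiring attention is the torus case, where one must confirm that the test ball of radius $\tfrac12\mu^{-\frac1{d+2}}$ embeds in $N\T^d$ for $N$ in the stated range (it does, as noted above) and that the a priori Faber--Krahn-type bound $\lambda_1(V,\textup{id}) \ge c_d|V|^{-\frac{2}{d}}$ persists for open subsets $V \subsetneq N\T^d$, which follows in standard fashion by Schwarz symmetrization after lifting to $\R^d$, using that a minimizer has volume $\le C\mu^{-\frac{d}{d+2}}$, negligible compared with $|N\T^d| = N^d$. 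Everything else is bookkeeping of powers of $\mu$.
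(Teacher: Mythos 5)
Your proof is correct and is essentially the paper's argument: the ellipticity sandwich $\Lambda^{-1}\lambda_1(\cdot,\textup{id}) \le \lambda_1(\cdot,a) \le \Lambda\,\lambda_1(\cdot,\textup{id})$, a test ball of radius $\sim\mu^{-\frac{1}{d+2}}$ to bound $J_\mu(U,a)$ from above, and the Faber--Krahn-type lower bound $\lambda_1(U,a)\ge c(d,\Lambda)|U|^{-2/d}$ to pin down both $|U|$ and $\lambda_1(U,a)$. Your half-radius choice $\rho=\tfrac12\mu^{-\frac{1}{d+2}}$ to ensure the test ball embeds in $N\T^d$ under the stated hypothesis $N\ge\mu^{-\frac{1}{d+2}}$ is in fact slightly more careful than the paper's $\rho=\mu^{-\frac{1}{d+2}}$, which is a harmless constant-level imprecision there.
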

 \begin{proof}
 Compare with $B_\rho$
\[\lambda_1(U,a) + \mu |U| \leq \lambda_1(B_\rho,a) + \mu |B_\rho| \leq \lambda_1(B_\rho, \Lambda \textup{id})+\mu |B_\rho| \leq C(d,\Lambda)\rho^{-2} + \mu |B_\rho|\]
Then choose $\rho =  \mu^{-\frac{1}{d+2}}$, and notice that $N \geq \rho$ so there is a ball $B_\rho$ in the torus $\R^d /N\Z^d$, so
\[ \lambda_1(U,a) +\mu|U|\leq C(d,\Lambda)\mu^{\frac{2}{d+2}} .\]
On the other side with $r$ such that $|B_r| = |U|$
\[ \lambda_1(U,a) \geq \lambda_1(U,\Lambda^{-1}\textup{id}) \geq \lambda_1(B_{r},\Lambda^{-1}\textup{id}) \geq c(d,\Lambda)|U|^{-2/d}\]
and so, using the upper bounds for $|U|$ and $\lambda_1(U,a)$ respectively,
\[ \lambda_1(U,a) \geq c(d,\Lambda)\mu^{\frac{2}{d+2}}\ \hbox{ and } \  |U| \geq c(d,\Lambda)\mu^{-\frac{d}{d+2}}. \]
\end{proof}

\subsection{Proof of relation \eref{Jgidentity}}\label{s.distJgcompute}
The equality \eref{Jgidentity} follows from the line of computation
\begin{align*}
J_g(\Omega,U_*,a) &= J_{\mu_*}(\Omega,a)+\int_{\Omega} \mu_* \omega\left(\frac{\rho(x, U_*)}{|U_*|^{\frac{1}{d}}}\right) dx\\
& = J_{\mu_*}(\Omega,a)+\mu_*\left[\int_{\Omega \setminus U_*}  \omega\left(\frac{d(x, \partial U_*)}{|U_*|^{\frac{1}{d}}}\right)  dx - \int_{\Omega \cap U_*} \omega\left(\frac{d(x, \partial U_*)}{|U_*|^{\frac{1}{d}}}\right) dx\right]\\
&= J_{\mu_*}(\Omega,a)+\mu_*\left[\int_{(\Omega \setminus U_*) \cup (U_* \setminus \Omega)}  \omega\left(\frac{d(x, \partial U_*)}{|U_*|^{\frac{1}{d}}}\right) dx   - \int_{(\Omega \cap U_*) \cup (U_* \setminus \Omega)} \omega\left(\frac{d(x, \partial U_*)}{|U_*|^{\frac{1}{d}}}\right) dx\right]\\
&=J_{\mu_*}(\Omega,a) + \mu_*\textup{dist}_\omega(\Omega,U_*) - \mu_*\int_{U_*} \omega\left(\frac{d(x, \partial U_*)}{|U_*|^{\frac{1}{d}}}\right) \ dx.
\end{align*}

\section{Proofs of standard technical lemmas}

\subsection{Relationship of the density quantities in \dref{scale-inv-densities}}\label{s.kappa-relation}

\begin{proof}
1.  (Bound $\sup_{z \in \Omega, r >0} \frac{|B_r(z) \cap \Omega|}{|B_{r/2}(z) \cap \Omega|}$) Let $z \in \Omega$, if $B_{r/4} \subset \Omega$ then we can upper bound by $|B_r|/|B_{r/4}| \leq C(d)$.  Otherwise there is $x_0 \in \partial \Omega \cap B_{r/4}$.

First suppose $r \leq 4 |\Omega|^{\frac{1}{d}}$.  Then
\[ |B_{r/2}(z) \cap \Omega| \geq |B_{r/4}(x_0) \cap \Omega| \geq c\kappa_0 |B_{r}|.\]

If $r/4  \geq r_* = |\Omega|^{\frac{1}{d}}$ then apply the density estimates to find
\[ |B_{r/2}(z) \cap \Omega| \geq |B_{r_*}(x_0) \cap \Omega| \geq \kappa_0|B_{r_*}| \geq c\kappa_0 |\Omega|.\]
Since the numerator is bounded above by $|\Omega|$ we are done.

2. (Bound $\sup_{z \in \partial \Omega, r>0} \frac{|B_r|}{|B_r(z) \setminus \Omega|}$) If $r \leq C|\Omega|^{\frac{1}{d}}$
\[ |B_r(z) \setminus \Omega| \geq |B_{C^{-1}r}(z) \setminus \Omega| \geq C^{-d}\kappa_0 |B_r|\]
while if $r \geq C|\Omega|^{\frac{1}{d}}$ then (for sufficiently large dimensional $C$)
\[ |B_r(z) \setminus \Omega| \geq \frac{1}{2} |B_r|.\]
\end{proof}
\subsection{Measure to Hausdorff bound}\label{s.Linftyupgrade}

The next result says that under inner/outer density estimates, \dref{scale-inv-densities}, we can upgrade estimates in measure to estimates in Hausdorff distance (for the domain). 
\begin{lemma}\label{l.Linftyupgrade}
Suppose that $\Omega$ and $\Omega'$ are sets with $|\Omega| = |\Omega'|$ both satisfy the inner/outer density estimates \dref{scale-inv-densities} then
 \[  d_H(\partial \Omega,\partial \Omega') \leq C\kappa_0^{-\frac{1}{d}}|\Omega \Delta \Omega'|^{1/d}\]
 as long as $|\Omega \Delta \Omega'| < \kappa_0|B_1|\min\{|\Omega|,|\Omega'|\}$.
\end{lemma}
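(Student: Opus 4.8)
The plan is to bound $\delta := d_H(\partial\Omega,\partial\Omega')$ by the amount of $\Omega\Delta\Omega'$ sitting inside a single ball of radius $\delta$, using the density estimates to guarantee that such a ball always carries a definite fraction of its volume in the symmetric difference. By the definition of Hausdorff distance, and by the symmetry of the hypotheses under swapping $\Omega$ and $\Omega'$, I may assume (using compactness of $\partial\Omega$, or an $\ep$-approximation of the supremum followed by $\ep\to0$) that there is a point $x_0\in\partial\Omega$ with $\dist(x_0,\partial\Omega')=\delta$. I will also assume for the moment that $\delta\le|\Omega|^{1/d}$ and verify this a posteriori.

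First I would observe that $B_\delta(x_0)$ is disjoint from $\partial\Omega'$; being connected and disjoint from the topological boundary of $\Omega'$, it lies entirely in the interior of $\Omega'$ or entirely in $\R^d\setminus\overline{\Omega'}$. In the former case $B_\delta(x_0)\setminus\Omega\subset\Omega'\setminus\Omega\subset\Omega\Delta\Omega'$, and the outer density bound of \dref{scale-inv-densities} (applicable since $x_0\in\partial\Omega$ and $\delta\le|\Omega|^{1/d}$) gives $|B_\delta(x_0)\setminus\Omega|\ge\kappa_0|B_\delta|=\kappa_0|B_1|\delta^d$. In the latter case $B_\delta(x_0)\cap\Omega\subset\Omega\setminus\Omega'\subset\Omega\Delta\Omega'$, and the inner density bound gives $|B_\delta(x_0)\cap\Omega|\ge\kappa_0|B_1|\delta^d$. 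Either way
\[ \kappa_0|B_1|\delta^d\le|\Omega\Delta\Omega'|, \]
which rearranges to $\delta\le|B_1|^{-1/d}\kappa_0^{-1/d}|\Omega\Delta\Omega'|^{1/d}$, the asserted estimate with $C=|B_1|^{-1/d}$, a dimensional constant.

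It remains to justify the standing assumption $\delta\le|\Omega|^{1/d}$. If instead $\delta>|\Omega|^{1/d}=:r_*$, then $B_{r_*}(x_0)$ is still disjoint from $\partial\Omega'$, so the same dichotomy applied at radius $r_*$ — now exactly at the threshold where \dref{scale-inv-densities} is available — gives $\kappa_0|B_1|r_*^d=\kappa_0|B_1|\,|\Omega|\le|\Omega\Delta\Omega'|$, contradicting the hypothesis $|\Omega\Delta\Omega'|<\kappa_0|B_1|\min\{|\Omega|,|\Omega'|\}$; and if the supremum defining $\delta$ is realized on $\partial\Omega'$ rather than on $\partial\Omega$, one argues identically with $|\Omega'|$ in place of $|\Omega|$ (here the two are equal). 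I do not expect a genuine obstacle in this lemma: it is a standard density-estimate argument, and the only points requiring a touch of care are the connectedness dichotomy for $B_\delta(x_0)$ — which is why one works with the topological boundary and uses that $\Omega,\Omega'$ are open, as they are in every application here — and the attainment (or approximation) of the supremum in the Hausdorff distance.
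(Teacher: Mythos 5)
Your proof is correct and follows essentially the same route as the paper: fix a boundary point of one set, note that the ball of radius equal to its distance to the other boundary is disjoint from that boundary, and use the appropriate (inner or outer) density estimate to produce a definite amount of $\Omega\Delta\Omega'$ inside the ball, with the smallness hypothesis ruling out the case $\delta>|\Omega|^{1/d}$. The only cosmetic difference is that you organize the dichotomy via connectedness of $B_\delta(x_0)$ and restrict to a near-maximizing $x_0$, whereas the paper simply treats the cases $x\in\partial\Omega\setminus\overline{\Omega'}$ and $x\in\partial\Omega\cap\Omega'$ for arbitrary $x\in\partial\Omega$ (making the extraction of a maximizer unnecessary); both are fine.
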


This is standard but for completeness we include the proof.
\begin{proof}  
 Let $x\in \partial \Omega \setminus \overline{\Omega'}$ and call $r = d(x,\overline{\Omega'})$.  
 
 If $r \leq |\Omega|^{1/d}$ then
\[ |\Omega \setminus \Omega'| \geq |\Omega \cap B_r(x)| \geq \kappa_{0} |B_r|\]
so $r \leq C\kappa_0^{-1/d}|\Omega \Delta \Omega'|^{1/d}$.

If $r \geq |\Omega|^{1/d}$ then 
\[ |\Omega \setminus \Omega'| \geq |\Omega \cap B_r(x)| \geq \kappa_0|B_1||\Omega| \]
which contradicts the smallness assumption of the statement.

Next let $x \in \partial \Omega \cap \Omega'$ and let $r = d(x,\partial \Omega')$.  By the outer density estimate of $\Omega$
\[  |\Omega' \setminus \Omega| \geq |\Omega^C \cap B_r(x)| \geq \kappa_0 |B_r|\]
 since this side actually holds for arbitrary $r$ we are done.

\end{proof}

\subsection{Proof of \lref{dist-omega-est}}\label{s.dist-omega-est} 
Call $d(x) = d(x,\partial B)$ which has $|\grad d(x)| = 1$ a.e. and we split, for some $0 < t \leq t_0$ to be picked later,
\[ |A \Delta B| = |(A \Delta B )\cap\{d(x) \leq t\}| + |(A \Delta B )\cap\{d(x) \geq t\}|.\]
By the regularity assumption on $\partial B$
\[|(A \Delta B )\cap\{d(x) \leq t\}| \leq |\{d(x) \leq t\}| \leq P|B|^{\frac{d-1}{d}}t. \]
For the other term, by co-area formula twice,
\begin{align*}
 |(A \Delta B )\cap\{d(x) \geq t\}| &= \int_{t}^\infty \mathcal{H}^{d-1}((A \Delta B) \cap \{d(x) = s\}) \ ds\\
 &\leq t^{-\beta}\int_{t}^\infty s^{\beta}\mathcal{H}^{d-1}((A \Delta B) \cap \{d(x) = s\}) \ ds \\
 &= |B|^{\beta/d} t^{-\beta}\int_{\{ d(x) \geq t\}} |B|^{-\beta/d}d(x)^\beta{\bf 1}_{A \Delta B} \ dx \\
 & \leq |B|^{\beta/d}t^{-\beta}\textup{dist}_{\beta}(A,B).
 \end{align*}
 So combining these estimates
 \[ \frac{|A \Delta B|}{|B|}  \leq P|B|^{-\frac{1}{d}}t + \frac{\textup{dist}_{\beta}(A,B)}{t^\beta|B|^{1-\beta/d}}\]
 and we pick $t = P^{-\frac{1}{1+\beta}}|B|^{\frac{1}{d}-\frac{1}{1+\beta}}\textup{dist}_{\beta}(A,B)^{\frac{1}{1+\beta}}$ to give
 \[ \frac{|A \Delta B|}{|B|} \leq 2P^{\frac{\beta}{1+\beta}}\left(\frac{\textup{dist}_\beta(A,B)}{|B|}\right)^{\frac{1}{1+\beta}}\]
 
 For $\omega(r) = |\log (2+ r^{-1})|^{-p}$ do the same argument resulting in
 \[ \frac{|A \Delta B|}{|B|}  \leq P|B|^{-\frac{1}{d}}t + |\log (2+\frac{|B|^{\frac{1}{d}}}{t})|^{p}\frac{\textup{dist}_{\omega}(A,B)}{|B|}\]
 and choose $t = P^{-1}|B|^{\frac{1}{d}-1}\textup{dist}_{\omega}(A,B)$ to get the claim.
 
 Finally for $\omega(r) = |\log (2+ r^{-1})|^{-p} \wedge \gamma$ again do the same argument resulting in
 \[ \frac{|A \Delta B|}{|B|}  \leq P|B|^{-\frac{1}{d}}t + \left[|\log (2+\frac{|B|^{\frac{1}{d}}}{t})|^{p}\vee \gamma^{-1}\right]\frac{\textup{dist}_{\omega}(A,B)}{|B|}.\]
 Choose, as before, $t = P^{-1}|B|^{\frac{1}{d}-1}\textup{dist}_{\omega}(A,B)$ to get
 \[\frac{|A \Delta B|}{|B|}  \leq C\frac{\textup{dist}_{\omega}(A,B)}{|B|}(\gamma^{-1}+|\log(2+\frac{P|B|}{\textup{dist}_{\omega}(A,B)})|^{p}).\]

\qed

  \bibliographystyle{plain}
\bibliography{eigenvalue-articles}
\end{document}